\begin{document}


\theoremstyle{plain}
\newtheorem{theorem}{Theorem}[subsection]
\newtheorem{introtheorem}{Theorem}

\theoremstyle{plain}
\newtheorem{proposition}[theorem]{Proposition}

\theoremstyle{plain}
\newtheorem{corollary}[theorem]{Corollary}

\theoremstyle{plain}
\newtheorem{lemma}[theorem]{Lemma}

\theoremstyle{plain}
\newtheorem{definition}[theorem]{Definition}

\theoremstyle{remark}
\newtheorem{remind}[theorem]{Reminder}

\theoremstyle{definition}
\newtheorem{condition}[theorem]{Condition}

\theoremstyle{definition}
\newtheorem{construction}[theorem]{Construction}

\theoremstyle{definition}
\newtheorem{example}[theorem]{Example}

\theoremstyle{definition}
\newtheorem{notation}[theorem]{Notation}

\theoremstyle{definition}
\newtheorem{convention}[theorem]{Convention}

\theoremstyle{definition}
\newtheorem{assumption}[theorem]{Assumption}
\newtheorem{isoassumption}[theorem]{Isomorphism Assumption}

\newtheorem{finassumption}[theorem]{Finiteness Assumption}
\newtheorem{indhypothesis}[theorem]{Inductive Hypothesis}

\theoremstyle{remark}
\newtheorem{remark}[theorem]{Remark}




\newcommand{\mathscr}[1]{\mathcal{#1}}
\newcommand{\TODO}{{\color{red} TODO}}
\newcommand{\CHANGE}{{\color{red} CHANGE}}
\newcommand{\annette}[1]{{\color{red}Annette: #1 }}
\newcommand{\simon}[1]{{\color{green}Simon: #1 }}
\newcommand{\giuseppe}[1]{{\color{blue}Giuseppe: #1 }}
\newcommand{\tensor}{\ox}
\newcommand{\Fil}{\tn{Fil}}
\newcommand{\Fh}{\shfF}
\newcommand{\GrVec}{\textnormal{GrVec}}
\newcommand{\isom}{\cong}
\newcommand{\isocan}{\xrightarrow{\hspace{1.85pt}\sim \hspace{1.85pt}}}
\newcommand{\red}{\mathrm{red}}

\newcommand{\ot}{\otimes}
\newcommand{\Gr}{\textnormal{Gr}}
\newcommand{\x}{\times}
\newcommand{\Ga}{\mathbb{G}_{a}}
\newcommand{\Gm}{\mathbb{G}_{m}}
\newcommand{\uGm}{\ul{\Gm}}
\newcommand{\Gal}{\textnormal{Gal}}
\newcommand{\tr}{\textnormal{tr}}
\newcommand{\Lf}{\mathbb{L}}
\newcommand{\id}{\mathrm{id}}
\newcommand{\chr}{\mathrm{char}}
\newcommand{\Aut}{\textnormal{Aut}}
\newcommand{\End}{\textnormal{End}}
\newcommand{\Sym}{\textnormal{Sym}}
\newcommand{\OSym}{\textnormal{coSym}}
\newcommand{\Alt}{\tn{Alt}}
\newcommand{\Altn}{\tn{Alt}^{n}}
\newcommand{\Symn}{\Sym^{n}}
\newcommand{\Altd}{\tn{Alt}^{d}}
\newcommand{\Symd}{\Sym^{d}}
\newcommand{\Symi}{\Sym^{i}}
\newcommand{\Altk}{\Alt^{k}}
\newcommand{\Alti}{\Alt^{i}}
\newcommand{\Symk}{\Sym^{k}}
\newcommand{\Symm}{\Sym^{m}}

\newcommand{\N}{\mathbb{N}}
\newcommand{\Z}{\mathbb{Z}}
\newcommand{\Q}{\mathbb{Q}}
\newcommand{\Ql}{\mathbb{Q}_{\ell}}
\newcommand{\C}{\mathbb{C}}

\newcommand{\A}{\mathbb{A}}
\newcommand{\cata}{\shf{A}}
\newcommand{\shfF}{\shf{F}}

\newcommand{\shfA}{\shf{A}}
\newcommand{\shfB}{\shf{B}}
\newcommand{\shfG}{\shf{G}}

\newcommand{\one}{\mathds{1}}
\newcommand{\tatt}{\one(1)[2]}
\newcommand{\Aff}{\mathbb{A}}
\newcommand{\Hom}{\textnormal{Hom}}
\newcommand{\Ob}{\textnormal{Ob}}
\newcommand{\Mor}{\textnormal{Mor}}
\newcommand{\op}{\tn{op}}
\newcommand{\Ext}{\tn{Ext}}
\newcommand{\Spec}{\textnormal{Spec}}
\newcommand{\Pro}{\textnormal{Pro}}
\newcommand{\lra}{\longrightarrow}
\newcommand{\vp}{\varphi}
\newcommand{\eff}{\tn{eff}}

\newcommand{\GpSch}{\tn{\tbf{GpSch}}}
\newcommand{\LgGpSch}{\tn{\tbf{LgGpSch}}}
\newcommand{\GpSchS}{\tn{\tbf{GpSch}}/S}
\newcommand{\LgGpSchS}{\tn{\tbf{LgGpSch}}/S}
\newcommand{\GpSchk}{\tn{\tbf{GpSch}}/k}
\newcommand{\CGSFTk}{\tn{\tbf{CmGSch}}/k}
\newcommand{\Ab}{\tn{\tbf{Ab}}}
\newcommand{\AbQ}{\Ab_\Q}
\newcommand{\sAb}{\tn{\tbf{sAb}}}
\newcommand{\sAbQ}{{\sAb_\Q}}
\newcommand{\gAb}{\tn{\tbf{gAb}}}
\newcommand{\SAb}{\tn{\tbf{SAb}}}
\newcommand{\SAbk}{\SAb(k)}
\newcommand{\SAbS}{\SAb(S)}
\newcommand{\AbS}{\tn{\tbf{AbSch}}}
\newcommand{\AbV}{\tn{\tbf{AbVar}}}
\newcommand{\AbVk}{\tn{\tbf{AbVar}}/k}
\newcommand{\AbVkQ}{\tn{\tbf{AbVar}}_{\Q}/k}
\newcommand{\AVI}{\tn{\tbf{AbVarIsog}}}
\newcommand{\AVIk}{\tn{\tbf{AbVarIsog}}/k}
\newcommand{\AVIkQ}{(\tn{\tbf{AbVarIsog}}/k)^{\Q}}
\newcommand{\tAbS}{{}^{\tn{t}}\tn{\tbf{AbS}}}
\newcommand{\CGS}{\tn{\tbf{cGrp}}}
\newcommand{\CGSQ}{\CGS_\Q}
\newcommand{\CGSS}{\tn{\tbf{CommGpSch}}_{S}}
\newcommand{\fgAb}{\tn{\tbf{fgAb}}}
\newcommand{\Open}{\tn{\tbf{Open}}}
\newcommand{\Soe}{\tn{\tbf{S}}^{\tn{eff}}_{1}}
\newcommand{\So}{\tn{\tbf{S}}_{1}}
\newcommand{\res}{\tn{res}}
\newcommand{\an}{\tn{an}}
\newcommand{\Prim}{\tn{Prim}}
\newcommand{\rsa}{\rightsquigarrow}
\newcommand{\lbk}{\lbrack}
\newcommand{\rbk}{\rbrack}

\newcommand{\aXYZ}{\alpha_{X, Y}^{Z}}
\newcommand{\aXYU}{\alpha_{X, Y}^{U}}

\newcommand{\tXYZ}{\theta_{X, Y}^{Z}}
\newcommand{\tXYU}{\theta_{X, Y}^{U}}
\newcommand{\tXYV}{\theta_{X, Y}^{V}}
\newcommand{\tXYW}{\theta_{X, Y}^{W}}

\newcommand{\Sets}{\tn{\tbf{Sets}}}
\newcommand{\SFL}{S_{\tn{FL}}}
\newcommand{\kFL}{\Speck_{\tn{FL}}}
\newcommand{\ket}{\Speck_{\et}}
\newcommand{\D}{\tn{D}}
\newcommand{\DD}{\mathbb{D}}
\newcommand{\GoS}{\mathbb{G}/S}
\newcommand{\uG}{\ul{\mathbb{G}}}
\newcommand{\uGS}{\ul{\mathbb{G}}/S}
\newcommand{\Dm}{\tn{D}^{-}}
\newcommand{\Dp}{\tn{D}^{+}}
\newcommand{\Db}{\tn{D}^{\tn{b}}}
\newcommand{\bDm}{\tn{\tbf{D}}^{-}}
\newcommand{\bDmc}{\tn{\tbf{D}}^{-}_{\tn{Cor}}}
\newcommand{\bDms}{\tn{\tbf{D}}^{-}_{\tn{St}}}

\newcommand{\bS}{\tn{\tbf{S}}}
\newcommand{\bSc}{\tn{\tbf{S}}_{\tn{Cor}}}
\newcommand{\bSt}{\tn{\tbf{S}}_{\tn{St}}}

\newcommand{\qcoh}{\tn{qcoh}}
\newcommand{\Dbqcoh}{\Db_{\qcoh}}

\newcommand{\Cb}{\cly{C}^{\tn{b}}}
\newcommand{\Kb}{\cly{K}^{\tn{b}}}

\newcommand*{\longhookrightarrow}{\ensuremath{\lhook\joinrel\relbar\joinrel\rightarrow}}
\newcommand{\lhra}{\longhookrightarrow}

\newcommand{\gr}{\tn{gr}}
\newcommand{\grwn}{\tn{gr}^{W}_{n}}
\newcommand{\grwp}{\tn{gr}^{W}_{p}}
\newcommand{\grwo}{\tn{gr}^{W}_{1}}
\newcommand{\grwi}{\tn{gr}^{W}_{i}}
\newcommand{\grw}{\tn{gr}^{W}}
\newcommand{\et}{\tn{\'{e}t}}
\newcommand{\fl}{\tn{fl}}
\newcommand{\fppf}{\tn{fppf}}
\newcommand{\nis}{\tn{Nis}}
\newcommand{\eh}{\tn{\'{e}h}}
\newcommand{\Dlg}{\tn{Dlg}}
\newcommand{\Tori}{\tn{\tbf{Tori}}}
\newcommand{\fgfrAb}{\tn{\tbf{fgfrAb}}}
\newcommand{\fgf}{\tn{\tbf{fgfrAb}}}
\newcommand{\cch}{\chi_{\bl}}
\newcommand{\Fnlm}{F^{\nu}_{\lambda, \mu}}
\newcommand{\etl}{\'{e}tale}
\newcommand{\Etl}{\'{E}tale}
\newcommand{\Vo}{Voevodsky}
\newcommand{\Vov}{\tn{Vo}}
\newcommand{\Dgl}{D\'{e}glise}
\newcommand{\SeAV}{Serre-Albanese variety}
\newcommand{\SeAT}{Serre-Albanese torsor}
\newcommand{\ASc}{Albanese scheme}
\newcommand{\sesq}{short exact sequence}
\newcommand{\Cech}{\check{C}ech}
\newcommand{\cech}{\check{c}ech}
\newcommand{\sng}{\tn{sing}}
\newcommand{\co}{c_{1}}
\newcommand{\cosng}{c_{1}^{\sng}}
\newcommand{\comot}{c_{1}^{\mot}}
\newcommand{\mot}{\tn{mot}}
\newcommand{\Hmot}{H_{\mot}}

\newcommand{\ner}{N\'{e}ron}
\newcommand{\ners}{N\'{e}ron-Severi}
\newcommand{\nsv}{Nisnevich}
\newcommand{\pnc}{Poincar\'{e}}
\newcommand{\gc}{Grothendieck-Chow}
\newcommand{\gcm}{Grothendieck-Chow motive}
\newcommand{\gcms}{Grothendieck-Chow motives}
\newcommand{\fa}{Fourier analysis}
\newcommand{\rie}{Riemann}
\newcommand{\rien}{Riemannian}
\newcommand{\rienme}{Riemannian metric}
\newcommand{\rienmes}{Riemannian metrics}
\newcommand{\rienma}{Riemannian manifold}
\newcommand{\rienmas}{Riemannian manifolds}

\newcommand{\Coh}{\tn{\tbf{Coh}}}
\newcommand{\Cop}{\tn{\tbf{Coh}}(\Pj^{1})}
\newcommand{\Torp}{\tn{\tbf{Tor}}(\Pj^{1})}
\newcommand{\Tor}{\tn{\tbf{Tor}}}
\newcommand{\slt}{\idl{sl}_{2}}

\newcommand{\De}{\tn{DM}^{\tn{eff}}_{\,\tn{\pcl{X}}}}
\newcommand{\Dg}{\tn{DM}_{\tn{gm}}}
\newcommand{\Deg}{\tn{DM}^{\tn{eff}}_{\tn{gm}}}
\newcommand{\Chow}{\tn{\tbf{Chow}}} 
\newcommand{\Chowk}{\Chow(k)}
\newcommand{\tChowk}{{}^{\tn{t}}\Chow(k)}
\newcommand{\Chowuk}{\tn{\tbf{Chow}}_{\tn{un}}(k)}
\newcommand{\ChowS}{\tn{\tbf{Chow}}(S)}
\newcommand{\Chowe}{\tn{\tbf{Chow}}^{\tn{eff}}}
\newcommand{\ChoweS}{\tn{\tbf{Chow}}^{\tn{eff}}(S)}
\newcommand{\Chowek}{\Chow^{\tn{eff}}(k)}
\newcommand{\ChowZek}{\Chow^{\tn{eff}}_{\Z}(k)}
\newcommand{\tChowek}{{}^{\tn{t}}\Chow^{\tn{eff}}(k)}
\newcommand{\ChowkQ}{\tn{\tbf{Chow}}(k)_{\Q}}
\newcommand{\ChowekQ}{\tn{\tbf{Chow}}^{\tn{eff}}(k)_{\Q}}
\newcommand{\CHp}{\tn{CH}^{p}}
\newcommand{\CHo}{\tn{CH}_{0}}
\newcommand{\tCHo}{\wt{\tn{CH}}_{0}}
\newcommand{\req}{\equiv_{\tn{rat}}}
\newcommand{\DMe}{\DM^{\tn{eff}}} 
\newcommand{\DTM}{\tn{DTM}} 
\newcommand{\DTMn}{\tn{DTM}_{n}} 
\newcommand{\DTMet}{\tn{DTM}_{\et}} 
\newcommand{\DTMeet}{\tn{DTM}^{\tn{eff}}_{\,\tn{\pcl{X}}, \et}}
\newcommand{\DTMetk}{\tn{DTM}_{\et}(k)} 
\newcommand{\DTMeetk}{\tn{DTM}^{\tn{eff}}_{\,\tn{\pcl{X}}, \et}(k)}
\newcommand{\DMeet}{\DMe_{-, \et}}

\newcommand{\DMen}{\DMe_{\,\tn{\pcl{X}}, \nis}}
\newcommand{\DMett}{\DMe_{\,\tn{\pcl{X}}, \tau}}
\newcommand{\DMettk}{\DMett(k)}
\newcommand{\DMenk}{\DMen(k)}
\newcommand{\DMenkQ}{\DMen(k, \Q)}
\newcommand{\DMenkR}{\DMen(k, R)}

\newcommand{\DMettkR}{\DMett(k, R)}
\newcommand{\DMettkZ}{\DMett(k, \Z)}
\newcommand{\DMettkQ}{\DMett(k, \Q)}
\newcommand{\DMeetk}{\DMeet(k)}
\newcommand{\DMeetkQ}{\DMeet(k, \Q)}
\newcommand{\DMeetKQ}{\DMeet(K, \Q)}

\newcommand{\DMeetkZ}{\DMeet(k, \Z)}
\newcommand{\DMeetkR}{\DMeet(k, R)}
\newcommand{\DMeetA}{\DMeet(A)}
\newcommand{\DMeetX}{\DMeet(X)}
\newcommand{\DMX}{\DM(X)}
\newcommand{\DMY}{\DM(Y)}
\newcommand{\DMgmX}{\DMgm(X)}
\newcommand{\DMgmY}{\DMgm(Y)}

\newcommand{\DMetk}{\tn{\tbf{DM}}_{\,\tn{\pcl{X}}, \et}(k)}
\newcommand{\DMetkQ}{\tn{\tbf{DM}}_{\,\tn{\pcl{X}}, \et}(k)_{\Q}}
\newcommand{\DMeget}{\tn{\tbf{DM}}^{\tn{eff}}_{\tn{gm}, \et}}
\newcommand{\DMegetk}{\tn{\tbf{DM}}^{\tn{eff}}_{\tn{gm}, \et}(k)}
\newcommand{\DMgetk}{\tn{\tbf{DM}}_{\tn{gm}, \et}(k)}

\newcommand{\DMentkQ}{\tn{\tbf{DM}}^{\tn{eff}}_{\,\tn{\pcl{X}}, \nis}(k)_{\Q}}
\newcommand{\DMegetQ}{\tn{DM}^{\tn{eff}}_{\tn{gm}, \et} \ox \Q}
\newcommand{\DMegetkQ}{\tn{DM}^{\tn{eff}}_{\tn{gm}, \et}(k) \ox \Q}
\newcommand{\DMg}{\tn{\tbf{DM}}_{\tn{gm}}}
\newcommand{\DMeg}{\tn{\tbf{DM}}^{\tn{eff}}_{\tn{gm}}}
\newcommand{\DMgS}{\tn{\tbf{DM}}_{\tn{gm}}(S)}
\newcommand{\DMeS}{\tn{\tbf{DM}}^{\tn{eff}}(S)}
\newcommand{\DMegS}{\tn{\tbf{DM}}^{\tn{eff}}_{\tn{gm}}(S)}

\newcommand{\DMegQ}{\DMeg \ox \Q}
\newcommand{\dDMeg}{d_{\le 1}\tn{DM}^{\tn{eff}}_{\tn{gm}}}
\newcommand{\dDMeget}{d_{\le 1}\tn{DM}^{\tn{eff}}_{\tn{gm}, \et}}
\newcommand{\dDMegetk}{d_{\le 1}\tn{DM}^{\tn{eff}}_{\tn{gm}, \et}(k)}
\newcommand{\as}{\alpha_{*}}

\newcommand{\Mot}{\tn{Mot}}
\newcommand{\Motk}{\Mot/k}

\newcommand{\DbM}{\Db(\Mok)}
\newcommand{\DbMp}{\Db(\Mok[1/p])}
\newcommand{\DbMQ}{\Db(\Mok \ox \Q)}
\newcommand{\CbMp}{\Cb(\Mok[1/p])}
\newcommand{\CbMQ}{\Cb(\Mok \ox \Q)}
\newcommand{\dlo}{d_{\le 1}} 
\newcommand{\Dlo}{\tn{D}_{\le 1}}
\newcommand{\Deto}{\tn{D}^{\et}_{\le 1}}
\newcommand{\Dnto}{\tn{D}^{\tn{Nis}}_{\le 1}}
\newcommand{\Mo}{\cly{M}_{1}}
\newcommand{\Mof}{M_{1}}
\newcommand{\hof}[1]{\mathfrak{h}_{#1}}
\newcommand{\Mi}[1]{M_{#1}}
\newcommand{\Do}{D_{1}}
\newcommand{\shtG}{\sht{G}}
\newcommand{\shtH}{\sht{H}}
\newcommand{\Mog}{\mathbf{M}_{1}}
\newcommand{\Mg}{\mathbf{M}}
\newcommand{\Mogr}{\mathbf{M}_{1}^{r}}
\newcommand{\Mgr}{\mathbf{M}^{r}}
\newcommand{\Mea}{\cly{M}_{\tn{anc}}^{\tn{eff}}}
\newcommand{\tMeo}{{}^{t}\cly{M}_{1}^{\tn{eff}}}
\newcommand{\ctMeo}{{}_{t}\cly{M}_{1}^{\tn{eff}}}
\newcommand{\tMo}{{}^{t}\cly{M}_{1}}
\newcommand{\Mok}{\cly{M}_{1}(k)}
\newcommand{\MoS}{\cly{M}_{1}(S)}
\newcommand{\Meak}{\cly{M}_{\tn{anc}}^{\tn{eff}}(k)}
\newcommand{\tMeok}{{}^{t}\cly{M}_{1}^{\tn{eff}}(k)}
\newcommand{\tMok}{{}^{t}\cly{M}_{1}(k)}
\newcommand{\ctMo}{{}_{t}\cly{M}_{1}}
\newcommand{\ctMok}{{}_{t}\cly{M}_{1}(k)}

\newcommand{\tH}{{}^{t} H}
\newcommand{\tHn}{{}^{t} H^{n}}
\newcommand{\tHln}{{}^{t} H_{n}}
\newcommand{\Mz}{\cly{M}_{0}}
\newcommand{\Mzk}{\cly{M}_{0}(k)}
\newcommand{\MzS}{\cly{M}_{0}(S)}
\newcommand{\tMz}{{}^{t}\cly{M}_{0}}
\newcommand{\tMzk}{{}^{t}\cly{M}_{0}(k)}

\newcommand{\MS}{\cly{M}(S)}
\newcommand{\MChS}{\cly{M}^{0}(S)}
\newcommand{\MChpS}{\cly{M}^{0}_{+}(S)}

\newcommand{\LAlb}{\tn{LAlb}}
\newcommand{\LAlbc}{\tn{LAlb}^{\tn{c}}}
\newcommand{\LAlbcs}{\tn{LAlb}^{\tn{c}} \;}
\newcommand{\LiAlb}{\tn{L}_{i}\tn{Alb}}
\newcommand{\LoAlb}{\tn{L}_{1}\tn{Alb}}
\newcommand{\LtAlb}{\tn{L}_{2}\tn{Alb}}
\newcommand{\LAlbi}[1]{\tn{L}_{#1}\tn{Alb}}
\newcommand{\LiAlbc}{\tn{L}_{i}\tn{Alb}^{\tn{c}}}
\newcommand{\LzAlb}{\tn{L}_{0}\tn{Alb}}
\newcommand{\LAlbQ}{\tn{LAlb}^{\Q}}
\newcommand{\LiAlbQ}{\LiAlb^{\Q}}

\newcommand{\Pic}{\tn{Pic}}
\newcommand{\RPic}{\tn{RPic}}
\newcommand{\RPicc}{\tn{RPic}^{\tn{c}}}
\newcommand{\RiPic}{\tn{R}_{i}\tn{Pic}}

\newcommand{\RPici}[1]{\tn{R}_{#1}\tn{Pic}}

\newcommand{\RiPicc}{\tn{R}_{i}\tn{Pic}^{\tn{c}}}
\newcommand{\LAlbs}{\tn{LAlb} \;}
\newcommand{\RPics}{\tn{RPic} \;}
\newcommand{\AXk}{\cly{A}_{X/k}}
\newcommand{\AGk}{\cly{A}_{G/k}}
\newcommand{\AUk}{\cly{A}_{U/k}}
\newcommand{\AzXk}{\cly{A}^{0}_{X/k}}
\newcommand{\AsXk}{\cly{A}^{*}_{X/k}}
\newcommand{\uAXk}{\ul{\cly{A}}_{X/k}}
\newcommand{\uAsXk}{\ul{\cly{A}}^{*}_{X/k}}
\newcommand{\PicXk}{\tn{Pic}_{X/k}}
\newcommand{\PicCk}{\tn{Pic}_{C/k}}
\newcommand{\uPicXk}{\ul{\tn{Pic}}_{X/k}}
\newcommand{\uPicCk}{\ul{\tn{Pic}}_{C/k}}
\newcommand{\wPicXk}{\wPic_{X/k}}
\newcommand{\wPicCk}{\wPic_{C/k}}

\newcommand{\PicoXk}{\tn{Pic}^{\circ}_{X/k}}
\newcommand{\uPicoXk}{\ul{\tn{Pic}}^{\circ}_{X/k}}

\newcommand{\NSXk}{\tn{NS}_{X/k}}
\newcommand{\NSZX}{\tn{NS}_{Z}(X)}
\newcommand{\NSsXk}{\tn{NS}_{X/k}^{*}}

\newcommand{\DM}{\tn{\tbf{DM}}} 
\newcommand{\DMgm}{\DM_{\tn{gm}}}
\newcommand{\DT}{\tn{\tbf{DT}}}
\newcommand{\Sm}{\tn{\tbf{Sm}}}
\newcommand{\Smc}{\tn{\tbf{SmCor}}}
\newcommand{\Sch}{\tn{\tbf{Sch}}}
\newcommand{\Schsf}{\Sch_{\tn{sf}}}
\newcommand{\SchsfX}{\Schsf/X}
\newcommand{\SchS}{\tn{\tbf{Sch}}/S}
\newcommand{\Var}{\tn{\tbf{Var}}}
\newcommand{\SmPrVar}{\tn{\tbf{SmPrVar}}}
\newcommand{\SmPrVark}{\tn{\tbf{SmPrVar}}/k}

\newcommand{\GrVekt}{\tn{\tbf{GrVekt}}}
\newcommand{\Vark}{\tn{\tbf{Var}}/k}
\newcommand{\Mgm}{M_{\tn{gm}}}
\newcommand{\tMgm}{\wt{\tn{M}}_{\tn{gm}}}
\newcommand{\Mgmc}{\tn{M}_{\tn{gm}}^{\tn{c}}}
\newcommand{\Mgmi}{\tn{M}_{\tn{gm}}^{i}}
\newcommand{\Mgmt}{\widetilde{\tn{M}}_{\tn{gm}}}

\newcommand{\DMek}{\DMe(k)}

\newcommand{\psul}[2]{{#1}_{\rlap{--}{\phantom{#2}}}}
\newcommand{\pl}{\partial}
\newcommand{\bl}{\bullet}
\newcommand{\sbl}{\bl}
\newcommand{\ssbl}{\bl}
\newcommand{\w}{\wedge}
\newcommand{\Dgk}{\tn{\tbf{DM}}_{\tn{gm}}(k)}
\newcommand{\Degk}{\tn{\tbf{DM}}^{\tn{eff}}_{\tn{gm}}(k)}
\newcommand{\DMk}{\tn{\tbf{DM}}(k)}
\newcommand{\DMS}{\tn{\tbf{DM}}(S)}
\newcommand{\DMgk}{\tn{\tbf{DM}}_{\tn{gm}}(k)}
\newcommand{\DMgkQ}{\tn{\tbf{DM}}_{\tn{gm}}(k)_{\Q}}

\newcommand{\DMegk}{\tn{\tbf{DM}}^{\tn{eff}}_{\tn{gm}}(k)}
\newcommand{\DMegL}{\tn{\tbf{DM}}^{\tn{eff}}_{\tn{gm}}(L)}

\newcommand{\DMegkQ}{\tn{\tbf{DM}}^{\tn{eff}}_{\tn{gm}}(k)_{\Q}}
\newcommand{\DMegLQ}{\tn{\tbf{DM}}^{\tn{eff}}_{\tn{gm}}(L)_{\Q}}

\newcommand{\Dgr}{\tn{Deg}}
\newcommand{\DTegk}{\tn{DT}^{\tn{eff}}_{\tn{gm}}(k)}
\newcommand{\DTk}{\tn{DT}(k)}
\newcommand{\DTS}{\tn{DT}(S)}
\newcommand{\CX}{\cat{C}/X}
\newcommand{\Cx}{\ul{C}_{*}}
\newcommand{\RCx}{\tn{\tbf{R}}\ul{C}_{*}}
\newcommand{\Cnx}{\ul{C}_{n}}
\newcommand{\rat}{\tn{\tiny{rat}}}
\newcommand{\xo}{x_{0}}
\newcommand{\yo}{y_{0}}
\newcommand{\xoT}{\xo^{T}}
\newcommand{\yoT}{\yo^{T}}

\newcommand{\zxoo}{z_{\xo}^{1}}
\newcommand{\zxon}{z_{\xo}^{n}}

\newcommand{\simeqcan}{\st{\tn{\tiny{can.}}}{\simeq}}

\newcommand{\AXz}{A_{X}^{0}}
\newcommand{\AXo}{A_{X}^{1}}
\newcommand{\poC}{p_{1}^{C}}
\newcommand{\fpo}{f_{P_{0}}}
\newcommand{\fxo}{f_{x_{0}}}
\newcommand{\jxo}{j_{x_{0}}}
\newcommand{\jxon}{j_{x_{0}}^{n}}
\newcommand{\jxoo}{j_{x_{0}}^{1}}
\newcommand{\jyo}{j_{y_{0}}}
\newcommand{\ixo}{\iota_{\xo}}
\newcommand{\ixon}{\ixo^{n}}
\newcommand{\iyo}{\iota_{\yo}}
\newcommand{\pxo}{p_{x_{0}}}
\newcommand{\axo}{a_{x_{0}}}
\newcommand{\cxo}{c_{x_{0}}}
\newcommand{\vxo}{v_{x_{0}}}
\newcommand{\upxo}{\ul{p}_{x_{0}}}
\newcommand{\uaxo}{\ul{a}_{x_{0}}}
\newcommand{\qX}{q_{X}}
\newcommand{\qG}{q_{G}}
\newcommand{\Go}{G^{\circ}}

\newcommand{\vnM}{\tn{V}_{n}(M)}
\newcommand{\vmM}{\tn{V}_{m}(M)}
\newcommand{\vnN}{\tn{V}_{n}(N)}
\newcommand{\vmN}{\tn{V}_{m}(N)}
\newcommand{\vn}{\tn{V}_{n}}
\newcommand{\vm}{\tn{V}_{m}}
\newcommand{\Zar}{\tn{Zar}}
\newcommand{\Cov}{\tn{Cov}}
\newcommand{\Et}{\tn{\tbf{Et}}}
\newcommand{\EtX}{\Et/X}
\newcommand{\Etk}{\tn{\tbf{Et}}/k}
\newcommand{\Hetz}{H_{\tn{{\'e}t}}^{0}}
\newcommand{\Heto}{H_{\tn{{\'e}t}}^{1}}
\newcommand{\Chi}{\cly{X}}
\newcommand{\toh}{\textonehalf}
\newcommand{\Lc}{L^{\tn{c}}}
\newcommand{\Smk}{\tn{\tbf{Sm}}/k}
\newcommand{\Smck}{\tn{\tbf{SmCor}}/k}
\newcommand{\SmckQ}{\tn{\tbf{SmCor}}_{\Q}/k}
\newcommand{\SmcQ}{\tn{\tbf{SmCor}}_{\Q}}

\newcommand{\SmcS}{\tn{\tbf{SmCor}}/S}

\newcommand{\scd}{[\, \cdot \, ]_{\tn{sc}}}
\newcommand{\scf}[1]{[#1]_{\tn{sc}}}

\newcommand{\Smcko}{\tn{\tbf{SmCor}}\; k}
\newcommand{\Schk}{\tn{\tbf{Sch}}/k}

\newcommand{\SchX}{\Sch/X}

\newcommand{\pio}{\pi_{0}}
\newcommand{\nat}{\natural}

\newcommand{\pioX}{\pi_{0}(X)}
\newcommand{\pioG}{\pi_{0}(G)}

\newcommand{\cZpioX}{\cly{Z}_{\pi_{0}(X)}}
\newcommand{\cZX}{\cly{Z}_{X}}
\newcommand{\cAX}{\cly{A}_{X}}
\newcommand{\AX}{A_{X}}
\newcommand{\pAX}{\tn{p}A_{X}}
\newcommand{\AoX}{A^{\circ}_{X}}
\newcommand{\cAoX}{\cly{A}^{\circ}_{X}}
\newcommand{\SmProj}{\tn{\tbf{SmProj}}}
\newcommand{\CSmProj}{\tn{\tbf{CSmProj}}}
\newcommand{\SmProjk}{\SmProj/k}
\newcommand{\CSmProjk}{\CSmProj/k}
\newcommand{\CSmProjek}{\CSmProj^{\tn{eff}}k}
\newcommand{\Sh}{\tn{\tbf{Sh}}}
\newcommand{\Shv}{\tn{\tbf{Shv}}}
\newcommand{\Shvs}{\tn{\tbf{Shv}}^{\tn{s}}}
\newcommand{\PSh}{\tn{\tbf{PShv}}}
\newcommand{\Ch}{\tn{\tbf{Ch}}}
\newcommand{\Chp}{\tn{\tbf{Ch}}^{+}}
\newcommand{\Chm}{\tn{\tbf{Ch}}^{-}}
\newcommand{\CoCh}{\tn{\tbf{CoCh}}}
\newcommand{\CoChp}{\tn{\tbf{CoCh}}^{+}}
\newcommand{\CoChm}{\tn{\tbf{CoCh}}^{-}}
\newcommand{\SN}{\tn{\tbf{Sh}}_{\tn{Nis}}}
\newcommand{\SE}{\tn{\tbf{Sh}}_{\tn{\'et}}}
\newcommand{\SNk}{\tn{\tbf{Sh}}_{\tn{Nis}}(\Smck)}
\newcommand{\SEk}{\tn{\tbf{Sh}}_{\et}(k,\Z)}
\newcommand{\SEkQ}{\SE(k, \Q)}
\newcommand{\fgt}{\tn{\tbf{fgt}}}

\newcommand{\SEsmk}{\SE(\Smk)}
\newcommand{\SEsmck}{\SE(\Sm/\cj{k})}

\newcommand{\ST}{\tn{\tbf{ShT}}}
\newcommand{\STt}{\ST_{\tau}}
\newcommand{\STE}{\ST_{\et}}
\newcommand{\STN}{\ST_{\nis}}
\newcommand{\STtk}{\STt(k)}
\newcommand{\STEk}{\STE(k)}
\newcommand{\STNk}{\STN(k)}
\newcommand{\STEkR}{\STE(k, R)}
\newcommand{\STNkR}{\STN(k, R)}
\newcommand{\STEkZ}{\STE(k, \Z)}
\newcommand{\STNkZ}{\STN(k, \Z)}
\newcommand{\STEkQ}{\STE(k, \Q)}
\newcommand{\STNkQ}{\STN(k, \Q)}

\newcommand{\STtkR}{\STt(k, R)}
\newcommand{\STtkZ}{\STt(k, \Z)}
\newcommand{\STtkQ}{\STt(k, \Q)}

\newcommand{\PST}{\tn{\tbf{PShT}}}
\newcommand{\PSTk}{\PST(k)}
\newcommand{\PSTkR}{\PST(k, R)}
\newcommand{\PSTkZ}{\PST(k, \Z)}
\newcommand{\PSTkQ}{\PST(k, \Q)}

\newcommand{\SEsk}{\tn{\tbf{Sh}}_{\et}(\Smk)}
\newcommand{\SNkQ}{\tn{\tbf{Sh}}_{\tn{Nis}}(\SmckQ)}
\newcommand{\SNks}{\tn{\tbf{Sh}}_{\tn{Nis}}(k)}
\newcommand{\DmSNk}{\Dm \; \tn{\tbf{Sh}}_{\tn{Nis}}(\Smcko)}
\newcommand{\DmSN}{\Dm \;  \tn{\tbf{Sh}}_{\tn{Nis}}}
\newcommand{\MM}{\cly{MM}}
\newcommand{\MMk}{\cly{MM}(k)}
\newcommand{\Mnum}{\cly{M}^{\tn{eff}}_{\tn{num}}}
\newcommand{\Mnumk}{\cly{M}^{\tn{eff}}_{\tn{num}}(k)}
\newcommand{\Mh}{\cly{M}^{\tn{eff}}_{\tn{hom}}}
\newcommand{\Mhk}{\cly{M}^{\tn{eff}}_{\tn{hom}}(k)}
\newcommand{\Ma}{\cly{M}^{\tn{eff}}_{\tn{alg}}}
\newcommand{\Mak}{\cly{M}^{\tn{eff}}_{\tn{alg}}(k)}
\newcommand{\Mer}{\cly{M}^{\tn{eff}}_{\tn{rat}}}
\newcommand{\Merk}{\cly{M}^{\tn{eff}}_{\tn{rat}}(k)}
\newcommand{\Mek}{\cly{M}^{\tn{eff}}_{\sim}}
\newcommand{\Meek}{\cly{M}^{\tn{eff}}_{\sim}(k)}
\newcommand{\RMp}{\cly{M}_{\sim}}
\newcommand{\RMkp}{\cly{M}_{\sim}(k)}
\newcommand{\Mnump}{\cly{M}_{\tn{num}}}
\newcommand{\Mnumkp}{\cly{M}_{\tn{num}}(k)}
\newcommand{\HI}{\tn{HI}}
\newcommand{\Hb}{\cly{H}^{\tn{b}}}
\newcommand{\Hmi}{H^{-i}}
\newcommand{\Hpi}{H^{i}}
\newcommand{\Hmz}{H^{0}}
\newcommand{\Hmo}{H^{-1}}

\newcommand{\Heti}{\Hpi_{\et}}
\newcommand{\Hnisi}{\Hpi_{\nis}}
\newcommand{\Hmoti}{\Hpi_{\mot}}

\newcommand{\HIet}{\tn{HI}_{\tn{\'et}}}
\newcommand{\HIset}{\tn{HI}_{\tn{\'et}}^{\tn{s}}}
\newcommand{\cH}{\tn{H}}
\newcommand{\cxH}{\widecheck{H}}
\newcommand{\cxHo}{\widecheck{H}^{1}}
\newcommand{\HIk}{\tn{HI}(k)}
\newcommand{\HIkR}{\tn{HI}(k, R)}
\newcommand{\EA}{\shf{E}_{\A^{1}}}
\newcommand{\Ztr}{\Z_{\tn{tr}}}
\newcommand{\Rtr}{R_{\tn{tr}}}
\newcommand{\ZtrX}{\Z_{\tn{tr}}(X)}
\newcommand{\Cbl}{C_{\bullet}}
\newcommand{\Cobl}{C^{\bullet}}
\newcommand{\CblF}{C_{\bullet}\shf{F}}
\newcommand{\Hot}{\tn{\tbf{Hot}}}
\newcommand{\Hotb}{\tn{Hot}^{\tn{b}}}
\newcommand{\Hotp}{\tn{Hot}^{+}}
\newcommand{\Hotm}{\tn{Hot}^{-}}
\newcommand{\Tot}{\tn{Tot}}
\newcommand{\TotQ}{\tn{Tot}^{\Q}}
\newcommand{\Tots}{\tn{Tot} \;}
\newcommand{\CH}{\tn{CH}}
\newcommand{\CHx}{\tn{CH}^{*}}

\newcommand{\CHMo}{\tn{CH}_{\Mo}}

\newcommand{\uCH}{\ul{\tn{CH}}}
\newcommand{\Rat}{\tn{Rat}}
\newcommand{\kd}{\tn{kd}}
\newcommand{\Ratr}{\tn{Rat}_{r}}
\newcommand{\Cor}{\tn{Cor}}
\newcommand{\Zc}{\tn{Z}}
\newcommand{\Zfin}{\tn{Z}_{\tn{fin}}}
\newcommand{\Cors}{\tn{Cor}_{\sim}}
\newcommand{\Corsk}{\tn{Cor}_{\sim}(k)}
\newcommand{\Cork}{\tn{Cor}_{k}}
\newcommand{\CCork}{\tn{Cor}_{\tn{Ch}/k}}
\newcommand{\CCorkZ}{\tn{Cor}_{\tn{Ch}/k, \Z}}

\newcommand{\VCork}{\tn{Cor}_{\tn{Vo}/k}}
\newcommand{\VCorke}{\tn{Cor}_{\tn{Vo}/k}^{\eff}}
\newcommand{\CCor}{\tn{Cor}_{\tn{Ch}}}
\newcommand{\VCor}{\tn{Cor}_{\tn{Vo}}}
\newcommand{\Sgn}{\Sigma_{n}}
\newcommand{\SmCor}{\tn{\tbf{SmCor}}}
\newcommand{\SchCor}{\tn{\tbf{SchCor}}}
\newcommand{\SchCork}{\tn{\tbf{SchCor}}_{k}}
\newcommand{\SmCork}{\tn{\tbf{SmCor}}_{k}}

\newcommand{\SH}{\tn{\tbf{SH}}}
\newcommand{\SHX}{\tn{\tbf{SH}}(X)}
\newcommand{\SHA}{\tn{\tbf{SH}}(A)}

\newcommand{\pr}{\tn{pr}}
\newcommand{\cone}{\tn{cone}}
\newcommand{\GS}[1]{\Gamma(X, #1)}
\newcommand{\GSX}{\Gamma(X, \pul{U})}
\newcommand{\GammaX}{\Gamma^{X}}
\newcommand{\muonn}{\nu^{\ge n}}
\newcommand{\mlnn}{\nu_{< n}}
\newcommand{\mlenn}{\nu_{\le n}}
\newcommand{\munn}{\nu_{n}}
\newcommand{\cn}{c_{n}}
\newcommand{\ox}{\otimes}
\newcommand{\oxp}{\ox_{\tn{pre}}}
\newcommand{\oxpn}{\oxp^{n}}
\newcommand{\oxtr}{\ox_{\tn{tr}}}
\newcommand{\oxtrD}{\ox_{\tn{tr}}^{\tn{\tiny{doub}}}}
\newcommand{\oxL}{\ox^{\Lf}}
\newcommand{\oxtrLet}{\oxtr^{\Lf, \et}}
\newcommand{\bx}{\bigotimes}
\newcommand{\os}{\oplus}
\newcommand{\od}{\odot}
\newcommand{\bos}{\bigoplus}
\newcommand{\oxo}{\ox_{1}}
\newcommand{\sx}{\boxtimes}
\newcommand{\ost}{\circledast}
\newcommand{\Part}{\tn{Part}}
\newcommand{\supp}{\tn{supp}}
\newcommand{\Tp}{\tn{T}_{p}}
\newcommand{\TpM}{\tn{T}_{p}M}
\newcommand{\TpN}{\tn{T}_{p}N}
\newcommand{\TpV}{\tn{T}_{p}V}
\newcommand{\TLpW}{\tn{T}_{L(p)}W}

\newcommand{\TFpM}{\tn{T}_{F(p)}M}
\newcommand{\TFpN}{\tn{T}_{F(p)}N}
\newcommand{\TM}{\tn{T} M}
\newcommand{\TpX}{\tn{T}_{p}X}
\newcommand{\TX}{\tn{T} X}
\newcommand{\TqM}{\tn{T}_{q}M}

\newcommand{\hinis}{\ul{h}_{i}^{\tn{Nis}}}
\newcommand{\honis}{\ul{h}_{1}^{\tn{Nis}}}
\newcommand{\hznis}{\ul{h}_{0}^{\tn{Nis}}}

\newcommand{\hitau}{\ul{h}_{i}^{\tau}}
\newcommand{\hotau}{\ul{h}_{1}^{\tau}}
\newcommand{\hztau}{\ul{h}_{0}^{\tau}}

\newcommand{\hiet}{\ul{h}_{i}^{\et}}
\newcommand{\hoet}{\ul{h}_{1}^{\et}}
\newcommand{\hzet}{\ul{h}_{0}^{\et}}

\newcommand{\Hoet}{H^{1}_{\et}}

\newcommand{\cHoet}{\widecheck{H}^{1}_{\et}}

\newcommand{\hz}{h^{0}}
\newcommand{\ho}{h^{1}}
\newcommand{\hn}{h^{n}}
\newcommand{\htw}{h^{2}}
\newcommand{\htd}{h^{2d}}
\newcommand{\hgo}{h^{\ge 1}}
\newcommand{\hi}{h^{i}}

\newcommand{\chinis}{\ul{h}^{i}_{\nis}}
\newcommand{\chonis}{\ul{h}^{1}_{\nis}}
\newcommand{\chznis}{\ul{h}^{0}_{\nis}}
\newcommand{\chmnis}{\ul{h}^{-1}_{\nis}}
\newcommand{\chnis}{\ul{h}_{\nis}}

\newcommand{\cs}[1]{{#1}^{\tn{cs}}}

\newcommand{\Slm}{S_{\lambda}}
\newcommand{\Tk}{\tn{T}^{k}}
\newcommand{\Ak}{\Lambda^{k}}
\newcommand{\Sk}{\Sigma^{k}}
\newcommand{\TkM}{\tn{T}^{k} M}
\newcommand{\TkN}{\tn{T}^{k} N}
\newcommand{\AkM}{\Lambda^{k} M}
\newcommand{\SkM}{\Sigma^{k} M}
\newcommand{\TkV}{\tn{T}^{k} (V)}
\newcommand{\AkV}{\Lambda^{k} (V)}
\newcommand{\SkV}{\Sigma^{k} (V)}
\newcommand{\TkW}{\tn{T}^{k} (W)}
\newcommand{\AkW}{\Lambda^{k} (W)}
\newcommand{\SkW}{\Sigma^{k} (W)}

\newcommand{\TaukM}{\tau^{k}(M)}
\newcommand{\TaukN}{\tau^{k}(N)}

\newcommand{\Td}{\tn{T}^{\bl}}
\newcommand{\Ad}{\Lambda^{\bl}}
\newcommand{\TdM}{\tn{T}^{\bl} M}
\newcommand{\AdM}{\Lambda^{\bl} M}
\newcommand{\SdM}{\Sigma^{\bl} M}
\newcommand{\TdV}{\tn{T}^{\bl} (V)}
\newcommand{\AdV}{\Lambda^{\bl} (V)}
\newcommand{\SdV}{\Sigma^{\bl} (V)}
\newcommand{\TdW}{\tn{T}^{\bl} (W)}
\newcommand{\AdW}{\Lambda^{\bl} (W)}
\newcommand{\SdW}{\Sigma^{\bl} (W)}

\newcommand{\Tuk}{\tn{T}_{k}}
\newcommand{\Auk}{\Lambda_{k}}
\newcommand{\Suk}{\Sigma_{k}}
\newcommand{\TukM}{\tn{T}_{k} M}
\newcommand{\AukM}{\Lambda_{k} M}
\newcommand{\SukM}{\Sigma_{k} M}
\newcommand{\TukV}{\tn{T}_{k} (V)}
\newcommand{\AukV}{\Lambda_{k} (V)}
\newcommand{\SukV}{\Sigma_{k} (V)}
\newcommand{\TukW}{\tn{T}_{k} (W)}
\newcommand{\AukW}{\Lambda_{k} (W)}
\newcommand{\SukW}{\Sigma_{k} (W)}

\newcommand{\TenkV}{\tn{Ten}^{k}(V)}
\newcommand{\Tenk}{\tn{Ten}^{k}}
\newcommand{\TenukV}{\tn{Ten}_{k}(V)}
\newcommand{\Tenuk}{\tn{Ten}_{k}}

\newcommand{\TenV}{\tn{Ten}(V)}
\newcommand{\Ten}{\tn{Ten}}
\newcommand{\TendV}{\tn{Ten}^{\bl}(V)}
\newcommand{\Tend}{\tn{Ten}^{\bl}}
\newcommand{\TenuV}{\tn{Ten}(V)}
\newcommand{\Tenu}{\tn{Ten}}
\newcommand{\shF}{\shf{F}}
\newcommand{\shV}{\shf{V}}
\newcommand{\shG}{\shf{G}}
\newcommand{\shH}{\shf{H}}
\newcommand{\shT}{\shf{T}}
\newcommand{\shO}{\shf{O}}
\newcommand{\shOX}{\shf{O}_{X}}
\newcommand{\shOn}{\shf{O}(n)}
\newcommand{\shOm}{\shf{O}(m)}
\newcommand{\shOd}{\shf{O}(d)}
\newcommand{\shOdi}{\shf{O}(d_{i})}
\newcommand{\shL}{\shf{L}}
\newcommand{\ZX}{\Z^{X}}
\newcommand{\ZY}{\Z^{Y}}
\newcommand{\ZC}{\Z^{C}}
\newcommand{\DX}{D_{X}}
\newcommand{\ZpioX}{\Z^{\pioX}}
\newcommand{\ZqX}{\Z^{\qX}}
\newcommand{\rhoX}{\rho_{X}}
\newcommand{\etaX}{\eta_{X}}
\newcommand{\thetaX}{\theta_{X}}
\newcommand{\psiX}{\psi_{X}}
\newcommand{\muX}{\mu_{X}}
\newcommand{\nuX}{\nu_{X}}
\newcommand{\muXz}{\mu_{X}^{0}}
\newcommand{\nuXz}{\nu_{X}^{0}}
\newcommand{\muXo}{\mu_{X}^{1}}
\newcommand{\muC}{\mu_{C}}
\newcommand{\muCz}{\mu_{C}^{0}}
\newcommand{\muCo}{\mu_{C}^{1}}
\newcommand{\muY}{\mu_{Y}}
\newcommand{\muYo}{\mu_{Y}^{0}}
\newcommand{\unuX}{\ul{\nu}_{X}}
\newcommand{\gammaX}{\gamma_{X}}
\newcommand{\veX}{\ve_{X}}
\newcommand{\deltaX}{\delta_{X}}
\newcommand{\degX}{\deg_{X}}
\newcommand{\alphaX}{\alpha_{X}}
\newcommand{\aX}{a_{X}}
\newcommand{\betaX}{\beta_{X}}
\newcommand{\iotaX}{\iota_{X}}

\newcommand{\utri}{\bigtriangleup}
\newcommand{\TNm}{\tn{T} N}
\newcommand{\cTp}{\tn{T}_{p}^{\vee}}
\newcommand{\cTpM}{\tn{T}_{p}^{\vee}M}
\newcommand{\cTM}{\tn{T}^{\vee} M}
\newcommand{\cTpN}{\tn{T}_{p}^{\vee}N}
\newcommand{\cTN}{\tn{T}^{\vee} N}
\newcommand{\CiM}{C^{\infty}(M)}
\newcommand{\CiN}{C^{\infty}(N)}
\newcommand{\Ci}{C^{\infty}}
\newcommand{\ev}{\tn{ev}}
\newcommand{\trf}{\st{\sim}{\hra}}

\newcommand{\lrai}{\st{\sim}{\lra}}
\newcommand{\rai}{\st{\sim}{\ra}}
\newcommand{\trc}{\st{\sim}{\twoheadrightarrow}}

\newcommand{\dlV}{V^{\vee}}
\newcommand{\dlW}{W^{\vee}}
\newcommand{\dlf}{f^{\vee}}
\newcommand{\dl}[1]{{#1}^{\vee}}
\newcommand{\dldl}[1]{{#1}^{\vee \vee}}
\newcommand{\dldlW}{\dldl{W}}
\newcommand{\dldlV}{\dldl{V}}
\newcommand{\dldlf}{\dldl{f}}
\newcommand{\dlA}{{A}^{\vee}}
\newcommand{\wt}[1]{\widetilde{#1}}

\newcommand{\muon}[1]{\nu^{\ge {#1}}}
\newcommand{\mln}[1]{\nu_{< {#1}}}
\newcommand{\mlen}[1]{\nu_{\le {#1}}}
\newcommand{\mun}[1]{\nu_{{#1}}}
\newcommand{\dpqr}{d^{p,q}_{r}}
\newcommand{\dpqro}{d^{p+r,q-r+1}_{r}}
\newcommand{\dpqrm}{d^{p-r,q+r-1}_{r}}
\newcommand{\Epq}{E^{p,q}}
\newcommand{\Epqr}{E^{p,q}_{r}}
\newcommand{\Epqro}{E^{p+r,q-r+1}_{r}}
\newcommand{\Zpqr}{Z^{p,q}_{r}}
\newcommand{\Bpqr}{B^{p,q}_{r}}
\newcommand{\apqr}{\alpha^{p,q}_{r}}
\newcommand{\bpqr}{\beta^{p,q}_{r}}
\newcommand{\Epqi}{E^{p,q}_{\infty}}

\newcommand{\Hr}{H^{\textnormal{r}}}
\newcommand{\Hl}{H^{\textnormal{l}}}
\newcommand{\Autr}{\textnormal{Aut}_{\textnormal{r}}}
\newcommand{\Autl}{\textnormal{Aut}_{\textnormal{l}}}
\newcommand{\CMt}{K/K_{0}/\Q}

\newcommand{\phn}[1]{\phantom{#1}}
\newcommand{\pul}[1]{\ul{\phantom{#1}}}
\newcommand{\pcl}[1]{\sout{\phantom{#1}}}
\newcommand{\sds}{\, \cdot \,}
\newcommand{\tc}[2]{\textcolor{#1}{#2}}
\newcommand{\pol}[1]{\ol{\phantom{#1}}}
\newcommand{\st}[2]{\stackrel{#1}{#2}}

\makeatletter
\def\equalsfill{$\m@th\mathord=\mkern-7mu
\cleaders\hbox{$\!\mathord=\!$}\hfill
\mkern-7mu\mathord=$}
\makeatother

\newcommand{\lgeqt}[1]{\st{\tn{#1}}{\hbox{\equalsfill}}}
\newcommand{\lgeq}[1]{\st{#1}{\hbox{\equalsfill}}}
\newcommand{\defeq}{\lgeq{\tn{\tiny{def}}}}

\newcommand{\xra}{\xrightarrow}
\newcommand{\xla}{\xleftarrow}
\newcommand{\sxra}[1]{\xra{#1}}
\newcommand{\sxla}[1]{\xla{#1}}

\newcommand{\sra}[1]{\stackrel{#1}{\ra}}
\newcommand{\sla}[1]{\stackrel{#1}{\la}}
\newcommand{\slra}[1]{\stackrel{#1}{\lra}}
\newcommand{\sllra}[1]{\stackrel{#1}{\llra}}
\newcommand{\slla}[1]{\stackrel{#1}{\lla}}

\newcommand{\ira}{\stackrel{\simeq}{\ra}}
\newcommand{\ila}{\stackrel{\simeq}{\la}}
\newcommand{\ilra}{\stackrel{\simeq}{\lra}}
\newcommand{\illa}{\stackrel{\simeq}{\lla}}

\newcommand{\wh}[1]{\widehat{#1}}
\newcommand{\fn}[1]{\footnote{#1}}
\newcommand{\mbf}[1]{\mathbf{#1}}
\newcommand{\modulo}[1]{\; \left( \textnormal{mod} \; {#1} \right)}
\newcommand{\ArtSym}[3]{\left( \frac{{#1} / {#2}}{{#3}} \right)}
\newcommand{\SqrArt}[3]{\left[ \frac{{#1} / {#2}}{{#3}} \right]}
\newcommand{\ArtMap}[2]{\left( \frac{{#1} / {#2}}{\cdot} \right)}
\newcommand{\recmap}[2]{[\, \cdot \,, {#1}/{#2}]}
\newcommand{\rec}[3]{[{#1}, {#2}/{#3}]}
\newcommand{\conj}[1]{\overline{#1}}
\newcommand{\cj}[1]{\overline{#1}}
\newcommand{\leg}[2]{\left( \frac{#1}{#2} \right)}
\newcommand{\sep}[1]{{#1}^{\textnormal{sep}}}
\newcommand{\alg}{\tn{alg}}
\newcommand{\ab}[1]{{#1}^{\textnormal{ab}}}
\newcommand{\abs}[1]{\left|{#1} \right|}
\newcommand{\er}[2]{\textnormal{End}_{#1}(#2)}
\newcommand{\ea}[2]{\textnormal{End}^{0}_{#1}(#2)}
\newcommand{\plh}{( \cdot )}
\newcommand{\absmap}{| \cdot |}
\newcommand{\paar}{\langle \cdot, \cdot \rangle}
\newcommand{\nrmmap}{\rVert \, . \, \rVert}
\newcommand{\nmap}[1]{\rVert {#1} \rVert}
\newcommand{\bnmap}[2]{\langle {#1}, {#2} \rangle}
\newcommand{\sqmap}{[ \, \cdot \, ]}
\newcommand{\smprod}{\; \Pi \;}
\newcommand{\car}{\curvearrowright}
\newcommand{\limm}{\lim\limits}
\newcommand{\pair}{\langle \, , \, \rangle}
\newcommand{\Top}{\tn{\tbf{Top}}}
\newcommand{\TopX}{\tn{\tbf{Top}}(X)}
\newcommand{\nfn}[1]{\rVert #1 \rVert}
\newcommand{\sheaf}[1]{\mathscr{#1}}
\newcommand{\shf}[1]{\mathscr{#1}}
\newcommand{\sh}[1]{\wt{G}}
\newcommand{\sht}[1]{\wt{#1}^{\tr}}
\newcommand{\goth}[1]{\mathfrak{#1}}
\newcommand{\cat}[1]{\textnormal{\textbf{#1}}}
\newcommand{\ideal}[1]{\mathfrak{#1}}
\newcommand{\idl}[1]{\mathfrak{#1}}
\newcommand{\functor}[1]{\mathcal{#1}}
\newcommand{\curly}[1]{\mathcal{#1}}
\newcommand{\cly}[1]{\mathcal{#1}}
\newcommand{\und}[1]{\underline{#1}}
\newcommand{\floor}[1]{\lfloor #1 \rfloor}
\newcommand{\ceil}[1]{\lceil #1 \rceil}
\newcommand{\dirlim}[1]{\lim_{\stackrel{\longrightarrow}{#1}}}
\newcommand{\homol}[3]{H_{#1}(#2, #3)}
\newcommand{\cohom}[3]{H^{#1}(#2, #3)}
\newcommand{\cohomzero}[2]{H^{0}(#1, #2)}
\newcommand{\cohomone}[2]{H^{1}(#1, #2)}
\newcommand{\labeleq}[1]{\stackrel{\textnormal{\tiny{#1}}}{=}}
\newcommand{\diff}[2]{\frac{d{#1}}{d{#2}}}
\newcommand{\mdiff}[3]{\frac{d^{#3}{#1}}{d{#2^{#3}}}}
\newcommand{\pderiv}[2]{\frac{\partial{#1}}{\partial{#2}}}
\newcommand{\pd}{\partial}
\newcommand{\nf}[2]{\nicefrac{#1}{#2}}
\newcommand{\nfh}{\nicefrac{1}{2}}
\newcommand{\nfq}{\nicefrac{1}{4}}
\newcommand{\sth}{{\small\textonehalf}}
\newcommand{\stq}{{\small\textonequarter}}

\newcommand{\pdx}{\frac{\pd}{\pd x}}
\newcommand{\pdxi}{\frac{\pd}{\pd x_{i}}}
\newcommand{\pdxj}{\frac{\pd}{\pd x_{j}}}
\newcommand{\pdxn}[1]{\frac{\pd}{\pd x_{#1}}}

\newcommand{\ord}[2]{\textnormal{ord}_{#2}(#1)}
\newcommand{\mpderiv}[3]{\frac{\partial^{#3}{#1}}{\partial{#2^{#3}}}}
\newcommand{\mpderivb}[3]{\frac{\partial^{#3}{#1}}{\partial{#2}}}
\newcommand{\dotp}[2]{\langle #1, #2 \rangle}
\newcommand{\dotpa}[2]{\langle #1, #2 \rangle_{\tn{a}}}
\newcommand{\dotpm}[2]{\langle #1, #2 \rangle_{\tn{m}}}
\newcommand{\euls}[2]{(#1,  #2)_{\tn{a}}}
\newcommand{\eulsym}{(\pul{x},  \pul{x})}
\newcommand{\eulsyma}{(\pul{x},  \pul{x})_{\tn{a}}}
\newcommand{\dotpsym}{\langle \, \cdot \, \rangle}
\newcommand{\norm}[1]{\Vert #1 \Vert}
\newcommand{\bfm}{(\, \cdot \, , \, \cdot \,)}
\newcommand{\bfmr}[1]{(\, \cdot \, , #1)}
\newcommand{\bfml}[1]{(#1, \, \cdot \,)}
\newcommand{\bfma}{\langle \, \cdot \, , \, \cdot \, \rangle}
\newcommand{\normm}[1]{\rVert #1 \rVert}
\newcommand{\refl}[1]{{#1}^{\textnormal{r}}}
\newcommand{\unit}[1]{{#1}^{\times}}
\newcommand{\tcr}[1]{\textcolor{red}{#1}}
\newcommand{\tcg}[1]{\textcolor{grey}{#1}}
\newcommand{\tbr}[1]{\textcolor{red}{\tbf{#1}{}}}
\newcommand{\tbb}[1]{\textcolor{blue}{\tbf{#1}{}}}
\newcommand{\tbg}[1]{\textcolor{dgreen}{\tbf{#1}{}}}
\newcommand{\remph}[1]{\emph{\tcr{#1}}}
\newcommand{\tp}[1]{{#1}^{\tn{t}}}
\newcommand{\pt}[1]{{}^{\tn{t}}{#1}}
\newcommand{\fns}[1]{\footnotesize{#1}}
\newcommand{\scs}[1]{\scriptsize{#1}}
\newcommand{\tpt}[1]{{}^{\tn{t}}{#1}}
\newcommand{\gbx}[1]{\tcg{\fbox{\tn{\small{#1}}}}}
\newcommand{\gbl}{\tcg{\fbox{\tn{\small{\phn{blah}}}}}}

\newcommand{\tcb}[1]{\textcolor{blue}{#1}}
\newcommand{\NrmF}[2]{\textnormal{N}^{#1}_{#2}}
\newcommand{\prm}{\prime}
\newcommand{\p}{\prime}
\newcommand{\bs}{\backslash}
\newcommand{\ve}{\varepsilon}
\newcommand{\vth}{\vartheta}
\newcommand{\vsig}{\varsigma}
\newcommand{\tn}[1]{\textnormal{#1}}
\newcommand{\tbf}[1]{\textbf{#1}}
\newcommand{\ul}[1]{\underline{#1}}
\newcommand{\ol}[1]{\overline{#1}}
\newcommand{\ub}[1]{\underbrace{#1}}
\newcommand{\ubu}[2]{\underbrace{#1}_{#2}}
\newcommand{\ob}[1]{\overbrace{#1}}
\newcommand{\obo}[2]{\overbrace{#1}^{#2}}
\newcommand{\csf}[1]{\stackrel{\smallfrown}{#1}}
\newcommand{\tcl}[1]{\textcolor{#1}}
\newcommand{\ora}[1]{\overrightarrow{#1}}

\newcommand{\explicitbijection}[4] 
{\begin{array}{ccccccc} 
\rnode{left}{#1} & & & & & & \rnode{right}{#2}  \\\\\\
\end{array} 
\psset{nodesep=3pt, offsetA=3pt, offsetB=3pt} 
\everypsbox{\scriptstyle} 
\ncline{->}{left}{right}\Aput{#3} 
\ncline{->}{right}{left}\Aput{#4}}

\newcommand{\tricommdiag}[6]
{{\xymatrix{
{#1} \ar@{->}[d]_{#6} \ar@{->}[r]^{#4} & {#2}\\
{#3} \ar@{->}[ur]_{#5} &}}}

\newcommand{\hightricommdiag}[6]
{{\xymatrix{
{#1} \ar@{->}[dd]_{#6} \ar@{->}[rr]^{#4} & & {#2}\\
& & \\
{#3} \ar@{->}[uurr]_{#5} & & }}}

\newcommand{\tricommdiagtwo}[6]
{\begin{array}{ccccc} 
\rnode{left}{#1} & & & & \rnode{right}{#2}  \\\\\\
& & \rnode{bottom}{#3} & &
\end{array} 
\psset{nodesep=3pt} 
\everypsbox{\scriptstyle} 
\ncline{<-}{left}{right}\Aput{#4} 
\ncline{<-}{left}{bottom}\Aput{#5}
\ncline{<-}{bottom}{right}\Bput{#6}}

\newcommand{\tricommdiagthree}[7]
{\begin{array}{ccccc} 
& & & & \rnode{top}{#1}  \\\\\\\\
\rnode{left}{#2} & & & & \rnode{right}{#3}
\end{array} 
\psset{nodesep=3pt} 
\everypsbox{\scriptstyle} 
\ncline{->}{top}{left}\Bput{#4} 
\ncline{->}{right}{left}\Bput{#5}
\ncarc[linestyle=dashed,ncurv=0.7,arcangleB=10,arcangleA=10,nodesep=2pt]{<-}{top}{right} \Aput{{#6}}
\ncarc[linestyle=dashed,ncurv=0.7,arcangleB=10,arcangleA=10,nodesep=2pt]{<-}{right}{top} \Aput{{#7}}}

\newcommand{\amultdiag}[6]
{\begin{array}{ccccccc} 
& & & & & & \rnode{top}{#1^{#2}}  \\\\\\
\rnode{midleft}{#1} & & & & & & \rnode{midright}{#1} \\\\\\
& & & & & & \rnode{bottom}{#3}
\end{array} 
\psset{nodesep=3pt} 
\everypsbox{\scriptstyle} 
\ncline{->}{midleft}{top}\Aput{#4^{#2}} 
\ncline{->}{midleft}{midright}\Aput{\iota(#5)}
\ncline{->}{midleft}{bottom}\Bput{#6}
\ncline[linestyle=dashed]{->}{top}{midright}\Bput{f_{#5}}
\ncline[linestyle=dashed]{->}{bottom}{midright}\Aput{g_{#5}}
\ncarc[linestyle=dashed,ncurv=0.7,arcangleB=30,arcangleA=30,nodesep=2pt]{<-}{top}{bottom}\Aput{h_{#5}}}

\newcommand{\hightricommdiagtwo}[6]
{\begin{array}{ccccc} 
\rnode{left}{#1} & & & & \rnode{right}{#2}  \\\\\\\\
& & \rnode{bottom}{#3} & &
\end{array} 
\psset{nodesep=3pt} 
\everypsbox{\scriptstyle} 
\ncline{<-}{left}{right}\Aput{#4} 
\ncline{<-}{left}{bottom}\Aput{#5}
\ncline{<-}{bottom}{right}\Bput{#6}}

\newcommand{\diamcommdiag}[8]
{\begin{array}{ccccc} 
& & \rnode{t}{#1} & & \\\\\\
\rnode{l}{#4} & & & & \rnode{r}{#2}  \\\\\\
& & \rnode{b}{#3} & &
\end{array} 
\psset{nodesep=3pt} 
\everypsbox{\scriptstyle} 
\ncline{->}{t}{r}\Aput{#5} 
\ncline{->}{r}{b}\Aput{#6}
\ncline{->}{l}{b}\Bput{#7}
\ncline{->}{t}{l}\Bput{#8}}

\newcommand{\fibrediag}[7] 
{\begin{array}{ccccc} 
& & \rnode{t}{#3} & & \\\\\\\\
& & \rnode{m}{#1 \times_{#4} #2} & & \\\\\\
\rnode{l}{#1} & & & & \rnode{r}{#2}  \\\\\\
& & \rnode{b}{#4} & &
\end{array} 
\psset{nodesep=3pt} 
\everypsbox{\scriptstyle} 
\ncline[linestyle=dashed]{->}{t}{m}\Aput{#7} 
\ncline{->}{r}{b}
\ncline{->}{l}{b}
\ncline{->}{m}{r}\Bput{p_{2}}
\ncline{->}{m}{l}\Aput{p_{1}}
\ncarc{<-}{l}{t}\Aput{#5}
\ncarc{->}{t}{r}\Aput{#6}}

\newcommand{\newfibrediag}[8] 
{\begin{array}{ccccc} 
& & \rnode{t}{#3} & & \\\\\\\\
& & \rnode{m}{#4} & & \\\\\\
\rnode{l}{#1} & & & & \rnode{r}{#2}  \\\\\\
& & \rnode{b}{#5} & &
\end{array} 
\psset{nodesep=3pt} 
\everypsbox{\scriptstyle} 
\ncline[linestyle=dashed]{->}{t}{m}\Aput{#8} 
\ncline{->}{r}{b}
\ncline{->}{l}{b}
\ncline{->}{m}{r}\Bput{p_{2}}
\ncline{->}{m}{l}\Aput{p_{1}}
\ncarc{<-}{l}{t}\Aput{#6}
\ncarc{->}{t}{r}\Aput{#7}}

\newcommand{\specfibrediag}[7] 
{\begin{array}{ccccc} 
& & \rnode{t}{#4} & & \\\\\\\\
& & \rnode{m}{\Spec(#1 \otimes_{#3} #2)} & & \\\\\\
\rnode{l}{\Spec(#1)} & & & & \rnode{r}{\Spec(#2)}  \\\\\\
& & \rnode{b}{\Spec(#3)} & &
\end{array} 
\psset{nodesep=3pt} 
\everypsbox{\scriptstyle} 
\ncline[linestyle=dashed]{->}{t}{m}\Aput{#7} 
\ncline{->}{r}{b}
\ncline{->}{l}{b}
\ncline{->}{m}{r}\Bput{p_{2}}
\ncline{->}{m}{l}\Aput{p_{1}}
\ncarc{<-}{l}{t}\Aput{#5}
\ncarc{->}{t}{r}\Aput{#6}}

\newcommand{\contrafibrediag}[7] 
{\begin{array}{ccccc} 
& & \rnode{t}{\Gamma(#4, \sheaf{O}_{#4})} & & \\\\\\\\
& & \rnode{m}{#1 \otimes_{#3} #2} & & \\\\\\
\rnode{l}{#1} & & & & \rnode{r}{#2}  \\\\\\
& & \rnode{b}{#3} & &
\end{array} 
\psset{nodesep=3pt} 
\everypsbox{\scriptstyle} 
\ncline[linestyle=dashed]{<-}{t}{m}\Aput{#7} 
\ncline{<-}{r}{b}
\ncline{<-}{l}{b}
\ncline{<-}{m}{r}\Bput{p_{2}^{*}}
\ncline{<-}{m}{l}\Aput{p_{1}^{*}}
\ncarc{->}{l}{t}\Aput{#5^{*}}
\ncarc{<-}{t}{r}\Aput{#6^{*}}}

\newcommand{\revtricommdiag}[6]
{\begin{array}{ccccc} 
\rnode{i}{#1} & & & & \rnode{j}{#2}  \\\\\\
& & \rnode{g}{#3} & &
\end{array} 
\psset{nodesep=3pt} 
\everypsbox{\scriptstyle} 
\ncline{<-}{i}{j}\Aput{#4} 
\ncline{<-}{j}{g}\Aput{#5}
\ncline{<-}{i}{g}\Bput{#6}}

\newcommand{\catequivcommdiag}[6] 
{\begin{array}{cccccc} 
& & & \rnode{tu}{#1(#3)} & & \\\\\\
\rnode{tl}{#3} & & & & & \rnode{tr}{(#2 \circ #1)(#3)} \\\\\\\\
& & & \rnode{bu}{#1(#4)} & & \\\\\\
\rnode{bl}{#4} & & & & & \rnode{br}{(#2 \circ #1)(#4)}  
\end{array} 
\psset{nodesep=3pt} 
\everypsbox{\scriptstyle} 
\ncline{->}{tl}{tu}\Aput{#1} 
\ncline{->}{tu}{tr}\Aput{#2}
\ncline{->}{tl}{tr}\Aput{#6(#3)}
\ncline{->}{bl}{bu}\Aput{#1} 
\ncline{->}{bu}{br}\Aput{#2}
\ncline{->}{bl}{br}\Bput{#6(#4)}
\ncline{->}{tl}{bl}\Bput{#5} 
\ncline[linestyle=dashed]{->}{tu}{bu}\Aput{#1(#5)}
\ncline{->}{tr}{br}\Aput{(#2 \circ #1)(#5)}}

\newcommand{\rectcommdiag}[8]
{\xymatrix{
{#1} \ar@{->}[d]_{#8} \ar@{->}[r]^{#5} & {#2} \ar@{->}[d]^{#6} \\
{#3} \ar@{->}[r]_{#7} & {#4}}}

\newcommand{\highrectcommdiag}[8]
{\xymatrix{
{#1} \ar@{->}[dd]_{#8} \ar@{->}[rr]^{#5} & & {#2} \ar@{->}[dd]^{#6} \\
& & \\
{#3} \ar@{->}[rr]_{#7} & & {#4}}}

\newcommand{\sheafrectcommdiag}[5] 
{\begin{array}{ccccc} 
\rnode{tl}{#1(#3)} & & & & \rnode{tr}{#2(#3)}  \\\\\\
\rnode{bl}{#1(#4)} & & & & \rnode{br}{#2(#4)}
\end{array} 
\psset{nodesep=3pt} 
\everypsbox{\scriptstyle} 
\ncline{->}{tl}{tr}\Aput{#5(#3)} 
\ncline{->}{tr}{br}\Aput{\rho^{#2}_{#3 #4}}
\ncline{->}{bl}{br}\Aput{#5(#4)}
\ncline{->}{tl}{bl}\Aput{\rho^{#1}_{#3 #4}}}

\newcommand{\rectcommdashdiag}[9]
{\begin{array}{ccccc} 
\rnode{tl}{#1} & & & & \rnode{tr}{#2}  \\\\\\
\rnode{bl}{#4} & & & & \rnode{br}{#3}
\end{array} 
\psset{nodesep=3pt} 
\everypsbox{\scriptstyle} 
\ncline{->}{tl}{tr}\Aput{#5} 
\ncline{->}{tr}{br}\Aput{#6}
\ncline{->}{bl}{br}\Aput{#7}
\ncline{->}{tl}{bl}\Bput{#8}
\ncline[linestyle=dashed]{->}{bl}{tr}\Aput{#9}}

\newcommand{\contrahighrectcommdiag}[8]
{\begin{array}{ccccc} 
\rnode{tl}{#1} & & & & \rnode{tr}{#2}  \\\\\\\\
\rnode{bl}{#4} & & & & \rnode{br}{#3}
\end{array} 
\psset{nodesep=3pt} 
\everypsbox{\scriptstyle} 
\ncline{->}{tl}{tr}\Aput{#5} 
\ncline{<-}{tr}{br}\Aput{#6}
\ncline{->}{bl}{br}\Aput{#7}
\ncline{<-}{tl}{bl}\Aput{#8}}

\newcommand{\sheafhighrectcommdiag}[5] 
{\begin{array}{ccccc} 
\rnode{tl}{#1(#3)} & & & & \rnode{tr}{#2(#3)}  \\\\\\\\
\rnode{bl}{#1(#4)} & & & & \rnode{br}{#2(#4)}
\end{array} 
\psset{nodesep=3pt} 
\everypsbox{\scriptstyle} 
\ncline{->}{tl}{tr}\Aput{#5(#3)} 
\ncline{->}{tr}{br}\Aput{\rho^{#2}_{#3 #4}}
\ncline{->}{bl}{br}\Aput{#5(#4)}
\ncline{->}{tl}{bl}\Aput{\rho^{#1}_{#3 #4}}}

\newcommand{\dirlimdiag}[6] 
{\begin{array}{ccccccc} 
\rnode{tl}{#1_{#4}} & & & & & & \rnode{tr}{#1_{#5}}  \\\\\\\\
& & & \rnode{m}{#1} & & & \\\\\\\\
& & & \rnode{b}{#2} & & &
\end{array} 
\psset{nodesep=3pt} 
\everypsbox{\scriptstyle} 
\ncline{->}{tl}{tr}\Aput{#3_{#4 #5}} 
\ncline{->}{tr}{m}\Bput{#3^{#1}_{#5}}
\ncline{->}{tl}{m}\Aput{#3^{#1}_{#4}}
\ncarc{<-}{b}{tl}\Aput{#3^{#2}_{#4}}
\ncline{->}{m}{b}\Aput{#6}
\ncarc{->}{tr}{b}\Aput{#3^{#2}_{#5}}}

\newcommand{\dirlimdiagdash}[6] 
{\begin{array}{ccccccc} 
\rnode{tl}{#1_{#4}} & & & & & & \rnode{tr}{#1_{#5}}  \\\\\\\\
& & & \rnode{m}{#1} & & & \\\\\\\\
& & & \rnode{b}{#2} & & &
\end{array} 
\psset{nodesep=3pt} 
\everypsbox{\scriptstyle} 
\ncline{->}{tl}{tr}\Aput{#3_{#4 #5}} 
\ncline{->}{tr}{m}\Bput{#3^{#1}_{#5}}
\ncline{->}{tl}{m}\Aput{#3^{#1}_{#4}}
\ncarc{<-}{b}{tl}\Aput{#3^{#2}_{#4}}
\ncline[linestyle=dashed]{->}{m}{b}\Aput{#6}
\ncarc{->}{tr}{b}\Aput{#3^{#2}_{#5}}}

\newcommand{\twomatrix}[4]
{\left(  \begin{array}{cc} 
\rnode{tl}{#1} & \rnode{tr}{#2}  \\
\rnode{bl}{#3} & \rnode{br}{#4}
\end{array} \right)}

\newcommand{\bigmatrix}[9]
{\left(  \begin{array}{cccc} 
\rnode{tl}{#1} & \rnode{tm}{#2}  & \cdots & \rnode{tr}{#3} \\
\rnode{ml}{#4} & \rnode{mm}{#5}  & \cdots & \rnode{mr}{#6} \\
\vdots & \vdots  & \ddots & \vdots \\
\rnode{bl}{#7} & \rnode{bm}{#8}  & \cdots & \rnode{br}{#9}
\end{array} \right)}

\newcommand{\bigmatrixb}[9]
{\left(  \begin{array}{cccc} 
\rnode{tl}{#1} & \rnode{tm}{#2}  & \cdots & \rnode{tr}{#3} \\\\
\rnode{ml}{#4} & \rnode{mm}{#5}  & \cdots & \rnode{mr}{#6} \\\\
\vdots & \vdots  & \ddots & \vdots \\\\
\rnode{bl}{#7} & \rnode{bm}{#8}  & \cdots & \rnode{br}{#9}
\end{array} \right)}

\newcommand{\mathsmap}[5] 
{\begin{array}{cccc}
#1 \, : & #2 & \longrightarrow & #3\\
& #4 & \longmapsto & #5
\end{array}}

\newcommand{\mathsmapbi}[7] 
{\begin{array}{cccc}
#1 \, : & #2 & \longrightarrow & #3 \\
& #4 & \longmapsto & #5 \\
& #6 & \longmapsfrom & #7 \\
\end{array}}

\newcommand{\mathsmapxy}[5] 
{\xymatrix{
{#1}: & {#2} \ar@{->}[r] & {#3} \\
& {#4} \ar@{|->}[r] & {#5}}}

\newcommand{\matrixtwo}[4] 
{\left( \begin{array}{ccc}
{#1} & {#2} \\
{#3} & {#4} 
\end{array}\right)}

\newcommand{\mathsmaptwo}[4] 
{\begin{array}{ccc}
#1 & \longrightarrow & #2\\
#3 & \longmapsto & #4
\end{array}}

\newcommand{\mathsmaptwox}[4] 
{\begin{eqnarray*}
\xymatrix{
#1 \ar@{->}[rr] && #2 \\
#3 \ar@{|->}[rr] && #4}
\end{eqnarray*}}

\newcommand{\mathsmapvert}[5] 
{\xymatrix{
{#2} \ar@{->}[dd]_{{#1}} & \ni & {#4} \ar@{|->}[dd]\\
& & \\
{#3} & \ni & {#5}}}

\newcommand{\doublemathsmap}[7] 
{\begin{array}{cccc}
#1: & #2 & \longrightarrow & #3\\
& #4 & \longmapsto & #5 \\
& #6 & \longmapsto & #7
\end{array}}

\newcommand{\ses}[7]
{\xymatrix{#1 \ar[r] & #2 \ar[r]^{#6} & #3 \ar[r]^{#7} & #4 \ar[r] & #5}}

\newcommand{\sestwo}[5]
{\xymatrix{#1 \ar[r] & #2 \ar[r] & #3 \ar[r] & #4 \ar[r] & #5}}

\newcommand{\sesthree}[5]
{\xymatrix{0 \ar[r] & #1 \ar[r]^{#4} & #2 \ar[r]^{#5} & #3 \ar[r] & 0}}

\newcommand{\splitses}[8]
{\xymatrix{#1 \ar[r] & #2 \ar[r]^{#6} & #3 \ar@/^/[r]^{#7} & #4 \ar@/^/[l]^{#8} \ar[r] & #5}}

\setcounter{tocdepth}{1}

\title{On the motive of a commutative algebraic group}

\author{Giuseppe Ancona}
\address{Universit\"at Duisburg-Essen, Thea-Leymann-Stra\ss e 9 ,
45127 Essen, Germany}

\email{monsieur.beppe@gmail.com}
\author{Stephen Enright-Ward}
\address{Mathematisches Institut, Albert-Ludwigs-Universit\"at Freiburg, Eckerstra\ss e~1,  
79104 Freiburg im Breisgau, Germany}
\email{steveleward@gmail.com}
\author{Annette Huber}
\address{Mathematisches Institut, Albert-Ludwigs-Universit\"at Freiburg, Eckerstra\ss e~1,  
79104 Freiburg im Breisgau, Germany}
\email{annette.huber@math.uni-freiburg.de}

\begin{abstract}
We prove a canonical K\"unneth decomposition for the motive of a commutative group scheme over a field. Moreover, we show that this decomposition behaves under the group law just as in cohomology. 
\end{abstract}


%


\maketitle
\begin{center}
\today
\end{center}
\tableofcontents
\section*{Introduction}
The aim of this article is to prove a canonical K\"unneth decomposition for the motive of a commutative group scheme over a field. Moreover, we show that this decomposition behaves under the group law just as in cohomology.

\

Let us start with the concrete example of a connected commutative algebraic group $G$ over $\C$, the field of complex numbers. Then the complex manifold associated with $G$ is homotopy equivalent to a product of unit circles $S^1$. So, by the K\"unneth formula for singular cohomology, one has 
\[H^i_\mathrm{sing}(G(\C),\Q)= \bigwedge^i H^1_\mathrm{sing}(G(\C),\Q).\]
This phenomenon exists also for $\ell$-adic cohomology. Let $G$ be a connected commutative group scheme over a field $k$ and let $\ell$ a prime number different from the characteristic of $k$. Then 
$H^*(G_{\bar{k}},\Q_{\ell})$ 
is a graded finite-dimensional
$\Q_{\ell}$-vector space. Moreover, it has a canonical structure of graded commutative and cocommutative connected Hopf algebra coming from the group structure and the diagonal immersion $\Delta:G \to G \times G$ (the latter induces the cup product).
The classification of graded Hopf algebras implies that $H^*(G_{\bar{k}},\Q_{\ell})$ has to be the exterior algebra of its \textsl{primitive part}. It turns out
to be $H^1(G_{\bar{k}},\Q_{\ell})$, the dual of the Tate module tensor $\Q_{\ell}$. 
So one deduces an isomorphism of graded Hopf algebras
\[H^*(G_{\bar{k}},\Q_{\ell}) = \bigwedge^* H^1(G_{\bar{k}},\Q_{\ell}) \ .\]

Our main theorem shows that this formula is motivic, up to two minor subtleties. First, in odd degree symmetric powers realize to the exterior powers (Koszul rule of signs), so one has to consider the former. Second, the realization functors from motives to cohomology is contravariant so one has to consider the opposite Hopf algebra structure.

\begin{introtheorem}\label{IntroKunn}
Let $G$ be a connected commutative algebraic group over a field $k$. 
Then there exists a canonical decomposition in $\DMegkQ$
\[M(G)=\bigoplus_{i=0}\Mi{i}(G),\]
 such that, for any mixed Weil cohomology theory $H^*$, the motive $\Mi{i}(G)$ realizes to $H^i(G)$. Moreover,
\begin{enumerate}
\item for $i$ big enough the motives $ \Sym ^i( \Mi{1}(G))$  and $\Mi{i}(G)$ vanish,
\item there is a canonical isomorphism 
\[\Mi{i}(G) \cong \Sym^i\Mof(G)\ ,\]
\item
The canonical isomorphism 
 \[M(G) \cong \OSym  (\Mi{1}(G))= \bigoplus_i\Sym^i\Mof(G)\]
 is an isomorphism of Hopf algebras between the motive of $G$ and the \textit{symmetric coalgebra} over the motive $\Mof(G).$
 \end{enumerate}

\end{introtheorem}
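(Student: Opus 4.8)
The plan is to reduce to the case of a semi-abelian variety, give $M(G)$ the structure of a graded, connected, commutative and cocommutative Hopf algebra object of $\DMegkQ$, and then transport the classical identity $H^{*}(G)=\bigwedge^{*}H^{1}(G)$ back to motives, carrying out the single non-formal step on tori and abelian varieties. \emph{Step 1 (reductions and the Hopf structure).} By Chevalley's theorem, after a base change of $k$ controlled by descent thanks to the $\Q$-coefficients, there is an extension $0\to U\to G\to G'\to 0$ with $U$ unipotent and $G'$ semi-abelian, with abelian quotient $A$ and torus part $T$. Since $G\to G'$ is a torsor under the unipotent group $U$ it is an $\Aff^{1}$-weak equivalence, so $M(G)\isom M(G')$ compatibly with the group data (because $G\to G'$ is a homomorphism), and we may assume $G$ semi-abelian. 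The maps $e,m,\iota,\Delta$ and the projection to $\Spec k$, together with the K\"unneth isomorphism $M(G\x G)\isom M(G)\ox M(G)$, make $M(G)$ a Hopf algebra in $\DMegkQ$: $M(m)$ is a commutative product (contravariant to the cup product of the introduction, whence the ``opposite'' structure), $M(\Delta)$ a cocommutative coproduct, $M(e)$ the unit $\one$, the projection the counit, $M(\iota)$ the antipode, and the axioms are the group axioms.

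\emph{Step 2 (the grading and the primitive motive).} Diagonalizing the commuting endomorphisms $[n]_{*}$, $n\in\Z$, produces the decomposition: building on the abelian-variety case of Deninger--Murre and K\"unnemann and extending it to semi-abelian $G$, suitable universal $\Q$-polynomials in the $[n]_{*}$ give orthogonal idempotents whose images $\Mi{i}(G)$ are the subobjects on which $[n]_{*}$ acts by $n^{i}$, with $\Mi{0}(G)=\one$. From $[n]\circ m=m\circ([n]\x[n])$ and $\Delta\circ[n]=([n]\x[n])\circ\Delta$ one gets that the Hopf structure is graded --- the product sends $\Mi{i}\ox\Mi{j}$ into $\Mi{i+j}$ and the coproduct sends $\Mi{i}$ into $\bigoplus_{a+b=i}\Mi{a}\ox\Mi{b}$ --- so $M(G)$ is a graded connected commutative cocommutative Hopf algebra. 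Put $\Mof(G):=\Mi{1}(G)$; it is the motive of the $1$-motive $[0\to G]$, hence lies in $\dDMeg\simeq\DbM$, and it has a two-step weight filtration with graded pieces $h_{1}(A)$ and the Tate-twisted cocharacter motive of $T$. Tracking eigenvalues through any mixed Weil realization shows $\Mi{i}(G)$ realizes to the $n^{i}$-eigenspace of $[n]^{*}$ on $H^{*}(G)$, i.e.\ to $H^{i}(G)$.

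\emph{Step 3 (the Hopf isomorphism, and (1),(2),(3)).} The inclusion of the degree-one summand $\Mof(G)\hookrightarrow M(G)$ extends, via the commutative product, to a morphism of graded algebras $\OSym(\Mof(G))=\bigoplus_{i}\Sym^{i}\Mof(G)\to M(G)$ landing in $\Mi{i}(G)$ in degree $i$; since the restriction of $M(\Delta)$ to $\Mof(G)$ is the primitive coproduct (forced by the counit and cocommutativity axioms), this is a morphism of Hopf algebras. It is an isomorphism by dévissage on the structure of $G$: for $G=A$ an abelian variety it is the multiplicative Chow--K\"unneth decomposition $h(A)=\bigoplus_{i}\Sym^{i}h_{1}(A)$ of Deninger--Murre and K\"unnemann, proved via the Fourier transform on Chow groups and hence valid already in $\DMegkQ$; for $G$ a torus it is the explicit $M(\Gm)=\one\oplus\Q(1)[1]$ tensored up, with $\Q(1)[1]$ odd for the Koszul symmetry so that symmetric powers become exterior ones; and for a general semi-abelian $G$, realized as the complement of a section in a line bundle over a smaller semi-abelian $G'$, the Gysin triangle $M(G)\to M(G')\to M(G')(1)[2]\to M(G)[1]$ lets one deduce the statement for $G$ from that for $G'$ by a five-lemma argument respecting the grading and the Hopf structure. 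Comparing degree-$i$ parts gives (2), $\Mi{i}(G)\isom\Sym^{i}\Mof(G)$, and (3). For (1): each weight-graded piece of $\Mof(G)$ is odd, so $\Sym^{a}$ of the cocharacter piece is $\bigwedge^{a}$ of a rank-$(\dim T)$ lattice, Tate twisted (zero for $a>\dim T$), and $\Sym^{b}h_{1}(A)=\bigwedge^{b}h_{1}(A)$ (zero for $b>2\dim A$, by K\"unnemann); since $\Sym^{i}\Mof(G)$ is an iterated extension of the $\Sym^{a}(-)\ox\Sym^{b}h_{1}(A)$ with $a+b=i$, it vanishes for $i$ large, and hence so does $\Mi{i}(G)$.

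\emph{Main obstacle.} The real work lies in two places: making the decomposition and the primitive motive $\Mof(G)$ canonical and functorial for \emph{every} connected commutative $G$ --- controlling the affine part and the auxiliary base changes --- and propagating the isomorphism together with the full Hopf structure through the Gysin triangles of Step 3 while the underlying $1$-motive extension $[0\to G]$ is genuinely non-split. This is exactly where the theory of $1$-motives is indispensable: because $\Mof(G)$ lives in $\dDMeg\simeq\DbM$, where the pertinent extensions and realizations are understood, the dévissage closes and the classical Hopf-algebra classification passes to the triangulated category of motives.
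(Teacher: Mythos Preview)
Your overall strategy matches the paper's: reduce to the semiabelian case, use the Hopf structure on $M(G)$, handle tori and abelian varieties by classical results, and induct on torus rank via the Gysin triangle for a $\Gm$-torsor $G\to H$. However, there are two genuine gaps where the paper does substantially more work than your sketch acknowledges.

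First, your Step~2 assumes you can diagonalize $[n]_*$ on $M(G)$ \emph{a priori} by ``suitable universal $\Q$-polynomials in the $[n]_*$.'' For abelian varieties this is Deninger--Murre, proved using the Fourier transform; but as the introduction explains, the Fourier transform is not available for semiabelian varieties, and there is no direct way to bound the spectrum of $[n]_*$ on $M(G)$ in advance. The paper sidesteps this entirely: it defines $\Mof(G)$ concretely as the homotopy-invariant \'etale sheaf with transfers $S\mapsto G(S)\otimes\Q$, constructs an explicit summation map $\alpha_G:M(G)\to\Mof(G)$, and extends it via the \emph{coalgebra} universal property to $\vp_G:M(G)\to\OSym(\Mof(G))$. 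The K\"unneth components are then read off from the isomorphism, not produced beforehand; your ordering is circular.

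Second, and more seriously, the five-lemma argument in your Step~3 does not close. The paper does construct the two triangles you describe (Lemma~\ref{sequence2}) and, after a nontrivial Chern-class comparison occupying all of Appendix~\ref{sectionchern}, shows that the \emph{middle} square commutes. The triangulated-category axioms then furnish \emph{some} isomorphism $\psi:\OSym(\Mof(G))\to M(G)$ --- but there is no reason it agrees with the canonical map. The authors remark explicitly after Corollary~\ref{iso psi}: ``We expect $\vp_?$ to define a morphism of triangles\dots\ We were not able to establish this\dots\ and use a completely different argument instead.'' Their fix is to use the non-canonical $\psi$ only to deduce that $M(G)$ is Kimura finite, then build a map $\beta$ whose $\ell$-adic realization is inverse to $H^*(\vp_G)$, and invoke the Andr\'e--Kahn conservativity theorem (Theorem~\ref{conservativity}) for Kimura-finite objects to conclude that $\vp_G$ itself is an isomorphism. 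Your appeal to ``the theory of $1$-motives'' in the final paragraph does not address this: the difficulty is not with understanding $\Mof(G)$ but with the non-uniqueness of fill-in maps in a triangulated category.
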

For more 
refined statements see Theorems \ref{Thmconsequences} and
\ref{propweil} 

There are a number of useful applications of our motivic decomposition. For instance, we are able to describe the weight filtration on $\Mi{i}(G)$ explicitly (see Section~\ref{sec_weights}). We get a new proof of Kimura finiteness of $1$-motives (which implies Kimura finiteness of the motive of a curve). We also show that  $H^*(M)$ is concentrated in one degree for any $1$-motive $M$ and any Weil cohomology theory.

Recently, Sugiyama \cite{Sug} has applied Theorem \ref{IntroKunn} to generalize  to semiabelian varieties results of Beauville \cite{Beau} and Bloch 
\cite[Theorem 0.1]{Bl1} on Chow groups of abelian varieties.

In a sequel of this paper \cite{relative}, we will show that all the results in this introduction hold for the relative motive of a general smooth commutative group scheme $G$ over a base scheme $S$ (supposed noetherian and finite dimensional). Note that Theorem \ref{IntroKunn} was known for abelian schemes over a regular base \cite{DeMu,Ku1}. Our generalization will be interesting especially for N\'eron models of abelian varieties and for the group scheme over  compactifications of Shimura varieties.


\

Theorem \ref{IntroKunn} was already known for tori and abelian varieties. The straightforward case of tori was discussed partially by Huber-Kahn \cite{HK}. The abelian case was proven by K\"unnemann \cite{Ku1} following earlier partial results by Shermenev \cite{Sher} and Deninger-Murre \cite{DeMu}. The main tool in his proof is the Fourier transform for cycles over an abelian variety, introduced by Beauville \cite{Beau}. This is not available for more general groups. 

Instead, our starting point is to use the more flexible category 
\textit{motivic complexes}. 
Indeed, we write down the component $\Mof(G)$ completely explicitly. This is non-trivial, even for abelian varieties.


First, following Barbieri-Viale and Kahn \cite{BVK}, consider $\ul{G}$ the sheaf  on the big \'etale site on $\Smk$ represented by $G$. It inherits from the group structure of $G$ a canonical structure of sheaf of abelian groups. 
 By work of Spie\ss-Szamuely (\cite{SS}) $\ul{G}$ has transfers, hence it induces a motive (namely the singular complex $\ul{C}_*\ul{G}$ that we will note $\Mof(G).$  

Second, consider the map of (pre)sheaves with transfers
\[ \Cor(\cdot,G)\to \ul{G}\]
which maps a multivalued map $S\to G$ (a correspondence) to the sum of its values in the commutative group $G$. It induces a canonical map
\[ \alpha_G:M(G)\to \Mof(G)\ .\]
Using the comultiplication on $M(G)$ this extends to a natural map
\[ \vp_G^n: M(G)\to \Sym^n\Mof(G)\ .\]
Most of the effort of this paper goes into proving the following (Theorem \ref{MainThm} in the paper):



\begin{introtheorem}\label{IntroMain}
Let $G$ be a semiabelian variety over a perfect field $k$. Then the motive $\Mof(G)$ is odd (i.e. $\Symn \Mof(G)$ vanishes for $n$ big enough) and, moreover, the map 
\[ \vp_G=\bigoplus_n \vp_{G}^n: M(G) \lra \bigoplus_{n = 0}\Symn\Mof(G).\]
is an isomorphism. 
\end{introtheorem}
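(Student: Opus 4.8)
The plan is to reduce the statement, via dévissage along the weight/structure filtration of a semiabelian variety, to the two known cases: tori and abelian varieties. More precisely, every semiabelian variety $G$ sits in a short exact sequence $1 \to T \to G \to A \to 1$ with $T$ a torus and $A$ an abelian variety, and one would like to propagate the isomorphism $\vp_G$ from $T$ and $A$ to $G$. The first task is therefore to establish the two base cases intrinsically in $\DMeetkQ$: for a torus $T$, the sheaf $\ul{T}$ is (after tensoring with $\Q$) a twist of a permutation module, $\Mof(T)$ is concentrated in the expected place, oddness is immediate, and $\vp_T$ being an isomorphism can be checked directly; for an abelian variety $A$, one must match $\Mof(A)$ with the weight-one part of $M(A)$ and identify $\vp_A$ with (the $\Q$-linear version of) the classical decomposition of Deninger--Murre and K\"unnemann, using that $M_1(A)$ agrees with the motive attached to $\ul{A}$ — this is essentially the content flagged in the excerpt as "non-trivial, even for abelian varieties," and will require the comparison between $\DMeetkQ$ and the category of Chow motives together with the Spie\ss--Szamuely transfer structure.

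The second task is the dévissage itself. Applying $M(-)$ to $1 \to T \to G \to A \to 1$ does not immediately give a distinguished triangle, but $\Mof(-)$ is exact on such sequences of semiabelian varieties (it is literally the motive of a sheaf, and $0 \to \ul{T} \to \ul{G} \to \ul{A} \to 0$ is exact as sheaves with transfers after $\otimes \Q$, since $T \to G$ is smooth with the right cohomological dimension), so one gets a triangle $\Mof(T) \to \Mof(G) \to \Mof(A) \xrightarrow{+1}$. From oddness of $\Mof(T)$ and $\Mof(A)$ one deduces oddness of $\Mof(G)$ (an extension of odd objects is odd, after checking that the connecting map respects the relevant vanishing), and one gets a filtration on $\bigoplus_n \Sym^n \Mof(G)$ whose graded pieces are built from $\Sym^i \Mof(T) \otimes \Sym^j \Mof(A)$. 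On the source side, $M(G)$ should be compared with $M(T) \otimes M(A)$ via the torsor structure $G \to A$ with fibre $T$: étale-locally on $A$ the group $G$ is a product $T \times A$, and one needs a projection-bundle / torsor-triviality argument (for the Zariski-locally-trivial torsor $G \to A$, using homotopy invariance and Mayer--Vietoris in $\DMeetkQ$) to conclude $M(G) \cong M(T) \otimes M(A)$ compatibly with comultiplication and with the maps $\vp$. Granting all this, $\vp_G = \vp_T \otimes \vp_A$ up to the filtration, and the five lemma (or rather a spectral-sequence/filtered-object argument) upgrades "$\vp_T$ and $\vp_A$ are isomorphisms" to "$\vp_G$ is an isomorphism."

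The point that needs the most care, and which I expect to be the main obstacle, is the compatibility of all these identifications with the \emph{coalgebra} structure and with the explicitly defined maps $\vp_G^n$ — i.e. checking that the isomorphism $M(G) \cong M(T) \otimes M(A)$ intertwines the comultiplications and that under it $\vp_G^n$ corresponds to $\bigoplus_{i+j=n} \vp_T^i \otimes \vp_A^j$. This is because $\vp_G^n$ is defined by first comultiplying on $M(G)$ and then applying $\alpha_G$ in each tensor factor, so one must track the comultiplication through the torsor-triviality isomorphism, which is only \emph{locally} a product and is glued; the naturality of $\alpha$ in $G$ (which follows from its definition via $\Cor(S,-) \to (-)(S)$, a natural transformation of functors on $\Smk^{\op} \times \{\text{semiab.}\}$) is what one leans on. A secondary delicate point is the reduction to a \emph{connected} group and to \emph{semiabelian} (rather than general) $G$, and the reduction over a perfect field, where one may need to base-change to $\bar k$, invoke the $\ell$-adic realization to see $\vp_{G_{\bar k}}$ is an isomorphism (as the excerpt says the $\ell$-adic case drives the motivic one), and then descend; but the bulk of the new work is the exactness of $\Mof$ and the torsor-triviality comparison, together with the bookkeeping of Hopf structures under both.
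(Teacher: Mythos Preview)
Your dévissage strategy has a genuine gap at the key step: the claimed isomorphism $M(G)\cong M(T)\otimes M(A)$ is \emph{false} in general. The $T$-torsor $G\to A$ is not Zariski-locally trivial when $T$ is non-split, and even when $T=\Gm$ the localization triangle
\[
M(G)\to M(A)\xrightarrow{\;\cdot\cup c_1([G])\;} M(A)(1)[2]
\]
has boundary map given by cup-product with the first Chern class of the associated line bundle, which is nonzero for a non-trivial extension. So the triangle does not split and $M(G)$ is genuinely not a tensor product. In fact one can see this already at the level of $\Mof$: by the full embedding $\sAbQ\hookrightarrow\DMeetkQ$ (Proposition~\ref{Yoneda}), a splitting $\Mof(G)\cong\Mof(T)\oplus\Mof(A)$ would force $G$ to be isogenous to $T\times A$, which is not the case whenever the extension class of $G$ in $\Ext^1(A,T)$ is non-torsion. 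Consequently your compatibility check ``$\vp_G=\vp_T\otimes\vp_A$ up to filtration'' cannot get off the ground, because there is no such product identification to begin with; at best there is a filtration on $M(G)$ whose associated graded is $M(T)\otimes M(A)$, but knowing $\vp$ is an isomorphism on the graded pieces does not by itself imply it is one on the filtered object (you would need $\vp_G$ itself to be a filtered map inducing $\vp_T\otimes\vp_A$ on the graded, and even defining the filtration on $M(G)$ independently of the answer is not obvious).

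The paper's argument circumvents this as follows. One proves oddness of $\Mof(G)$ directly from the filtration (this part of your proposal is fine). For the isomorphism, one argues by induction on the torus rank via a sequence $1\to\Gm\to G\to H\to 1$, and compares the localization triangle for $M(G)$ with the analogous triangle for $\OSym(\Mof(G))$ obtained from the one-step filtration $\Mof(\Gm)\subset\Mof(G)$. The crucial input (Appendix~\ref{sectionchern}) is that the two boundary maps --- cup-product with $c_1([G])$ on the geometric side and with the extension class $[\Mof(G)]$ on the $\OSym$ side --- coincide. This yields a \emph{non-canonical} isomorphism $\psi:M(G)\to\OSym(\Mof(G))$, hence Kimura finiteness of $M(G)$. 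One then modifies $\psi$ to obtain a map whose $\ell$-adic realization is inverse to that of $\vp_G$, and concludes by the conservativity of realization on Kimura-finite objects (Andr\'e--Kahn). In particular, the paper explicitly notes it was \emph{unable} to show that $\vp_G$ itself fits into a morphism of triangles, which is exactly the obstruction your filtered/five-lemma approach would run into.
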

We will easily deduce Theorem \ref{IntroKunn} from Theorem \ref{IntroMain}: the construction of the morphism $\vp_G$ has been done so that it is natural and it respects the Hopf structures (and the unipotent part of a general commutative group will be easy to treat, as well as the non-perfect case).  

A difficulty in showing this theorem is that we do not have for free a natural maps in the opposite direction
(the reader can think of the embedding $\Mor_{\Smk}(\cdot,G) \subset\Cor(\cdot,G)$, but this is just a morphism of sheaves of sets).

We prove Theorem \ref{IntroMain}  by induction on the torus rank with the case of abelian varieties as a starting point.
Consider a short exact sequence of semiabelian varieties 
\[ 1\to\Gm\to G\to H\to 1\]
and suppose to know Theorem~\ref{IntroMain} for $H$. 
On the one hand, the filtration on $\ul{G}$ defined by the above short exact sequence 
induces triangles
\[  \Sym^{n}\Mof(G) \to \Sym^{n}\Mof(H) \to \Sym^{n-1}\Mof(H)(1)[2]\ .\]
On the other hand, the localization sequence for the $\Gm$-torsor $G\to H$ reads
\[ M(G)\to M(H)\to M(H)(1)[2]\ .\]
By comparing these triangles, the hypothesis that $\vp_H$ is an isomorphism 
implies that there is a non-canonical isomorphism
\[\psi:M(G)\to \bigoplus_n\Symn(\Mof(G))\ .\]
This fact has two essential consequences. First, the motive $M(G)$ is \textit{Kimura finite}. Second, there is a non-trivial morphism from $\bigoplus_n\Symn(\Mof(G))$ to $M(G)$. We then show that one can modify this morphism in order to obtain a map which is the inverse of $\vp_G$ after $\ell$-adic realization. Then Kimura finiteness allows us to conclude that $\vp_G$ is an isomorphism.

\ 

We now briefly discuss the structure of the paper. Section \ref{Notations} settles notations and recalls some facts from the literature.
In Section \ref{sect_constr} we define the motive $\Mof(G)$ and the morphism $\alpha_G$ and establish basic properties. We will pretty quickly specialize to the case of a semiabelian variety.
In Section \ref{sect main theorem} we construct the morphism $\vp_G$ and show that it respects the Hopf structures, under the hypothesis that $\Symn\Mof(G)$ vanishes for $n$ big enough.
Section \ref{sect special} deals with the two special cases: tori and abelian varieties.  
In Section \ref{sect real} we show that the motive $\Mof(G)$ of a semiabelian variety $G$ is odd and geometric. We then compute its $\ell$-adic realization. 
Section \ref{proofMainThm} finally gives the proof of Theorem \ref{IntroMain} for any semiabelian variety $G$. 

Section \ref{secconseq} shows how to deduce Theorem \ref{IntroKunn} from Theorem \ref{IntroMain}. 
We also explain how the semiabelian case implies the case of a general commutative scheme, possibly with  a unipotent radical or several connected components. Some other properties are studied, namely the uniqueness of the K\"unneth decomposition  and that the weight filtration in cohomology lifts canonically to a filtration of each K\"unneth component. 

Some technical points of the main proof are left to the appendices. 
Appendix \ref{sectionchern} 
is the essential input to relate the two triangles above. This is done
comparing two obvious definitions of the first Chern class of the line
bundle defined by an extension of a semiabelian variety by $\Gm$.
Appendix~\ref{appsym} rewiews the definition of the symmetric coalgebra, its universal properties and a comparison with the standard  symmetric algebra in the setting of $\Q$-linear pseudo-abelian additive categories. 
Appendix \ref{appfilt} constructs a natural filtration on $\Sym^n(V)$ given
a subobject $U\subset V$ in the setting of general $\Q$-linear abelian categories.

\subsection*{Acknowledgments}
Part of these results have already been obtained in the PhD thesis \cite{ward} of the second author, written under the supervision of the third. The first author would like to thank them for letting him participate to conclude this work and for the warm hospitality in Freiburg. 

In more detail: the definition of $\vp_G$ was given in \cite{ward} and  the results on tori and abelian varieties were established there. It also contained the inductive argument of Section \ref{sec_triangles} but not the proof of the main Theorem \ref{IntroMain}.

We would like to thank J.~Ayoub,  F.~D\'eglise, B.~Drew, J.~Fres\'an, M.~Huruguen, P.~Jossen, B.~Kahn, S.~Kelly,  K.~K\"unnemann, S.~Meagher, S.~Pepin Lehalleur, J.~Scholbach, R.~Sugiyama, and C.~Vial for useful discussions.

R.~Sugiyama pointed out a gap in an earlier version of our preprint. The 
problem is resolved by his \cite[Appendix B]{Sug}.

We are most thankful to M.~Wendt, who has been involved in the project from the start. He came to act as a coadvisor of the thesis and contributed generously by discussions, advice on references and careful proof reading. The (as it turns out decisive) remark that the very formulation of the main theorem needs a finiteness assumption, was his.

\section{Notations and Generalities}\label{Notations}
Throughout this paper $k$ denotes a fixed perfect field of any characteristic. We fix   an algebraic closure of $k$ and we call it $\bar{k}$. We write $\overline{(\cdot)}$ for the pull-back to $\bar{k}$ on varieties, motives and sheaves. 
\subsection{Categories of varieties}
We denote:
\begin{itemize}
\item $\Schk$ the category of schemes separated and of finite type over $k$; 
\item $\Smk$ the full subcategory of smooth schemes over $k$;
\item $\SmPrVark$ the full subcategory of smooth and projective schemes over $k$;
\item $\Smck$ the category of smooth correspondences. Objects are smooth schemes over $k$  and morphisms are finite correspondences in the sense of Voevodsky \cite[Section 2]{TMF}. The free abelian group generated by primitive finite correspondences from $X$ to $Y$ will be denoted $\Cor(X,Y)$; 
\item $\CGS/k$ the category of commutative group schemes of finite type over $k$ with morphisms  of $k$-group schemes; 
\item $\sAb/k$ the full subcategory of $\CGS$ of semiabelian varieties. By definition an object of $\sAb/k$ is a connected smooth commutative group scheme $G$ over $k$ such that its pullback $G_{\bar{k}}$  does not have any subgroups which are isomorphic to an additive group.
\item $\Ab/k$ the full subcategory of $\CGS$ of abelian varieties.
\end{itemize}
We will usually abbreviate $\Sch$ instead of $\Sch/k$ etc.

\begin{remark}
Notice that by a theorem of Barsotti \cite{Bars} and Chevalley \cite{Chev} (see also \cite{Con} for a modern presentation) any semiabelian variety can be uniquely decomposed as  an extension of an abelian variety by a torus. 


Let us also recall that there are no non-constant homomorphism from a torus to an abelian variety, see for example \cite[Lemma 2.3]{Con}. In particular any homomorphism between two semiabelian varieties induces morphisms between the tori and between the abelian varieties of their decomposition.
\end{remark}
\begin{remark}
Notice that a short exact sequence in $\CGS$ of smooth groups induces a short exact sequence of \'etale sheaves, as any smooth morphism has a section locally for the \'etale topology.
\end{remark}
\subsection{Rational coefficients}\label{sect_rational}
When $\cata$ is an additive category, then we will write 
$ \cata_{\Q}$ for the pseudo-abelian hull of 
the category having the same objects as $\cata$ and such that for any $X,Y \in \cata$ the set of the homomorphisms from $X$ to $Y$ is the $\Q$-vector space 
\[ \Hom_{\cata}(X,Y)\otimes_{\Z}\Q\ .\] 
This applies, in particular, to the additive categories $\Smc$, $\Ab$ and $\sAb$.
\begin{remark}
\begin{enumerate}

\item Notice that morphisms  in $\SmcQ$ between two varieties are $\Q$-linear combinations of primitive  finite correspondences.
\item The categories $\AbQ$ and $\sAbQ$ are the categories of abelian and semiabelian varieties up to isogeny, respectively. These categories are abelian.
\end{enumerate}
\end{remark}

\subsection{Symmetric powers, exterior powers and Kimura finiteness}\label{notsym}
Let $\cata$ be a pseudo-abelian $\Q$-linear symmetric tensor category with unit $\one$. There are canonical functors
\[\Sym^n:\cata\to\cata \ .\]
Indeed, let $X \in \cata$ be an object of $\cata$. As the category is symmetric the group of permutations $\mathcal{S}_n$ acts on $X^{\otimes n}$. The endomorphism 
\[ \frac{1}{n!}\sum_{\sigma\in \mathcal{S}_n}\sigma:X^{\tensor n}\to X^{\tensor n}\]
is a projector. Then one defines $\Sym^n(X)$
to be its image (notice that it is fonctorial in $X \in \cata$).  We define similarly the functor $\bigwedge^n :\cata\to\cata$. By convention, we will write $\Sym^{0}X=\bigwedge^0 X= \one$ for all non-zero objects $X$.
Following \cite{Kim} and \cite{OS} we will say that an object $X$ is 
\begin{itemize}
\item {\em odd of dimension $N$} if $\Sym^N X\neq 0$ and $\Sym^{N+1} X= 0$. In this case we will write $\det X= \Sym^N X$;
\item  {\em even  of dimension $N$}  if $\bigwedge^N X\neq 0$ and $\bigwedge^{N+1} X= 0$. In this case we will write $\det X= \bigwedge^N X$;
\item {\em odd or even finite-dimensional} if it is even or odd finite-dimensional for some $N$;
\item {\em Kimura finite} if there exists a decomposition (in general not unique) $X= X_+ \oplus X_-$ such that $X_+$ is even finite-dimensional and $X_-$ is odd finite-dimensional.
\end{itemize}
 
 The result we will need about finite-dimensional motives is the following theorem of Andr\'e and Kahn.
\begin{theorem}[Andr\'e, Kahn]\label{conservativity}
Let $K$ be a field of characteristic zero,  $\mathcal{C}$ be a $K$-linear pseudoabelian symmetric tensor category, and let $f:X \to Y$ be a map in $\mathcal{C}$ between two finite-dimensional objects. Suppose that there exist another $K$-linear pseudo-abelian symmetric tensor category $\mathcal{D}$, a non-zero $K$-linear symmetric tensor functor $F: \mathcal{C} \to  \mathcal{D}$ (covariant or contravarient) and a map $g:Y \to X$ in $\mathcal{C}$ such that $F(f)$ and $F(g)$ are inverses of the other. Then $f$ is an isomorphism.
\end{theorem}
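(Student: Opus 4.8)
The plan is to reduce the statement to the nilpotence theorem of Kimura and O'Sullivan, and to treat that reduction as the one step I would carry out in detail. The input I want is: \emph{if $Z\in\mathcal{C}$ is finite-dimensional and $u\in\End_{\mathcal{C}}(Z)$ satisfies $F(u)=0$, then $u$ is nilpotent}. Granting this, consider first a covariant $F$ and set
\[
u:=g\circ f-\id_X\in\End(X),\qquad v:=f\circ g-\id_Y\in\End(Y).
\]
Because $F(f)$ and $F(g)$ are mutually inverse, $F(u)=F(g)\circ F(f)-\id_{F(X)}=0$ and likewise $F(v)=0$; since $X$ and $Y$ are finite-dimensional, the nilpotence theorem makes $u$ and $v$ nilpotent. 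Then $g\circ f=\id_X+u$ and $f\circ g=\id_Y+v$ are invertible (inverse: the finite geometric series), so $f$ has the left inverse $(g\circ f)^{-1}\circ g$ and the right inverse $g\circ(f\circ g)^{-1}$; a morphism admitting both is an isomorphism, and the two inverses agree. For contravariant $F$ one has instead $F(g\circ f)=F(f)\circ F(g)=\id_{F(X)}$ and $F(f\circ g)=F(g)\circ F(f)=\id_{F(Y)}$, so the very same $u,v$ are killed by $F$ and the argument is unchanged; the covariant/contravariant alternative is thus cosmetic.

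The hard part is the nilpotence theorem itself. The strategy there: write $Z=Z_+\oplus Z_-$ with $Z_+$ even of dimension $m$ and $Z_-$ odd of dimension $n$, and write $u$ in block form; each block $u_{\pm\pm}=p_{\pm}\circ u\circ i_{\pm}$ (for the structural projections and inclusions) is again killed by $F$. The relation $\bigwedge^{m+1}Z_+=0$ yields a monic polynomial relation satisfied by $u_{++}$ whose coefficients lie in $\End(\one)=K$ — namely, up to sign, the traces of the induced endomorphisms $\bigwedge^{i}u_{++}$ of $\bigwedge^{i}Z_+$. Since $F$ commutes with $\bigwedge^{i}$ and with traces and is injective on $\End(\one)=K$ (as $\mathcal{D}\neq 0$), and since $\bigwedge^{i}F(u_{++})=0$ for $i\geq 1$, all these coefficients vanish, so $u_{++}$ is nilpotent; dually $u_{--}$ is nilpotent via $\Sym^{n+1}Z_-=0$. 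Finally, a further lemma of Kimura — any alternating composite $Z_+\to Z_-\to Z_+\to\cdots$ of morphisms is zero after boundedly many steps, because a nonzero object cannot be simultaneously even and odd finite-dimensional — promotes nilpotence of the diagonal blocks to nilpotence of $u$.

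In an actual write-up I would present the first paragraph in full and invoke the theorem of André and Kahn (resting on \cite{Kim} and \cite{OS}) for the nilpotence statement rather than reproving the Cayley--Hamilton identities; the ancillary facts used — that a $K$-linear symmetric tensor functor commutes with $\bigwedge^{i}$, $\Sym^{i}$ and traces, sends finite-dimensional objects to finite-dimensional ones, and restricts to an injective $K$-algebra map on $\End(\one)=K$ — are standard.
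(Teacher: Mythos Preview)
Your argument is correct and takes a more hands-on route than the paper's. The paper invokes the Andr\'e--Kahn structural machinery directly: it passes to the Kimura subcategory, notes that $\ker F$ is a proper tensor ideal and hence contained in the radical $\mathcal{R}$ by \cite[Thm.~9.2.2]{AK}, and then uses that the quotient functor $\pi:\mathcal{C}\to\mathcal{C}/\mathcal{R}$ is conservative \cite[Prop.~1.4.4(b)]{AK}; since $\pi(f)$ and $\pi(g)$ become mutual inverses, $f$ is an isomorphism. You instead extract the concrete consequence of that theory---nilpotence of endomorphisms of Kimura-finite objects killed by $F$---and apply it to $gf-\id_X$ and $fg-\id_Y$, getting invertibility of $gf$ and $fg$ via the geometric series. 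The two arguments are equivalent at the core: ``$\mathcal{R}$ is a nilideal and $\pi$ detects isomorphisms'' unpacks precisely to your statement that $\id+(\text{element of }\ker F)$ is invertible. Your version makes the mechanism transparent; the paper's keeps the citation cleaner. One remark on your sketch of the nilpotence input: the Cayley--Hamilton step tacitly uses dualizability (so that traces land in $\End(\one)$), and the passage from nilpotent diagonal blocks to nilpotent $u$ needs a little more care than the one line you give---but since you intend to cite Andr\'e--Kahn for this input rather than reprove it, this does not affect your proof.
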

\begin{proof}This is a consequence of the work \cite{AK}. As this theorem is never stated in this form let us explain how to deduce it from loc. cit. We will suppose $F$ to be covariant (otherwise one can compose with the canonical antiequivalence of $\mathcal{D}$ with its opposite category).

 First of all we can replace $\mathcal{C}$ by the $K$-linear symmetric tensor category generated by finite-dimensional objects, hence suppose that $\mathcal{C}$ is a \textit{Kimura category}. The kernel of the functor $F$ is a tensor ideal of $\mathcal{C}$, which is by hypothesis non-trivial. Then by \cite[Theorem. 9.2.2]{AK} the kernel is contained inside a tensor ideal $\mathcal{R}$ called the \textit{radical}  and $\mathcal{C}$ is a \textit{Wedderburn category}. Write  $\pi: \mathcal{C} \to \mathcal{C} / \mathcal{R}$ for the canonical functor. Our hypothesis implies that $\pi(f)$ and $\pi(g)$ are  inverses of each other. We can now conclude because by \cite[Proposition 1.4.4(b)]{AK} the functor $\pi$ detects the isomorphism, i.e., $\pi(f)$ is an isomorphism if and only if $f$ is.
 
Notice that the erratum of \cite{AK} does not concern the statements we are using.
\end{proof}

\subsection{Symmetric coalgebra}
Let $X$ be an odd object of dimension $N$ in a pseudo-abelian $\Q$-linear symmetric tensor category with unit $\one$.
%
We define the {\em symmetric coalgebra}
\[\OSym(X)=\bigoplus_{n=0}^N\Sym^n(X)\ .\]


The coalgebra $\OSym(X)$ has a canonical structure of Hopf algebra. Multiplication and comultiplication are defined so that $\OSym(X)$ becomes the standard symmetric algebra in the opposite category. For details see Appendix 
\ref{appsym}.


\subsection{Graded super-vector spaces} 
Let $F$ be a field. The category of {\em graded super-$F$-vector spaces} will be denoted by $\GrVec^{\pm}_{F}$. As an $F$-linear tensor category it is equivalent to the category of graded $F$-vector spaces. In particular if $V=\bigoplus_{i \in \Z}V_i$ and $W=\bigoplus_{i \in \Z}W_i$, then
\[(V\otimes W)_n=\bigoplus_{p+q=n}V_p\otimes W_q.\]
The difference is in the convention of the symmetry. If $\sigma_{i,j}: V_i\otimes W_j \to W_j \otimes V_i$ is the isomorphism of symmetry in the category of $F$-vector spaces, then the isomorphism of symmetry in $\GrVec^{\pm}_{F}$ is given by $(-1)^{i\cdot j} \sigma_{i, j}$.
\subsection{Chow and Voevodsky motives}\label{sect_motives}

Let $k$ be a perfect field (for the non-perfect case see Section \ref{reduce perfect}).
Our conventions on motives will follow notations and constructions from Voevodsky \cite{TMF} and especially from \cite[Section 14]{MVW}, where the case of rational coefficients is explicitly treated. We will study the following categories:
\[\STEkQ\ ,\ \DMeetkQ\ ,\ \DMegk_\Q\ ,\ \DMgk_\Q\ ,\ \ChowkQ\]
of {\em \'etale sheaves with transfers}, {\em motivic complexes}, {\em effective geometric motives}, {\em geometric motives}, and {\em Chow motives}, respectively. We are going to need the functors
\[L:\Sm_k \to D^-(\STEkQ) \hspace{0.5cm} \textrm{and} \hspace{0.5cm} q: D^-(\STEkQ) \to  \DMeetkQ\ \ .\]
We will write 
\[M= q \circ L \ ,\] 
and for any variety $X\in \Sm_k$ we will call $M(X)$ the {\em motive} of $X$.

Let us give more details. The category of sheaves of $\Q$-vector spaces with transfers on the big \'etale site on $\Smk$ is denoted \[\STEkQ \ . \] By \cite[Lemma 14.21]{MVW} a  rational presheaf with transfers is a sheaf for the Nisnevich topology if and only if it is a sheaf for \'etale topology. We decide to work with the latter topology.  Tensor product is exact on this category by \cite[Proposition B.1]{Sug}. Moreover, by \cite[Lemma 14.21]{MVW} the categories $\DMeetkQ$ and $\DMen(k,\Q)$ are equivalent.

By \cite[Theorem 14.28]{MVW} and \cite[Remark 14.29]{MVW} there are two adjoint $\Q$-linear exact functors 
\[ q:D^-\STEkQ \rightleftarrows \DMeetkQ :i\ . \]
The functor $q$ is compatible with tensor products. Moreover, the functor $i$ is an embedding and the functor $q$ is a localization. We may identify $\DMeetkQ$ with its image in 
$D^-\STEkQ$ and avoid to write $i$.

 The category of {\em effective geometric motives} will be denoted by 
\[\DMegk_{\Q} \ . \] 
Several equivalent definitions are possible, here we define it as the full triangulated $\Q$-linear  pseudo-abelian tensor sub-category of $\DMeetkQ$ generated by motives of smooth schemes over $k$. 
\begin{remark}
\begin{enumerate}
\item Let $\DMeg(k,\Z)$ be the category of effective geometric motives with integer coefficients defined by Voevodsky \cite[Definition 2.1.1]{TMF}, then 
\[\DMegk_{\Q}= \DMeg(k,\Z)_{\Q}\ , \] 
where the tensor product on the right hand side is in the sense of Section~\ref{sect_rational}. 
This fact can be deduced from \cite[Theorem 11.1.13]{CD}, \cite[Remark 9.1.3(3)]{CD} and \cite[Corollary 16.1.6]{CD}.

 \item The reader should be warned that $\STEkQ$ and $\DMeetkQ$ are not $\STEkZ_{\Q}$ and $\DMeetkZ_{\Q}$. We will always work with $\STEkQ$ and $\DMeetkQ$.
 \end{enumerate}
 \end{remark}

After tensor-inverting the Lefschetz motive inside $\DMegk_{\Q}$ we obtain a $\Q$-linear pseudo-abelian triangulated rigid tensor category, denoted by $\DMgk_{\Q}$ and called the category of {\em geometric  motives}. The category $\DMegk_{\Q}$ is canonically a full  subcategory of $\DMgk_{\Q}$ by Voevodsky's Cancellation Theorem \cite{Canc}.

The $\Q$-linear pseudo-abelian rigid tensor sub-category of $\DMgk_{\Q}$ generated by motives of smooth and proper schemes over $k$ will be denoted by $\ChowkQ$ and called the category of Chow (or pure) motives (with $\Q$ coefficients). 
Note that by \cite[Proposition 2.1.4]{TMF} together with \cite{VoeChow}, this category is equivalent to the \textit{opposite} of the classical category of Chow motives.

Although we will use properties of classical Chow motives transposed in this context, we will always keep the notations coming from the conventions of Voevodsky, for instance the motive of the projective line
 decomposes as $M(\mathbb{P}^1_k)=\mathds{1} \oplus \mathds{1}(1)[2].$

We are going to need the following facts. 
\begin{proposition}\label{stalk}
Let $f$ be a morphism in $\DMeetkQ$. Then $f$ is an
isomorphism if and only if its pull-back $\overline{f}$ to $\bar{k}$ is. In particular a motive $M$ vanishes if and only if $\overline{M}$ does.
 \end{proposition}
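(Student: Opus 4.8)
The plan is to reduce the statement to a fact about étale sheaves with transfers, using the fact that $\DMeetkQ$ sits inside $D^-(\STEkQ)$ as a full subcategory (via the embedding $i$, which we suppress). A morphism in a triangulated category is an isomorphism if and only if its cone is zero, and the cone of $\overline{f}$ is the pullback of the cone of $f$; so the ``in particular'' clause — a motive $M$ vanishes iff $\overline M$ does — is the only thing that needs proof, and the statement about $f$ follows by applying it to $\cone(f)$. Since the embedding $i$ commutes with the base-change functor $\overline{(\cdot)}$ (both are pullbacks along $\Spec\bar k \to \Spec k$ at the level of sheaves/complexes), it suffices to show: a complex $C \in D^-(\STEkQ)$ is zero if and only if $\overline C \in D^-(\SE(\bar k,\Q))$ is zero.

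First I would recall that a complex in a derived category of an abelian category vanishes iff all its cohomology sheaves vanish, and that $\overline{(\cdot)}$ on complexes is computed termwise, hence commutes (up to the spectral sequence / exactness) with taking cohomology sheaves; more cleanly, $\overline{(\cdot)}: \STEkQ \to \SE(\bar k, \Q)$ is an exact functor (pullback of sheaves), so it induces an exact triangulated functor on $D^-$ that commutes with cohomology. Thus the claim reduces to: an étale sheaf with transfers $\shF$ on $\Smk$ (with $\Q$-coefficients) is zero iff $\overline{\shF}$ is zero. Second, I would prove this last statement at the level of stalks. For the étale topology, a sheaf is zero iff all its stalks at geometric points vanish; a geometric point of $\Smk$ is a point of some $X\in\Smk$ with separably closed residue field, and one checks that every such geometric point factors through $X_{\bar k}$ for the corresponding $X$, so the geometric points of the site of $\Smk$ and of $\Sm/\bar k$ ``see'' the same stalks — hence $\shF = 0$ iff $\overline{\shF}=0$. (Alternatively: if $\overline{\shF}=0$ then $\shF(X)\hookrightarrow \overline{\shF}(X_{\bar k}) = 0$ after passing to a suitable colimit, using that sections inject into sections over a cover and that $X_{\bar k}\to X$ is faithfully flat, so $\shF$ has no nonzero sections, and the same for all étale $X'\to X$, giving $\shF=0$.)

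The main obstacle — really the only subtlety — is bookkeeping about \emph{which} categories of sheaves one lands in after base change: sheaves with transfers on $\Smk$ pull back to sheaves with transfers on $\Sm/\bar k$, and one must know that the functor $\overline{(\cdot)}$ is well-defined, exact, and compatible with the localization $q$, so that the identification of $\DMeetkQ$ with a subcategory of $D^-(\STEkQ)$ is preserved under base change. All of this is standard (it is part of the six-functor / base-change formalism for $\DMe$, e.g.\ the functoriality in \cite{MVW} or \cite{CD}), so once it is invoked the proof is immediate. I would therefore keep the argument short: (i) reduce to vanishing of a complex; (ii) reduce to vanishing of cohomology sheaves by exactness of pullback; (iii) conclude by the stalk-wise / faithful-flatness argument for étale sheaves with transfers.
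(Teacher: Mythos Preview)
Your proposal is correct and follows essentially the same route as the paper: embed $\DMeetkQ$ into $D^-(\STEkQ)$, reduce to cohomology sheaves, and test vanishing on geometric stalks, which amounts to passing to $\bar k$. The paper phrases it directly in terms of $f$ being an isomorphism on cohomology sheaves rather than passing through $\cone(f)$, but this is cosmetic; your identification of the only real subtlety (compatibility of base change with $i$ and $q$) is exactly right.
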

\begin{proof}
Let $f$ be a morphism in $\DMeetkQ$. Then $f$ is an
isomorphism if and only if it is an isomorphism in the derived category of \'etale sheaves, i.e., if it induces isomorphism on cohomology sheaves without transfers. A morphism of \'etale sheaves is an isomorphism if it is an isomorphism on geometric stalks. It suffices to consider geometric points over closed points of objects in $\Sm/k$.
Hence we can check it after pull back to the algebraic closure. 
\end{proof}

\begin{remark}\label{tensorexact}
Voevodsky's tensor product on $\STEkQ$ is exact 
because by \cite[\S 5 and \S 6]{SV1} the category can be seen as a full tensor 
subcategory of the category of $\mathrm{qfh}$-sheaves with the standard tensor product of sheaves. This argument is due to Sugiyama, see \cite[Appendix B]{Sug} for full details.
\end{remark}

\subsection{Motives over non-perfect fields}\label{reduce perfect}

To avoid the hypothesis of perfectness one can consider Beilinson motives with rational coefficients following Cisinski and D\'eglise \cite{CD}. These motives are defined over general bases and when the base is a perfect field their definition is equivalent to the category of geometric Voevodsky motives with $\Q$-coefficients $\DMegkQ$, see \cite[Corollary 16.1.6]{CD}. 

Suppose now that $k$ is not perfect, and let $k^i$ be its perfect closure.
Then by \cite[Proposition 2.1.9]{CD} and \cite[Theorem 14.3.3]{CD} the pull-back from $\DMegkQ$ to $\DMeg(k^i)_{\Q}$ is an equivalence of category.

\subsection{Realization functor}\label{ladic}
Fix a prime number  $\ell$ different from the characteristic of $k$.
We denote
\[H^* =\bigoplus_{i\in \Z} H^i: \Sm_k\to\Sm_{\overline{k}}\longrightarrow \GrVec^{\pm}_{\Ql}\]
the contravariant functor of {\em $\ell$-adic cohomology}.
By work of Ivorra \cite[Theorem 4.3]{Ivo} it extends uniquely 
to a controvariant functor  called {\em realization} 
\[H^*: \DMgk \longrightarrow \GrVec^{\pm}_{\Ql}\]
which is a $\Q$-linear symmetric tensor functor, sending the unit to the unit and which moreover verifies $H^*(M(n))=H^*(M)(-n)$ and $H^m(M[n])=H^{m-n}(M)$ for all integers $m$ and $n$ and all motives $M$.

\ 

The structure of $\ell$-adic cohomology of semiabelian varieties is well known.

\begin{lemma}\label{H semiabelian}
Let $G$ be a semiabelian variety. 
Then
\[ H^*(G)=\Sym(H^1(G)) \ . \]
Moreover, let $V_\ell(G)$ be the $\ell$-adic Tate module tensored by $\Q_{\ell}$, then we have
\[ H^1(G)=V_\ell(G)^*\ . \]
\end{lemma}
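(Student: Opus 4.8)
\textbf{Proof plan for Lemma \ref{H semiabelian}.}

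The plan is to reduce everything to the three building blocks $\Gm$, abelian varieties, and extensions, and then to invoke standard properties of $\ell$-adic cohomology: the Künneth formula, the Leray spectral sequence for a fibration, and the structure of $H^*$ for tori and abelian varieties.

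First I would treat the identity $V_\ell(G)=H^1(G)^*$. Recall that $V_\ell(G)=\Q_\ell\otimes_{\Z_\ell}T_\ell(G)$, where $T_\ell(G)=\varprojlim_n G[\ell^n](\bar k)$. For a semiabelian variety the multiplication-by-$\ell^n$ map is a finite étale isogeny with kernel $G[\ell^n]$, so $T_\ell(G)$ is the $\ell$-adic fundamental group $\pi_1^{\ell}(G_{\bar k})$ and its $\Q_\ell$-dual computes $H^1_{\et}(G_{\bar k},\Q_\ell)$; this is the usual identification $H^1_{\et}(G_{\bar k},\Q_\ell)=\Hom(T_\ell(G),\Q_\ell)$ coming from the Kummer sequence (for $\Gm$) and from $H^1_{\et}(A_{\bar k},\Q_\ell)=V_\ell(A)^*$ (for abelian varieties, e.g.\ Mumford), combined with the exact sequence $1\to T\to G\to A\to 1$. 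Concretely, applying $T_\ell(-)$ to this exact sequence gives $0\to T_\ell(T)\to T_\ell(G)\to T_\ell(A)\to 0$, which is dual to the exact sequence of $H^1$'s obtained from the Leray (or Gysin) analysis below; so the two are compatible.

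Next, for the main identity $H^*(G)=\Sym(H^1(G))$, I would argue by induction on the torus rank of $G$, exactly paralleling the inductive structure used elsewhere in the paper. The base case splits in two: if $G=A$ is an abelian variety of dimension $g$, then $H^*(A_{\bar k},\Q_\ell)=\bigwedge^* H^1(A_{\bar k},\Q_\ell)$ is classical (the cup product makes $H^*$ a graded-commutative Hopf algebra whose primitive part is $H^1$, of dimension $2g$); in the super-vector-space convention of the paper, where $H^1$ sits in odd degree, $\bigwedge^*$ of a super vector space concentrated in odd degree \emph{is} its symmetric algebra $\Sym$, so $H^*(A)=\Sym(H^1(A))$. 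If $G=\Gm$, then $H^*(\Gm)=\Q_\ell\oplus\Q_\ell(-1)[-1]$, i.e.\ $H^0$ and $H^1$ one-dimensional and all higher cohomology zero, which is $\Sym(H^1(\Gm))$ since $\Sym^{\ge 2}$ of a one-dimensional odd object vanishes. For the inductive step, write $1\to\Gm\to G\to H\to 1$ with $H$ of smaller torus rank. This is a $\Gm$-torsor, hence (étale locally on $H$) a fibration with fibre $\Gm$; the associated Gysin/Leray sequence reads
\[
\cdots\to H^{i-2}(H)(-1)\xrightarrow{\cup c} H^i(H)\to H^i(G)\to H^{i-1}(H)(-1)\xrightarrow{\cup c} H^{i+1}(H)\to\cdots,
\]
where $c\in H^2(H)(1)$ is the first Chern class of the line bundle associated with the extension. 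By the inductive hypothesis $H^*(H)=\Sym(H^1(H))$, and $c$ is (up to a unit) the image of the class $[\Mof(G)]$ — equivalently, $c$ is the element of $\Ext^1(H,\Gm)=\Pic^0(H)$... more simply: the cup product with $c$ on $\Sym(H^1(H))$ is the Koszul differential for the element $c$, and its cohomology is $\Sym(H^1(H)/\langle\text{nothing}\rangle)\otimes(\Q_\ell\oplus\Q_\ell c)$; unwinding, one gets precisely $\Sym$ of the two-step extension $0\to H^1(\Gm)\to H^1(G)\to H^1(H)\to 0$, i.e.\ $H^*(G)=\Sym(H^1(G))$, and simultaneously the long exact sequence degenerates to the short exact sequence $0\to H^1(\Gm)\to H^1(G)\to H^1(H)\to 0$ computing $H^1(G)$.

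The main obstacle I anticipate is making the Koszul-complex bookkeeping in the inductive step clean: one must check that cup product with the Chern class $c$ really is injective on the appropriate graded pieces of $\Sym(H^1(H))$ (equivalently that $c\neq 0$ whenever the extension is non-split, and that when it splits one gets the expected tensor decomposition), so that the Leray/Gysin sequence breaks into the short exact sequences one wants. This is the cohomological shadow of the comparison of Chern classes carried out in Appendix \ref{sectionchern}, and for the purposes of this lemma one can alternatively sidestep it by quoting the known computation of $H^*$ of a semiabelian variety from the literature (e.g.\ via the fact that $G_{\bar k}$ is, after the analytification over $\C$ or via comparison, built from $\Gm$'s and an abelian variety with the Künneth formula applying on the universal cover / in the limit). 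Either way, the formula $H^*(G)=\Sym(H^1(G))$ with $H^1(G)=V_\ell(G)^*$ follows.
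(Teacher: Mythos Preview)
The paper's own proof is a one-line citation: the abelian and torus cases are classical, and for the general semiabelian case it refers to Brion--Szamuely \cite[Lemma 4.1 and 4.2]{BSz}. Your fallback suggestion (``quote the known computation from the literature'') is exactly what the paper does.

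Your inductive argument via the Gysin sequence is more hands-on, but it contains a genuine confusion that would derail the proof as written. You anticipate having to show that cup product with the Chern class $c$ is \emph{injective}, and you phrase the expected obstacle as ``$c\neq 0$ whenever the extension is non-split''. In fact the opposite is true: the line bundle attached to an extension $1\to\Gm\to G\to H\to 1$ of semiabelian varieties lies in $\Pic^0(H)$ (the extension is classified by a point of the dual abelian variety), and a line bundle algebraically equivalent to zero has \emph{vanishing} first Chern class in $H^2_{\et}(H_{\bar k},\Q_\ell(1))$. So $c=0$ always, cup product with $c$ is the zero map, and the Gysin sequence breaks into short exact sequences
\[
0\to H^i(H)\to H^i(G)\to H^{i-1}(H)(-1)\to 0
\]
for every $i$. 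This immediately gives the correct dimensions $\dim H^i(G)=\binom{2g+r}{i}$ by Pascal's rule and the inductive hypothesis, and also the short exact sequence on $H^1$.

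What the Gysin sequence alone does \emph{not} give you is the algebra structure $H^*(G)\cong\Sym(H^1(G))$. For that, the cleanest route (and the one the paper itself uses in the introduction and again in the proof of Proposition \ref{realization M1}) is the structure theorem for connected graded-commutative finite-dimensional Hopf algebras over a field of characteristic zero: such an algebra is the exterior algebra on its primitive part. The dimension count you obtain from the degenerate Gysin sequence then forces the primitive part to be exactly $H^1(G)$. With this correction your argument goes through, but it is considerably longer than the paper's citation.
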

\begin{proof}The abelian or torus case are classical. For the semiabelian
case see e.g. \cite[Lemma 4.1 and 4.2]{BSz}. The rational case follows from their stronger assertion with torsion coefficients.
\end{proof}




\begin{lemma}\label{H1 faithful}      
The functor $H^1$ is exact and faithful on $\sAbQ$.

\end{lemma}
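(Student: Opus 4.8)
The plan is to use the very concrete description of $\ell$-adic cohomology of a semiabelian variety from Lemma \ref{H semiabelian}, namely $H^1(G) = V_\ell(G)^*$, where $V_\ell(G) = T_\ell(G) \otimes_{\Z} \Q_\ell$ is the rational $\ell$-adic Tate module. Since $\sAbQ$ is an abelian category, exactness of $H^1$ amounts to showing that $H^1$ takes a short exact sequence $0 \to G' \to G \to G'' \to 0$ in $\sAbQ$ to a short exact sequence of $\Q_\ell$-vector spaces. Dualizing, this is equivalent to exactness of the Tate module functor $V_\ell(-)$ on $\sAbQ$. First I would recall that for an \emph{isogeny} of semiabelian varieties the induced map on $V_\ell$ is an isomorphism (the kernel is finite, hence killed after $\otimes\, \Q_\ell$), so $V_\ell$ is well-defined on the isogeny category $\sAbQ$. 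Then, given a short exact sequence of semiabelian varieties, the snake lemma applied to multiplication by $\ell^n$ on $0 \to G' \to G \to G'' \to 0$ gives an exact sequence of $\ell^n$-torsion groups together with a surjectivity statement; passing to the inverse limit over $n$ (the relevant $\lim^1$ terms vanish because the torsion groups are finite, so the Mittag-Leffler condition holds) yields the short exact sequence $0 \to T_\ell(G') \to T_\ell(G) \to T_\ell(G'') \to 0$, and tensoring with $\Q_\ell$ preserves exactness. Dualizing gives exactness of $H^1$.

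For faithfulness, since $\sAbQ$ is abelian and $H^1$ is additive and exact, it suffices to check that $H^1(G) = 0$ implies $G = 0$ in $\sAbQ$, i.e. that $G$ is finite (hence zero up to isogeny — but a smooth connected group that is finite is trivial). If $H^1(G) = 0$ then $V_\ell(G) = 0$, so $T_\ell(G) = 0$, which forces $\dim G = 0$ since $\dim_{\Q_\ell} V_\ell(G) = 2\dim(\text{abelian part}) + \dim(\text{torus part})$ by Lemma \ref{H semiabelian} together with the Barsotti--Chevalley decomposition recalled above. A connected semiabelian variety of dimension zero is the trivial group, so $G = 0$. Alternatively, and perhaps more cleanly, one invokes faithfulness on the two building blocks: on abelian varieties up to isogeny $V_\ell$ is faithful by the classical theory (a nonzero homomorphism of abelian varieties induces a nonzero map on Tate modules, as its image is again an abelian variety of positive dimension), and on tori up to isogeny $V_\ell$ is faithful because $\mathrm{Hom}(T, T') \otimes \Q_\ell \hookrightarrow \mathrm{Hom}(V_\ell(T), V_\ell(T'))$ identifies with the faithful action on cocharacter lattices; the five lemma applied to the weight filtration (torus part $\subset$ semiabelian $\twoheadrightarrow$ abelian part), which is respected by every morphism in $\sAbQ$ by the remark following the Barsotti--Chevalley theorem, then gives faithfulness on all of $\sAbQ$.

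The main obstacle I anticipate is the bookkeeping in the exactness argument: one must be careful that the functoriality of the Barsotti--Chevalley decomposition (morphisms of semiabelian varieties restrict to the torus parts and descend to the abelian parts) is exactly what makes the weight filtration argument go through, and one should make sure the inverse-limit argument for the Tate modules is set up with the right finiteness so that $\lim^1$ vanishes. Everything else is standard, and in fact the cleanest writeup probably just cites Lemma \ref{H semiabelian} to reduce exactness to the (known) exactness of $V_\ell$ on $\sAbQ$ and handles faithfulness via the weight filtration together with the classical abelian and toric cases.
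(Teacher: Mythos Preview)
Your proposal is correct and follows essentially the same approach as the paper: both reduce exactness to the explicit formula $H^1(G)=V_\ell(G)^*$ from Lemma~\ref{H semiabelian}, and both obtain faithfulness from the dimension formula $\dim_{\Q_\ell}H^1(G)=2g+r$, which forces $H^1(G)\neq 0$ for nontrivial $G$. The paper's proof is a one-line sketch (``From the explicit formula for $H^1$ we get both the exact sequence and a formula for the dimension of $H^1(G)$. In particular, it does not vanish, if $G$ is non-trivial.''), and your writeup simply unpacks the details behind that sketch; your alternative faithfulness argument via the weight filtration is sound but unnecessary given the simpler dimension count.
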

\begin{proof}
From the explicit  formula for $H^1$ we get both the exact sequence and a formula for the dimension of $H^1(G)$. In particular, it does not vanish, if $G$ is non-trivial.
\end{proof}

\section{The $1$-motive $\Mof(G)$ of $G$}\label{sect_constr}
Let $k$ be a perfect field and $G$ be a smooth commutative group scheme over $k$ (often a semiabelian variety). In this section 
we are going to construct
 a natural morphism
\begin{align*}
\alpha_{G}: M(G) \lra \Mof(G)
\end{align*}
in the category $\DMeetkQ$ of triangulated motives, where $\Mof(G)$ is the
the $1$-motive of $G$ to be defined below. 
This is based on the work of
Barbieri-Viale/Kahn \cite{BVK}, Spie{\ss}/Szamuely \cite{SS}, and Suslin/Voevodsky \cite{SV1}.

There is no need to work with rational coefficients in this section, but we prefer to put them in the setting (they will become essential later). 
\subsection{Adding transfers}
\begin{definition}\label{defnulG}Let $\ul{G}$ be the presheaf of abelian groups $\Smk$ defined by $G$,
i.e.,
\[ \ul{G}(S)=\Mor_{\Smk}(S,G)\]
for $S\in \Sm/k$. We denote $\ul{G}_\Q=\ul{G}\tensor_\Z\Q$ the presheaf tensor product, i.e.,
\[ \ul{G}_\Q(S)=\Mor_{\Smk}(S,G)\tensor_{\Z}\Q\]
for $S\in\Sm/k$.
\end{definition}

\begin{lemma}[Spie{\ss}-Szamuely, Orgogozo]\label{lemwelldef}
Let $G$ be a smooth commutative group scheme over $k$, then $\ul{G}_\Q$ is an \'etale sheaf with transfers of $\Q$-vector spaces. When $G$ is a semiabelian variety, $\ul{G}_\Q$ is homotopy invariant.
\end{lemma}
\begin{proof}
Spie{\ss} and Szamuely showed in \cite[Proof of Lemma 3.2]{SS} that $\ul{G}$ is
a presheaf with transfers (for more details we refer to second author's thesis \cite[Section 2.3]{ward}).

Homotopy invariance follows as in the abelian or torus case, see \cite[Lemma 3.3.1]{Org}.

We have to check the sheaf condition for $\ul{G}_\Q$. It is enough to check this for covers with finitely many objects, as a general cover can be refined to such a cover. On the other hand the sheaf condition in this case is implied by the sheaf condition of 
$S \mapsto\Mor_{\Smk}(S,G)$, which is true by \'etale descent. 
\end{proof}

Let us recall the relation between finite correspondences $\Cor(S,X)$ and multivalued maps. Let $S$ be a connected smooth $k$-scheme and $W\subset S\times X$ be an irreducible subvariety finite and surjective over $S$, i.e., a primitive finite correspondence. Let $d$ be the degree of $W/S$. By work of Suslin and Voevodsky  \cite[Preamble to Theorem 6.8]{SV1} there is an associated morphism
\[ S\to S^d(X) \ ,\]
where $S^d(X)=X^d/\mathcal{S}_d$ is the symmetric power of $X$.

From this description, the following is straightforward.
\begin{proposition}\label{propspsz}
Let $G$ be a commutative group scheme over $k$. Then there is a canonical map of sheaves with transfers
\[\gamma_G:\Cor(\cdot,G)\otimes_\Z\Q\to \ul{G}_\Q\ ,\]
characterized by the fact that it maps a morphism $S\stackrel{f}{\rightarrow} S^d(G)$ to
\[S\stackrel{f}{\rightarrow} S^d(G)\stackrel{\mu}{\rightarrow}G\ ,\]
where $ \mu:S^d(G)\to G$ is the summation map. Moreover, $\gamma_G$ is natural in $G\in \CGS_k$.
\end{proposition}

\begin{definition}\label{defnalpha}
Let $G$ be a semiabelian variety over $k$. 
We denote
\[ \Mof(G)\in \DMeetkQ \]
the image of the complex given by $\ul{G}_\Q$ concentrated in degree in $0$
under the funtor $q$.
Let
\[ \alpha_G:M(G)\to \Mof(G)\]
be the morphism in $\DMeetkQ$ induced from $\gamma_G:\Cor(\cdot,G)\otimes_\Z\Q\to\ul{G}_\Q$  (by the adjunction between
$D^-(\STEkQ)$ and $\DMeetkQ$, see Section \ref{sect_motives}).
\end{definition}

\begin{remark}
We call $\Mof(G)$ the $1$-motive defined by $G$. Indeed, the definition is a special case of the embedding of the category of $1$-motives into triangulated motives constructed by Barbieri-Viale/Kahn \cite{BVK}.

The notation $\Mof(G)$ should suggest the first K\"unneth component of $G$.
This intuition is justified by  Proposition \ref{Yoneda} and Lemma \ref{natural} (and of course also by the main Theorem \ref{MainThm}).
\end{remark}

\subsection{Elementary properties}\label{fromnowsemiabelian}


\begin{proposition}[Orgogozo]\label{exactMof}\label{Yoneda}
The assignment 
\[  \Mof:\sAb\to\DMeetkQ\]
is an exact functor, i.e., it maps short exact sequences to exact triangles. 
It is isogeny invariant and factors via an exact functor
\[ \Mof:\sAbQ\to\DMeetkQ\ .\]
In particular, it maps direct products in $\sAbQ$ to direct sums and multiplication
by $n$ on a semiabelian variety $G$ to multiplication by $n$ on $\Mof(G)$.

Moreover, $\Mof$ is a full embedding, i.e., for any two semiabelian varieties
$G$ and $H$ we have a natural isomorphism
\[ \Hom_{\sAbQ}(G,H)=\Hom_{\DMeetkQ}(\Mof(G),\Mof(H))\ .\]
\end{proposition}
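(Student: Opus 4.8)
I would prove the four assertions in the order they are stated, reducing everything to known facts about $\ul{G}$ as a sheaf with transfers and about the category $\DMeetkQ$.

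\textbf{Exactness.} First I would recall from the second Remark after the definition of $\sAb/k$ that a short exact sequence $0 \to G' \to G \to G'' \to 0$ in $\sAb$ induces a short exact sequence of \'etale sheaves, hence (by Corollary~\ref{corspsz}, since each $\ul{G}$ has transfers) a short exact sequence $0 \to \ul{G'} \to \ul{G} \to \ul{G''} \to 0$ of \'etale sheaves with transfers. In $D^-(\STEkQ)$ this short exact sequence of complexes concentrated in degree $0$ gives a distinguished triangle, and applying the localization functor $q$ — which is exact and which we have identified with an embedding up to the adjunction of Section~\ref{sect_motives} — yields the exact triangle $\Mof(G') \to \Mof(G) \to \Mof(G'') \to \Mof(G')[1]$ in $\DMeetkQ$. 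Functoriality of $\ul{(\cdot)}$ in $G \in \CGS_k$ (Lemma~\ref{lemwelldef}, Proposition~\ref{propspsz}) makes $\Mof$ a functor; for exactness one only needs that the sequence of sheaves is exact, which is the point above.

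\textbf{Isogeny invariance.} An isogeny $\phi: G \to H$ of semiabelian varieties has finite kernel $K$ and so fits into $0 \to K \to G \to H \to 0$ with $K$ a finite group scheme. Because $\ul{(\cdot)}$ is a sheaf of $\Q$-vector spaces, $\ul{K} = 0$: indeed $K$ is finite, so $\Mor_{\Smk}(S,K)$ is torsion (a connected smooth $S$ maps to $K$ by a map factoring through a finite set of torsion points), and tensoring with $\Q$ kills it. Hence $\ul{G} \to \ul{H}$ is an isomorphism of sheaves, so $\Mof(G) \isom \Mof(H)$. This shows $\Mof$ factors through $\sAbQ$; since $\sAbQ$ is the localization at isogenies (Remark in Section~\ref{sect_rational}), $\Mof: \sAbQ \to \DMeetkQ$ is a well-defined $\Q$-linear functor. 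It is additive, so it takes finite products to direct sums, and multiplication by $n$ on $G$ goes to multiplication by $n$ on $\Mof(G)$ by functoriality together with the fact that $[n]_*$ on $\ul{G}$ is multiplication by $n$ (visible on sections, where it is $f \mapsto n f$). Exactness on the abelian category $\sAbQ$ then follows from exactness on $\sAb$ proved above, since an exact sequence in $\sAbQ$ is, up to isogeny, one in $\sAb$.

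\textbf{Full faithfulness — the main point.} This is where the work lies. Faithfulness: a nonzero morphism $\psi: G \to H$ in $\sAbQ$ induces a nonzero map $\ul{\psi}$ of sheaves (evaluate on $S = G$ and track the identity to $\psi$ via the last clause of Proposition~\ref{propspsz}: $\theta_{G,G}(\Gamma_{\id}) = \id_G$, so $\ul{\psi}$ sends $\id_G \in \ul{G}(G)$ to $\psi \ne 0$), and $q$ of a nonzero map of sheaves concentrated in degree~$0$ is nonzero because $q$ is faithful on such objects (its right adjoint $i$ is an embedding, and $\Mof(G)$ lies in the heart). Fullness is the serious obstacle: I must produce, for every $\Hom_{\DMeetkQ}(\Mof(G),\Mof(H))$, a class coming from $\Hom_{\sAbQ}(G,H)$. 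The strategy is to compute $\Hom_{\DMeetkQ}(\Mof(G),\Mof(H))$ as a hypercohomology/$\Ext$ group of \'etale sheaves with transfers — both sides are homotopy-invariant sheaves with transfers by Corollary~\ref{corspsz}, so $\Hom$ in $\DMeetkQ$ between them in degree $0$ is $\Hom_{\STEkQ}(\ul{G}, \ul{H})$ plus a correction in higher $\Ext$ that vanishes for degree reasons (no negative Ext, and the relevant group is $\Ext^0$); then I identify $\Hom_{\STEkQ}(\ul{G},\ul{H})$ with $\Hom_{\sAbQ}(G,H)$. For the last identification, a map of \'etale sheaves $\ul{G} \to \ul{H}$ is determined by its effect on sections over smooth $k$-schemes; evaluating on $G$ itself and chasing $\id_G$ produces a morphism of schemes $G \to H$ up to $\Q$-combination (using that $\ul{H}(G) = \Mor(G,H) \otimes \Q$ and a Yoneda-type argument that a natural transformation respecting the group-scheme structure is a $\Q$-linear combination of homomorphisms — semiabelian varieties have no nonconstant maps killing the origin that are not homomorphisms after the Barsotti–Chevalley analysis, or one invokes rigidity). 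This is exactly the Barbieri-Viale/Kahn full-faithfulness of the $1$-motive embedding \cite{BVK}, restricted to the sub-case of semiabelian varieties placed in degree $0$, so I would state it as a citation to \cite{BVK} and only sketch the sheaf-theoretic reduction, flagging that the genuinely new content is nil — the statement is recorded here because it is used repeatedly in the sequel.
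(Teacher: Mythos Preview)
Your treatment of exactness and isogeny invariance is correct and essentially the same as the paper's (the paper phrases isogeny invariance as ``multiplication by $n$ is invertible on $\ul{G}$'', you as ``$\ul{K}=0$ for $K$ finite''; these are equivalent).

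For full faithfulness your route diverges from the paper's and is more complicated than necessary. You attempt Yoneda directly on $\ul{G}$, run into the fact that $\ul{G}$ is not representable (it is $G(\cdot)\otimes\Q$, not $G(\cdot)$), and patch this by appealing to rigidity of semiabelian varieties and ultimately to \cite{BVK}. The paper instead inserts one clean extra step: by the universal property of $\otimes_{\Z}\Q$, a map of $\Q$-sheaves $\ul{G}\to\ul{H}$ is the same as a map of abelian sheaves $\ul{G}_\Z\to\ul{H}$, where $\ul{G}_\Z(S)=\Mor(S,G)$ \emph{is} the representable sheaf. Yoneda then gives $\Mor_{\SE}(\ul{G}_\Z,\ul{H})=\ul{H}(G)=\Mor(G,H)\otimes\Q$ on the nose, and restricting from morphisms of set-valued sheaves to morphisms of group objects on both sides yields $\Hom_{\sAb}(G,H)\otimes\Q$. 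No rigidity, no Barsotti--Chevalley, no citation of \cite{BVK} or \cite{Org} is needed; the paper explicitly notes (in the Remark following the Proposition) that it chose to give this self-contained argument rather than cite the general $1$-motive result. Your rigidity argument can be made to work (decompose $\Mor(G,H)=H(k)\times\Hom_{\sAb}(G,H)$ and check that only the second factor survives the primitivity condition), but it is a detour. The remaining steps---that transfers are for free, and that $\Hom_{\DMeetkQ}(\Mof G,\Mof H)=\Hom_{\STEkQ}(\ul{G},\ul{H})$ because both sheaves are homotopy invariant---you have correctly identified.
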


\begin{proof}
The fact that $\Mof$ is a full embedding is shown more generally for the derived category of $1$-motives in \cite[Proposition 3.3.3]{Org}. Exactness is explained in the preamble of loc. cit.
\end{proof}

\begin{lemma}\label{natural}
The morphism $\alpha_G: M(G) \to \Mof(G)$ is natural in ${G \in \sAb/k}$ and it is always non-zero for $G\neq 0 \in \sAbQ$.
\end{lemma}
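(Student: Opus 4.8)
The naturality of $\alpha_G$ in $G\in\sAb/k$ is immediate from the construction: the morphism $\gamma_G:L(G)\to\ul{G}$ is the map $\theta_{\cdot,G}$ of Proposition~\ref{propspsz}, which is natural in $G\in\CGS_k$ (hence in $G\in\sAb/k$), and $\alpha_G$ is obtained from $\gamma_G$ by applying the functor $q$ and the adjunction $(q,i)$, both of which are functorial. So the only real content is the non-vanishing assertion, and here the plan is to produce a left inverse, or at least a map that composes with $\alpha_G$ to something visibly non-zero, after passing to a situation where we can compute.

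First I would reduce to a statement we can detect. By Proposition~\ref{stalk} it suffices to check $\overline{\alpha_G}=\alpha_{G_{\bar k}}$ is non-zero, so we may assume $k=\bar k$. Then I would pick a closed point, i.e.\ a $k$-rational point, say the identity $e\in G(k)$, giving a split inclusion $\Spec k\xrightarrow{e} G$ and a retraction $G\to\Spec k$; more importantly, any other $k$-point $g\in G(k)$ gives a morphism $\Spec k\to G$. The composite
\[
\Spec k \xrightarrow{\ g\ } G,\qquad M(\Spec k)=\one \xrightarrow{M(g)} M(G)\xrightarrow{\alpha_G}\Mof(G)
\]
is, by the last clause of Proposition~\ref{propspsz} (namely $\theta_{S,G}(\Gamma_f)=f$), precisely the map $\one=\Mof(\Spec k)\to\Mof(G)$, wait—$\Spec k$ is not semiabelian. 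Instead: $\alpha_G\circ M(g)$ is the image in $\Mof(G)=\ul{G}$ of the section $g\in\ul{G}(k)=G(k)\tensor\Q$, viewed as a map $\one\to\ul{G}$ in $\STEkQ$. So the task becomes: show that for a non-trivial semiabelian variety $G$ over an algebraically closed field, the sheaf-theoretic map $\alpha_G:M(G)\to\ul{G}$ is non-zero, and for this it is enough to exhibit one section of $\ul{G}$ that lies in the image, together with the fact that $\ul{G}$ itself is non-zero.

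Concretely, I would argue as follows. The map $\alpha_G$ on $H_0$ (i.e.\ on $h_0^{\tn{Nis}}$ of the motivic complexes) is the canonical surjection: $L(G)$ computes the motive, and on $\pi_0$-level sections its $H_0$ is generated by classes of correspondences, which $\theta$ sends onto all of $\ul{G}(S)=G(S)\tensor\Q$, since the graph of an actual morphism $S\to G$ maps to that morphism. Thus $\alpha_G$ induces a surjection on the degree-$0$ cohomology sheaves with transfers, $\ul{h}_0^{\tn{Nis}}(M(G))\twoheadrightarrow\ul{G}$. By Lemma~\ref{H semiabelian} or simply because $\ul{G}(k)=G(k)\tensor\Q\neq 0$ for $G\neq 0$ over $\bar k$ (a non-trivial connected algebraic group over an algebraically closed field has infinitely many rational points, in particular a point of infinite order or at least a non-identity point, so after $\tensor\Q$ the group is non-zero), the sheaf $\ul{G}$ is non-zero. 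A non-zero surjection onto a non-zero object cannot be the zero morphism in $\DMeetkQ\subset D^-(\STEkQ)$, since a zero morphism of complexes induces zero on all cohomology sheaves. Hence $\alpha_G\neq 0$.

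The main obstacle is making the surjectivity on $H_0$ precise and clean. One must be careful that $M(G)=q(L(G))$ and that in $\DMeetkQ$ one identifies with the subcategory of $D^-(\STEkQ)$ of complexes with homotopy-invariant cohomology sheaves; then $\ul{h}_0^{\tn{Nis}}(M(G))$ is the Nisnevich (equivalently étale, by \cite[Lemma~14.21]{MVW}) sheafification of $S\mapsto H_0(C_\bullet L(G)(S))$, and one needs that $\theta$ descends to this quotient and remains surjective onto $\ul{G}$ — this is exactly the content of Corollary~\ref{corspsz} together with the concrete formula $\theta_{S,G}(\Gamma_f)=f$. Once that bookkeeping is in place, the non-vanishing is formal. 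An alternative, perhaps shorter route avoiding $H_0$: compose $\alpha_G$ with $M(g)$ for a non-torsion $k$-point $g$ (which exists since $G(\bar k)$ is divisible and non-trivial, hence not all torsion), observe the composite is the non-zero section $g\in\ul{G}(k)\tensor\Q$ regarded as a morphism $\one\to\Mof(G)$, and conclude $\alpha_G\neq 0$ directly; I would present this second argument as the proof, as it sidesteps the sheafification subtleties entirely.
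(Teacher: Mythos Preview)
Your treatment of naturality matches the paper's: both derive it from the naturality of $\gamma_G$ in Proposition~\ref{propspsz} and Corollary~\ref{corspsz}, then pass through the adjunction.

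For non-vanishing, however, your argument has a genuine gap in positive characteristic. You reduce to $k=\bar k$ and then want a non-torsion point $g\in G(k)$, or equivalently that $G(\bar k)\tensor\Q\neq 0$. But this is \emph{false} over $\overline{\mathbb F_p}$: for any semiabelian variety $G$ over a finite field, $G(\overline{\mathbb F_p})=\bigcup_n G(\mathbb F_{p^n})$ is a union of finite groups, hence entirely torsion, so $G(\overline{\mathbb F_p})\tensor\Q=0$. Your sentence ``a non-trivial connected algebraic group over an algebraically closed field has infinitely many rational points, in particular a point of infinite order'' conflates infinitude with the existence of non-torsion elements. Both your first approach (which needs $\ul{G}(k)\neq 0$) and your second approach (which needs a non-torsion $k$-point) fail for this reason.

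The paper's fix is to evaluate on $S=G$ rather than $S=\Spec k$: the map $\gamma_G:L(G)\to\ul{G}$ sends $\id_G\in\Cor(G,G)$ to $\id_G\in\ul{G}(G)=\Mor_{\Sm/k}(G,G)\tensor\Q$, and $\id_G$ is non-torsion there because $n\cdot\id_G=[n]_G$ is an isogeny, not the constant map to $e$, for every $n\neq 0$. Thus $\gamma_G\neq 0$ as a map of sheaves, hence non-zero in $D^-(\STEkQ)$, and by the adjunction (using that $\ul{G}$ is already homotopy invariant so $iq(\ul{G})=\ul{G}$) one concludes $\alpha_G\neq 0$. No base change, no rational points, no $H_0$ bookkeeping required. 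Your first approach becomes correct if you replace the appeal to $\ul{G}(k)$ by this observation about $\ul{G}(G)$, at which point it collapses to the paper's two-line argument.
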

\begin{proof}
The statement is clear if we replace $\alpha_G$ by the map $\gamma_G$ of Proposition \ref{propspsz} (we have $\gamma_G \neq 0$ because it does not vanish on $\id_G\in \Cor(G,G)\otimes_\Z\Q$). By adjunction this implies the statements for $\alpha_G$.
\end{proof}

\begin{corollary}\label{formulaMof}
Let $G$ be a semiabelian variety. Let $\mu$ be the multiplication, $-1$ the inverse, $0$ the unit and $\epsilon$ the structural map. Then
\begin{align*}
\Mof(\mu)=+:&\Mof(G)\oplus\Mof(G) \to\Mof(G)\ ,\\
\Mof(-1)=-1:&\Mof(G)\to \Mof(G)\ ,\\
\Mof(0)=0:&\Mof(\Spec k)\to \Mof(G)\ ,\\
\Mof(\epsilon)=0:&\Mof(G)\to \Mof(\Spec k)\ .
\end{align*}
\end{corollary}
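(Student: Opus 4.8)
The plan is to reduce each identity to the corresponding statement for the map $\gamma_G : L(G) \to \ul{G}$ of Corollary \ref{corspsz}, since $\alpha_G$ is obtained from $\gamma_G$ by adjunction and $\Mof$ is a full embedding (Proposition \ref{Yoneda}); once the identities are known at the level of the sheaf-theoretic map $\theta_{S,G}$ of Proposition \ref{propspsz}, they transport automatically to $\DMeetkQ$. The key underlying fact is that $\theta_{S,G}$ is \emph{additive} in $G$ in a strong sense: for a finite correspondence $W \subset S \times G$, $\theta_{S,G}(W)$ is the composite $S \xrightarrow{W} S^d(G) \xrightarrow{\mu} G$, and compatibility with group homomorphisms $G \to H$ was recorded in the naturality part of Proposition \ref{propspsz}.

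First I would treat $\Mof(\mu)$. Applying $\Mof$ to the multiplication $\mu : G \times G \to G$ and using that $\Mof$ is additive and sends products to direct sums (Proposition \ref{exactMof}), one must identify $\Mof(\mu) : \Mof(G) \oplus \Mof(G) \to \Mof(G)$ with the sum map $+$. This amounts to checking that the induced map of sheaves $\ul{G} \oplus \ul{G} = \ul{G \times G} \to \ul{G}$ is $(x,y) \mapsto x + y$, which is immediate from the definition $\ul{G}(S) = \Mor_{\Smk}(S,G) \ot \Q$ together with the fact that the group structure on $\ul{G}(S)$ is the one induced pointwise by $\mu$. The identity $\Mof(\mu) = +$ can also be seen as the statement that $\Mof$ is a functor to an additive category in which $\Mof(G)$ carries its canonical abelian-group-object structure, and the comultiplication/addition on a group scheme maps to the categorical addition.

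Next, $\Mof(-1) = -1$ follows from part (1) of Proposition \ref{exactMof}: $\Mof$ factors through $\CGSQ$ and sends multiplication by $n$ on $G$ to multiplication by $n$ on $\Mof(G)$; taking $n = -1$ gives the claim. The two statements about the unit section $0 : \Spec k \to G$ and the structure map $\epsilon : G \to \Spec k$ are handled together. Since $\Spec k$ is the zero object of $\sAbQ$ (it is the trivial semiabelian variety, and $\Mof(\Spec k) = 0$ because $\ul{\Spec k}$ is the constant sheaf $\Q$ placed in degree $0$, which is the unit $\one$ — here one must be slightly careful: $\Mof(\Spec k)$ is $\one$, not $0$, so in fact $0$ and $\epsilon$ are maps to/from $\one$, and the assertion is that they are the zero morphisms). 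To see $\Mof(0) = 0$ and $\Mof(\epsilon) = 0$, note that both $0$ and $\epsilon$ are homomorphisms of group schemes, so $\Mof$ applied to them is $\Q$-linear; and each factors through the trivial group scheme, hence through $\Mof(0_{\text{group}}) = $ the zero object of $\sAbQ$ — but since we are in a setting where $\Mof(\Spec k) \neq 0$, the cleanest argument is that $\epsilon \circ 0 = \id_{\Spec k}$ and $0 \circ \epsilon$ factors through $\Spec k$ inside $\sAbQ$ while on $\Mof$ the composite $\Mof(0)\Mof(\epsilon)$ must be the canonical idempotent projecting $\Mof(G)$ onto $\Mof(\Spec k) \hookrightarrow \Mof(G)$; since $G$ is connected with no $\Spec k$-factor in $\sAbQ$, this idempotent is $0$, forcing $\Mof(0) = 0 = \Mof(\epsilon)$ for nontrivial $G$ and trivially for $G = 0$.

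The main obstacle I anticipate is purely bookkeeping around the role of $\Mof(\Spec k)$: one must decide whether $\Mof(\Spec k)$ means the unit motive $\one$ or the zero motive, and the four displayed formulas only make sense with a consistent convention (the natural reading is $\Mof(\Spec k) = \one$, and then "$\Mof(0) = 0$" asserts the composite $\one \to \Mof(G)$ is the zero morphism, which is correct because $\Mof$ kills the augmentation ideal structure — equivalently the section $0 : \Spec k \to G$ induces the zero map on reduced motives). Beyond that, everything is a formal consequence of additivity and functoriality of $\Mof$ together with the explicit description of $\ul{G}$, so no genuine difficulty should arise; the proof is essentially a one-line reduction in each case to Proposition \ref{exactMof} and Proposition \ref{propspsz}.
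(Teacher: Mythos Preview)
Your treatment of $\Mof(\mu)=+$ and $\Mof(-1)=-1$ is fine and matches the paper: the first is by definition of the sheaf $\ul{G}$, the second is Proposition~\ref{exactMof} with $n=-1$.

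The genuine error is in your handling of $\Mof(0)$ and $\Mof(\epsilon)$. You assert that $\Mof(\Spec k)=\one$, confusing the functor $\Mof$ with the motive functor $M$. In fact $\Mof(\Spec k)=0$: by Definition~\ref{defnulG}, $\ul{\Spec k}(S)=\Mor_{\Smk}(S,\Spec k)\tensor_\Z\Q$, and the abelian group $\Mor_{\Smk}(S,\Spec k)$ is the trivial group (the group-scheme structure on $\Spec k$ is the zero object of $\CGS$), so the sheaf $\ul{\Spec k}$ is identically zero. Hence $\Mof(\Spec k)=0$ in $\DMeetkQ$, and the identities $\Mof(0)=0$ and $\Mof(\epsilon)=0$ are immediate: any morphism with zero source or target is zero. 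This is exactly the paper's one-line argument.

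Your workaround involving idempotents and ``reduced motives'' is therefore unnecessary and rests on a false premise. Once you correct the computation of $\Mof(\Spec k)$, the last two claims become trivial and the whole corollary is a direct consequence of the definitions and Proposition~\ref{exactMof}, with no need to invoke $\gamma_G$, $\theta_{S,G}$, or the full-embedding statement.
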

\begin{proof} The statement on $\mu$ holds by definition. The second follows from Proposition \ref{exactMof} with $n=-1$.
The last two are trivial
from $\Mof(\Spec k)=0$.
\end{proof}




Recall that  $\bar{(\cdot)}$ denotes the pull-back to $\bar{k}$.

\begin{lemma}\label{basechange}
Let $G$ be a semiabelian variety over $k$. 
\begin{align*}
   \overline{M(G)}=M(\overline{G})\ ,\\
  \overline{\Mof(G)}=  \Mof(\overline{G})\ ,\\
  \overline{\alpha_G}=\alpha_{\overline{G}}\ .
\end{align*}
\end{lemma}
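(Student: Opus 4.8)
The plan is to verify each of the three identities by unwinding the definitions and using that base change to $\bar{k}$ is compatible with all the constructions involved. First I would recall that the pull-back functor $\overline{(\cdot)}\colon \Sm/k \to \Sm/\bar{k}$ extends to a tensor-triangulated functor $\overline{(\cdot)}\colon \DMeetkQ \to \DMeet(\bar{k},\Q)$ (and likewise on the level of $D^-$ of \'etale sheaves with transfers), and that this functor commutes with the functors $L$ and $q$ of Section \ref{sect_motives}; the identity $\overline{M(G)} = M(\overline{G})$ is then immediate from $M = q\circ L$ and the fact that $L$ is defined representably by the scheme. This is standard and one can cite the compatibility of Voevodsky's motive functor with base change.

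Next, for $\overline{\Mof(G)} = \Mof(\overline{G})$, I would observe that by Definition \ref{defnulG} the sheaf $\ul{G}$ is represented (after $\tensor_\Z\Q$ and sheafification, which is automatic by Lemma \ref{lemwelldef}) by the scheme $G$, so pulling back to $\bar{k}$ sends $\ul{G}$ to the \'etale sheaf on $\Sm/\bar{k}$ represented by $G_{\bar{k}} = \overline{G}$, which is $\ul{\overline{G}}$. One small point to address is that the pull-back of an \'etale sheaf with transfers is again a sheaf with transfers and that this is compatible with the transfer structure of Spie{\ss}--Szamuely from Corollary \ref{corspsz}; this follows because the construction of $\theta_{S,G}$ via symmetric powers and the summation map $\mu\colon G^d\to G$ is itself stable under base change (symmetric powers and group laws commute with $\overline{(\cdot)}$). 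Since $\Mof(G)$ is by Definition \ref{defnalpha} just $\ul{G}$ placed in degree $0$, and since $q$ commutes with base change, we get $\overline{\Mof(G)} = \Mof(\overline{G})$.

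Finally, the identity $\overline{\alpha_G} = \alpha_{\overline{G}}$ follows by the same principle applied to the morphism rather than the objects: $\alpha_G$ is by construction the image under the adjunction $q\dashv i$ of the map $\gamma_G\colon L(G)\to \ul{G}$, which in turn is $\theta_{\cdot,G}$. Since $\theta_{\cdot,G}$ is compatible with base change (as just noted) and the adjunction between $D^-(\STEkQ)$ and $\DMeetkQ$ is compatible with the base-change functors (being induced by compatible pairs of functors on the underlying sites), pulling back $\alpha_G$ to $\bar{k}$ yields $\alpha_{\overline{G}}$. I would phrase all three assertions as consequences of a single remark: every functor and natural transformation used to build $M(G)$, $\Mof(G)$, and $\alpha_G$ is defined by data on the big \'etale site of $\Sm/k$ that is manifestly preserved by restriction along $\Sm/\bar{k}\to\Sm/k$.

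The main obstacle, such as it is, is not conceptual but a matter of being careful about which base-change compatibilities are ``obvious'' versus need a citation: specifically, the commutation of the localization functor $q$ (equivalently, the $\A^1$-localization and \'etale sheafification) with $\overline{(\cdot)}$, and the compatibility of the Spie{\ss}--Szamuely transfer structure with base change. Both are standard — the former is part of the formalism of $\DM$ over general bases (e.g.\ via \cite{CD} or the construction in \cite{MVW}), and the latter is visible from the explicit formula for $\theta_{S,G}$ — so the proof is short, and in the paper it is reasonable to simply remark that all constructions are compatible with the base change functor and leave the verification to the reader.
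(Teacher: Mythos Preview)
Your proposal is correct and follows essentially the same approach as the paper: reduce to the analogous statements for $L(G)$, $\ul{G}$, and $\gamma_G$ in $\STE(\bar{k},\Q)$ by checking the definitions, then apply the localization functor $q$. The paper's proof is a two-line sketch of exactly this, whereas you spell out more carefully which compatibilities (of $q$, of the Spie{\ss}--Szamuely transfers, of the adjunction) need to be invoked.
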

\begin{proof}
The analogous statement in $\STE(\bar{k},\Q)$ for $\Cor(\cdot,G)\otimes_\Z\Q$, $\ul{G}_\Q$ and $\gamma_G$ can be proved just by checking the definitions. Then one deduces the statement by applying the functor $q$.
\end{proof}

\section{Main Theorem}\label{sect main theorem}
In this section we state the main theorem. 
The point is to construct a morphisms $\vp_G$ and the theorem will essentially state that it is an isomorphism. We also establish some basic properties of $\vp_G$.
\subsection{The morphism $\vp_G$}

\begin{definition}
Let $G$ be a semiabelian variety over $k$, $n \geq 0$ be an integer and $\Delta^n_G$ be the $n$-fold diagonal. We define $\vp_G^n$ to be the morphism
\[ \vp_G^n:  M(G) \sxra{M(\Delta_{G}^{n})} M(G)^{\ox n} \sxra{\alpha_{G}^{\ox n}} \Mof(G)^{\ox n}.\]
As $\Delta^n_G$ is invariant under permutations, this factors uniquely 
\[\xymatrix{
M(G)\ar[rr]^{\alpha_G^{\ox n}M(\Delta_G^n)}\ar[rd]_{\vp^n_G}&&   \Mof(G)^{\ox n}\\
   &  \Symn(\Mof(G))\ar[ur]
}\]
\end{definition}
\begin{remark} Equivalently, we have
\[\vp_G^n: M(G) \sxra{M(\Delta_{G}^{n})} M(G)^{\ox n} \sxra{\alpha_{G}^{\ox n}} \Mof(G)^{\ox n}\lra \Symn(\Mof(G)),\]
which was the original definition in \cite{ward}.
\end{remark}

\begin{definition}\label{defnall}
Let $G$ be a semiabelian variety over $k$ which is an extension of an abelian variety of dimension $g$ by a torus of rank $r$. 
Define the morphism $\vp_{G}$ as 
\[ \vp_{G}= \bigoplus_{n = 0}^{2g+r} \vp_G^n: M(G) \lra \bigoplus_{n = 0}^{2g+r}\Symn(\Mof(G)).\]
\end{definition}

Our main theorem is the following; it will be proven in Section \ref{proofMainThm}.

\begin{theorem}\label{MainThm}
Let $k$ be a perfect field and $G$ be a semiabelian variety which is an extension of an abelian variety of dimension $g$ by a torus of rank $r$. Then the motive $\Mof(G)$ is odd of dimension $2g+r$ and the map 
\[ \vp_{G}: M(G) \lra \bigoplus_{n = 0}^{2g+r}\Symn(\Mof(G)).\]
is an isomorphism of motives. 
\end{theorem}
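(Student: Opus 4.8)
The plan is to induct on the torus rank $r$. The base case $r=0$ is the case of abelian varieties, established in Section~\ref{sect special}: there $\Mof(A)$ is the classical motive $\mathfrak{h}_1(A)$, odd of dimension $2g$, and $\vp_A$ is the known K\"unneth isomorphism. For the inductive step I would fix a short exact sequence of semiabelian varieties
\[ 1 \to \Gm \to G \to H \to 1, \]
obtained by choosing a rank-one subtorus of the maximal torus of $G$, so that $H$ has torus rank $r-1$, and assume the theorem for $H$.

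The core of the argument is the comparison of two distinguished triangles attached to this sequence. On the geometric side, $G\to H$ is the complement of the zero section in a line bundle $L/H$, so the Gysin triangle together with homotopy invariance gives
\[ M(H)(1)[1] \to M(G) \to M(H) \xrightarrow{\cup c_1(L)} M(H)(1)[2]. \]
On the motivic side, $\Mof$ is exact (Proposition~\ref{exactMof}) and $\Mof(\Gm)=\one(1)[1]$ is odd of dimension $1$ (by the torus case of Section~\ref{sect special}), so the filtration of Appendix~\ref{appfilt} on $\Sym^{n}\Mof(G)$ attached to the subobject $\Mof(\Gm)\subset\Mof(G)$ has only two graded pieces and produces triangles
\[ \Sym^{n-1}\Mof(H)(1)[1] \to \Sym^{n}\Mof(G) \to \Sym^{n}\Mof(H) \xrightarrow{\cup e} \Sym^{n-1}\Mof(H)(1)[2], \]
where $e\colon\Mof(H)\to\one(1)[2]$ is the class of the extension. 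Summing over $n$ and invoking the inductive isomorphism $\vp_H$ identifies the outer terms and the third terms of the two triangles; what remains is to match the connecting maps $\cup c_1(L)$ and $\cup e$, and this is precisely the comparison of the two natural definitions of the first Chern class of the line bundle of an extension by $\Gm$ carried out in Appendix~\ref{sectionchern}. This is the main obstacle of the whole proof. Granting it, the two triangles become isomorphic, so on the one hand $\Mof(G)$ is odd of dimension $2g+r$ (both outer terms of the $n$-th triangle vanish for $n>2g+r$, while $\Sym^{2g+r}\Mof(G)\cong\det\Mof(H)(1)[1]\neq 0$), and on the other hand there is a non-canonical isomorphism $\psi\colon M(G)\xrightarrow{\sim}\bigoplus_{n}\Sym^{n}\Mof(G)$; in particular $M(G)$, being isomorphic to a finite direct sum of direct summands of tensor powers of the odd finite-dimensional motive $\Mof(G)$, is Kimura finite.

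It remains to replace $\psi$ by $\vp_G$. First I would compute the $\ell$-adic realization of $\vp_G$: by Section~\ref{sect real} the geometric motive $\Mof(G)$ realizes to $H^{1}(G)$ placed in degree $1$ and $H^{*}(\alpha_G)$ is the inclusion $H^{1}(G)\hookrightarrow H^{*}(G)$, while $H^{*}(\Delta_{G}^{n})$ is the $n$-fold cup product; hence $H^{*}(\vp_G)\colon\bigoplus_{n}\bigwedge^{n}H^{1}(G)\to H^{*}(G)$ is the canonical map out of the exterior algebra on $H^{1}(G)$, which is an isomorphism by Lemma~\ref{H semiabelian}. Since $\vp_G$ is natural in $G$ (naturality of $\alpha$ and of the diagonals), it is compatible with the projections to $M(H)$ in the two triangles; comparing with $\psi$, the discrepancy between $\vp_G$ and $\psi$ is controlled by maps factoring through the degree shift $M(H)(1)[1]$, and one uses this, together with the realization computation, to modify $\psi^{-1}$ into a morphism $g\colon\bigoplus_{n}\Sym^{n}\Mof(G)\to M(G)$ whose $\ell$-adic realization is inverse to $H^{*}(\vp_G)$. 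Finally, Theorem~\ref{conservativity} applied to $\vp_G$ and $g$ with the contravariant tensor functor $H^{*}$ --- legitimate because $M(G)$ and $\bigoplus_{n}\Sym^{n}\Mof(G)$ are finite-dimensional --- yields that $\vp_G$ is an isomorphism.

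Besides the Chern-class identification of Appendix~\ref{sectionchern}, which is the genuinely hard input, the remaining care is technical: one must run the symmetric-power filtration of Appendix~\ref{appfilt} at the level where $\Mof(\Gm)$ is odd, keep track of signs and Tate twists in the two triangles, and make sure the base case already supplies the oddness of $\Mof(A)$ and the degree-one concentration of its realization.
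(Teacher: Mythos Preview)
Your overall strategy matches the paper's: induction on torus rank with the abelian base case, comparison of the Gysin and symmetric-power triangles via the Chern-class identification of Appendix~\ref{sectionchern}, the resulting non-canonical isomorphism $\psi$ and Kimura finiteness of $M(G)$, and finally conservativity (Theorem~\ref{conservativity}). One minor omission: over a non-closed perfect field the torus part of $G$ need not contain a copy of $\Gm$ (consider an anisotropic torus), so you must first reduce to $\bar{k}$ via Corollary~\ref{reduction1} before the sequence $1\to\Gm\to G\to H\to 1$ is available.

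The genuine gap is the passage from $\psi$ to a map $g$ with $H^*(g)=H^*(\vp_G)^{-1}$. You argue that the discrepancy $\vp_G-\psi$ factors through the shifted term and that ``one uses this \ldots\ to modify $\psi^{-1}$'', but such a factorization does not by itself produce a map in the opposite direction with prescribed realization; indeed the paper remarks explicitly (after Corollary~\ref{iso psi}) that it was unable to make $\vp_G$ fit into the morphism of triangles and had to use ``a completely different argument instead''. That argument, which you do not mention, runs as follows: restrict $\psi$ to $\psi_1\colon\Mof(G)\to M(G)$ and consider $\alpha_G\circ\psi_1\in\End(\Mof(G))$; the key input is the full embedding of Proposition~\ref{Yoneda}, which identifies this endomorphism with $\Mof(f_0)$ for some $f_0\in\End_{\sAbQ}(G)$, and faithfulness of $H^1$ on $\sAbQ$ (Lemma~\ref{H1 faithful}) then shows $f_0$ is invertible. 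This yields a genuine section $\beta_1$ of $\alpha_G$, whose extension to an \emph{algebra} map $\beta\colon\OSym(\Mof(G))\to M(G)$ is the required $g$; that $H^*(\vp_G\circ\beta)=\id$ follows from the Hopf structure (Proposition~\ref{Hopf}) and the universal property of $\Sym$. The Yoneda step and the use of the algebra structure to extend the section are the missing ideas in your sketch.
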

More refined statements will be deduced in Section \ref{secconseq}.

\subsection{First properties and reductions}

Let $G$ be a semiabelian variety.  The motive $M(G)$ has a canonical Hopf algebra structure
induced by morphisms of varieties:
\begin{itemize}
\item multiplication by multiplication on $G$;
\item comultiplication by the diagonal $\Delta:G\to G\times G$;
\item the antipodal map by the inverse on $G$;
\item the unit by the neutral element;
\item the counit by the structure map to the base field.
\end{itemize}

 The aim here is to show that the morphism $\vp_G$ of Definition \ref{defnall} is a natural morphism of Hopf algebras from $M(G)$ to the
 symmetric coalgebra 
\[ \OSym(\Mof(G))=\prod_{n\geq 0}\Symn(\Mof(G))\ .\]
To consider such an object one needs to work under the following assumption:
\begin{finassumption}\label{assumptionfindim}
The motive $\Mof(G)$ is odd of dimension $2g+r$ (here $G$ is a semiabelian variety which is an extension of an abelian variety of dimension $g$ by a torus rank $r$). 
\end{finassumption}

\begin{remark}The finiteness assumption is needed to make $\OSym(\Mof(G))$ an
object of $\DMeetkQ$.
We are going to establish later (see Proposition \ref{propodd}) that this assumption is always satisfied.
The reader who does not want to work under this assumption can simply work unconditionally in the procategory $\Pro$-$\DMeetkQ$.
\end{remark}

\begin{lemma}\label{coalg}
Under the finiteness assumption \ref{assumptionfindim}, the map $\vp_G$ is the unique morphism of commutative coalgebras extending $\alpha_G$ (Definition \ref{defnalpha}).
\end{lemma}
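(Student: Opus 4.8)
The plan is to deduce the lemma from the universal property of the symmetric coalgebra established in Appendix~\ref{appsym}, the only real work being to pin down the coalgebra structures involved and to verify that $\vp_G$ respects them.

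First I would record the structures in play. The motive $M(G)$ is a counital cocommutative coalgebra in $\DMeetkQ$: comultiplication $M(\Delta_G)$ and counit $M(\epsilon_G)$, with co-associativity and cocommutativity inherited from the diagonal $\Delta_G\colon G\to G\times G$ through the symmetric monoidal K\"unneth isomorphism $M(G\times G)\cong M(G)\otimes M(G)$. Under Finiteness Assumption~\ref{assumptionfindim} the object $\OSym(\Mof(G))=\bigoplus_{n=0}^{2g+r}\Symn(\Mof(G))$ lies in $\DMeetkQ$, and by Appendix~\ref{appsym} it is the cofree cocommutative coalgebra on $\Mof(G)$: for every cocommutative coalgebra $C$, postcomposition with the projection $\pi_1\colon\OSym(\Mof(G))\to\Sym^{1}(\Mof(G))=\Mof(G)$ induces a bijection between morphisms of coalgebras $C\to\OSym(\Mof(G))$ and morphisms $C\to\Mof(G)$ in $\DMeetkQ$. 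Granting this, there is at most one morphism of coalgebras $M(G)\to\OSym(\Mof(G))$ whose $\pi_1$-component is $\alpha_G$, which already yields the uniqueness part of the lemma.

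It then remains to check that $\vp_G$ is such a morphism. That $\pi_1\circ\vp_G=\vp_G^1=\alpha_G$ and that $\pi_0\circ\vp_G=\vp_G^0=M(\epsilon_G)$ is the counit of $M(G)$ are immediate from the definitions, so $\vp_G$ respects the counit. For the comultiplication, write $\wt{\vp}_G^{\,n}=\alpha_G^{\ox n}\circ M(\Delta_G^n)\colon M(G)\to\Mof(G)^{\ox n}$ for the morphism before symmetrisation, so that $\vp_G^n$ is $\wt{\vp}_G^{\,n}$ followed by the canonical projection $\Mof(G)^{\ox n}\to\Symn(\Mof(G))$. The iterated diagonals satisfy $\Delta_G^{p+q}=(\Delta_G^p\times\Delta_G^q)\circ\Delta_G$, which together with the K\"unneth isomorphism gives $\wt{\vp}_G^{\,p+q}=(\wt{\vp}_G^{\,p}\ox\wt{\vp}_G^{\,q})\circ M(\Delta_G)$; moreover $M(\Delta_G^n)$, hence $\wt{\vp}_G^{\,n}$, is $\mathcal{S}_n$-invariant, so the averaging projectors may be commuted past it. Feeding these two facts into the description of the comultiplication $\Delta_{\OSym}$ of $\OSym(\Mof(G))$ from Appendix~\ref{appsym} (whose $(\Sym^{p},\Sym^{q})$-component is induced by the identification $\Mof(G)^{\ox(p+q)}=\Mof(G)^{\ox p}\ox\Mof(G)^{\ox q}$ and the symmetrising projectors) gives, componentwise, $\Delta_{\OSym}\circ\vp_G=(\vp_G\ox\vp_G)\circ M(\Delta_G)$. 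Since both coalgebras are cocommutative, $\vp_G$ is a morphism of (co)commutative coalgebras, hence the unique one.

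I expect the main obstacle to be purely a matter of normalisation. The comultiplication of $\OSym(\Mof(G))$ is by definition the multiplication of the symmetric algebra in the opposite category, and when rewritten through the averaging projectors $\tfrac{1}{n!}\sum_\sigma\sigma$ it can acquire binomial factors; one must check that these are exactly matched by the unnormalised definition $\vp_G^n=\alpha_G^{\ox n}M(\Delta_G^n)$, so that $\vp_G$ is a coalgebra morphism on the nose rather than up to an invertible scalar. This is a short but fiddly computation with the formulas of Appendix~\ref{appsym}. Should one prefer to bypass the appeal to the universal property, uniqueness can also be argued directly: any morphism of coalgebras $f$ satisfies $\pi_0\circ f=\epsilon_{M(G)}$, $\pi_1\circ f=\alpha_G$, and — composing the coalgebra identity with the multiplication $\Sym^{1}(\Mof(G))\ox\Sym^{n-1}(\Mof(G))\to\Symn(\Mof(G))$, which over $\Q$ is invertible after composing back with the comultiplication — a recursion determining $\pi_n\circ f$ from $\pi_1\circ f$ and $\pi_{n-1}\circ f$; one then checks that $\vp_G$ obeys the same recursion.
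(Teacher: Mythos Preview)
Your approach is essentially the paper's: verify that $\vp_G$ is a morphism of coalgebras from the definitions, then invoke the universal property of $\OSym$ (Lemma~\ref{universalalgebra}) for uniqueness. Your verification of comultiplication compatibility is correct and more detailed than the paper's one-line ``clear from the definitions''; indeed, since $(\iota^p\otimes\iota^q)\circ\iota_{p+q}^{p,q}=\iota^{p+q}$ and $\iota^n\circ\vp_G^n=\wt{\vp}_G^{\,n}$, the identity $\wt{\vp}_G^{\,p+q}=(\wt{\vp}_G^{\,p}\otimes\wt{\vp}_G^{\,q})\circ M(\Delta_G)$ descends immediately because $\iota^p\otimes\iota^q$ is a monomorphism.

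Your closing worry about binomial factors is unfounded and can be dropped: the comultiplication on $\OSym$ is by definition $\iota_{n+m}^{n,m}$ with no extra scalars (it is the comultiplication on $\Sym$, not on $\OSym$, that carries the factor ${n+m\choose n}$, cf.\ Lemma~\ref{prodexplicit}). So the computation in your second paragraph already matches on the nose, and no ``fiddly'' normalisation check is needed.
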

\begin{proof}
From the definitions it is clear that it is a morphism of coalgebras.
Uniqueness then comes by universal property of $\OSym(\Mof(G))$.
\end{proof}
\begin{lemma}\label{naturalvp}
The maps $\vp^n_G$ are natural in ${G\in\sAb}$. 
\end{lemma}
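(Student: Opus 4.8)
The plan is to unwind the definition of $\vp_G^n$ and reduce the claimed naturality to the naturality of its three constituents. Concretely, fix a homomorphism $f\colon G\to H$ in $\sAb$; I want to show that the square with horizontal arrows $\vp_G^n,\vp_H^n$ and vertical arrows $M(f)$ and $\Symn(\Mof(f))$ commutes. Since $\Symn(\Mof(H))$ is by construction the image of the symmetrizing idempotent on $\Mof(H)^{\ox n}$, the canonical inclusion $\iota_H\colon\Symn(\Mof(H))\to\Mof(H)^{\ox n}$ is a split monomorphism, so it is enough to check commutativity after post-composing with $\iota_H$. Using $\iota_H\circ\vp_H^n=\alpha_H^{\ox n}\circ M(\Delta_H^n)$ (the defining property of $\vp_H^n$) and $\iota_H\circ\Symn(\Mof(f))=\Mof(f)^{\ox n}\circ\iota_G$ (functoriality of $\Symn$, which holds because $\Mof(f)^{\ox n}$ commutes with the symmetrizing idempotents by naturality of the symmetry constraint), this reduces the lemma to the single identity
\[
\Mof(f)^{\ox n}\circ\alpha_G^{\ox n}\circ M(\Delta_G^n)=\alpha_H^{\ox n}\circ M(\Delta_H^n)\circ M(f).
\]

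To establish this identity I would feed in, in order: first, naturality of $\alpha$ from Lemma~\ref{natural}, i.e.\ $\Mof(f)\circ\alpha_G=\alpha_H\circ M(f)$, hence $\Mof(f)^{\ox n}\circ\alpha_G^{\ox n}=\alpha_H^{\ox n}\circ M(f)^{\ox n}$; second, naturality of the $n$-fold diagonal on the level of schemes, $f^{\times n}\circ\Delta_G^n=\Delta_H^n\circ f$; and third, the compatibility of $M$ with finite products (the K\"unneth isomorphism $M(G^{\times n})\isom M(G)^{\ox n}$), which turns $M(f)^{\ox n}\circ M(\Delta_G^n)$ into $M(\Delta_H^n)\circ M(f)$. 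Chaining these three gives exactly the displayed identity, and the lemma follows.

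I do not expect a genuine obstacle here: once the definitions are unwound this is a diagram chase, and the only points needing a moment's care are the bookkeeping with the symmetrizing idempotent (that $\iota_G$ is a split monomorphism, and that $\Mof(f)^{\ox n}$ descends to a map of $\Symn$'s) and the monoidality of $M$ — both standard. It is worth noting that no finiteness hypothesis is required for this lemma: each $\vp_G^n$ individually is defined unconditionally, since $\Symn$ is the image of an idempotent in the pseudo-abelian $\Q$-linear tensor category $\DMeetkQ$; Assumption~\ref{assumptionfindim} enters only when one assembles the $\vp_G^n$ into the single morphism $\vp_G$ valued in $\OSym(\Mof(G))$.
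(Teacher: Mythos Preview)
Your proof is correct and follows exactly the same approach as the paper, which simply says ``Clear by construction and naturality of $\alpha_G$ (Lemma~\ref{natural}).'' You have merely spelled out in detail the diagram chase implicit in that sentence.
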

\begin{proof}
Clear by construction and naturality of $\alpha_G$ 
(Lemma \ref{natural}).
\end{proof}

\begin{lemma}\label{basechangevp}
Let $\bar{k}$ be an algebraic closure of $k$ and denote $\bar{(\cdot)}$ the base change to $\bar{k}$. Then 
\[\overline{\vp^n_G}=\vp^n_{\overline{G}}\ .\]
\end{lemma}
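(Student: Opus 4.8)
The plan is to unwind the definition of $\vp^n_G$ and to check that each of its constituents---the functor $M(-)$, the $n$-fold diagonal, the map $\alpha_G$, the tensor power, and the symmetrization---is compatible with base change to $\bar{k}$, and then to compose these compatibilities.

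First I would recall, as in the proof of Lemma~\ref{basechange}, that $\overline{(\cdot)}$ is induced on $\DMeetkQ$ from the symmetric monoidal exact pull-back on $D^{-}(\STEkQ)$ via the tensor localization functor $q$; thus $\overline{(\cdot)}\colon\DMeetkQ\to\DMeet(\bar{k},\Q)$ is a $\Q$-linear triangulated symmetric monoidal functor. Being symmetric monoidal, it commutes with the permutation action of $\mathcal{S}_n$ on $n$-fold tensor powers, hence with the projector $\frac{1}{n!}\sum_{\sigma\in\mathcal{S}_n}\sigma$, and therefore with the functor $\Sym^n$ and with the canonical projection $V^{\ox n}\to\Sym^n(V)$. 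In particular $\overline{\Sym^n\Mof(G)}=\Sym^n\bigl(\overline{\Mof(G)}\bigr)=\Sym^n\Mof(\overline{G})$, the last identification being Lemma~\ref{basechange}.

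Next I would apply $\overline{(\cdot)}$ to the composite
\[ \vp^n_G\colon\ M(G)\xra{M(\Delta_G^n)}M(G)^{\ox n}\xra{\alpha_G^{\ox n}}\Mof(G)^{\ox n}\lra\Sym^n\Mof(G). \]
By Lemma~\ref{basechange}, $\overline{M(G)}=M(\overline{G})$, $\overline{\Mof(G)}=\Mof(\overline{G})$ and $\overline{\alpha_G}=\alpha_{\overline{G}}$. The $n$-fold diagonal $\Delta_G^n\colon G\to G^{\times n}$ is a scheme-theoretic construction and $\overline{G^{\times n}}=\overline{G}^{\times n}$, so $\overline{\Delta_G^n}=\Delta_{\overline{G}}^n$, whence $\overline{M(\Delta_G^n)}=M(\Delta_{\overline{G}}^n)$; monoidality gives $\overline{\alpha_G^{\ox n}}=\alpha_{\overline{G}}^{\ox n}$, and by the previous step $\overline{(\cdot)}$ carries the projection $\Mof(G)^{\ox n}\to\Sym^n\Mof(G)$ to the corresponding projection for $\overline{G}$. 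Composing these identifications then yields $\overline{\vp^n_G}=\vp^n_{\overline{G}}$.

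I do not expect any genuine obstacle: the statement reduces to the compatibility of $M(-)$ and $\alpha_{(-)}$ with base change (Lemma~\ref{basechange}) together with the fact that $\overline{(\cdot)}$ is a symmetric monoidal triangulated functor, which forces it to commute with tensor powers and with $\Sym^n$. The only point I would be careful to state is precisely this last monoidality of $\overline{(\cdot)}$ on $\DMeetkQ$; alternatively, one can sidestep it by running the argument at the level of $\STE(\bar{k},\Q)$ on $L(G)$, $\ul{G}$ and $\gamma_G$ and applying $q$ at the end, exactly as in the proof of Lemma~\ref{basechange}.
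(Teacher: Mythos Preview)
Your proposal is correct and follows exactly the paper's approach: the paper's proof is the single line ``By construction this comes from the case of $\alpha_G$ (Lemma~\ref{basechange})'', and you have simply unpacked what ``by construction'' means, checking that each constituent of $\vp^n_G$ (the diagonal, $\alpha_G$, tensor powers, and the symmetrization projector) is compatible with the symmetric monoidal base-change functor. Your explicit verification that $\overline{(\cdot)}$ commutes with $\Sym^n$ via monoidality is the only point the paper leaves implicit, and your alternative of working at the level of $\STE(\bar{k},\Q)$ before applying $q$ is precisely the strategy of Lemma~\ref{basechange} itself.
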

\begin{proof}
By construction this comes from the case of $\alpha_G$  (Lemma \ref{basechange}).
\end{proof}

\begin{corollary}\label{reduction1}
Let $G$ be a semiabelian variety over $k$. Then
Theorem~\ref{MainThm} holds for $G$ if it holds for $\overline{G}$.
\end{corollary}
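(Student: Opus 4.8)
The plan is a direct descent along the base-change functor $\overline{(\cdot)}$, using the conservativity criterion of Proposition~\ref{stalk} together with the compatibilities recorded in Lemmas~\ref{basechange} and~\ref{basechangevp}. First I would observe that $\overline{G}$ is again a semiabelian variety: base-changing the presentation of $G$ as an extension of an abelian variety of dimension $g$ by a torus of rank $r$ shows that $\overline{G}$ admits a presentation of the same shape. Hence ``Theorem~\ref{MainThm} holds for $\overline{G}$'' is precisely the assertion that $\Mof(\overline{G})$ is odd of dimension $2g+r$ and that $\vp_{\overline{G}}$ is an isomorphism, and this is what I would assume throughout.

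For the oddness of $\Mof(G)$: the functor $\overline{(\cdot)}$ is $\Q$-linear, additive and symmetric monoidal, so it commutes with the symmetrizing projector on $\Mof(G)^{\ox n}$ and hence with $\Sym^n$; combined with Lemma~\ref{basechange} this gives $\overline{\Sym^n\Mof(G)}\cong\Sym^n\Mof(\overline{G})$ for every $n$. Applying the vanishing part of Proposition~\ref{stalk}, $\Sym^{2g+r+1}\Mof(G)$ vanishes because its base change does, while $\Sym^{2g+r}\Mof(G)$ is non-zero because its base change is. Thus $\Mof(G)$ is odd of dimension $2g+r$, so $\OSym(\Mof(G))=\bigoplus_{n=0}^{2g+r}\Sym^n\Mof(G)$ is a genuine object of $\DMeetkQ$ and the target of $\vp_G$ from Definition~\ref{defnall} is the expected one.

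For the isomorphism statement: Lemma~\ref{basechangevp} gives $\overline{\vp^n_G}=\vp^n_{\overline{G}}$ for each $n$, and since $\overline{(\cdot)}$ is additive we get $\overline{\vp_G}=\bigoplus_{n=0}^{2g+r}\vp^n_{\overline{G}}=\vp_{\overline{G}}$, which is an isomorphism by hypothesis. Proposition~\ref{stalk} then forces $\vp_G$ to be an isomorphism as well, which completes the proof. The argument is essentially formal, and I do not anticipate a real obstacle; the only point deserving a word is that $\overline{(\cdot)}$ is additive and symmetric monoidal on the motivic categories in play, so that it indeed commutes with the formation of $\Sym^n$ — but this is already implicit in the discussion preceding Lemma~\ref{basechange}.
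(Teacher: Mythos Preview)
Your proof is correct and follows essentially the same approach as the paper: use Lemma~\ref{basechange} together with the conservativity of Proposition~\ref{stalk} to descend the oddness of $\Mof(G)$, then Lemma~\ref{basechangevp} and Proposition~\ref{stalk} again to descend the isomorphism $\vp_G$. Your version is slightly more explicit in spelling out why base change commutes with $\Sym^n$ and why the dimension is exactly $2g+r$ (not just at most), but the argument is the same.
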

\begin{proof}
By 
Lemma \ref{basechange}, we have $\Mof(\overline{G})=  \overline{\Mof(G)}$. 
We apply Proposition~\ref{stalk} to the motives $\Symn \overline{\Mof(G)}$  and we obtain that $\Mof(G)$ is odd of the same dimension as
$\Mof(\overline{G})$. In particular, the finiteness assumption \ref{assumptionfindim} is verified for $G$. By Lemma \ref{basechangevp} we have $\overline{\vp_G}=\vp_{\overline{G}}$, we then conclude applying Proposition \ref{stalk} to $f=  \vp_G$.
\end{proof}
 
We want to study compatibility of $\vp_G$ with products.
Let $G$ and $H$ be two semiabelian varieties.
Recall that we have
\begin{gather*}
\Mof(G\times H)=\Mof(G)\oplus \Mof(H)\\
M(G\times H)=M(G)\otimes M(H)\\
\OSym(\Mof(G)\oplus\Mof(H))=\OSym(\Mof(G))\otimes \OSym(\Mof(H))
\end{gather*}
by additivity of $\Mof$ (Lemma \ref{exactMof}), K\"unneth formula and
Corollary \ref{opsymsum}.

\begin{proposition}\label{product}
Let $G$ and $H$ be two semiabelian varieties over $k$ satisfying finiteness assumption \ref{assumptionfindim}. Then, under the above identifications, we have
 \[\vp_{G\times H} = \vp_{G} \otimes \vp_{ H} \ ,\]
 i.e., the diagram
\begin{align*}
\xymatrix{
M(G \x H) \ar[d]^{\isom} \ar[r]^{\vp_{G \x H}} & 
\OSym(\Mof(G \x H)) \ar[r]^{\isom} & 
\OSym(\Mof(G) \os \Mof(H)) \\
M(G) \ox M(H) \ar[rr]_{\vp_{G} \ox \vp_{H}}  & & \OSym(\Mof(G)) \ox \OSym(\Mof(H))\ar[u]^{\isom} }
\end{align*}
commutes.
\end{proposition}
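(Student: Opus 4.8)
The plan is to deduce this from the uniqueness clause of Lemma \ref{coalg}. First one checks that finiteness assumption \ref{assumptionfindim} also holds for $G \x H$: by Lemma \ref{exactMof} we have $\Mof(G \x H) = \Mof(G) \os \Mof(H)$, and by Corollary \ref{opsymsum} the symmetric powers of a direct sum decompose as $\Sym^m(V \os W) = \bigoplus_{i+j=m} \Sym^i V \ox \Sym^j W$, so a direct sum of odd finite-dimensional objects is again odd finite-dimensional. Hence $\OSym(\Mof(G \x H))$ lies in $\DMeetkQ$ and, by Lemma \ref{coalg}, $\vp_{G \x H}$ is the unique morphism of commutative coalgebras $M(G \x H) \to \OSym(\Mof(G \x H))$ whose degree-one corestriction is $\alpha_{G \x H}$. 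So it is enough to show that the composite
\[ M(G \x H) \isom M(G) \ox M(H) \xra{\vp_G \ox \vp_H} \OSym(\Mof(G)) \ox \OSym(\Mof(H)) \isom \OSym(\Mof(G \x H)) \]
is a morphism of commutative coalgebras with degree-one corestriction $\alpha_{G \x H}$.

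That the composite is a morphism of commutative coalgebras is formal: the K\"unneth isomorphism $M(G \x H) \isom M(G) \ox M(H)$ is an isomorphism of Hopf algebras, since multiplication, comultiplication, antipode, unit and counit on $G \x H$ are componentwise those of $G$ and of $H$; the middle map $\vp_G \ox \vp_H$ is a tensor product of morphisms of commutative coalgebras, hence again one; and the identification $\OSym(\Mof(G)) \ox \OSym(\Mof(H)) \isom \OSym(\Mof(G) \os \Mof(H))$ is an isomorphism of Hopf algebras by Corollary \ref{opsymsum}.

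For the corestriction, I would use that for any $G' \in \sAb$ the degree-one part of $\vp_{G'}$ is $\vp^1_{G'} = \alpha_{G'}$ while its degree-zero part is $\vp^0_{G'} = M(\Delta^0_{G'})$, which is nothing but the counit $\epsilon_{G'} \colon M(G') \to \one$ of the Hopf algebra $M(G')$, as $\Delta^0_{G'}$ is the structural morphism $G' \to \Spec k$. Since the total-degree-one part of $\OSym(V) \ox \OSym(W)$ is $(V \ox \one) \os (\one \ox W)$, the degree-one corestriction of $\vp_G \ox \vp_H$ equals $(\alpha_G \ox \epsilon_H) \os (\epsilon_G \ox \alpha_H)$. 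On the other hand, naturality of $\alpha$ (Lemma \ref{natural}) applied to the projection $\pr_G \colon G \x H \to G$, together with the fact that $\Mof(\pr_G)$ is the projection $\Mof(G \x H) \to \Mof(G)$ and $M(\pr_G) = \id_{M(G)} \ox \epsilon_H$ under K\"unneth, shows that the $\Mof(G)$-component of $\alpha_{G \x H}$ is $\alpha_G \ox \epsilon_H$; symmetrically the $\Mof(H)$-component is $\epsilon_G \ox \alpha_H$. Hence $\alpha_{G \x H} = (\alpha_G \ox \epsilon_H) \os (\epsilon_G \ox \alpha_H)$, which is exactly the corestriction computed above, so Lemma \ref{coalg} concludes.

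The one step I expect to carry genuine content is Corollary \ref{opsymsum}, i.e.\ that the canonical isomorphism $\OSym(V \os W) \isom \OSym(V) \ox \OSym(W)$ is compatible with the Hopf-algebra (in particular coalgebra) structures; everything else is diagram chasing. That compatibility is established in Appendix \ref{appsym} independently of the present statement, so here it may be invoked as a black box. A more hands-on alternative — chasing the defining diagram of $\vp^n$ using the compatibility of $\Delta^n_{G \x H}$ with $\Delta^n_G$ and $\Delta^n_H$ via shuffles together with the additivity of $\alpha$ — would also work, but the coalgebra route avoids that bookkeeping.
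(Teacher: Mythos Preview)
Your proof is correct and follows essentially the same approach as the paper: both reduce via the universal property of $\OSym$ (Lemma \ref{coalg}) to checking the degree-one diagram, i.e., that $\alpha_{G\times H}$ corresponds to $(\alpha_G\ox\epsilon_H)\os(\epsilon_G\ox\alpha_H)$ under the identifications. The paper cites Corollary \ref{formulaMof} for this last step whereas you invoke naturality of $\alpha$ (Lemma \ref{natural}) with respect to the projections, but these amount to the same verification; your write-up is simply more explicit about why the long composite is a coalgebra morphism.
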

\begin{proof}
First, notice that $G\times H$ verifies the finiteness assumption \ref{assumptionfindim}. Now, by the universal property of the coalgebra $\OSym$
it is enough to check that the diagram
\begin{align*}
\xymatrix{
M(G \x H) \ar[d]_{\isom} \ar[r]^{\alpha_{G \x H}} &  
\Mof(G \x H) \ar[r]^{\isom}  & 
\Mof(G) \oplus \Mof(H)  \\
M(G) \ox M(H) \ar[rr]_{(\alpha_{G} \ox \epsilon_G) \os (\epsilon_H \otimes\alpha_{H})} &  & \Mof(G) \ox \one \os \one \ox \Mof(H)\ar[u]^{\isom}
}
\end{align*}
commutes,  which is the case by Corollary \ref{formulaMof}.
\end{proof}
\begin{corollary}\label{correduction2}
Let $G$ and $H$ be connected semiabelian varieties satisfying finiteness assumption \ref{assumptionfindim}. Then
$\vp_{G\times H}$ is an isomorphism if and only if $\vp_G$ and $\vp_H$ are isomorphisms.
\end{corollary}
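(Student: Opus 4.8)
The plan is to reduce everything to Proposition~\ref{product}, which identifies $\vp_{G\times H}$ with $\vp_G\ox\vp_H$ under the canonical isomorphisms $M(G\times H)\cong M(G)\ox M(H)$ (K\"unneth) and $\OSym(\Mof(G\times H))\cong\OSym(\Mof(G))\ox\OSym(\Mof(H))$ (Corollary~\ref{opsymsum}); recall also that $G\times H$ inherits the finiteness assumption~\ref{assumptionfindim} from $G$ and $H$, so all objects here make sense. The implication ``$\Leftarrow$'' is then immediate, since a tensor product of isomorphisms is an isomorphism. For ``$\Rightarrow$'' the idea is to realize $\vp_G$ as a \emph{retract} of the isomorphism $\vp_G\ox\vp_H$ in the category of arrows of $\DMeetkQ$; isomorphisms being stable under retracts, $\vp_G$ is then an isomorphism, and exchanging the roles of $G$ and $H$ gives the statement for $\vp_H$.

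The retract data come from the base point of $H$. The unit section $0_H\colon\Spec k\to H$ yields $u_H=M(0_H)\colon\one\to M(H)$, split by the counit $r_H=M(\epsilon_H)\colon M(H)\to\one$ because $\epsilon_H\circ 0_H=\id$. On the other side, $\one=\Sym^0\Mof(H)$ is a direct summand of $\OSym(\Mof(H))$; let $u'_H\colon\one\hookrightarrow\OSym(\Mof(H))$ and $r'_H\colon\OSym(\Mof(H))\to\one$ be the inclusion and projection, so $r'_Hu'_H=\id$. The key point to verify is that $\vp_H$ intertwines these splittings: $r'_H\circ\vp_H=r_H$ and $\vp_H\circ u_H=u'_H$.

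This compatibility is the only genuine computation, and is where I expect the (mild) difficulty to lie. The first identity is immediate from the construction: $r'_H\circ\vp_H$ is the component $\vp_H^0\colon M(H)\to\Sym^0\Mof(H)=\one$, which is $M(\Delta^0_H)=M(\epsilon_H)=r_H$. For the second, one checks the component of $\vp_H\circ u_H$ in each $\Sym^n\Mof(H)$: in degree $0$ it is $r_Hu_H=\id$, while for $n\ge 1$ it factors through $\alpha_H\circ M(0_H)$, which by naturality of $\alpha$ (Lemma~\ref{natural}) equals $\Mof(0_H)\circ\alpha_{\Spec k}$ and hence vanishes because $\Mof(\Spec k)=0$. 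Thus $\vp_H\circ u_H=u'_H$. (Equivalently, one may note that $\vp_H$ is a morphism of coalgebras by Lemma~\ref{coalg} and argue unit-compatibility the same way.)

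With the splittings established the argument is purely formal. Tensoring $\vp_G$ with the identity on the $M(H)$- and $\OSym(\Mof(H))$-factors and using $\vp_{G\times H}=\vp_G\ox\vp_H$, the diagram
\[
\xymatrix{
M(G)\ar[r]^{\vp_G}\ar[d]_{\id\ox u_H} & \OSym(\Mof(G))\ar[d]^{\id\ox u'_H}\\
M(G)\ox M(H)\ar[r]^{\vp_G\ox\vp_H} & \OSym(\Mof(G))\ox\OSym(\Mof(H))
}
\]
commutes, and so does the analogous square with $r_H,r'_H$ in place of $u_H,u'_H$ and the vertical arrows reversed; moreover the two vertical composites are identities. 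Hence $\vp_G$ is a retract of $\vp_G\ox\vp_H$ in $\mathrm{Arr}(\DMeetkQ)$, and explicitly $(\id_{M(G)}\ox r_H)\circ(\vp_G\ox\vp_H)^{-1}\circ(\id_{\OSym(\Mof(G))}\ox u'_H)$ is a two-sided inverse of $\vp_G$. The same reasoning with $G$ and $H$ swapped shows $\vp_H$ is an isomorphism.
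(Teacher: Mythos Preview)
Your proof is correct and follows essentially the same approach as the paper's. The paper's argument is a one-liner---``$M(G)$ is a direct factor of $M(G\times H)$ via $G\to G\times H$''---which is exactly the retract you construct using the base point $0_H$; you simply spell out in detail the compatibility of this retract with $\vp$, a point the paper leaves implicit.
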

\begin{proof}If $\vp_G$ and $\vp_H$ are isomorphisms, so is $\vp_G\tensor \vp_H$. For the converse note that $M(G)$ is a direct factor of $M(G\times H)$
via $G\to G\times H$.
\end{proof}


\begin{proposition}\label{Hopf}
Under the finiteness assumption \ref{assumptionfindim}, the morphism $\vp_G$ is a morphism of Hopf algebras.
\end{proposition}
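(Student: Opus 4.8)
The plan is to reduce everything to facts already in hand. By Lemma~\ref{coalg} the morphism $\vp_G$ is already a morphism of cocommutative coalgebras, hence compatible with comultiplication and counit. Since in any symmetric monoidal category the antipode of a Hopf algebra is the unique convolution inverse of the identity, a morphism of bialgebras between two Hopf algebras automatically commutes with the antipodes; so it suffices to show that $\vp_G$ is in addition multiplicative and sends the unit to the unit.

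For multiplicativity, recall that the algebra structure on $M(G)$ is $m_{M(G)}\colon M(G)\ox M(G)\xrightarrow{\isom}M(G\x G)\xrightarrow{M(\mu)}M(G)$, where $\mu\colon G\x G\to G$ is the group law, a morphism in $\sAb$. First I would apply naturality of $\vp$ (Lemma~\ref{naturalvp}) to $\mu$, giving $\vp_G\circ M(\mu)=\OSym(\Mof(\mu))\circ\vp_{G\x G}$. Next, Proposition~\ref{product} with $H=G$ rewrites $\vp_{G\x G}$ as $\vp_G\ox\vp_G$ under the identifications $M(G\x G)=M(G)\ox M(G)$ and $\OSym(\Mof(G\x G))=\OSym(\Mof(G))\ox\OSym(\Mof(G))$ of Corollary~\ref{opsymsum}. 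Finally, $\Mof(\mu)=+\colon\Mof(G)\os\Mof(G)\to\Mof(G)$ by Corollary~\ref{formulaMof}, and the key point is that $\OSym$ applied to the sum map $+$ is exactly the multiplication of the Hopf algebra $\OSym(\Mof(G))$ — this is part of the description of the symmetric coalgebra recalled in Appendix~\ref{appsym} (dually, $\Sym$ applied to the diagonal of an object is the comultiplication of the symmetric algebra), and it is compatible with the isomorphism of Corollary~\ref{opsymsum}. Combining these four facts yields
\[\vp_G\circ m_{M(G)}=m_{\OSym(\Mof(G))}\circ(\vp_G\ox\vp_G)\ ,\]
i.e.\ $\vp_G$ is multiplicative.

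For the unit, the unit of $M(G)$ is $M(e)\colon\one=M(\Spec k)\to M(G)$, where $e\colon\Spec k\to G$ is the identity section. Since $\Mof(\Spec k)=0$ we have $\vp_{\Spec k}=\id_\one$, and $\Mof(e)=0$ by Corollary~\ref{formulaMof}, so $\OSym(\Mof(e))\colon\one=\OSym(0)\to\OSym(\Mof(G))$ is the unit of $\OSym(\Mof(G))$. Naturality of $\vp$ applied to $e$ then gives $\vp_G\circ M(e)=\OSym(\Mof(e))\circ\vp_{\Spec k}$, which is the unit of $\OSym(\Mof(G))$. Together with the coalgebra statement this shows $\vp_G$ is a morphism of bialgebras, hence of Hopf algebras.

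I expect the only real obstacle to be bookkeeping rather than mathematics: making precise, once and for all in Appendix~\ref{appsym}, that the Hopf-algebra multiplication on $\OSym(V)$ is the one induced functorially by the sum map $V\os V\to V$, and that the isomorphism $\OSym(V\os W)\isom\OSym(V)\ox\OSym(W)$ is one of Hopf algebras. Granting these, the naturality square for $\mu$ together with Proposition~\ref{product} deliver multiplicativity with no further computation.
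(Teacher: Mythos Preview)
Your proof is correct and follows essentially the same approach as the paper: comultiplication via Lemma~\ref{coalg}, unit and multiplication via naturality (Lemma~\ref{naturalvp}) combined with Proposition~\ref{product}, and the identification of the $\OSym$ structure maps from Appendix~\ref{appsym}. The only minor difference is that the paper handles the antipode by applying naturality directly to the inverse map $-1\colon G\to G$ (using $\Mof(-1)=-1$ from Corollary~\ref{formulaMof} and the description of the antipode on $\OSym$ in Appendix~\ref{appsym}), whereas you invoke the general fact that bialgebra morphisms between Hopf algebras automatically preserve antipodes; both are valid and equally short.
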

\begin{proof}
Comultiplication is part of the Lemma \ref{coalg}. Antipode, unit and counit are special cases of naturality Lemma \ref{naturalvp}. Multiplication is also reduced to naturality using Proposition \ref{product}.
\end{proof}




\section{Special cases}\label{sect special}
Before giving a full proof we need to address the cases of tori and abelian varieties, the two building blocks of the category of semiabelian varieties.
The case of tori is simple from the properties that we have established so far.
In the case of abelian varieties, there are  two key ingredients: some properties of the Chow motive of $A$ (after Deninger, Murre, K\"unnemann and Kings) and a computation of Voevodsky of the motive of a curve. 
We also draw some consequences from these partial results.

\subsection{The case of tori}
Recall that $(\CGS/k)_\Q$ is the category of commutative groups schemes of finite type  over $k$ up to isogeny.

\begin{definition}\label{M1torus}Let $T$ be a torus over $k$. 
The {\em cocharacter sheaf} $\Xi(T)$ of $T$ is
defined as the sheaf of $\Q$-vector spaces on the small \'etale site of $k$ given by
\[ K\mapsto\Hom_{(\CGS/K)_\Q}((\Gm)_K,T_K)\]
for all finite field extensions $K/k$ 
By abuse of notation, we also write $\Xi(T)$ for the pull-back to the category $\STEkQ$ of \'etale sheaves with transfers and for its image in $\DMeetk_{\Q}$.
\end{definition}
\begin{remark}\label{artintorus} 
The motive $\Xi(T)$ is an Artin motive.
\end{remark}
\begin{proposition}\label{toruscase}
Let $T$ be a torus over $k$ of rank $r$. Then the main Theorem \ref{MainThm} holds for $G=T$, i.e., $\Mof(T)$ is odd of dimension $r$ and 
\[\vp_T:M(T)\to\OSym(\Mof(T))\]
is an isomorphism.
 Moreover, the natural pairing
\[ \Hom_{\CGSQ}(\Gm,T)\times\ul{\Gm}_\Q\to\ul{T}_\Q\]
defines a map
\[ \Xi(T)\tensor \one(1)[1]\to \Mof(T)\ \]
which is an isomorphism.
\end{proposition}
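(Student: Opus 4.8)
The plan is to reduce the torus case to the already-known case of $\Gm$ and then bootstrap via the structure of tori up to isogeny. Every torus $T$ over $k$ becomes, after a finite Galois extension $K/k$, isomorphic (as a $K$-group) to a product of copies of $\Gm$; equivalently, in $(\CGS/k)_{\Q}$ the torus $T$ is a direct factor of $\mathrm{Res}_{K/k}(\Gm)^{\oplus r}$, and the cocharacter sheaf $\Xi(T)$ records exactly this descent datum. Since $\Mof$ is an exact, additive, isogeny-invariant functor on $\sAbQ$ (Proposition~\ref{exactMof}), and similarly $M(-)$ and $\OSym$ behave well under products (Proposition~\ref{product}, Corollary~\ref{correduction2}), I would first settle $G=\Gm$ by hand, then promote to split tori by $\vp_{\Gm^r}=\vp_{\Gm}^{\otimes r}$, and finally use the Artin-sheaf nature of $\Xi(T)$ (Remark~\ref{artintorus}) to descend from the split case over $\bar k$ back to $k$ via Proposition~\ref{stalk} and Corollary~\ref{reduction1}.

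For $G=\Gm$ itself, the key computation is that $M(\Gm)=\one\oplus\one(1)[1]$ in $\DMeetkQ$ (the reduced motive of $\Gm$ is the Tate twist $\one(1)[1]$), while $\ul{\Gm}$ is the sheaf $\shf{O}^{*}\otimes\Q$ whose associated motive $\Mof(\Gm)$ is exactly $\one(1)[1]$; this is standard (it is, e.g., part of the Suslin--Voevodsky description of $H_{1}$ of $\Gm$, and appears in Huber--Kahn \cite{HK}). Thus $\Mof(\Gm)$ is odd of dimension $1$, $\Sym^{\ge 2}$ of it vanishes, and $\OSym(\Mof(\Gm))=\one\oplus\one(1)[1]$. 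One then checks that $\alpha_{\Gm}: M(\Gm)\to\Mof(\Gm)$ is, on the summand $\one(1)[1]$, the identity — this follows from the Spie\ss--Szamuely construction, since $\theta_{S,\Gm}$ is the identity on graphs of morphisms (Proposition~\ref{propspsz}), so $\alpha_{\Gm}$ is the natural projection $M(\Gm)\to\widetilde M(\Gm)=\one(1)[1]$ — and hence $\vp_{\Gm}=\alpha_{\Gm}^{0}\oplus\alpha_{\Gm}^{1}$ is an isomorphism. The natural pairing $\Hom_{\CGSQ}(\Gm,\Gm)\times\ul{\Gm}\to\ul{\Gm}$ is just the identity pairing on $\Q\times\ul{\Gm}$, giving the tautological isomorphism $\one(1)[1]\cong\Mof(\Gm)$.

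For a general torus $T$ of rank $r$, I would argue as follows. Over $\bar k$ we have $T_{\bar k}\cong\Gm^{r}$, so by Lemma~\ref{basechange} and the split case, $\overline{\Mof(T)}=\Mof(\overline T)$ is odd of dimension $r$; by Proposition~\ref{stalk} this forces $\Mof(T)$ to be odd of dimension $r$, verifying the finiteness assumption. Likewise $\overline{\vp_T}=\vp_{\overline T}$ (Lemma~\ref{basechangevp}) is an isomorphism, so $\vp_T$ is an isomorphism by Proposition~\ref{stalk}. For the last claim, the pairing $\Hom_{\CGSQ}(\Gm,T)\times\ul{\Gm}\to\ul{T}$, $(\psi,f)\mapsto\psi\circ f$, is bilinear and sheafifies (over the small \'etale site of $k$) to a map $\Xi(T)\otimes\ul{\Gm}\to\ul{T}$; passing to motives and using $\Mof(\Gm)=\one(1)[1]$ gives $\Xi(T)\otimes\one(1)[1]\to\Mof(T)$. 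To see it is an isomorphism I check after pulling back to $\bar k$ (Proposition~\ref{stalk} again): there $\Xi(T)$ becomes the constant sheaf $\Q^{r}$, the map becomes $(\one(1)[1])^{\oplus r}\to\Mof(\Gm^{r})=\Mof(\Gm)^{\oplus r}$, and this is the identity by the $\Gm$ computation.

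**The main obstacle** I anticipate is making the identification $\Mof(\Gm)=\one(1)[1]$ and $\alpha_{\Gm}=\mathrm{pr}$ genuinely precise: one must know that the motivic complex attached to the sheaf $\ul{\Gm}=\shf{O}^{*}\otimes\Q$ is the Tate object, and that the map $\gamma_{\Gm}:L(\Gm)\to\ul{\Gm}$ induces on $\widetilde M(\Gm)$ the canonical isomorphism with $\one(1)[1]$ — this is where one leans on Suslin--Voevodsky and Voevodsky's computation of $\underline{h}_{1}$ of $\Gm$. Everything after that is formal manipulation with the functorial properties already established (exactness and isogeny-invariance of $\Mof$, multiplicativity of $\vp$, and the base-change/conservativity statements of Proposition~\ref{stalk}), together with the elementary observation that a torus splits over a finite separable extension so that $\Xi(T)$ is an Artin motive.
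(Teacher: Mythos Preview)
Your proposal is correct and follows essentially the same route as the paper: handle $\Gm$ directly, then reduce the general torus to the split case over $\bar{k}$ via Proposition~\ref{stalk}, Corollary~\ref{reduction1}, and compatibility with products (Corollary~\ref{correduction2}); the identification $\Xi(T)\otimes\one(1)[1]\cong\Mof(T)$ is likewise checked after base change.

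The one place where the paper differs is precisely the point you flag as the main obstacle. Rather than verifying that $\alpha_{\Gm}$ restricts to the identity on the summand $\one(1)[1]$, the paper sidesteps this with a one-line argument: since $\alpha_{\Gm}$ is natural, it vanishes on the summand $\one$ (apply naturality to the constant map $\Gm\to\Gm$), so $\alpha_{\Gm}$ factors through $\one(1)[1]\to\Mof(\Gm)=\one(1)[1]$; but $\Hom_{\DMeetkQ}(\one(1)[1],\one(1)[1])\cong\Hom(\one,\one)=\Q$, so it suffices that $\alpha_{\Gm}\neq 0$, which is Lemma~\ref{natural}. This avoids any direct comparison of $\gamma_{\Gm}$ with the splitting of $M(\Gm)$. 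Your direct approach would also work, but the paper's trick is cleaner and dissolves exactly the difficulty you anticipated.
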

\begin{proof}Let us start with the first part of the statement.
Let first $T=\Gm$. In this case it is well-known that 
\[ M(\Gm)=\one\oplus\one(1)[1]\]
with $\one(1)[1]=\ul{\Gm}_\Q$ and the factor $\one$ is the image of the projector induced by the constant automorphism of $\Gm$. We claim that $\alpha$ induces an isomorphism
\[\one(1)[1]\to\Mof(\Gm)\ .\]
The proper way of showing this would be to analyze the constructions carefully. However, as $\Hom_{\DMeetkQ}(\one,\one)=\Q$ we can use a 
quicker argument instead.
By the naturality of Lemma \ref{natural}, the morphism $\alpha|_{\Gm}$ vanishes when restricted to the unit motive $\one$, hence it suffices
to check that 
\[\alpha_{\Gm}:M(\Gm)\to\Mof(\Gm)\]
is non-zero. This was pointed out in Lemma \ref{natural}. Note that 
$\Mof(\Gm)$ is odd of dimension $1$ because $\one$ is even of dimension $1$.

Now let $T$ be general.
By Corollary \ref{reduction1} it suffices to consider the case
$k$ algebraically closed. Hence $T\isom\Gm^r$. 
The assertion now follows from
Corollary
\ref{correduction2} and $T=\Gm$.

We now turn to the identification of $\Mof(T)$. 
In order to check that it is an isomorphism, we use the same reduction steps as before. Without loss of generality, $k$ is algebraically closed (Proposition \ref{stalk}) and hence $T\isom\Gm^r$. By compatibility with products it suffices to consider the case
$r=1$ which is tautological. 
\end{proof}

\begin{remark}The analogous computation for the associated graded of the slice filtration is shown in \cite[Proposition 7.2]{HK}, even with integral coefficients. Note that by loc. cit. Corollary 7.9 we do not expect the integral version of Proposition \ref{toruscase}.
\end{remark}
\begin{remark}\label{det torus} Let $T$ be a torus of rank $r$. Then 
\[ \det(\Mof(T))=\Sym^r(\Mof(T))=\left(\bigwedge^r\Xi(T)\right)(r)[r]=\det \Xi(T)(r)[r]\] is
a finite-dimensional motive. It is odd of dimension $1$ when $r$ is odd and it is even of dimension $1$ when $r$ is even. However, it is not always isomorphic
to $\one(r)[r]$ as the example of a non-split torus of rank $1$ shows.

The Artin motive $\det\Xi(T)$ is even of dimension $1$. It is given by a one-dimensional continuous representation of the absolute Galois group of $k$ in the category of $\Q$-vector spaces. It is either trivial or of order $2$.
\end{remark}

\subsection{The Chow motive of an abelian variety}
We recall here some classical results on the Chow motive of an abelian variety.

Let us recall some notations and conventions: $\ChowkQ$ is the  pseudo-abelian $\Q$-linear rigid symmetric tensor category of Chow motives over $k$ with rational coefficients, endowed with the {\em covariant} $\Q$-linear symmetric tensor functor called \emph{motive}  
\[M :\SmPrVar/k \rightarrow \ChowkQ\ .\]
For a detailed description of  this category see \cite{DeMu} or \cite{Ku1}. 
\begin{remark}
Recall that the standard convention in the classical literature on Chow motives is to take
the functor to Chow motives to be controvariant. By replacing a cycle by
its transpose we can pass to the covariant version. Note that this
operation interchanges the notions of symmetric algebra and symmetric coalgebra.
Note also that \cite{DeMu} and \cite{Ku1} use the notation
$\bigwedge^i$ instead of $\Sym^i$. 
\end{remark}

\begin{theorem}[{\cite[Thm. 3.1]{DeMu}}]\label{DM}
For any abelian variety over $k$ of dimension $g$  there is a unique decomposition in $\ChowkQ$
\[M(A)=\bigoplus_{i=0}^{i=2g}\hof{i}(A),\]
 which is natural in $A \in \Ab/k$, and such that for all $n\in\Z$
 \[M(n_A)= \bigoplus_{i=0}^{i=2g} n^i \cdot\id_{\hof{i}(A)}.\]
Moreover, one has $\hof{0}(A)=\one$.
\end{theorem}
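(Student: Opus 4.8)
Since this is the classical theorem of Deninger and Murre \cite{DeMu}, the plan is to recall their argument. I would work inside the endomorphism ring $R:=\End_{\ChowkQ}(M(A))$, which in the present conventions is $\CH^{g}(A\times A)_{\Q}$ with composition of correspondences as multiplication and identity $[\Delta_A]$. For each $n\in\Z$ write $\Gamma_n\in R$ for the class of the graph of the multiplication $n_A:A\to A$; thus $\Gamma_1=\id$, $\Gamma_n\circ\Gamma_m=\Gamma_{nm}$, and $M(n_A)=\Gamma_n$.

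The geometric heart of the proof, and the step I expect to be the main obstacle, is the polynomiality statement: there are classes $\gamma_0,\dots,\gamma_{2g}\in R$, independent of $n$, with $\Gamma_n=\sum_{i=0}^{2g}n^{i}\gamma_i$ for all $n\in\Z$. This is exactly the abelian‑specific input that is unavailable for more general commutative groups — which is precisely why the present paper must take a different route. I would obtain it either from Beauville's Fourier transform \cite{Beau} on the Chow groups of $A\times A$ (built from the Poincar\'e bundle), which simultaneously diagonalises the operators $[n]$, or from a direct argument starting with the theorem of the cube, which forces $[n]^{*}$ to be quadratic on divisor classes, combined with bookkeeping of how such classes control $\CH^{*}(A\times A)_{\Q}$; in either case the $\gamma_i$ are then recovered from $\Gamma_0,\dots,\Gamma_{2g}$ by Vandermonde inversion.

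Granting this, the rest is formal. Since $\Gamma_n\circ\Gamma_m=\Gamma_{nm}$ is an identity of polynomials in $(n,m)$ with coefficients in the $\Q$-vector space $R$, comparing the coefficient of $n^{i}m^{j}$ yields $\gamma_i\circ\gamma_j=\delta_{ij}\gamma_i$, and $\Gamma_1=\id$ gives $\sum_{i=0}^{2g}\gamma_i=\id$; so the $\pi_i:=\gamma_i$ form a complete orthogonal system of idempotents in $R$. I then set $\hof{i}(A):=(M(A),\pi_i)$, giving $M(A)=\bigoplus_{i=0}^{2g}\hof{i}(A)$, and from $\Gamma_n\circ\gamma_i=n^{i}\gamma_i$ the map $M(n_A)$ acts on $\hof{i}(A)$ as multiplication by $n^{i}$. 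Naturality in $A\in\Ab/k$ is immediate: a homomorphism $f:A\to B$ satisfies $f\circ n_A=n_B\circ f$, so $M(f)$ intertwines the families $\{\Gamma_n\}$ on $M(A)$ and on $M(B)$, hence intertwines the idempotents, hence induces maps $\hof{i}(A)\to\hof{i}(B)$.

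For uniqueness, given any decomposition $M(A)=\bigoplus_i N_i$ with $M(n_A)|_{N_i}=n^{i}\id$ for all $n$, I would fix $n\ge 2$, so that $n^{0},\dots,n^{2g}$ are pairwise distinct; then the projector of $R$ cutting out $N_i$ is the value at $\Gamma_n$ of the Lagrange polynomial taking value $1$ at $n^{i}$ and $0$ at the other $n^{j}$, an expression intrinsic to $\Gamma_n$ that equally computes $\pi_i$, whence $N_i=\hof{i}(A)$. Finally, setting $n=0$ in $\Gamma_n=\sum_i n^{i}\gamma_i$ gives $\pi_0=\gamma_0=\Gamma_0$, and $\Gamma_0$ is the class of the graph of the zero map $0_A$, i.e. of the composite $A\xrightarrow{\epsilon}\Spec k\xrightarrow{e}A$; since $M(\epsilon)\circ M(e)=\id$, the idempotent $\Gamma_0=M(e)\circ M(\epsilon)$ has image $M(\Spec k)=\one$, so $\hof{0}(A)=\one$.
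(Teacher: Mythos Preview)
The paper does not give a proof of this statement: it is quoted as a classical result of Deninger and Murre and simply cited as \cite[Thm.~3.1]{DeMu}. Your sketch is a faithful outline of their argument, and the formal part (orthogonal idempotents from the polynomial identity $\Gamma_n\Gamma_m=\Gamma_{nm}$, naturality, uniqueness via Lagrange interpolation, and the identification $\hof{0}(A)=\one$ from $\Gamma_0$) is correct.

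One remark on the ``main obstacle'': of your two suggested routes to the polynomiality $\Gamma_n=\sum_{i=0}^{2g}n^i\gamma_i$, the Fourier transform is the one Deninger--Murre actually use (following Beauville), and it genuinely delivers the eigenspace decomposition of all of $\CH^*(A\times A)_\Q$ under the $[n]$-actions. Your alternative ``theorem of the cube plus bookkeeping'' is not a real substitute: the cube theorem only gives the quadratic behaviour of $[n]^*$ on $\CH^1$, and promoting this to the required statement on $\CH^g(A\times A)_\Q$ is essentially equivalent to redoing the Fourier/Beauville argument. So if you were to write this out, commit to the Fourier transform.
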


\begin{theorem}[{\cite[Thm. 3.1.1 et 3.3.1]{Ku1}}]\label{Ku}
For any abelian variety $A$ over $k$ of dimension $g$ the following holds:
\begin{enumerate}
\item\label{abel poincare} The Poincar\'e duality holds \[\hof{2g-i}(A)^{\vee}=\hof{i}(A)(-g)[-2g]\ .\] 
In particular  one has $\hof{2g}(A)=\one(g)[2g]$.
\item For $i> 2g,$ the motive $ \Sym ^i \hof{1}(A)$ vanishes. 
\item The canonical morphism of coalgebras
 \[M(A) \longrightarrow \OSym(\hof{1}(A))\]
 induced by the projection $M(A) \to  \hof{1}(A)$ is an isomorphism. It respects the grading, i.e. it induces isomorphisms
 \[\hof{i}(A) \longrightarrow \Sym^i(\hof{1}(A))\ .\]
\end{enumerate}
\end{theorem}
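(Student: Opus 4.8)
The plan is to bootstrap from the Deninger--Murre decomposition of Theorem~\ref{DM}: it already supplies the summands $\hof{i}(A)$ and the characterisation of $\hof{i}(A)$ as the part of $M(A)$ on which every $[n]_A$ (multiplication by $n$) acts by $n^i$, and it forces the $\ell$-adic realisation of $\hof{i}(A)$ to be $H^i(A)=\bigwedge^iH^1(A)$, since $[n]_A$ acts by $n^i$ on $H^i(A)$. I would establish (2) first, since it is what makes $\OSym(\hof{1}(A))$ a legitimate object of $\ChowkQ$. The classical fact that the Chow motive of an abelian variety is finite-dimensional (Shermenev~\cite{Sher}, Kimura~\cite{Kim}), applied to the direct summand $\hof{1}(A)$, shows that $\hof{1}(A)$ is odd of some dimension $N$; then $\Sym^{N+1}\hof{1}(A)=0$ realises to $\bigwedge^{N+1}H^1(A)$, which is nonzero when $N<2g$, while $\det\hof{1}(A)=\Sym^N\hof{1}(A)$ is invertible, hence of nonzero realisation $\bigwedge^NH^1(A)$, which forces $N\le 2g$. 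So $N=2g$ and $\Sym^i\hof{1}(A)=0$ for $i>2g$, which is (2).

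For (3) the point is that the diagonal $\Delta\colon A\to A\times A$ and the multiplication $\mu\colon A\times A\to A$ make $M(A)$ a commutative and cocommutative Hopf algebra in $\ChowkQ$. Iterating the comultiplication and projecting each tensor factor onto $\hof{1}(A)$ yields, by cocommutativity and the universal property of $\OSym$ (Appendix~\ref{appsym}), a morphism of coalgebras $\Phi\colon M(A)\to\OSym(\hof{1}(A))$ extending the projection $M(A)\to\hof{1}(A)$; since $\Delta$ is $[n]_A$-equivariant and $\Sym^i\hof{1}(A)$ is exactly the summand of the target on which $[n]_A$ acts by $n^i$, the map $\Phi$ respects the gradings and induces $\Phi_i\colon\hof{i}(A)\to\Sym^i\hof{1}(A)$, the identity for $i=0,1$. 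A candidate inverse is built from multiplication, $\Sym^i\hof{1}(A)\hookrightarrow\hof{1}(A)^{\otimes i}\hookrightarrow M(A)^{\otimes i}\xrightarrow{\ \mu_*\ }M(A)$, which lands in $\hof{i}(A)$, again by $[n]_A$-equivariance; after normalising by a rational scalar this gives $\Psi_i\colon\Sym^i\hof{1}(A)\to\hof{i}(A)$. It then remains to see that $\Phi$ and $\Psi=\bigoplus_i\Psi_i$ are mutually inverse. Both objects being finite-dimensional, Theorem~\ref{conservativity} reduces this to $\ell$-adic realisation, where $H^*(\Phi)$ is the cup-product isomorphism $\bigwedge^*H^1(A)\xrightarrow{\ \sim\ }H^*(A)$ (the classical exterior-algebra structure of $H^*(A)$) and, with the chosen normalisation, $H^*(\Psi)$ is its inverse. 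Restricting the isomorphism $\Phi$ to graded pieces gives $\hof{i}(A)\cong\Sym^i\hof{1}(A)$.

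For (1): the motive of a smooth projective variety of dimension $g$ is self-dual up to twist, $M(A)^\vee\cong M(A)(-g)[-2g]$, the duality being given by the diagonal together with Poincar\'e duality on $\CH^*(A\times A)$. Under this identification the dual of the Deninger--Murre idempotent cutting out $\hof{i}(A)$ is again an idempotent commuting with all $[n]_A$, and dualising exchanges the action of $[n]_A^*$ with that of $[n]_{A*}$; by the projection formula for the isogeny $[n]_A$, of degree $n^{2g}$, this replaces the eigenvalue $n^i$ by $n^{2g-i}$. Hence $\hof{i}(A)^\vee\cong\hof{2g-i}(A)(-g)[-2g]$, and the case $i=0$ with $\hof{0}(A)=\one$ gives $\hof{2g}(A)=\one(g)[2g]$. (Alternatively, once (2) and (3) are available, (1) follows from the perfect pairing $\Sym^i\hof{1}(A)\otimes\Sym^{2g-i}\hof{1}(A)\to\Sym^{2g}\hof{1}(A)=\det\hof{1}(A)$.)

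The main obstacle is the identification in (3) that $\Phi$ and the multiplication-built $\Psi$ are mutually inverse --- equivalently, pinning down the rational scalars in the Poincar\'e--Birkhoff--Witt-type comparison of $\mu_*$ and $\Delta_*$ on the K\"unneth components. There is no purely formal argument here using only that $\hof{1}(A)$ is an odd object: one must either feed in the finite-dimensionality of $M(A)$ and reduce to realisation as above, or, as in K\"unnemann's original proof \cite{Ku1}, carry out the cycle computations explicitly by means of Beauville's Fourier transform \cite{Beau}, which trades the Pontryagin product for the intersection product and thereby makes the combinatorics tractable.
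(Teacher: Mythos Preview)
The paper does not prove this theorem at all; it is quoted from K\"unnemann~\cite{Ku1}, whose argument rests on Beauville's Fourier transform for cycles on abelian varieties and explicit manipulations of correspondences. Your route is genuinely different: you take the Deninger--Murre decomposition and Kimura finiteness of $M(A)$ as inputs, build the coalgebra map $\Phi$ and the algebra map $\Psi$ from the Hopf structure, and then invoke the Andr\'e--Kahn conservativity principle (Theorem~\ref{conservativity}) to upgrade the cohomological isomorphism to a motivic one. This is precisely the philosophy the present paper adopts for general semiabelian varieties (see Sections~\ref{sec_triangles}--\ref{end}), so your proposal is in excellent harmony with the surrounding text. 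What K\"unnemann's approach buys, and yours does not, is an explicit cycle-theoretic description of the K\"unneth projectors.

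One point deserves care, and you touch on it only obliquely in your last paragraph. To apply Theorem~\ref{conservativity} to $\Phi$ you need $M(A)$ itself to be Kimura-finite, not just $\hof{1}(A)$. The usual reference for this is exactly K\"unnemann's theorem, so you must source it independently to avoid circularity. The clean way is: choose a curve $C$ with $A$ an isogeny factor of $J(C)$; then $M(A)$ is a direct summand of $M(J(C))$ (isogenies become isomorphisms on rational Chow motives, by Theorem~\ref{DM}), and $M(J(C))$ is in turn a direct summand of $M(C)^{\otimes g_C}$ via the generically finite summation map $C^{g_C}\to J(C)$. Since $M(C)$ is Kimura-finite by Kimura's theorem for curves, so is $M(A)$. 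You should also spell out that $\hof{1}(A)$ is purely odd: its Kimura decomposition $X_+\oplus X_-$ has $H^*(X_+)=0$ (the realisation lives in odd degree), whence $X_+=0$ by the same conservativity.
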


\begin{proposition}[{\cite[Prop. 2.2.1]{Ki}}] \label{Ki}
For all pairs of abelian varieties $A$ and $B$, the functor $\hof{1}$ induces an isomorphism of $\Q$-vector spaces
\[\Hom_{\AbQ}(A,B) \isocan \Hom_{\ChowkQ}(\hof{1}(A), \hof{1}(B)).\]
\end{proposition}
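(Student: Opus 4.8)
The plan is to exhibit the map induced by $\hof{1}$ on morphism groups, prove it is injective by passing to an $\ell$-adic realization, and prove surjectivity by a computation with algebraic correspondences on $A\x B$; the surjectivity is the substantial point. For the map itself: by the naturality in $A$ of the decomposition of Theorem \ref{DM}, a homomorphism $f\colon A\to B$ induces $M(f)\colon M(A)\to M(B)$ which is compatible with the two Chow--K\"unneth decompositions, because $f\circ n_A=n_B\circ f$ and each decomposition is characterised by the action of the multiplications $n_A$ (Theorem \ref{DM}); hence $M(f)$ restricts to $\hof{1}(f)\colon\hof{1}(A)\to\hof{1}(B)$, and extending $\Q$-linearly yields the natural map $\Hom_{\AbQ}(A,B)\to\Hom_{\ChowkQ}(\hof{1}(A),\hof{1}(B))$.

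For injectivity I would compose with the $\ell$-adic realization of Section \ref{ladic}. Since the realization is compatible with the action of $n_A$ (which acts by $n^i$ on $H^i$), it carries the summand $\hof{1}(A)$ to $H^1(A)$, and $H^1(\hof{1}(f))$ is the pull-back $f^{*}$ on $H^1$. Thus $\hof{1}(f)=0$ forces $f^{*}=0$ on $H^1$, hence $f=0$ in $\AbQ\subseteq\sAbQ$ by the faithfulness of $H^1$ on $\sAbQ$ (Lemma \ref{H1 faithful}); equivalently, one invokes the classical injectivity of $\Hom(A,B)\ox\Q\hookrightarrow\Hom(V_{\ell}(A),V_{\ell}(B))$.

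For surjectivity, note that $\Hom_{\ChowkQ}(\hof{1}(A),\hof{1}(B))$ is the image of the idempotent $\phi\mapsto\pi_1^{B}\circ\phi\circ\pi_1^{A}$ on $\Hom_{\ChowkQ}(M(A),M(B))$, the latter being a Chow group of correspondence cycles on $A\x B$ with $\Q$-coefficients (Voevodsky's description of maps of motives of smooth proper varieties by cycle classes). Using $M(n_A)=\sum_i n^{i}\pi_i^{A}$ and its analogue for $B$, a standard bookkeeping with Beauville's decomposition identifies this image with the ``bidegree $(1,1)$'' (i.e.\ bilinear) summand of that correspondence group. Beauville's Fourier transform \cite{Beau} in the $B$-variable transports this summand isomorphically onto the bidegree $(1,1)$ summand of $\Pic(A\x\wh{B})\ox\Q$; by the theorem of the cube (seesaw principle) one has $\Pic(A\x\wh{B})\ox\Q=\Pic(A)\ox\Q\oplus\Pic(\wh{B})\ox\Q\oplus(\Hom(A,\wh{\wh{B}})\ox\Q)$, the three summands being of bidegrees $(\ast,0)$, $(0,\ast)$ and $(1,1)$, so the bidegree $(1,1)$ part is $\Hom(A,\wh{\wh{B}})\ox\Q=\Hom(A,B)\ox\Q=\Hom_{\AbQ}(A,B)$. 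In particular $\Hom_{\ChowkQ}(\hof{1}(A),\hof{1}(B))$ is finite-dimensional of the same dimension as the source, so the injective natural map is forced to be an isomorphism (and one can check directly that the composite isomorphism just produced is its inverse).

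The main obstacle is the surjectivity, and within it the identification of the bidegree $(1,1)$ part of the (in general higher-codimension) correspondence Chow group on $A\x B$ with $\Hom(A,B)\ox\Q$: this is exactly where Beauville's Fourier transform on abelian varieties is needed, reducing the problem from arbitrary codimension to divisorial correspondences, which the theorem of the cube then controls. When $\dim B=1$ no Fourier transform is necessary, since the relevant Chow group is already $\Pic(A\x B)$.
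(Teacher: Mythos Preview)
The paper does not give its own proof of this proposition; it is quoted verbatim from Kings \cite[Prop.~2.2.1]{Ki} and used as a black box. So there is no ``paper's proof'' to compare against, and your task was really to supply what the paper imports from the literature.

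Your argument is correct. The injectivity via faithfulness of $H^1$ on $\sAbQ$ (Lemma~\ref{H1 faithful}) is clean. For surjectivity, the Fourier-transform reduction is valid: writing $h=\dim B$, a class $Z\in\CH^h(A\times B)_\Q$ representing a map $\hof{1}(A)\to\hof{1}(B)$ satisfies $(n_A\times 1)^*Z=nZ$ and $(1\times n_B)^*Z=n^{2h-1}Z$; in Beauville's grading relative to $B$ this is $\CH^h_{(1)}$, and the relative Fourier transform sends it isomorphically to $\CH^1_{(1)}$ of $A\times\widehat{B}$, i.e.\ into $\Pic(A\times\widehat{B})_\Q$ with both pull-backs acting by $n$. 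The seesaw decomposition $\Pic(A\times\widehat{B})_\Q\cong\Pic(A)_\Q\oplus\Pic(\widehat{B})_\Q\oplus\Hom(A,\widehat{\widehat{B}})_\Q$ then identifies the bidegree-$(1,1)$ part with $\Hom(A,B)_\Q$, and the dimension count finishes.

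Two small remarks. First, the decomposition of $\Pic(A\times\widehat{B})$ is usually attributed to the seesaw theorem (or the theorem of the square) rather than the theorem of the cube; the latter is not needed here. Second, your final sentence about the case $\dim B=1$ is a nice observation but slightly misleading: even then $\Hom_{\ChowkQ}(M(A),M(B))=\CH^1(A\times B)_\Q$ only because $h=1$, and you still need the bidegree analysis to isolate $\Hom(A,B)_\Q$ inside it --- the Fourier step is genuinely redundant there, but the divisorial bookkeeping is not.

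This is essentially the argument one finds in the literature around Deninger--Murre and Beauville, and is presumably close to what Kings does.
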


\begin{proposition}\label{Chow Jacobian}
Let $C$ be a smooth and projective curve over $k$ and $J(C)$ its Jacobian. 
Suppose that $C(k)\neq \emptyset$ and let $x_0: C \to C$ be a constant map.
Then the motive of the curve can be decomposed as
\[M(C)=\one  \oplus \hof{1}(J(C))  \oplus  \one(1)[2]\]
such that the projector to $\one$ is given by $M(x_0)$ and the projector to $\one(1)[2]$ by
its Poincar\'e dual.
\end{proposition}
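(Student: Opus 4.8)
The statement comprises two parts: the existence of the three–term decomposition with the prescribed outer projectors, and the identification of the middle summand with $\hof{1}(J(C))$. The first is formal, and the second is the real content. For the decomposition, fix $x_{0}\in C(k)$, write $\iota_{x_{0}}\colon\Spec k\to C$ for the inclusion of the point and $\epsilon_{C}\colon C\to\Spec k$ for the structure map, so that the constant self–map is $x_{0}=\iota_{x_{0}}\circ\epsilon_{C}$. Then $\pi_{0}:=M(x_{0})=M(\iota_{x_{0}})\circ M(\epsilon_{C})$ is an idempotent of $M(C)$ with image $\one$, since $M(\epsilon_{C})\circ M(\iota_{x_{0}})=\id_{\one}$. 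A smooth projective curve satisfies Poincar\'e duality $M(C)^{\vee}\isocan M(C)(-1)[-2]$ in $\ChowkQ$; let $\pi_{2}$ be the idempotent dual to $\pi_{0}$ under this identification, so its image is $\one(1)[2]$. A standard computation in $\Cor(C,C)$ gives $\pi_{0}\pi_{2}=\pi_{2}\pi_{0}=0$ (one composition is supported in the wrong dimension, the other is an excess intersection whose excess bundle is the trivial normal bundle of a point on the curve). Hence $\pi_{1}:=\id-\pi_{0}-\pi_{2}$ is an idempotent orthogonal to both, and with $M_{1}(C):=\mathrm{im}(\pi_{1})$ we obtain $M(C)=\one\oplus M_{1}(C)\oplus\one(1)[2]$ with the asserted projectors.

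Next I would build a comparison morphism. Put $J:=J(C)=\Pic^{0}_{C/k}$ and let $f_{x_{0}}\colon C\to J$ be the Abel--Jacobi morphism normalised by $f_{x_{0}}(x_{0})=0$. By Theorem~\ref{DM} it induces $M(f_{x_{0}})\colon M(C)\to M(J)=\bigoplus_{i}\hof{i}(J)$; post--composing with the projection onto $\hof{1}(J)$ gives $\tilde{g}\colon M(C)\to\hof{1}(J)$. I claim $\tilde{g}$ annihilates $\one$ and $\one(1)[2]$. For $\one$: the composite $\one\xrightarrow{M(\iota_{x_{0}})}M(C)\xrightarrow{M(f_{x_{0}})}M(J)\to\hof{1}(J)$ equals, by naturality of the Chow--K\"unneth decomposition (Theorem~\ref{DM}) and $f_{x_{0}}\circ\iota_{x_{0}}=0_{J}$, the $\hof{1}$--component of $M(0_{J}\colon\Spec k\to J)$, which vanishes because $\hof{1}(\Spec k)=0$. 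For $\one(1)[2]$: it is enough that $\Hom_{\ChowkQ}(\one(1)[2],\hof{1}(J))=0$, and by Poincar\'e duality for abelian varieties (Theorem~\ref{Ku}(\ref{abel poincare})) this group is a direct summand of $\CH^{g-1}(J)_{\Q}$ cut out by a Deninger--Murre projector acting as zero there --- equivalently, the relevant Beauville component of $\CH_{1}(J)_{\Q}$ vanishes (\cite{Beau}). Thus $\tilde{g}$ factors through a morphism $g\colon M_{1}(C)\to\hof{1}(J)$, and it only remains to show $g$ is an isomorphism.

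Under the $\ell$--adic realisation $H^{*}(g)$ is the classical isomorphism $H^{1}(J)\isocan H^{1}(C)$ induced by $f_{x_{0}}$ (that is, $J$ is the Albanese of $(C,x_{0})$). Since the realisation is not faithful on Chow motives, I would upgrade this via the Andr\'e--Kahn conservativity theorem (Theorem~\ref{conservativity}). Both objects are finite--dimensional: $\hof{1}(J)$ is odd of dimension $2g$ by Theorem~\ref{Ku}, and $M_{1}(C)$, being a direct summand of the motive of a curve, is Kimura finite. Hence it suffices to produce $h\colon\hof{1}(J)\to M_{1}(C)$ whose realisation is inverse to that of $g$. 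I would obtain $h$ from the transpose $g^{\vee}$ together with the canonical principal polarization of $J$ --- equivalently, from the autoduality isomorphism $f_{x_{0}}^{*}\colon\Pic^{0}_{J/k}\isocan\Pic^{0}_{C/k}=J$ and the full faithfulness of $\hof{1}$ on $\AbQ$ (Proposition~\ref{Ki}): composing $g^{\vee}$ with the self--duality isomorphism $\hof{1}(J)\isocan\hof{1}(J)^{\vee}(1)[2]$ produces, after the cancelling twist, a morphism $\hof{1}(J)\to M_{1}(C)$, and a check on realisations (nondegeneracy of the cup product pairing on $H^{1}(C)$) shows that $g\circ h$ and $h\circ g$ are multiplication by a nonzero rational; rescaling and invoking Theorem~\ref{conservativity} finishes. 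Alternatively, one may simply quote Voevodsky's computation of the motive of a smooth curve, which identifies $M_{1}(C)$ directly with the motive of $\Pic^{0}_{C/k}$.

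The main obstacle is precisely this last step. The decomposition $M(C)=\one\oplus M_{1}(C)\oplus\one(1)[2]$ and the morphism $g$ are obtained formally from the rational point and from the stated structural results on motives of abelian varieties; but establishing that $g$ is an isomorphism cannot be done by realisation alone and requires genuine geometric input --- the autoduality of the Jacobian (equivalently, the theta divisor / principal polarization) together with Kimura finite-dimensionality and conservativity, or, packaged differently, Voevodsky's explicit curve computation.
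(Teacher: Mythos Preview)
Your argument is correct in outline but takes a genuinely different route from the paper, which simply cites Scholl \cite[Proposition~4.5]{Scholl}. Scholl's proof is classical and cycle-theoretic: the inverse correspondence is constructed explicitly from the geometry of the map $C^{(g)}\to J$ (Weil's theorem that it is birational, equivalently from the theta divisor), with no appeal to realizations or finite-dimensionality. Your route---build $g$ from Abel--Jacobi, build a candidate inverse $h$ from the principal polarization via duality, then invoke Andr\'e--Kahn conservativity (Theorem~\ref{conservativity})---mirrors the strategy the paper uses for its main theorem, so it is a natural fit here, but it trades explicit cycles for heavier categorical input.

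Two caveats. First, the vanishing $\Hom_{\ChowkQ}(\one(1)[2],\hof{1}(J))=0$ is unnecessary: since $M_1(C)$ is already a direct summand of $M(C)$ you may simply \emph{define} $g$ as the composite $M_1(C)\hookrightarrow M(C)\to\hof{1}(J)$ without requiring $\tilde g$ to kill the top piece; your Beauville justification is correct but superfluous. Second, be careful how you source the Kimura finiteness of $M_1(C)$: several standard accounts deduce it from the fact that $M(C)$ is a direct summand of $M(J)$, which is essentially the statement being proved, so you need an argument independent of this (e.g.\ Kimura's direct computation on powers of $C$). Finally, your closing ``alternatively, quote Voevodsky'' does not close the loop on its own: Voevodsky's curve computation identifies the middle summand with the \emph{sheaf} $\Mof(J(C))$, not with the Chow--K\"unneth piece $\hof{1}(J(C))$; in this paper the bridge between those two objects (Lemma~\ref{Voe Jacobian} and Proposition~\ref{iso mot H1 abel}) is built \emph{using} the very proposition you are proving.
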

\begin{proof}
This is classical, see for example \cite[Proposition 4.5]{Scholl}. Notice that by Theorem 5.3 of loc. cit. the different notions of motivic $\hof{1}$ for an abelian variety coincide. 
\end{proof}

\subsection{The Voevodsky motive of an abelian variety}
We consider $A$ an abelian variety of dimension $g$ over the base field $k$. 

In order to prove the main Theorem \ref{MainThm} in this special case, the key point is to show that $\alpha_A$ induces an isomorphism between $\hof{1}(A)$ and  $\Mof(A)$.  We will reduce this to the case of Jacobians and then use Proposition \ref{Chow Jacobian}
and a parallel result of Voevodsy for geometric motives.

\begin{lemma}\label{non zero}
Consider the decomposition $M(A)=\bigoplus_{i=0}^{i=2g}\hof{i}(A)$ of  Theorem \ref{DM} and the map 
$\alpha_A :  M(A) \rightarrow   \Mof(A)$  of definition \ref{defnall}. 
Then the restriction of the map to each factor $\hof{i}(A)$ is zero for all $i\neq 1$. Moreover, the induced map 
\[\alpha_A :  \hof{1}(A) \rightarrow   \Mof(A)\]
is non-zero.
\end{lemma}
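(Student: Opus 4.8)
The plan is to exploit the naturality of $\alpha$ (Lemma~\ref{natural}) with respect to the multiplication-by-$n$ endomorphisms of $A$, using that these act diagonally on both sides: on $M(A)$ through the Deninger--Murre decomposition (Theorem~\ref{DM}), and on $\Mof(A)$ as the scalar $n$ (Proposition~\ref{exactMof}). Since $M(A)$ is an effective geometric motive and $\ChowkQ$ sits fully faithfully inside $\DMgk_\Q$, the summands $\hof{i}(A)$ together with their inclusions and projections make sense as a decomposition of $M(A)$ in $\DMeetkQ$, so it is legitimate to speak of the restriction of $\alpha_A$ to each $\hof{i}(A)$.

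First I would record the basic identity. For each $n\in\Z$ the endomorphism $n_A\colon A\to A$ is a homomorphism of semiabelian varieties, so naturality of $\alpha_A$ gives a commutative square, whence
\[ \alpha_A\circ M(n_A)=\Mof(n_A)\circ\alpha_A=n\cdot\alpha_A \]
in $\DMeetkQ$, the last equality being $\Mof(n_A)=n\cdot\id_{\Mof(A)}$. Write $\iota_i\colon\hof{i}(A)\to M(A)$ and $p_i\colon M(A)\to\hof{i}(A)$ for the inclusion and projection of the $i$-th summand, and put $\alpha_i=\alpha_A\circ\iota_i\colon\hof{i}(A)\to\Mof(A)$. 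By Theorem~\ref{DM} we have $M(n_A)\circ\iota_i=n^i\,\iota_i$, so precomposing the displayed identity with $\iota_i$ yields $n^i\alpha_i=n\cdot\alpha_i$ for all $n$. Taking $n=2$ gives $(2^i-2)\,\alpha_i=0$; since $2^i-2\neq 0$ for every $i\neq 1$ and $\DMeetkQ$ is $\Q$-linear, this forces $\alpha_i=0$ for all $i\neq 1$, which is the first assertion.

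For the second assertion I would write $\id_{M(A)}=\sum_i\iota_i p_i$ and deduce $\alpha_A=\sum_i\alpha_i p_i=\alpha_1 p_1$, since all the other $\alpha_i$ vanish. As $p_1$ is a split epimorphism and $\alpha_A\neq 0$ for $A\neq 0$ by Lemma~\ref{natural}, it follows that $\alpha_1=\alpha_A|_{\hof{1}(A)}$ is non-zero.

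I do not expect a serious obstacle in this lemma: it is a formal consequence of naturality together with the diagonal action of $n_A$, the only point requiring care being that the argument takes place in the $\Q$-linear category $\DMeetkQ$, so that the integer $2^i-2$ is invertible. The one external input is the non-vanishing of $\alpha_A$, which by Lemma~\ref{natural} reduces to the fact that $\gamma_G$ does not kill the identity correspondence. The genuinely difficult step lies beyond this lemma, namely upgrading the non-zero map $\hof{1}(A)\to\Mof(A)$ to an isomorphism, which will use Proposition~\ref{Chow Jacobian} and Voevodsky's computation of the motive of a curve.
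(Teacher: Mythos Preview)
Your proof is correct and follows essentially the same approach as the paper: naturality of $\alpha_A$ with respect to $n_A$, combined with the diagonal action $n^i$ on $\hof{i}(A)$ and the scalar action $n$ on $\Mof(A)$, forces $\alpha_i=0$ for $i\neq 1$, and the non-vanishing of $\alpha_A$ from Lemma~\ref{natural} then gives $\alpha_1\neq 0$. You have simply made the argument more explicit (choosing $n=2$, writing out the projectors), which is fine.
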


\begin{proof}
By Lemma, \ref{natural} the map $\alpha_A$ is natural in $\sAb_{\Q}$. On the other hand the action of the multiplication $n_A$ is equal to  $n^i \cdot\id$ on $\hof{i}(A)$ (Theorem \ref{DM}) and to $n\cdot\id$ on $\Mof(A)$ (Proposition \ref{Yoneda}). This implies that $\alpha_A $ is zero on $\hof{i}(A)$ for $i\neq 1$. 

To conclude, notice that the restriction of $\alpha_A$ to $\hof{1}(A)$ has to be non-zero, otherwise the whole $\alpha_A$ would be zero, which contradicts Lemma \ref{natural}.
\end{proof} 
\begin{lemma}\label{Voe Jacobian}
For any smooth and  proper curve $C$ with a rational point, the motives $\hof{1}(J(C))$ and $\Mof(J(C))$ are isomorphic.
\end{lemma}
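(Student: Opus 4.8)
The plan is to realize both $\hof{1}(J(C))$ and $\Mof(J(C))$ as the \emph{same} direct summand of the motive $M(C)$ of the curve, by two \emph{a priori} different descriptions, and then to match them up.

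First I would record the Chow side: by \cite{VoeChow} the functor $\ChowkQ\to\DMgkQ$ is fully faithful, so Proposition~\ref{Chow Jacobian} yields, in $\DMegkQ\subset\DMeetkQ$, a decomposition
\[ M(C)\isom \one\oplus\hof{1}(J(C))\oplus\one(1)[2], \]
in which the idempotent onto $\one$ is $M(x_0)$ and the idempotent onto $\one(1)[2]$ is its Poincar\'e dual.

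Next I would bring in the Voevodsky side, namely Voevodsky's computation of the motive of a smooth projective curve with a rational point (see \cite{TMF}; for the reformulation through the Barbieri-Viale--Kahn embedding of $1$-motives, see also \cite{BVK} and \cite[Prop.~3.3.3]{Org}). It identifies the \emph{same} idempotent decomposition of $M(C)$ with
\[ M(C)\isom \one\oplus \Mof(J(C))\oplus\one(1)[2], \]
the complementary summand being the homotopy invariant sheaf with transfers $\ul{\Pic}^{0}_{C/k}=\ul{J(C)}$ placed in degree $0$, which is exactly $\Mof(J(C))$ (Definition~\ref{defnalpha}). Concretely, one can pick out this summand using the Albanese map $a_{x_0}\colon C\to J(C)$ based at $x_0$: the composite $\alpha_{J(C)}\circ M(a_{x_0})\colon M(C)\to\Mof(J(C))$ is nonzero on $\hof{1}(J(C))$ (Lemma~\ref{non zero}), vanishes on the summand $\one$ (since $a_{x_0}(x_0)=0$, so by naturality of $\alpha$, Lemma~\ref{natural}, it factors through $\Mof(\Spec k)=0$), and vanishes on $\one(1)[2]$ (a short computation, since $\one(1)[2]=\ul{\Gm}[1]$ and $\ul{J(C)}$ is a homotopy invariant sheaf concentrated in degree $0$).

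Because both decompositions are pinned to the \emph{same} pair of idempotents $M(x_0)$ and its Poincar\'e dual, their complementary summands coincide, whence $\hof{1}(J(C))\isom\Mof(J(C))$. The real obstacle will be the Voevodsky-side input: extracting, in exactly the form required, the identification of the ``$h_1$'' of a curve with $\ul{J(C)}$ \emph{at the level of Voevodsky motives} (Proposition~\ref{Chow Jacobian} gives only the classical, strictly weaker Chow statement) and checking its compatibility with the Chow idempotent $M(x_0)$ and its dual --- this cannot be bypassed by a naive cancellation of the common summand $\one(1)[2]$, since $\Hom_{\DMegkQ}(\hof{1}(J(C)),\one(1)[2])$ need not vanish. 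Everything else is formal, and this lemma is precisely the ingredient that, together with Poincar\'e reducibility and the isogeny-invariance of $\hof{1}$ (Proposition~\ref{Ki}) and of $\Mof$ (Proposition~\ref{exactMof}), will give the abelian-variety case of Theorem~\ref{MainThm}.
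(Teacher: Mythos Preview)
Your proposal is correct and follows essentially the same route as the paper: both identify $\hof{1}(J(C))$ and $\Mof(J(C))$ as the complementary summand of $M(C)$ once $\one$ and $\one(1)[2]$ are split off via $M(x_0)$ and its Poincar\'e dual. The paper carries this out slightly more concretely by computing the cohomology sheaves of $M(C)$ in $D^-(\STEkQ)$ directly---degree $0$ is $\ul{\Pic}(C)$ by \cite[Theorem~3.4.2]{TMF} and degree $-1$ is $\sheaf{O}^*$---so that after removing $\Z\oplus\sheaf{O}^*[1]$ the remainder is visibly $\ker(\ul{\Pic}(C)\to\Z)=\ul{J(C)}$; this sidesteps the idempotent-matching worry you flag, since the Voevodsky-side splitting is \emph{constructed} with the very same projectors rather than imported as a black box.
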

\begin{proof}
Consider $M(C)\in D^-(\STEkQ)$. It is cohomologically concentrated in degrees
$0$ and $-1$. The cohomology in degree $0$ is by \cite[Theorem 3.4.2]{TMF}  
given by $\Pic(C)$. In degree $-1$ it is equal to $\sheaf{O}^*$. 

We use the projector given by the rational point $x$ and its Poincar\'e dual to split off 
$\one\oplus \one(1)[2]=\Z\oplus\sheaf{O}^*[1]$. The remaining object is cohomologically concentrated in degree $0$ and given by the kernel of the degree map $\Pic(C)\to \Z$, hence
isomorphic to $\Mof(J(C))$.

By comparing with the decomposition
in Proposition \ref{Chow Jacobian} we get the result.
\end{proof}

\begin{proposition}\label{iso mot H1 abel}
For any abelian variety $A$, the map
\[\alpha_A|_{\hof{1}(A)} :  \hof{1}(A) \rightarrow   \Mof(A)\]
is an isomorphism.
\end{proposition}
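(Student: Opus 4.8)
The plan is to reduce to the case of a Jacobian, and then to identify $\alpha$ with the comparison isomorphism implicit in the proof of Lemma~\ref{Voe Jacobian}.

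\emph{Reductions.} First I would pass to $k=\bar k$: the Chow--K\"unneth decomposition of Theorem~\ref{DM} is canonical, hence commutes with base change (so $\overline{\hof 1(A)}=\hof 1(\overline A)$), whence by Lemma~\ref{basechange} $\overline{\alpha_A|_{\hof 1(A)}}=\alpha_{\overline A}|_{\hof 1(\overline A)}$ and Proposition~\ref{stalk} applies. Over $\bar k$ every abelian variety is a quotient of the Jacobian of a smooth projective curve (a standard fact), hence, by Poincar\'e complete reducibility, a direct factor in $\AbQ$ of such a Jacobian $J(C)$ (with $C(\bar k)\neq\emptyset$). Since $\hof 1$ (Theorem~\ref{DM}) and $\Mof$ (Proposition~\ref{exactMof}) are additive functors and $\alpha$ is natural (Lemma~\ref{natural}), $\alpha_A|_{\hof 1(A)}$ is then a direct factor of $\alpha_{J(C)}|_{\hof 1(J(C))}$, so it is enough to treat $A=J(C)$.

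\emph{The Jacobian case.} Fix $x_0\in C(k)$ and let $a\colon C\to J(C)$ be the Albanese morphism with $a(x_0)=0$. By Proposition~\ref{Chow Jacobian} one has $M(C)=\one\oplus\hof 1(J(C))\oplus\one(1)[2]$, the middle summand being identified, via $M(a)$, with $\hof 1(J(C))\subseteq M(J(C))$ (the classical identification of the $\hof 1$ of a curve with that of its Jacobian). So it suffices to show that $g:=\alpha_{J(C)}\circ M(a)\colon M(C)\to\Mof(J(C))$ restricts to an isomorphism on the middle summand. First I would check that $g$ kills the other two summands. It kills $\one$ (the image of the projector $M(x_0)$): on it $g$ equals $\alpha_{J(C)}\circ M(\iota_0)$ with $\iota_0\colon\{0\}\hookrightarrow J(C)$ the zero section, and this vanishes by naturality of $\alpha$ along $\{0\}\hookrightarrow J(C)$ together with $\Mof(\{0\})=0$ (Corollary~\ref{formulaMof}). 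It kills $\one(1)[2]$: since $\one(1)[1]=\ul{\Gm}$ (Proposition~\ref{toruscase}), $\Mof(J(C))=\ul{J(C)}$ sits in degree $0$, and $\DMeetkQ\hookrightarrow D^-(\STEkQ)$ is fully faithful, we have $\Hom_{\DMeetkQ}(\one(1)[2],\Mof(J(C)))=\Ext^{-1}_{\STEkQ}(\ul{\Gm},\ul{J(C)})=0$. Hence $g$ factors through the projection onto the middle summand as $\bar g\colon\hof 1(J(C))\to\Mof(J(C))$, and $\alpha_{J(C)}|_{\hof 1(J(C))}$ is an isomorphism if and only if $\bar g$ is.

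\emph{Identifying $\bar g$ --- the hard part.} Because $\Mof(J(C))=\ul{J(C)}$ is in degree $0$ and $M(C)$ is cohomologically concentrated in degrees $-1$ and $0$ (proof of Lemma~\ref{Voe Jacobian}), truncation shows $\Hom_{\DMeetkQ}(M(C),\Mof(J(C)))=\Hom_{\STEkQ}(H^0(M(C)),\ul{J(C)})$, so $g$ is pinned down by $H^0(g)$. Unwinding the definition of $\alpha$ through the Spie{\ss}--Szamuely map $\theta_{\cdot,J(C)}$ (Proposition~\ref{propspsz}), one should find that $H^0(g)\colon H^0(M(C))=\uPicCk\otimes\Q\to\ul{J(C)}$ sends the class of a degree-$d$ divisor to its difference with $d[x_0]$ in $\ul{\tn{Pic}}^{\circ}_{C/k}=\ul{J(C)}$; that is, $g$ is exactly the canonical projection of $M(C)$ onto its middle summand, identified with $\Mof(J(C))$ as in the proof of Lemma~\ref{Voe Jacobian}. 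Equivalently, $\bar g$ is the comparison isomorphism of that lemma, so $\bar g$ --- and hence $\alpha_{J(C)}|_{\hof 1(J(C))}$ --- is an isomorphism. The hard part is precisely this last identification: matching the morphism produced by $\alpha$, built from the Suslin--Voevodsky symmetric-power construction and the summation map of $J(C)$, with the description of $H^0(M(C))$ as the rational Picard sheaf of $C$. It should carry no geometric content, but it does require tracing carefully through the constructions; everything else is formal, given the results recalled in Sections~\ref{Notations}--\ref{sect special}.
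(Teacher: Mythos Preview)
Your argument is essentially correct, and indeed the paper remarks (just after the proposition) that this direct approach---tracing through the definitions---is the one taken in the second author's thesis \cite[Section~4.3]{ward}. The ``hard part'' you flag is genuine but doable: since $\ul{J(C)}$ is homotopy invariant and concentrated in degree~$0$, the map $g=\alpha_{J(C)}\circ M(a)$ is determined by the sheaf map $L(C)\to\ul{J(C)}$, which by Proposition~\ref{propspsz} sends a primitive finite correspondence to the sum of its values under $a$; on $0$-cycles this is $D\mapsto[D]-\deg(D)[x_0]$, exactly the projection onto $\ul{\Pic}^{\circ}_{C/k}$ used in the proof of Lemma~\ref{Voe Jacobian}.

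The paper's own proof, however, takes a genuinely different and shorter route that sidesteps this computation entirely. After the same reduction to $k=\bar{k}$, it reduces further to \emph{simple} $A$ (using additivity and naturality), picks a curve $C$ with $A$ an isogeny factor of $J(C)$, and then invokes Lemma~\ref{Voe Jacobian} only to know that $\hof{1}(J(C))$ and $\Mof(J(C))$ are abstractly isomorphic. This lets one view $\hof{1}(A)$ and $\Mof(A)$ as direct summands of a common object $X$ whose endomorphism ring is $\End_{\sAbQ}(J(C))$ (by Proposition~\ref{Ki} on one side and Proposition~\ref{Yoneda} on the other), hence semisimple. Both summands are simple, so any nonzero map between them is an isomorphism; Lemma~\ref{non zero} supplies the nonvanishing. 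The upshot: your approach gives an explicit identification of $\alpha_{J(C)}|_{\hof 1(J(C))}$ with a known isomorphism, at the cost of unwinding the Suslin--Voevodsky machinery; the paper's approach trades this for a soft Schur-type argument that only needs Lemma~\ref{Voe Jacobian} as an existence statement and the already-established full faithfulness results.
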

\begin{remark}
In \cite[Section 4.3]{ward} this is established by going through the definitions carefully. 
The proof given here is different.
\end{remark}
\begin{proof}
We can assume that $k$ is algebraically closed by Proposition \ref{stalk} and Lemma \ref{basechange}.
We may decompose $A$ up to isogeny into simple factors. 
The map $\alpha_A$ is natural in $\AbQ$ and compatible with direct products.
Hence it suffices to consider the case of a simple abelian variety. 
We can choose a curve $C$ such that $A$ is a factor of $J(C)$ up to isogeny.
As $k$ is algebraically closed, we can apply Lemma \ref{Voe Jacobian}
and deduce that there is some isomorphism
\[ \hof{1}(J(C))\to\Mof(J(C))\ .\]
Hence we may view $\hof{1}(A)$ and $\Mof(A)$ as direct factors of the same
object $X$ and $\alpha|_{\hof{1}(A)}$ as an endomorphism of $X$. 
They are both simple factors by Proposition \ref{Yoneda} and Proposition \ref{Ki}. Hence any non-zero map between them is an isomorphism. 
By Lemma \ref{non zero} this is the case for $\alpha_A$.
\end{proof}

\begin{proposition}\label{abelian case}
Let $A$ be an abelian variety over $k$ of dimension $g$. Then the main Theorem~\ref{MainThm} holds for $G=A$, i.e., $\Mof(A)$ is odd of dimension $2g$ and
\[\vp_A:M(A)\to\OSym(\Mof(A))\]
is an isomorphism.
\end{proposition}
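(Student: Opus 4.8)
The plan is to reduce everything to the two inputs already in hand: Proposition~\ref{iso mot H1 abel}, which provides an isomorphism $\theta \colon \hof{1}(A) \to \Mof(A)$, namely the restriction of $\alpha_A$ to the Deninger--Murre component $\hof{1}(A)$, and the structural results on the Chow motive of $A$, i.e.\ Theorems~\ref{DM} and \ref{Ku}.

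First I would settle oddness. Since $\theta$ is an isomorphism, $\Symn \Mof(A) \cong \Symn \hof{1}(A)$ for every $n$ by functoriality of $\Symn$ (Section~\ref{notsym}). By Theorem~\ref{Ku}(2) this vanishes for $n > 2g$, while Theorem~\ref{Ku}(1) and (3) give $\Sym^{2g}\hof{1}(A) \cong \hof{2g}(A) = \one(g)[2g] \neq 0$. Hence $\Mof(A)$ is odd of dimension exactly $2g$; in particular the finiteness assumption~\ref{assumptionfindim} holds for $A$, so $\OSym(\Mof(A)) = \bigoplus_{n=0}^{2g}\Symn\Mof(A)$ is an object of $\DMeetkQ$ and Lemma~\ref{coalg} is available.

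Next I would produce the inverse by transporting the Künnemann isomorphism. The isomorphism $\theta$ induces an isomorphism of Hopf algebras $\OSym(\theta) \colon \OSym(\hof{1}(A)) \to \OSym(\Mof(A))$ (functoriality of the symmetric coalgebra, Appendix~\ref{appsym}). By Theorem~\ref{Ku}(3) the canonical coalgebra morphism $\kappa \colon M(A) \to \OSym(\hof{1}(A))$ induced by the Deninger--Murre projection $\pi_1 \colon M(A) \to \hof{1}(A)$ is an isomorphism respecting the grading. Therefore $\OSym(\theta)\circ\kappa \colon M(A) \to \OSym(\Mof(A))$ is a coalgebra isomorphism whose degree-$1$ component is $\theta\circ\pi_1$; by Lemma~\ref{non zero} (which rests on Theorem~\ref{DM} and Proposition~\ref{Yoneda}) this equals $\alpha_A$, since $\alpha_A$ annihilates $\hof{i}(A)$ for $i\neq 1$ and restricts to $\theta$ on $\hof{1}(A)$. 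Thus $\OSym(\theta)\circ\kappa$ is a morphism of commutative coalgebras extending $\alpha_A$, so by the uniqueness in Lemma~\ref{coalg} it coincides with $\vp_A$. Hence $\vp_A$ is an isomorphism; since it respects the grading one moreover reads off $\Mi{i}(A) \cong \Symi\Mof(A)$.

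I do not expect a serious obstacle: the geometric content has been isolated in Proposition~\ref{iso mot H1 abel}, and the remaining argument is formal. The two points needing care are (i) checking the finiteness assumption before invoking Lemma~\ref{coalg}, and (ii) verifying that the transported Künnemann morphism genuinely extends $\alpha_A$ at the level of degree-$1$ components, where the multiplication-by-$n$ bookkeeping of Lemma~\ref{non zero} is exactly what is needed. One could alternatively bypass Lemma~\ref{coalg} by computing each $\vp_A^n$ against the Deninger--Murre decomposition directly, using multiplicativity of the diagonal, but the coalgebra-uniqueness route is shorter and more transparent.
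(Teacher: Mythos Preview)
Your proposal is correct and follows essentially the same route as the paper: both transport K\"unnemann's isomorphism $M(A)\to\OSym(\hof{1}(A))$ along the identification $\hof{1}(A)\cong\Mof(A)$ from Proposition~\ref{iso mot H1 abel}, and then invoke the uniqueness in Lemma~\ref{coalg} to conclude that the resulting composite equals $\vp_A$. Your write-up is slightly more explicit in identifying the degree-$1$ component as $\theta\circ\pi_1=\alpha_A$ via Lemma~\ref{non zero}, whereas the paper packages this step in a commutative triangle, but the argument is the same.
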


\begin{proof}
First, $\hof{1}(A)$ is odd of dimension $2g$ by Theorem \ref{Ku}. So by Proposition \ref{iso mot H1 abel} the same holds for $\Mof(A)$.
Let us consider the following commutative diagram:
 \[\xymatrix{
  M(A) \ar[d]  \ar[rd]^{\alpha_A} & {} \\
  \hof{1}(A) \ar[r]_{{\alpha_A}_{|\hof{1}(A)}} & \Mof(A) }
\]
where the vertical arrow is the projection. By the universal property \ref{universalalgebra} it induces a unique commutative diagram 
\[\xymatrix{
  M(A) \ar[d]  \ar[rd] & {} \\
 \OSym( \hof{1}(A)) \ar[r] & \OSym(\Mof(A)) }
\]
of morphisms of coalgebras. The diagonal morphism is $\vp_A$ by Lemma \ref{coalg}. By Proposition \ref{iso mot H1 abel}  $\alpha_A :  \hof{1}(A) \rightarrow   \Mof(A)$ is an isomorphism, so the horizontal arrow $\OSym(\hof{1}(A)) \to \OSym(\Mof(A))$ is an isomorphism. The vertical arrow is an isomorphism by Theorem \ref{Ku}. We deduce that $\vp_A$ is an isomorphism.
\end{proof}

\section{Properties of $\Mof(G)$}\label{sect real}
In all the section, $G$ is a semiabelian variety over $k$. We consider the basic exact sequence
\[1\to T\to G\to A\to 1\ , \]
with $T$ a torus of rank $r$ and $A$ an abelian variety of dimension $g$. We establish properties for $\Mof(G)$ which we already know for $\Mof(T)$ and $\Mof(A)$.

\subsection{The motive $\Mof(G)$ is Kimura finite}
\begin{proposition}\label{propodd}Let $G$ be 
a semiabelian variety over $k$ which is an 
extension of an abelian variety of dimension $g$ by a torus $T$ of rank $r$.
Then the motive  $\Mof(G)$
is odd of dimension $2g+r$, i.e., $ \Sym ^n( \Mof(G))$ vanishes for $n>2g+r$, and the motive
\[ \det(G):=  \det(\Mof(G))=\Sym^{2g+r}\Mof(G)\]
is of the form
\[ \Lambda(g+r)[2g+r]\]
where $\Lambda$ is the tensor-invertible Artin motive $\det \Xi(T)$ of Remark \ref{det torus}. In particular, if the torus part of
$G$ is split, then $\Lambda=\one$.
\end{proposition}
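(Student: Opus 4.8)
The plan is to deduce the statement from the basic exact sequence $1\to T\to G\to A\to 1$ together with the two special cases already established (tori: Proposition~\ref{toruscase} and Remark~\ref{det torus}; abelian varieties: Proposition~\ref{abelian case} and Theorem~\ref{Ku}). Since a short exact sequence of smooth commutative group schemes induces one of \'etale sheaves, since $-\otimes_{\Z}\Q$ is exact, and since the transfer structure of Proposition~\ref{propspsz} is natural in the group, we obtain a short exact sequence
\[0\lra\ul{T}\lra\ul{G}\lra\ul{A}\lra 0\]
in the $\Q$-linear abelian tensor category $\STEkQ$. Recall that $\Mof(T)$, $\Mof(G)$, $\Mof(A)$ are the images of these sheaves (placed in degree $0$) under the exact symmetric monoidal localization $q\colon D^-(\STEkQ)\to\DMeetkQ$, that $\Mof(T)$ is odd of dimension $r$ with $\det\Mof(T)=\Lambda(r)[r]$, and that $\Mof(A)$ is odd of dimension $2g$ with $\det\Mof(A)=\hof{2g}(A)=\one(g)[2g]$.

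Next I would feed this short exact sequence into Appendix~\ref{appfilt}, working inside the abelian tensor category $\STEkQ$: it equips $\Sym^n\ul{G}$ with a canonical finite filtration whose graded pieces are $\Sym^a\ul{T}\oxtr\Sym^b\ul{A}$ for $a+b=n$. Then I would push everything through $q$. Since $q$ is exact and symmetric monoidal, and since (by \cite{tensor}) the tensor product of \'etale sheaves with transfers with rational coefficients is exact — so that no derived-tensor correction terms appear — one gets $q(\Sym^n\ul{G})=\Sym^n\Mof(G)$ and $q(\Sym^a\ul{T}\oxtr\Sym^b\ul{A})=\Sym^a\Mof(T)\otimes\Sym^b\Mof(A)$, and the filtration of $\Sym^n\ul{G}$ becomes a finite tower of distinguished triangles realizing $\Sym^n\Mof(G)$ as an iterated extension of the motives $\Sym^a\Mof(T)\otimes\Sym^b\Mof(A)$ with $a+b=n$.

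It then remains to read off the answer. By the torus and abelian cases, $\Sym^a\Mof(T)\otimes\Sym^b\Mof(A)=0$ unless $a\le r$ and $b\le 2g$. If $n>2g+r$, every decomposition $n=a+b$ fails one of these bounds, so all graded pieces vanish and hence $\Sym^n\Mof(G)=0$. If $n=2g+r$, the unique surviving pair is $(a,b)=(r,2g)$, so the tower collapses to an isomorphism
\[\Sym^{2g+r}\Mof(G)\;\cong\;\det\Mof(T)\otimes\det\Mof(A)\;=\;\Lambda(r)[r]\otimes\one(g)[2g]\;=\;\Lambda(g+r)[2g+r],\]
which is non-zero because $\Lambda=\det\Xi(T)$ is $\otimes$-invertible (Remark~\ref{det torus}). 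Hence $\Mof(G)$ is odd of dimension exactly $2g+r$ and $\det\Mof(G)=\Lambda(g+r)[2g+r]$; when $T$ is split, $\Xi(T)$ is the constant sheaf $\Q^{r}$, so $\Lambda=\det\Xi(T)=\one$.

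The step I expect to demand the most care is the passage from the filtration living in the abelian category $\STEkQ$ to the tower of triangles in $\DMeetkQ$: one must check that forming $n$-th symmetric powers and the extension filtration of Appendix~\ref{appfilt} is compatible with the exact monoidal functor $q$, and this is precisely where the exactness of the tensor product of \'etale sheaves with transfers (\cite{tensor}) is used, so that the comparison between the underived tensor on $\STEkQ$ and the tensor product on $\DMeetkQ$ is clean and no higher derived terms intervene. Everything else is formal bookkeeping with the previously treated cases of tori and abelian varieties.
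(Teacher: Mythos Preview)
Your proposal is correct and follows essentially the same approach as the paper: both arguments feed the short exact sequence $0\to\ul{T}\to\ul{G}\to\ul{A}\to 0$ into the filtration of Appendix~\ref{appfilt} (Proposition~\ref{qniisomo2}), identify the graded pieces as $\Sym^a\Mof(T)\otimes\Sym^{n-a}\Mof(A)$, and then read off the vanishing and the determinant from the previously established torus and abelian cases. Your write-up is slightly more explicit about the passage through $q$ and the role of exactness of the tensor product from \cite{tensor}, but the argument is the same.
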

\begin{proof} Let $1\to T\to G\to A\to 1 $ be the basic sequence and consider the associated filtration of Appendix \ref{sectfilttensor}
\[ \Fil_i^{\Mof(T)}\Sym^{n}\Mof(G)\ .\]
By Proposition \ref{qniisomo2} its associated graded pieces have the form 
\[ \Sym^i\Mof(T)\tensor\Sym^{n-i}\Mof(A)\ .\]
Hence, they vanish in $\DMeetkQ$ for $i> r$ (see Proposition \ref{toruscase}) or $n-i>2g$ (see Theorem \ref{Ku}). This implies vanishing for $n>2g+r$. For $n=2g+r$
we get a canonical isomorphism
\[\det\Mof(G)=\det\Mof(T)\tensor\det\Mof(A)\ .\]
Hence the formula follows  from the Proposition \ref{toruscase} for tori and Theorem \ref{Ku} for abelian varieties. Note that $\Lambda$ is indeed invertible.
\end{proof}
The following is not needed in the sequel.
\begin{corollary}\begin{enumerate}
\item Let $C$ be a curve (not necessarily smooth and projective). Then $M(C)$ is Kimura finite. 
\item\label{findim1motives} Fix the embedding of the category of $1$-motives \cite{DeH3} in $\DMeetkQ$ to be the one constructed in \cite{BVK,Org}. Then all $1$-motives  are even objects in $\DMeetkQ$.
\end{enumerate}
\end{corollary}
\begin{proof}
By \cite[Theorem 11.2.1]{BVK}  the motive $M(C)$ decomposes into the sum of an Artin-motive and a $1$-motive (shifted by $1$). Then, it is enough to show (\ref{findim1motives}).

Consider a $1$-motive $[F\to G]$ as a complex where  $F$ (in degree $0$) is a $k$-group scheme such that $F_{\bar{k}}$ is a free abelian group of finite rank  and $G$ (in degree $1$) is a semiabelian variety. 

 By Proposition \ref{qniisomo2}, the filtration (in the abelian category $C^b(\STEkQ)$)
\[ 0\to [0\to \ul{G}_\Q]\to [\ul{F}_\Q\to \ul{G}_\Q]\to [\ul{F}_\Q\to 0]\to 0\]
 induces a filtration on 
$\Sym^n[\ul{F}_\Q\to \ul{G}_\Q]$ with associated graded pieces isomorphic to
\[ \Sym^a[0\to \ul{G}_\Q]\tensor\Sym^{n-a}[\ul{F}_\Q\to 0]\ .\]
These exact sequences induce triangles in $\DMeetkQ$ which reduce to the case of  $[F\to 0]$ and $[0\to G]$. The first is just an Artin motive, hence even. The second, by definition, equals to $\Mof(G)[-1]$, which is even by Proposition \ref{propodd}.
\end{proof}

\begin{remark}
\begin{enumerate}
\item
Kimura finiteness of motives of curves was known
by the work of Guletskii \cite{Guletskii} and Mazza \cite{Mazza} (using different methods).
\item The fact that $1$-motives are Kimura
finite is pointed out in \cite[Remark 5.11]{Mazza} (attributed to O'Sullivan) as a consequence of Kimura finiteness of motives of curves. The above is more precise and also more direct.
\end{enumerate}
\end{remark}

\subsection{The motive $\Mof(G)$ is geometric}
\begin{proposition}
The motive $\Mof(G)\in \DMeetkQ$ belongs to the full subcategory $\DMegk_{\Q}$.
\end{proposition}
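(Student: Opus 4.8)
The plan is to deduce this by d\'evissage from the cases already settled in Section~\ref{sect special}, using that $\DMegk_{\Q}$ is a full triangulated pseudo-abelian subcategory of $\DMeetkQ$. First I would record that the conclusion holds for the two building blocks. For a torus $T$, Proposition~\ref{toruscase} shows that $\vp_T$ is an isomorphism, so $\Mof(T)=\Sym^1\Mof(T)$ is a direct summand of $\OSym(\Mof(T))\cong M(T)$; since $T$ is a smooth $k$-scheme, $M(T)\in\DMegk_{\Q}$, hence so is the summand $\Mof(T)$ by pseudo-abelianness. (Equivalently one can invoke the identification $\Mof(T)\cong\Xi(T)(1)[1]$ of Proposition~\ref{toruscase}, with $\Xi(T)$ an Artin motive — a summand of $M(\Spec K)$ for a suitable finite extension $K/k$ — and note that $(1)[1]$ already occurs as a summand of $M(\Gm)$.) For an abelian variety $A$, Proposition~\ref{abelian case} gives that $\vp_A$ is an isomorphism, so $\Mof(A)$ is a direct summand of $\OSym(\Mof(A))\cong M(A)\in\DMegk_{\Q}$; alternatively one uses Proposition~\ref{iso mot H1 abel} to identify $\Mof(A)$ with the summand $\hof{1}(A)$ of the (effective, geometric) motive $M(A)$.

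Second, for a general semiabelian $G$ with basic exact sequence $1\to T\to G\to A\to 1$, Proposition~\ref{exactMof} produces an exact triangle
\[\Mof(T)\to\Mof(G)\to\Mof(A)\to\Mof(T)[1]\]
in $\DMeetkQ$. Rotating, $\Mof(G)$ is isomorphic to the cone of a morphism $\Mof(A)[-1]\to\Mof(T)$. Both $\Mof(T)$ and $\Mof(A)[-1]$ lie in $\DMegk_{\Q}$ by the first step, and this morphism is automatically a morphism of $\DMegk_{\Q}$ since the latter is a full subcategory. As $\DMegk_{\Q}$ is triangulated (hence closed under shifts and cones) and strictly full, its cone $\Mof(G)$ lies in $\DMegk_{\Q}$. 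This completes the argument.

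I do not expect a genuine obstacle: the proof is essentially formal, relying only on (i) the special cases of Theorem~\ref{MainThm} already established for tori and abelian varieties, (ii) the exactness of $\Mof$ from Proposition~\ref{exactMof}, and (iii) the fact, built into the definition in Section~\ref{sect_motives}, that $\DMegk_{\Q}$ is a triangulated pseudo-abelian subcategory. The only mild point of care is to land in the \emph{effective} geometric category rather than merely in $\DMgk_{\Q}$: this is why I phrase the base cases as exhibiting $\Mof(T)$ and $\Mof(A)$ as direct summands of $M(T)$ and $M(A)$ (which are manifestly effective), rather than of Chow motives viewed a priori in $\DMgk_{\Q}$.
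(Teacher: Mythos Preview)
Your proof is correct and follows essentially the same approach as the paper: first handle tori and abelian varieties by exhibiting $\Mof(T)$ and $\Mof(A)$ as direct summands of the geometric motives $M(T)$ and $M(A)$ via the isomorphisms $\vp_T$ and $\vp_A$ (with the same alternative explicit identifications you mention), then conclude for general $G$ from the exact triangle $\Mof(T)\to\Mof(G)\to\Mof(A)$ and the fact that $\DMegk_{\Q}$ is triangulated. Your additional remarks about effectivity and pseudo-abelianness only make explicit what the paper leaves implicit.
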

\begin{remark}The fact that all $1$-motives are geometric is already shown
in \cite{Org}. We give a straight-forward argument in our case.
\end{remark}

\begin{proof}
If $G$ is a torus or an abelian variety, we have established the isomorphism
\[\vp_G:M(G)\to\OSym(\Mof(G))\]
in Proposition \ref{toruscase} and Proposition \ref{abelian case}.
In particular, $\Mof(G)$ is a direct summand of a geometric motive, hence geometric. (Alternatively, we have given an explicit description of $\Mof(G)$
in Proposition \ref{toruscase} and Proposition~\ref{iso mot H1 abel}, which 
is in both cases geometric.)

In general, consider a basic exact sequence fixed in the beginning of the section
\[ 1\to T \to G\to A\to 1 \ .\]
It induces an exact triangle 
\[ \Mof(T) \to \Mof(G)\to \Mof(A)\]
in $\DMeetkQ$. The claim follows because the category of geometric motives is triangulated.
\end{proof}
\subsection{Computation of realization}
\begin{proposition}\label{realization M1}

The realization of the map $\alpha_G: M(G) \to \Mof(G)$ of Definition \ref{defnalpha}
is zero in all degrees except in degree one where it induces an isomorphism
\[H^*(\alpha_G):  H^*(\Mof(G)) \to H^1(G).\]
\end{proposition}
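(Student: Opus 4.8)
The plan is to first determine the cohomology of $\Mof(G)$ from the basic exact sequence $1\to T\to G\to A\to 1$ together with the torus and abelian cases already settled, and then to identify the degree-one part of $H^*(\alpha_G)$ by a five lemma argument against the short exact sequence $0\to H^1(A)\to H^1(G)\to H^1(T)\to 0$.

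First I would check that $H^*(\Mof(T))$ and $H^*(\Mof(A))$ are concentrated in cohomological degree one. For the torus this is immediate from Proposition~\ref{toruscase}, which exhibits $\Mof(T)\cong\Xi(T)(1)[1]$ with $\Xi(T)$ an Artin motive, so $H^m(\Mof(T))=H^{m-1}(\Xi(T))(-1)$ vanishes for $m\neq1$. For the abelian variety I would invoke the isomorphism $\alpha_A|_{\hof{1}(A)}\colon\hof{1}(A)\xrightarrow{\sim}\Mof(A)$ of Proposition~\ref{iso mot H1 abel}: since $M(n_A)$ acts as $n^i\cdot\id$ on $\hof{i}(A)$ (Theorem~\ref{DM}) while $H^*(n_A)$ acts as $n^j$ on $H^j(A)$, comparison of eigenspaces gives $H^*(\hof{i}(A))=H^i(A)$ sitting in degree $i$, hence $H^*(\Mof(A))=H^1(A)$ in degree one. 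The exact triangle $\Mof(T)\to\Mof(G)\to\Mof(A)$ furnished by exactness of $\Mof$ (Proposition~\ref{exactMof}), after applying the contravariant functor $H^*$, yields a long exact sequence whose outer terms vanish outside degree one; this forces $H^n(\Mof(G))=0$ for $n\neq1$ and a short exact sequence
\[0\to H^1(\Mof(A))\to H^1(\Mof(G))\to H^1(\Mof(T))\to0.\]
In particular $H^n(\alpha_G)=0$ for $n\neq1$ since its source already vanishes, so only degree one remains.

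For degree one I would assemble a ladder. Naturality of $\alpha$ in $G\in\sAb/k$ (Lemma~\ref{natural}) applied to $T\hookrightarrow G$ and $G\to A$ produces, after applying $H^*$, a commutative diagram
\[\xymatrix{
0\ar[r]&H^1(\Mof(A))\ar[r]\ar[d]_{H^1(\alpha_A)}&H^1(\Mof(G))\ar[r]\ar[d]_{H^1(\alpha_G)}&H^1(\Mof(T))\ar[r]\ar[d]_{H^1(\alpha_T)}&0\\
0\ar[r]&H^1(A)\ar[r]&H^1(G)\ar[r]&H^1(T)\ar[r]&0
}\]
with top row the sequence just obtained and bottom row exact because $H^1$ is exact on $\sAbQ$ (Lemma~\ref{H1 faithful}). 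By the five lemma it then suffices to show $H^1(\alpha_A)$ and $H^1(\alpha_T)$ are isomorphisms. For $A$: by Lemma~\ref{non zero}, $\alpha_A$ factors as the projection $M(A)\to\hof{1}(A)$ followed by $\alpha_A|_{\hof{1}(A)}$, so applying $H^1$ and using $H^1(\hof{1}(A))=H^1(A)$ (the projection being an isomorphism on $H^1$, where the $\hof{1}$-summand is all of $H^1$) shows $H^1(\alpha_A)$ is an isomorphism. For $T$: Proposition~\ref{toruscase} gives the isomorphism $\vp_T\colon M(T)\xrightarrow{\sim}\OSym(\Mof(T))=\bigoplus_{n=0}^{r}\Sym^n\Mof(T)$ whose $n=1$ component is $\alpha_T$; applying the contravariant tensor functor $H^*$ and using that $H^*(\Mof(T))$ sits in the odd degree one, so $\Sym^n$ realizes to $\bigwedge^n$ (Koszul sign rule), turns $H^*(\vp_T)$ into a graded isomorphism $\bigwedge^\bullet H^1(T)=H^*(T)\xrightarrow{\sim}\bigwedge^\bullet H^1(\Mof(T))$ (Lemma~\ref{H semiabelian}) whose degree-one part is $H^1(\alpha_T)$; hence $H^1(\alpha_T)$ is an isomorphism. (Alternatively, reduce to $\bar k$, then to $T=\Gm^r$ using Proposition~\ref{product}, and then to $\Gm$, where the proof of Proposition~\ref{toruscase} already realizes $\alpha_{\Gm}$ as an isomorphism $\one(1)[1]\xrightarrow{\sim}\Mof(\Gm)$.)

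The argument is essentially formal once its inputs are in place; the delicate points are the two realization computations — that $\hof{i}(A)$ realizes to $H^i(A)$ concentrated in degree $i$, and that $\Mof(T)$ realizes to cohomology concentrated in degree one with $H^1(\alpha_T)$ an isomorphism — and checking that the naturality squares for $\alpha$ glue into a ladder with exact rows for the contravariant, super-graded realization functor. Granting these, the cohomology computation disposes of all degrees $\neq1$ and the five lemma settles degree one.
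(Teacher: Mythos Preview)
Your proof is correct and shares the overall structure of the paper's argument (the commutative ladder over the basic sequence $1\to T\to G\to A\to 1$), but differs in two sub-steps.

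First, for the base cases $T$ and $A$, the paper gives a uniform argument: once the Main Theorem is known for $G$ (as it is for tori and abelian varieties by Propositions~\ref{toruscase} and~\ref{abelian case}), applying $H^*$ to the Hopf algebra isomorphism $\vp_G$ yields $H^*(G)\cong\Sym(H^*(\Mof(G)))$, and comparing primitive parts identifies $H^*(\Mof(G))$ with $H^1(G)$. You instead argue case by case, using the explicit description $\Mof(T)\cong\Xi(T)(1)[1]$ for tori and the eigenvalue decomposition of $M(n_A)$ for abelian varieties. Your route is more hands-on; the paper's is more conceptual and ties directly into the Hopf-algebraic theme of the article.

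Second, for the exactness of the top row of the ladder, the paper does not invoke the long exact sequence of the triangle $\Mof(T)\to\Mof(G)\to\Mof(A)$. Instead it observes that the outer vertical isomorphisms force the first top map to be injective and the second surjective, and then uses the dimension count $\dim H^*(\Mof(G))=2g+r$ coming from Proposition~\ref{propodd} to conclude that the top row is short exact. Your long-exact-sequence argument is cleaner here and avoids the dependence on Proposition~\ref{propodd}; the paper's route, on the other hand, keeps the argument self-contained within the diagram and dovetails with the finite-dimensionality theme.
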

\begin{proof}
Let us start by showing that the statement holds for all $G$ which satisfy the main Theorem \ref{MainThm}.
Indeed, applying $H^*$
one has an isomorphism of Hopf algbras
\[ H^*(G)\isom \Sym(H^*(\Mof(G)))\ .\]
Hence, their primitive parts are isomorphic.
By the structure theory of connected graded Hopf algebras (see for example \cite[Appendix A]{Loday}) the primitive part of on the right hand side is $H^*(\Mof(G))$. On the other hand the primitive part of $ H^*(G)$ is $H^1(G)$ by Lemma \ref{H semiabelian}.

Note that, in particular, we have shown the statement in the toric case and in the abelian  case, by Propositions \ref{toruscase} and \ref{abelian case}. In the general case, write
\[ 1\stackrel{}{\longrightarrow} T \stackrel{f}{\longrightarrow} G \stackrel{g}{\longrightarrow} A \stackrel{}{\longrightarrow} 1 \ .\] 
 Then one has the following commutative diagram of complexes
 \[\xymatrix{
 H^*(\Mof(A))  \ar[d]^{H^*(\alpha_A)} \ar[r]^{H^*(\Mof(g))}  & H^*(\Mof(G))  \ar[d]^{H^*(\alpha_G)} \ar[r]^{H^*(\Mof(f))} & H^*(\Mof(T))  \ar[d]^{H^*(\alpha_T)}\\
  H^1(A) \ar[r]^{H^1(g)} & H^1(G) \ar[r]^{H^1(f)} & H^1(T).}
\]
The two squares are commutative by Lemma \ref{natural}. 
The second line is a short exact sequence by Lemma \ref{H1 faithful} and the first one is a priori just a complex. 
We have just shown that the first and the third vertical arrows are isomorphisms. 
This implies that $H^*(\Mof(g))$ is injective and $H^*(\Mof(f))$ is surjective.
By Proposition \ref{propodd}, the object $\Mof(G)$ is odd 
of dimension $2g+r$. Hence $H^*(\Mof(G))$ has $\Q_{\ell}$-dimension $2g+r$. 
So the first line is actually a short exact sequence. Then we can conclude that also the second vertical column is an isomorphism.
\end{proof}

\section{Proof of the Main Theorem}\label{proofMainThm}
The proof is by induction on the torus rank.
By comparing two triangles, we establish that there is some isomorphism between $M(G)$ and $\OSym(\Mof(G))$ and deduce that these two motives are finite-dimensional. In the next section \ref{end} we show that
$\vp_G$ is an isomorphism studying its behaviour in the realization and using Kimura finiteness.


\subsection{Comparing exact triangles}\label{sec_triangles}

Throughout this section, we consider a short  exact sequence of semiabelian varieties
\[ 1\to\Gm\to G\to H\to 1\ .\]


\begin{lemma}\label{sequence2}
\begin{enumerate}
\item \label{sequence1} Let $n\geq 0$.  We denote
\[[\Mof(G)]:\Mof(H)\to\one(1)[2]\] 
the connecting morphism of the exact triangle
\[ \one(1)[1]\to\Mof(G)\to\Mof(H)\ .\]
Then there  is an exact triangle
\[ \Symn(\Mof(G))\to\Symn(\Mof(H))\xrightarrow{\cup [\Mof(G)]}\Sym^{n-1}(\Mof(H))(1)[2]\ .\]

\item
Let
\[ c_1([G])\in H^1_\et(H,\Gm)
	 =\Mor_{\DMeetkQ}(M(H), \tatt)\]
be the first Chern class of $G$ viewed as a $\Gm$-torsor over $H$.
Then there is an exact triangle
\[ M(G)\to M(H)\xrightarrow{\cdot\cup c_1(G)} M(H)(1)[2]\ .\]
\item The diagram
\begin{align*} 
\xymatrix{    
M(H) \ar[r]^{\cdot\cup c_{1}([G])} \ar[d]^{\vp_{H}}& 
M(H)(1)[2] \ar@{->}[d]^{\vp_{H}(1)[2]} \\
\OSym(\Mof(H)) \ar[r]_{\cdot \cup [\Mof(G)]} & 
\OSym(\Mof(H))(1)[2] 
} 
\end{align*} 
commutes.
\end{enumerate}
\end{lemma}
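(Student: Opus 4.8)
The plan is to set up the two triangles of parts (1) and (2) and then verify the compatibility (3), the bulk of the work residing in a comparison of two Chern classes. For part (1), I would start from the short exact sequence of group schemes $1\to\Gm\to G\to H\to1$, which induces a short exact sequence of \'etale sheaves with transfers $0\to\ul{\Gm}\to\ul{G}\to\ul{H}\to0$, i.e.\ $0\to\Mof(\Gm)\to\Mof(G)\to\Mof(H)\to0$, in which $\Mof(\Gm)\cong\one(1)[1]$ is odd of dimension $1$ (Proposition \ref{toruscase}). Feeding this subobject into the filtration $\Fil_\bullet^{\Mof(\Gm)}\Sym^n\Mof(G)$ of Appendix \ref{sectfilttensor} (the tensor powers being computed the same way in $\STEkQ$ and in $\DMeetkQ$ since $\ox$ is exact), Proposition \ref{qniisomo2} identifies its graded pieces with $\Sym^i\Mof(\Gm)\ox\Sym^{n-i}\Mof(H)$; because $\Sym^i\Mof(\Gm)=0$ for $i\ge2$, this filtration has only two steps, which produces the exact triangle
\[\Sym^{n-1}\Mof(H)(1)[1]\to\Sym^n\Mof(G)\to\Sym^n\Mof(H)\to\Sym^{n-1}\Mof(H)(1)[2],\]
and the construction pins down the boundary map $\Sym^n\Mof(H)\to\Sym^{n-1}\Mof(H)(1)[2]$ as $\id\ox[\Mof(G)]$ precomposed with the $\Sym^{n-1}\ox\Sym^1$-component of the comultiplication of $\OSym(\Mof(H))$ --- this is what $\cup[\Mof(G)]$ denotes, and for $n=1$ it is just the defining triangle of $[\Mof(G)]$. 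For part (2), I would use the Gysin (localization) triangle: the $\Gm$-torsor $G\to H$ is the complement of the zero section of the line bundle $L$ on $H$ that it defines, so the localization triangle reads $M(G)\to M(L)\to M(H)(1)[2]$, and $M(L)\cong M(H)$ by homotopy invariance; a standard computation identifies the boundary $M(H)\to M(H)(1)[2]$ with cup product with $c_1(L)=c_1([G])\in H^1_\et(H,\Gm)=\Hom_{\DMeetkQ}(M(H),\one(1)[2])$.

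For part (3) the strategy is to rewrite both legs of the square as cup products built from one comultiplication and one class, and to reduce the commutativity to the equality of those classes. By definition $\cdot\cup c_1([G])$ is $(\id_{M(H)}\ox c_1([G]))\circ M(\Delta_H)$, so the clockwise leg of the square is $(\vp_H\ox c_1([G]))\circ M(\Delta_H)$. Now $\vp_H$ is a morphism of coalgebras by Lemma \ref{coalg} and Proposition \ref{Hopf} (its target $\OSym(\Mof(H))$ and the coalgebra property being available thanks to the finiteness assumption for $H$, which holds by Proposition \ref{propodd}), and its degree-one component is $\vp_H^1=\alpha_H$; since $\cdot\cup[\Mof(G)]$ is $\id\ox[\Mof(G)]$ precomposed with the $\Sym^\bullet\ox\Sym^1$-part of the comultiplication of $\OSym(\Mof(H))$, the counterclockwise leg unwinds, using the coalgebra identity $\Delta_{\OSym}\circ\vp_H=(\vp_H\ox\vp_H)\circ M(\Delta_H)$, to $(\vp_H\ox([\Mof(G)]\circ\alpha_H))\circ M(\Delta_H)$. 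Hence the square commutes as soon as
\[[\Mof(G)]\circ\alpha_H=c_1([G])\qquad\text{in }\Hom_{\DMeetkQ}(M(H),\one(1)[2]).\]

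I expect this last identity to be the \emph{main obstacle}. Its two sides are two a priori different elements of $\Hom_{\DMeetkQ}(M(H),\one(1)[2])=H^1_\et(H,\Gm)$ attached to the extension $1\to\Gm\to G\to H\to1$: on the left, the boundary class of the sheaf sequence $0\to\ul{\Gm}\to\ul{G}\to\ul{H}\to0$ in $\Ext^1_{\STEkQ}(\ul{H},\ul{\Gm})$, pulled back along $\alpha_H$; on the right, the geometric first Chern class of the line bundle on $H$ determined by the extension. Proving that these two definitions of the first Chern class agree is precisely the content of Appendix \ref{sectionchern}; granting it, parts (1)--(3) follow from the formal manipulations above.
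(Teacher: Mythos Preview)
Your proposal is correct and follows essentially the same route as the paper: part (1) via the filtration of Appendix~\ref{appfilt} (packaged there as Theorem~\ref{corcup}, whose proof is exactly the two-step filtration argument you sketch, together with Proposition~\ref{propcup} for the identification of the boundary with $\cup[\Mof(G)]$), part (2) via the Gysin/localization triangle for the associated line bundle (the paper cites \cite[\S 7]{HK} for the boundary-equals-$c_1$ identification), and part (3) by using that $\vp_H$ is a coalgebra map to reduce to degree~$1$, where the needed equality $[\Mof(G)]\circ\alpha_H=c_1([G])$ is precisely Proposition~\ref{propc1} of Appendix~\ref{sectionchern}. One small cosmetic point: in part (1) you should phrase the filtration argument as taking place in the abelian category $\STEkQ$ and then applying the localization functor $q$ to $\DMeetkQ$, rather than ``computing tensor powers the same way'' in the triangulated category directly.
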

\begin{proof}
For (\ref{sequence1}),
we apply Theorem \ref{corcup} to the exact sequence of sheaves
with transfers
\[ 0\to\ul{\Gm}_\Q\to\ul{G}_\Q\to\ul{H}_\Q\to 0 \]
and the localization functor
\[ q:\STEkQ\to\DMeetkQ\ .\]
Note that $\STEkQ$ is $\Q$-linear abelian symmetric tensor category
with an exact tensor product, see Remark \ref{tensorexact}, and that
$\Mof(\Gm)=\one(1)[1]$ and $\Sym^2(\one(1)[1])=0$.

For the second triangle,
let $\Aff(G)\to H$ be the line bundle associated to the $\Gm$-torsor
$G$. The zero section of $\Aff(G)$ identifies $H$ with $0(H)$, a smooth subvariety of $\Aff(G)$ of codimension $1$. Its complement is $G$. Hence the localization sequence reads
\[ M(G)\to M(\Aff(G))\to M(0(H))(1)[2]\ .\]
By homotopy invariance $M(\Aff(G))=M(H)$. The identification
of the boundary map with the first Chern class is carried out in \cite[\S 7]{HK}.

By compatibility with comultiplication (which holds by definition of the maps), it suffices to check  commutativity in degree $1$. This is precisely the comparison of Chern classes in Proposition~\ref{propc1}. 
\end{proof}

\begin{corollary}\label{iso psi}
Assume that there exists a short exact sequence of semiabelian varieties
\[ 1\to\Gm\to G\to H\to 1,\]
such that the main Theorem \ref{MainThm} holds for $H$.
Then there exists an isomorphism \[\psi: \OSym( \Mof(G)) \rightarrow M(G)\ .\] 
In particular, $M(G)$ is Kimura finite.
\end{corollary}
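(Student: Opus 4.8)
The plan is to obtain $\psi$ as the left-hand vertical arrow of a morphism of distinguished triangles, the two remaining vertical arrows being the isomorphism $\vp_H$ --- available because Theorem \ref{MainThm} is assumed for $H$ --- and its Tate twist. (In the induction on the torus rank it is precisely the fact that $H$ has one fewer $\Gm$-factor than $G$ that lets us invoke the inductive hypothesis at this point.)

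First I would form the direct sum over all $n\geq 0$ of the triangles of Lemma \ref{sequence2}(\ref{sequence1}). Since $\Mof(G)$ and $\Mof(H)$ are odd by Proposition \ref{propodd}, only finitely many summands are non-zero, and by the very definition of the symmetric coalgebra the three resulting sums are $\OSym(\Mof(G))$, $\OSym(\Mof(H))$ and (using $\Sym^{-1}=0$) $\OSym(\Mof(H))(1)[2]$. A finite direct sum of distinguished triangles is distinguished, so this gives an exact triangle
\[ \OSym(\Mof(G)) \lra \OSym(\Mof(H)) \xrightarrow{\ \cup[\Mof(G)]\ } \OSym(\Mof(H))(1)[2]\ .\]
Next I would line this up against the localization triangle of Lemma \ref{sequence2}(2),
\[ M(G) \lra M(H) \xrightarrow{\ \cdot\cup c_1(G)\ } M(H)(1)[2]\ .\]
By Lemma \ref{sequence2}(3) the square comparing $\cdot\cup c_1(G)$ with $\cdot\cup[\Mof(G)]$ commutes along $\vp_H$ and $\vp_H(1)[2]$, hence also along their inverses; applying the morphism-extension axiom (TR3) to the rotated triangles produces a morphism of triangles
\[
\xymatrix{
\OSym(\Mof(G)) \ar[r] \ar[d]_{\psi} & \OSym(\Mof(H)) \ar[r]^{\cup[\Mof(G)]} \ar[d]_{\vp_H^{-1}} & \OSym(\Mof(H))(1)[2] \ar[d]^{(\vp_H(1)[2])^{-1}} \\
M(G) \ar[r] & M(H) \ar[r]^{\cdot\cup c_1(G)} & M(H)(1)[2]
}
\]
whose two right-hand vertical arrows are isomorphisms. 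By the five lemma in a triangulated category $\psi$ is then an isomorphism.

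Finally, $\Mof(G)$ is odd (Proposition \ref{propodd}), in particular Kimura finite; as Kimura finiteness is stable under tensor products, direct summands and finite direct sums (see \cite{Kim}), each $\Symn\Mof(G)$, and hence $\OSym(\Mof(G))=\bigoplus_n\Symn\Mof(G)$, is Kimura finite, and therefore so is $M(G)\isom\OSym(\Mof(G))$. No step here is really delicate once Lemma \ref{sequence2} is in place; the only point worth flagging is that $\psi$ rests on the non-unique choice in (TR3) and is therefore \emph{not} canonical. That is exactly why the statement claims only existence, and why identifying the canonical map $\vp_G$ itself in Section \ref{end} still requires the further input of the $\ell$-adic realization together with the Kimura finiteness just obtained.
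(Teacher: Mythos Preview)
Your proof is correct and follows essentially the same approach as the paper: both compare the localization triangle for $M(G)$ with the (summed) filtration triangle for $\OSym(\Mof(G))$ via the commuting square of Lemma~\ref{sequence2}(3), and then invoke TR3 together with the triangulated five lemma to obtain the non-canonical isomorphism $\psi$. Your version is slightly more explicit about summing over $n$ and works with $\vp_H^{-1}$ so that $\psi$ points in the direction stated, but this is only a cosmetic difference.
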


\begin{proof}
We consider 
\begin{align*} 
\xymatrix@C=14pt{   
M(G) \ar@{->}[rr]^(0.5){M(g)} \ar@{-->}[dd]_(0.5){\psi} & & 
M(H) \ar@{->}[rr]^(0.45){1_{M(H)} \cup c_{1}(\Aff(G))} \ar@{->}[dd]^(0.5){\vp_{H}}_{\simeq} & & 
M(H)(1)[2] \ar@{->}[dd]^(0.5){\vp_{H}(1)[2]}_{\simeq} \ar@{->}[rr]^(0.52){\pd_{\Aff(G), H} \circ M(s_{0})} & & 
M(G)[1] \ar@{-->}[dd]^(0.5){\psi[1]} \\
& & & & & & \\
\OSym(\Mof(G)) \ar@{->}[rr] & & 
\OSym(\Mof(H)) \ar@{->}[rr] & & 
\OSym(\Mof(H))(1)[2] \ar@{->}[rr] & & \OSym(\Mof(G))[1].
} 
\end{align*} 
Both triangles are constructed in Lemma \ref{sequence2}.
The central square commutes also by Lemma \ref{sequence2}. By assumption $\vp_H$ is an isomorphism. By the axioms of a triangulated category we obtain an isomorphism $\psi$ as indicated.
$M(G)$ is Kimura finite because $\Mof(G)$ is Kimura finite by Proposition \ref{propodd} and the notion is stable under tensor products and direct summands.
\end{proof}

\begin{remark}The above corollary was the main result of \cite{ward}. We expect
$\vp_?$ to define a morphism of triangles, i.e., $\psi=\vp_G$.
This would immediately show that $\vp_G$ is an isomorphism. We were not
able to establish this morphism of triangles and use a completely different argument instead.
\end{remark}

\subsection{$\vp_G$ is an isomorphism}\label{end}
We modify the non-canonical isomorphism $\psi$ (Corollary \ref{iso psi}) such that its  realization is the inverse of the realization of $\vp_G.$ To conclude we use conservativity of the realization functor on finite-dimensional motives.
\begin{isoassumption}\label{assume_psi}
We assume and fix an isomorphism $\psi: \OSym( \Mof(G)) \rightarrow M(G)$ and write
 $\psi_1:  \Mof(G) \rightarrow M(G)$ for its restriction to $\Mof(G)$.
\end{isoassumption}

\begin{lemma}\label{realization psi}
Under the above isomorphism assumption \ref{assume_psi}, the realization  of $\psi_1$ induces an isomorphism
\[H^1(\psi_1): H^1(G)\isocan H^1(\Mof(G)) \]
\end{lemma}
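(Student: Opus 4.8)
The plan is to exploit the one remaining input that pins down $\psi_1$ up to scalar, namely the \emph{realization} computed in Proposition \ref{realization M1}. First I would apply the realization functor $H^*$ to the isomorphism $\psi: \OSym(\Mof(G)) \to M(G)$ of Isomorphism Assumption \ref{assume_psi}. Since $H^*$ is a $\Q$-linear symmetric tensor functor, it sends $\OSym(\Mof(G))$ to $\Sym(H^*(\Mof(G)))$ (a symmetric coalgebra in graded super-vector spaces, which over $\Ql$ is just the symmetric algebra on the odd object $H^*(\Mof(G))$), so $H^*(\psi)$ is an isomorphism of graded Hopf algebras $\Sym(H^*(\Mof(G))) \isocan H^*(G)$. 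By Proposition \ref{realization M1} we know $H^*(\Mof(G))$ is concentrated in degree $1$, so $H^1(\psi_1)$ is exactly the restriction of $H^*(\psi)$ to the degree-$1$ (equivalently, primitive) part on the source.

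The key step is then to identify the target of this restriction with $H^1(G)$. By the structure theory of connected graded Hopf algebras (as already invoked in the proof of Proposition \ref{realization M1}, see \cite[Appendix A]{Loday}), the primitive part of the Hopf algebra $H^*(G)$ is $H^1(G)$, using Lemma \ref{H semiabelian} which gives $H^*(G) = \Sym(H^1(G))$. A morphism of Hopf algebras carries primitives to primitives, so $H^*(\psi)$ restricts to an isomorphism between the primitive part of $\Sym(H^*(\Mof(G)))$ — which is $H^*(\Mof(G)) = H^1(\Mof(G))$ — and the primitive part of $H^*(G)$, which is $H^1(G)$. Hence $H^1(\psi_1): H^1(\Mof(G)) \to H^1(G)$ is an isomorphism, which is the claim (up to swapping source and target, a matter of how one orients $\psi_1$).

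I do not expect a genuine obstacle here; the lemma is essentially a formal consequence of functoriality of $H^*$ together with Proposition \ref{realization M1} and the Hopf-algebraic structure theory. The one point requiring a little care is bookkeeping: checking that $H^*(\psi)$ is genuinely a morphism of Hopf algebras (not merely of coalgebras) — but $\psi$ is an isomorphism, so the multiplicative structure is automatically transported, and in any case only the coalgebra structure is needed to see that primitives go to primitives. A second minor point is the sign/grading convention in $\GrVec^{\pm}_{\Ql}$: since $\Mof(G)$ is odd, $H^*(\Mof(G))$ sits in odd degree and the symmetric coalgebra realizes correctly to the exterior-type algebra, exactly as recorded in the discussion preceding Theorem \ref{IntroKunn}; this does not affect the degree-$1$ statement. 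So the proof is short: realize $\psi$, observe the source realizes to a free symmetric Hopf algebra on a degree-$1$ generator space, and match primitive parts.
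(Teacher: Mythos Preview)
Your argument relies on the claim that $H^*(\psi)$ is a morphism of Hopf algebras (or at least of coalgebras), so that primitives go to primitives. This is not justified: $\psi$ is produced by the filling axiom (TR3) in Corollary \ref{iso psi} and carries no compatibility whatsoever with the Hopf structures on $\OSym(\Mof(G))$ and $M(G)$. Your remark that ``$\psi$ is an isomorphism, so the multiplicative structure is automatically transported'' is not to the point: transporting the structure along $\psi$ gives a second Hopf structure on the target, not a proof that $\psi$ intertwines the two existing ones. So the step ``a morphism of Hopf algebras carries primitives to primitives'' is not available here.

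Fortunately the Hopf-algebra detour is unnecessary, and you essentially stated the correct argument in your first paragraph before abandoning it. The realization $H^*$ lands in $\GrVec^{\pm}_{\Ql}$, so $H^*(\psi)$ is automatically a \emph{graded} isomorphism of vector spaces. By Proposition \ref{realization M1}, $H^*(\Mof(G))$ is concentrated in degree $1$, hence $H^*(\Sym^n\Mof(G))=\Sym^n H^1(\Mof(G))$ sits in degree $n$, and the degree-$1$ piece of $H^*(\OSym(\Mof(G)))$ is exactly $H^1(\Mof(G))$. Thus the degree-$1$ component of the graded isomorphism $H^*(\psi)$ is an isomorphism $H^1(G)\isocan H^1(\Mof(G))$, and this component equals $H^1(\psi_1)$ since $\psi_1$ is the restriction of $\psi$ to the summand $\Mof(G)$. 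This is precisely the paper's (two-line) proof: it uses only the grading, not any algebra or coalgebra compatibility of $\psi$.
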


\begin{proof} 
Recall from Proposition \ref{realization M1} that $H^*(\Mof(G))$ is concentrated in degree one.
Hence the realization of $\psi$ gives isomorphisms
\[ H^n(G)\xrightarrow{H^n(\psi)} \Symn(H^1(\Mof))\ .\]
Moreover, $H^1(\psi_1)=H^1(\psi)$.
\end{proof}

\begin{lemma}
Under the isomorphism assumption \ref{assume_psi}, the endomorphism $\alpha_G \circ \psi_1$ of the motive $\Mof(G)$ is an isomorphism. In particular, there exists a morphism $\beta_1:  \Mof(G) \rightarrow M(G)$ such that 
\[ \alpha_G \circ \beta_1 = \id_{\Mof(G)}\ .\] 
\end{lemma}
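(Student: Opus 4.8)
The plan is to show that the endomorphism $u := \alpha_G \circ \psi_1$ of $\Mof(G)$ is invertible in the endomorphism \emph{ring} $\End_{\DMeetkQ}(\Mof(G))$, exploiting that this ring is a finite-dimensional $\Q$-algebra on which the $\ell$-adic realization is faithful. Once $u$ is known to be an isomorphism, the last assertion is formal: one sets $\beta_1 := \psi_1 \circ u^{-1} : \Mof(G) \to M(G)$, so that $\alpha_G \circ \beta_1 = u \circ u^{-1} = \id_{\Mof(G)}$.

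First I would identify the endomorphism ring. By Proposition~\ref{exactMof} the functor $\Mof$ is a full embedding, so $\End_{\DMeetkQ}(\Mof(G)) = \End_{\sAbQ}(G) = \End^{0}(G)$; for a semiabelian $G$ this is a finite-dimensional $\Q$-algebra (using the basic sequence $1 \to T \to G \to A \to 1$ and the absence of non-trivial homomorphisms between a torus and an abelian variety, it embeds into $\End^{0}(T) \times \End^{0}(A)$). In particular $u = \alpha_G \circ \psi_1$ lies in this finite-dimensional algebra.

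Next I would compute the realization of $u$. Since $H^{*}$ is contravariant, $H^{*}(u) = H^{*}(\psi_1) \circ H^{*}(\alpha_G)$. By Proposition~\ref{realization M1} the graded vector space $H^{*}(\Mof(G))$ is concentrated in degree one and $H^{*}(\alpha_G)$ induces an isomorphism $H^{1}(\Mof(G)) \isocan H^{1}(G)$; by Lemma~\ref{realization psi} the map $H^{1}(\psi_1): H^{1}(G) \isocan H^{1}(\Mof(G))$ is also an isomorphism. Hence $H^{*}(u)$ is an isomorphism of graded $\Q_{\ell}$-vector spaces.

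Finally I would conclude using faithfulness. By Lemma~\ref{H1 faithful} the realization $H^{*}: \End^{0}(G) \to \End_{\Q_{\ell}}(H^{1}(G))$ is injective. If $u \circ h = 0$ for some $h \in \End^{0}(G)$, then $0 = H^{*}(u \circ h) = H^{*}(h) \circ H^{*}(u)$, and since $H^{*}(u)$ is invertible this forces $H^{*}(h) = 0$, hence $h = 0$. Thus left composition with $u$ is an injective $\Q$-linear endomorphism of the finite-dimensional space $\End^{0}(G)$, hence bijective; so $u$ admits a right inverse and, by the standard argument for finite-dimensional algebras, is a two-sided unit. Therefore $\alpha_G \circ \psi_1$ is an isomorphism. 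There is no real obstacle here; the only points requiring care are bookkeeping with the contravariance of $H^{*}$ and the fact that one needs finite-dimensionality of $\End^{0}(G)$ as a $\Q$-algebra (not merely of the realized vector spaces). Note that, in contrast with the subsequent step on $\vp_G$, the conservativity Theorem~\ref{conservativity} is not needed at this point: finiteness of the endomorphism algebra already produces the inverse.
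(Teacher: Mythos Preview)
Your proof is correct and follows essentially the same route as the paper: identify $\alpha_G\circ\psi_1$ with an element of $\End_{\sAbQ}(G)$ via the full embedding of Proposition~\ref{exactMof}, compute that its $\ell$-adic realization is an isomorphism using Proposition~\ref{realization M1} and Lemma~\ref{realization psi}, and then use faithfulness of $H^1$ (Lemma~\ref{H1 faithful}) to conclude. The only minor difference is in the last step: the paper invokes directly that an exact faithful functor between abelian categories is conservative, whereas you argue via finite-dimensionality of $\End^0(G)$ as a $\Q$-algebra; both are standard and yield the same conclusion.
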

\begin{proof} We write 
\[ \alpha_G \circ \psi_1= \Mof(f_0)\ .\]
This is possible because
by Proposition \ref{Yoneda} any endomorphism of $\Mof(G)$ is of the form $\Mof(f)$ where $f$ is in $\End_{\sAbQ}(G)$.

It is enough to show that $f_0$ is an automorphism of $G\in \sAbQ$.
As $H^1$ is exact and faithful on $\sAbQ$ (see Lemma \ref{H1 faithful}), we can
test this after applying $H^1$. Moreover,
\[ H^1(f_0)=H^*(\Mof(f_0))=H^*(\psi_1)\circ H^*(\alpha_G)\ .\]
By Lemma \ref{realization M1},  the map $H^*(\alpha_G)$ is an isomorphism onto its image $H^1(G)$. By Lemma  \ref{realization psi}, the map $H^*(\psi_1)$ is an isomorphism when restricted to $H^1(G)$. Hence the composition is an isomorphism.
\end{proof}

\begin{lemma}\label{iso real}
Under the isomorphism assumption \ref{assume_psi}, let us fix  a morphism $\beta_1:  \Mof(G) \rightarrow M(G)$ such that ${\alpha_G \circ \beta_1 = \id_{\Mof(G)}}$ as in the previous lemma. Let
\[\beta:  \OSym\Mof(G) \rightarrow M(G)\]
be the induced morphism of algebras.

Then $H^*(\varphi_G)$ and $H^*(\beta)$ are inverse to each other.
\end{lemma}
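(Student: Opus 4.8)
The plan is to compute both realizations on each symmetric power and check they are mutually inverse. First I would recall that, by Proposition \ref{realization M1} applied to the toric and abelian pieces and the argument already run for $\Mof(G)$, the realization $H^*(\Mof(G))$ is concentrated in degree one and $H^*(\alpha_G)$ is an isomorphism $H^*(\Mof(G)) \isocan H^1(G)$. Dually, $H^*(\beta_1)\colon H^1(G)\to H^*(\Mof(G))$ is the inverse of $H^*(\alpha_G)$: indeed $\alpha_G\circ\beta_1=\id_{\Mof(G)}$ gives $H^*(\beta_1)\circ H^*(\alpha_G)=\id$ after applying the contravariant functor $H^*$, and since $H^*(\alpha_G)$ is an isomorphism this forces $H^*(\beta_1)$ to be its two-sided inverse.

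Next I would unwind the definition of $\beta$ and of $\vp_G$ in cohomology. Since $H^*$ is a contravariant $\Q$-linear symmetric tensor functor, it sends the coalgebra $\OSym(\Mof(G))=\bigoplus_n\Sym^n\Mof(G)$ to the algebra $\Sym(H^1(\Mof(G)))$ (in the super sense, using Lemma \ref{H semiabelian} to identify $H^*(G)=\Sym(H^1(G))$), and it interchanges the symmetric \emph{algebra} map $\beta$ with a map of coalgebras and the symmetric \emph{coalgebra} map $\vp_G$ with a map of algebras. Concretely, $H^*(\vp_G)\colon \Sym(H^1(G))\to \Sym(H^1(\Mof(G)))$ is the unique algebra homomorphism restricting to $H^*(\alpha_G)$ in degree one (by Lemma \ref{coalg} dualized), and $H^*(\beta)\colon \Sym(H^1(\Mof(G)))\to \Sym(H^1(G))$ is the unique algebra homomorphism restricting to $H^*(\beta_1)$ in degree one (by construction of $\beta$ as the map of algebras induced by $\beta_1$, dualized). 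Here I use that $H^*$ of a map of algebras is a map of coalgebras and that on a symmetric (co)algebra such maps are determined by their degree-one component together with the universal property from Appendix \ref{appsym}.

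Then the conclusion is formal: $H^*(\beta)\circ H^*(\varphi_G)$ and $H^*(\varphi_G)\circ H^*(\beta)$ are both algebra endomorphisms of a free symmetric algebra on a finite-dimensional space, and in degree one they equal $H^*(\beta_1)\circ H^*(\alpha_G)=\id$ and $H^*(\alpha_G)\circ H^*(\beta_1)=\id$ respectively. By the uniqueness part of the universal property, an algebra endomorphism of $\Sym(V)$ which is the identity on $V$ is the identity; hence both composites are the identity, so $H^*(\varphi_G)$ and $H^*(\beta)$ are inverse to each other.

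The main obstacle is bookkeeping rather than mathematics: one must be careful that $\varphi_G$ is a map of coalgebras while $\beta$ is a map of algebras, so after applying the contravariant $H^*$ both become the appropriate kind of morphism of the graded super-vector space $\Sym(H^1(G))$, and one has to invoke the universal properties in the correct (opposite) direction, as spelled out in Appendix \ref{appsym}. Once the variances are tracked correctly, the argument reduces, as above, to the degree-one statement $\alpha_G\circ\beta_1=\id$ together with the fact that $H^*(\alpha_G)$ and $H^*(\beta_1)$ are mutually inverse isomorphisms in degree one, which we have already established.
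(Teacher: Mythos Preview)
Your overall strategy is right, but there is a genuine gap in the bookkeeping you flag at the end, and it is not just bookkeeping.

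You correctly observe that $H^*(\vp_G)$ is an algebra map (since $\vp_G$ is a coalgebra map) and that $H^*(\beta)$ is a coalgebra map (since $\beta$ is an algebra map). But then you assert that both composites $H^*(\beta)\circ H^*(\vp_G)$ and $H^*(\vp_G)\circ H^*(\beta)$ are \emph{algebra} endomorphisms. For that you would need $H^*(\beta)$ to be an algebra map as well, i.e., $\beta$ to be a coalgebra map; nothing in the construction of $\beta$ gives you this, and your justification ``by construction of $\beta$ as the map of algebras induced by $\beta_1$, dualized'' produces a coalgebra map, not an algebra map. So the step where you invoke the algebra universal property of $\Sym$ for the composites is unjustified. (There are also direction slips: since $H^*$ is contravariant, $H^*(\vp_G)$ goes from $\Sym(H^1(\Mof(G)))$ to $H^*(G)$ and $H^*(\beta)$ the other way; but these are harmless once the real issue is fixed.)

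The missing ingredient is Proposition~\ref{Hopf}: $\vp_G$ is not only a coalgebra map but also an \emph{algebra} map. This is exactly what the paper uses. Granting it, $\vp_G\circ\beta$ is a morphism of algebras $\OSym(\Mof(G))\to\OSym(\Mof(G))$, hence $H^*(\vp_G\circ\beta)=H^*(\beta)\circ H^*(\vp_G)$ is a morphism of \emph{coalgebras} $\Sym(H^1(\Mof(G)))\to\Sym(H^1(\Mof(G)))$. Now apply the coalgebra universal property (Corollary~\ref{universalboth}): a coalgebra endomorphism of $\Sym(V)$ which is the identity after projection to $V$ is the identity. In degree one this projection is $H^*(\alpha_G\circ\beta_1)=\id$, so $H^*(\beta)\circ H^*(\vp_G)=\id$. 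The other composite then follows from the equality of dimensions (Assumption~\ref{assume_psi}). This is the paper's argument; your version becomes correct once you replace ``algebra endomorphism'' by ``coalgebra endomorphism'' and cite Proposition~\ref{Hopf} to justify it.
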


\begin{proof}
By Assumption \ref{assume_psi} the vector spaces $H^*(\OSym(\Mof(G)))$ and $ H^*(M(G))$ have the same dimension, in particular it is enough to check that the composition in one direction is the identity.

By  Proposition \ref{Hopf}, $\vp_G$ is not only a morphism of coalgebras but also  a morphism of {\em algebras}. Hence  
\[\vp_G \circ \beta:  \OSym(\Mof(G)) \rightarrow \OSym(\Mof(G))\]
is also a morphism of algebras and so 
\[H^*(\vp_G \circ \beta)= H^*(\beta)  \circ H^*(\vp_G) :  \Sym(H^*(\Mof(G))) \rightarrow \Sym(H^*(\Mof(G)))\]
is a morphism of {\em coalgebras}. 

By Corollary \ref{universalboth}, the bialgebra $\Sym(H^1(\Mof(G))$ also has 
the universal property with respect to comultiplication. We are going to
exploit it in order to establish that $H^*(\vp_G \circ \beta)$ is the identity.

By Proposition \ref{realization M1}, $H^*(\Mof(G)) = H^1(\Mof(G))$ is concentrated in degree one. 
In degree one our morphism is equal to $H^*(\alpha_G \circ \beta_1)$ so it is the identity by assumption.
\end{proof}

\begin{proposition}\label{vp_G iso}
Under the isomorphism assumption \ref{assume_psi}, the morphism
$\vp_G$ is an isomorphism.
\end{proposition}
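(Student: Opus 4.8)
The idea is that everything is now in place to invoke the Andr\'e--Kahn conservativity criterion (Theorem \ref{conservativity}). We want to apply it with $\mathcal{C}=\DMgk_{\Q}$, which is a $\Q$-linear pseudo-abelian rigid symmetric tensor category, with $\mathcal{D}=\GrVec^{\pm}_{\Ql}$, with the realization functor $H^*\colon \DMgk_{\Q}\to\GrVec^{\pm}_{\Ql}$ of Section \ref{ladic} as the (contravariant, $\Q$-linear, symmetric tensor, non-zero) functor $F$, and with $f=\vp_G$, $g=\beta$, where $\beta\colon\OSym\Mof(G)\to M(G)$ is the morphism of algebras fixed in Lemma \ref{iso real}.

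First I would check that the two objects $M(G)$ and $\OSym(\Mof(G))$ are objects of $\DMgk_{\Q}$ and are finite-dimensional. Geometricity holds because $M(G)$ is the motive of a smooth variety and $\Mof(G)$ is geometric by the proposition in Section \ref{sect real}, so $\OSym(\Mof(G))=\bigoplus_{n=0}^{2g+r}\Sym^n\Mof(G)$ is geometric as well; hence $\vp_G$ is a morphism in $\DMegk_{\Q}\subset\DMgk_{\Q}$. For finite-dimensionality: $\Mof(G)$ is Kimura finite (indeed odd) by Proposition \ref{propodd}, and Kimura finiteness is stable under symmetric powers, tensor products and direct sums, so $\OSym(\Mof(G))$ is Kimura finite; and $M(G)$ is Kimura finite by Corollary \ref{iso psi} (using the Isomorphism Assumption \ref{assume_psi}). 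Thus both $X=M(G)$ and $Y=\OSym(\Mof(G))$ are finite-dimensional objects of $\mathcal{C}$.

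Next I would feed in the key computational input, which is exactly Lemma \ref{iso real}: under Assumption \ref{assume_psi} the maps $H^*(\vp_G)$ and $H^*(\beta)$ are inverse to one another in $\GrVec^{\pm}_{\Ql}$. Since $H^*$ is contravariant, Theorem \ref{conservativity} (applied after composing with the canonical antiequivalence of $\GrVec^{\pm}_{\Ql}$ with its opposite, as in its proof) then concludes directly that $f=\vp_G$ is an isomorphism in $\DMgk_{\Q}$, hence in $\DMeetkQ$. This is all that is needed.

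There is no genuine obstacle left at this stage: the two substantial ingredients --- the existence of a (non-canonical) isomorphism forcing Kimura finiteness, and the identification of the realizations of $\vp_G$ and $\beta$ as mutual inverses --- have already been established in Corollary \ref{iso psi} and Lemma \ref{iso real}. The only point that requires a word of care is the bookkeeping that we are genuinely inside the hypotheses of Theorem \ref{conservativity}, namely that $\vp_G$ is a morphism between \emph{finite-dimensional} objects of a rigid $\Q$-linear pseudo-abelian symmetric tensor category and that $H^*$ is a non-zero symmetric tensor functor; once this is spelled out, conservativity of $H^*$ on the Kimura-finite part does the rest. One should also recall that, a priori, there was no natural map $\OSym(\Mof(G))\to M(G)$ at all, which is why the detour through $\psi$ and the realization was necessary; but at this final step the argument is purely formal.

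\begin{proof}
By the proposition in Section \ref{sect real} the motive $\Mof(G)$ is geometric, hence so is $\OSym(\Mof(G))=\bigoplus_{n=0}^{2g+r}\Sym^n\Mof(G)$, and $\vp_G$ is a morphism in $\DMegk_{\Q}\subset\DMgk_{\Q}$. By Proposition \ref{propodd} the motive $\Mof(G)$ is odd, in particular Kimura finite, and since Kimura finiteness is stable under symmetric powers, tensor products and direct sums, $\OSym(\Mof(G))$ is Kimura finite. By Corollary \ref{iso psi}, $M(G)$ is Kimura finite as well. Thus $\vp_G$ is a map between two finite-dimensional objects of the $\Q$-linear pseudo-abelian rigid symmetric tensor category $\DMgk_{\Q}$.

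Let $\beta\colon\OSym\Mof(G)\to M(G)$ be the morphism of algebras from Lemma \ref{iso real}. By Lemma \ref{iso real}, the morphisms $H^*(\vp_G)$ and $H^*(\beta)$ in $\GrVec^{\pm}_{\Ql}$ are inverse to each other. The realization $H^*\colon\DMgk_{\Q}\to\GrVec^{\pm}_{\Ql}$ is a non-zero (it sends the unit to the unit) $\Q$-linear symmetric tensor functor, contravariant. Applying Theorem \ref{conservativity} with $f=\vp_G$, $g=\beta$ and $F=H^*$, we conclude that $\vp_G$ is an isomorphism.
\end{proof}
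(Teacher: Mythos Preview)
Your proof is correct and follows essentially the same approach as the paper's: establish Kimura finiteness of both $M(G)$ and $\OSym(\Mof(G))$ via Proposition \ref{propodd} and Corollary \ref{iso psi}, then invoke Theorem \ref{conservativity} with the realization functor using Lemma \ref{iso real}. Your write-up is simply more explicit about the bookkeeping (geometricity, the ambient category), but note that Theorem \ref{conservativity} as stated does not require rigidity of $\mathcal{C}$, so the passage to $\DMgk_{\Q}$ is not strictly needed.
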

\begin{proof}
 By Proposition \ref{propodd} and Corollary \ref{iso psi} the objects $\OSym( \Mof(G))$ and $M(G)$ are finite-dimensional. So we can use Theorem \ref{conservativity}, applied to the realization and conclude by Lemma \ref{iso real}.
\end{proof}
\subsection{Conclusion}
\begin{proof}[Proof of the Main Theorem \ref{MainThm}.]
Let $k$ be a perfect field and $G$ be a semiabelian variety over $k$ which is an extension of an abelian variety $A$ of dimension $g$ and a torus $T$ of rank $r$. 

By Proposition \ref{propodd}, the motive $\Mof(G)$ is odd of dimension $2g+r$.
It remains to establish that $\vp_G$ is an isomorphism. 
By Corollary \ref{reduction1} we can suppose that $k$ is algebraically closed. 

We now argue by induction on $r$. When $r=0$ (and hence $G=A$) the theorem is proved by Proposition \ref{abelian case}. 

Let us now consider the case $r\geq 1$. As the ground field $k$ is algebraically closed we have $T\isom\Gm^r$. We fix such a splitting and let $\Gm\to\Gm^r$ be the inclusion as the first coordinate. This defines a short exact sequence
\[ 1\to\Gm\to G\to H\to 1,\]
with $H$ a semiabelian variety of torus rank $r-1$. By inductive hypothesis the theorem holds for
$H$.  
By Corollary \ref{iso psi}, this implies the existence of some isomorphism
\[\psi:M(G)\to\OSym(\Mof(G))\ .\]
This is the isomorphism assumption \ref{assume_psi} for $G$. 
Then Proposition \ref{vp_G iso} shows that  the morphism
$\vp_G$ is an isomorphism.
\end{proof}

\section{Consequences}\label{secconseq}
Let $k$ be a field (not necessarily perfect). We deduce from our main Theorem \ref{MainThm} a K\"unneth decomposition for the motive of a semiabelian variety, the behaviour under Weil cohomology theories and the existence of a weight filtration. 
Finally, we also compute the motives of arbitrary commutative group schemes.

\subsection{K\"unneth components}
In this section we fix a prime number $\ell$ and write $H^*: \DMegkQ \longrightarrow \GrVec^{\pm}_{\Ql}$ for the $\ell$-adic realization, see Section \ref{ladic}.

\begin{theorem}\label{Thmconsequences}
Let $G$ be a semiabelian variety over $k$ which is an 
extension of an abelian variety of dimension $g$ and a torus of rank $r$. 
Then there exists a unique decomposition in $\DMegkQ$
\[M(G)=\bigoplus_{i=0}^{2g+r}\Mi{i}(G)\]
 which is natural in $G \in \sAb/k$ and such that 
\[H^*(\Mi{i}(G))=H^i(G_{\bar{k}},\Q_{\ell}).\]
 Moreover:
 \begin{enumerate}

 \item\label{thmconmultn} The multiplication by $n_G$ acts as
 \[M(n_G)= \bigoplus_{i=0}^{i=2g+r} n^i \cdot\id_{\Mi{i}(G)}.\]
 In particular, for any non-zero integer $n$, the morphism $M(n_G)$ is an isomorphism and hence the K\"unneth decomposition is natural in $G \in \sAbQ$.

 \item\label{thmconext} If $L$ is a field extension of $k$, then we have
 \[\Mi{i}(G)_L= \Mi{i}(G_L)\]
 in $\DMegLQ$.
 
\item \label{thmm1}The image of the motive $\Mi{1}(G)$ in
$\DMeetkQ$ is given by the homotopy invariant sheaf with transfers
\[ S\mapsto \Mor_{\Sch/k}(S,G)\tensor\Q\ .\]

\item\label{thmconYoneda} For all pairs $G_1, G_2 \in \sAb/k_{\Q}$, the functor $\Mi{1}$ induces an isomorphism of $\Q$-vector spaces
\[\Hom_{\sAbQ}(G_1,G_2) \isocan \Hom_{\DMegkQ}(\Mi{1}(G_1), \Mi{1}(G_2)).\]

\item\label{thmconh1exact} Any exact sequence $1 \to G_1 \to G_2 \to G_3 \to 1$ in  $\sAbQ$ induces an exact triangle  
\[ \Mi{1}(G_1) \to \Mi{1}(G_2) \to \Mi{1}(G_3)  \]
in $\DMegkQ$

\item\label{thmconodd} For $i> 2g+r,$ the motive $ \Sym ^i( \Mi{1}(G))$ vanishes.
\item\label{thmconmain}\label{thmcongrad}
The canonical morphism of coalgebras
 \[M(G)=\bigoplus_i\Mi{i}(G) \longrightarrow \OSym  (\Mi{1}(G))=\bigoplus_i\Sym^i(\Mof(G))\]
 induced by the projection $M(G) \to  \Mi{1}(G)$ is a graded isomorphism of 
Hopf algebras. 

 \item\label{thmcontensor} The motive
\[ \det(G):=\Mi{2g+r}(G)=\det(\Mof(G))\]
is of the form
\[ \Lambda(g+r)[2g+r]\]
with a tensor-invertible Artin motive $\Lambda$. If the torus part of
$G$ is split, then $\Lambda=\one$.
 \item\label{thmcondual} There is an isomorphism 
\begin{align*}
\Mi{i}(G)^{\vee} &\cong \Mi{2g+r-i}(G) \otimes \det(G)^{-1}\\
 &=\Mi{2g+r-i}(G)\tensor\Lambda^{-1}(-g-r)[-2g-r]\ 
 \end{align*}
 natural in $G \in \sAb/k_{\Q}$. In particular there are isomorphisms 
\begin{align*}
M(G)^{\vee} &\cong M(G) \otimes \det(G)^{-1}\ ,\\ 
M_c(G) &\cong M(G) \otimes \det(G)^{-1}(g+r)[2g+2r]\\
   &=M(G)\tensor\Lambda^{-1}[r]
\end{align*}
natural in $G \in \sAb/k_{\Q}$ (where $M_c(G)$ is the motive with compact support of $G$).
\end{enumerate}
\end{theorem}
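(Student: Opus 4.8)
The plan is to deduce everything from the isomorphism $\vp_G:M(G)\isocan\OSym(\Mof(G))=\bigoplus_{i=0}^{2g+r}\Sym^i\Mof(G)$ of the Main Theorem~\ref{MainThm}, which is an isomorphism of Hopf algebras respecting the grading (combine Theorem~\ref{MainThm}, Proposition~\ref{Hopf}, and the construction $\vp_G=\bigoplus\vp_G^n$). First I would reduce to perfect $k$ via the equivalence $\DMegkQ\isom\DMeg(k^i)_{\Q}$ of Section~\ref{reduce perfect}, and then set $\Mi{i}(G):=\vp_G^{-1}(\Sym^i\Mof(G))$, the $i$-th K\"unneth component. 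Since $\Mof(G)$ is geometric (Section~\ref{sect real}), so is each $\Sym^i\Mof(G)$ and $M(G)$, so the decomposition lies in $\DMegkQ$; naturality in $G$ comes from naturality of $\vp_G$ (Lemma~\ref{naturalvp}) and of $\Mof$ (Proposition~\ref{exactMof}). For the realization, Proposition~\ref{realization M1} gives $H^*(\Mof(G))=H^1(G)$ concentrated in degree one, and since $H^*$ is a symmetric tensor functor into $\GrVec^{\pm}_{\Ql}$ with $H^1(G)$ in odd degree, $H^*(\Sym^i\Mof(G))=\Sym^i(H^1(G))=\bigwedge^i H^1(G)=H^i(G_{\bar k})$ by Lemma~\ref{H semiabelian}. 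This establishes existence of the decomposition with the stated realization.

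Most numbered items are then short. Item~(\ref{thmm1}) holds by the definition of $\Mof(G)$; item~(\ref{thmconYoneda}) is the full-embedding statement of Proposition~\ref{exactMof}, the target being $\DMegkQ$ because $\Mof(G)$ is geometric; item~(\ref{thmconh1exact}) is exactness of $\Mof$ (Proposition~\ref{exactMof}); item~(\ref{thmconodd}) is Proposition~\ref{propodd}; and item~(\ref{thmcontensor}) is the computation of $\det\Mof(G)$ in Proposition~\ref{propodd}. For item~(\ref{thmconmain}), I would note that the projection $M(G)\to\Mi{1}(G)$ becomes, under $\Mi{1}(G)\isom\Mof(G)$, the map $\alpha_G=\mathrm{pr}_1\circ\vp_G$, so the coalgebra morphism it canonically induces is $\vp_G$ by the uniqueness in Lemma~\ref{coalg}, hence a graded Hopf-algebra isomorphism. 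For item~(\ref{thmconext}), I would transport $\Mi{i}$ along base change, using that $\Mof$ and $\Sym^i$ commute with the base-change functors; for imperfect $L$ this and item~(\ref{thmm1}) pass through perfect closures as above.

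For uniqueness together with item~(\ref{thmconmultn}): since $\Mof(n_G)=n\cdot\id$ (Proposition~\ref{exactMof}), $M(n_G)$ acts by $n^i$ on $\Mi{i}(G)$, which is~(\ref{thmconmultn}) and exhibits $M(n_G)$ as a root of the separable polynomial $\prod_{i=0}^{2g+r}(t-n^i)$. Given any other natural decomposition $M(G)=\bigoplus N_i(G)$ with $H^*(N_i(G))=H^i(G_{\bar k})$, naturality forces $M(n_G)$ to preserve each $N_i(G)$; the restriction realizes to $n^i\cdot\id$ and satisfies the same polynomial, so for $j\neq i$ the factor $M(n_G)|_{N_i(G)}-n^j$ realizes to an isomorphism, hence is an isomorphism by conservativity of $H^*$ on the Kimura-finite motive $M(G)$ (Theorem~\ref{conservativity}; finite-dimensionality is Proposition~\ref{propodd} and Corollary~\ref{iso psi}). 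Therefore $M(n_G)|_{N_i(G)}=n^i\cdot\id$, the idempotent onto $N_i(G)$ is the Lagrange projector $P_i(M(n_G))$, and the same holds for $\Mi{i}(G)$, so $N_i(G)=\Mi{i}(G)$.

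The remaining item~(\ref{thmcondual}) is where I expect \textbf{the main work}. Here I would pass to the rigid category $\DMgkQ$. As $\Mof(G)$ is odd finite-dimensional of dimension $N=2g+r$ with invertible determinant (Proposition~\ref{propodd}), the symmetric-coalgebra formalism of Appendix~\ref{appsym} yields natural pairings $\Sym^i\Mof(G)\ox\Sym^{N-i}\Mof(G)\to\Sym^N\Mof(G)=\det(G)$, equivalently natural maps $\Mi{i}(G)^{\vee}\to\Mi{N-i}(G)\ox\det(G)^{-1}$. Applying $H^*$ turns these into the perfect pairings $\bigwedge^i H^1(G)\ox\bigwedge^{N-i}H^1(G)\to\bigwedge^N H^1(G)$, hence they are isomorphisms after realization and so, by conservativity again, isomorphisms of motives; summing over $i$ gives $M(G)^{\vee}\isom M(G)\ox\det(G)^{-1}$. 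Finally, $G$ is smooth of dimension $d=g+r$, so Voevodsky duality $M_c(G)\isom M(G)^{\vee}(d)[2d]$ (see \cite{MVW}) combined with $\det(G)^{-1}=\Lambda^{-1}(-g-r)[-2g-r]$ gives $M_c(G)\isom M(G)\ox\Lambda^{-1}[r]$, all isomorphisms being natural in $G$ since the pairings and the decomposition are. Besides~(\ref{thmcondual}), the points most needing care are the compatibility of the construction of $\Mof$ with the perfect-closure equivalence (for items~(\ref{thmm1}), (\ref{thmconYoneda}), (\ref{thmconext}) over imperfect fields) and the verification that $H^*$ is genuinely symmetric tensor into $\GrVec^{\pm}_{\Ql}$, so that $H^*(\Sym^i\Mof(G))$ really is $\bigwedge^i H^1(G)$ with Koszul signs.
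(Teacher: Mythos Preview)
Your proof follows the paper's approach for existence, uniqueness, and items (\ref{thmconmultn})--(\ref{thmcontensor}), with only cosmetic differences in the uniqueness argument: you first deduce $M(n_G)|_{N_i}=n^i\cdot\id$ and then identify Lagrange projectors, while the paper applies the Lagrange projectors $p_j$ to each $N_i$ directly, obtains a bigraded refinement $N_i=\bigoplus_j M_i^j$, and kills the cross-terms by realization plus Theorem~\ref{conservativity}.

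For item~(\ref{thmcondual}) you take a genuinely different route. You construct the pairing $\Sym^i\Mof(G)\ox\Sym^{N-i}\Mof(G)\to\det(G)$, check it is perfect after realization, and invoke conservativity. The paper instead cites a purely categorical fact due to O'Sullivan \cite[Lemma~3.2]{OS}: for any odd object $X$ of dimension $d$ in a $\Q$-linear pseudo-abelian rigid tensor category there is a \emph{canonical} isomorphism $(\Sym^iX)^\vee\cong\Sym^{d-i}X\ox(\Sym^dX)^\vee$, deduced from the even case by switching the sign of all symmetries. This avoids realization entirely and gives naturality for free. Your route also works, but be aware that Theorem~\ref{conservativity} as stated in the paper requires an explicit candidate inverse $g$; you should supply it, for instance via the copairing coming from comultiplication $\det(G)\to\Sym^i\Mof(G)\ox\Sym^{N-i}\Mof(G)$ (rescaled by the appropriate binomial coefficient), or else cite the stronger conservativity for Kimura categories that dispenses with $g$.
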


\begin{proof}
By Section \ref{reduce perfect}, we may assume that $k$ is perfect.
We use the main Theorem \ref{MainThm} and choose
\[ \Mi{i}(G)=\Sym^i(\Mof(G))\ .\]
By Proposition \ref{realization M1} it has
the correct behaviour for the realization of $\Mof(G)$ and by
Lemma \ref{H semiabelian}  also for all $\Mi{i}(G)$. 
By Proposition \ref{exactMof} it also satisfies (\ref{thmconmultn}).
Suppose there is another natural decomposition 
\[ M(G)=\bigoplus_{i=0}^{2g+r}\Mi{i}'(G)\ .\]
By naturality, this decomposition is stable under the action of the {$\Q$-algebra} generated by $M(n_G)$. Notice now that, for all $i$, the projector $p_i$ defining $\Mi{i}(G)$ is in this algebra, indeed
\[ p_i=\frac{\prod_{i\neq j} M(n_G) - n^j\id}{\prod_{i\neq j}(n^i - n^j)} \ . \] So we get 
 a decomposition 
\[ \Mi{i}'(G)=\bigoplus_{j=0}^{2g+r}\Mi{i}^j(G)\]
where the motive $\Mi{i}^j(G)$ is a direct factors of $\Mi{i}'(G)$ and of $\Mi{j}(G)$. Hence by hypothesis, its realization is concentrated, on one hand in degree $i$ and on the other hand in degree $j$, which implies that its realization is zero when $i \neq j$. 

Moreover, $\Mi{i}^j(G)$ is a finite-dimensional motive (as $M(G)$ is by Theorem \ref{MainThm}) so we can apply Theorem \ref{conservativity} to deduce that $\Mi{i}^j(G)$ vanishes for $i \neq j$. This gives the uniqueness.  


For (\ref{thmconext}) one can argue in two different ways: using uniqueness of the decomposition or using the multiplication $n_G$ of (\ref{thmconmultn}). Properties (\ref{thmm1}) and (\ref{thmconmain}) hold  by definition and main Theorem \ref{MainThm}.  
The properties (\ref{thmconYoneda}) and (\ref{thmconh1exact}) come  from Proposition \ref{Yoneda}.  

The properties (\ref{thmconodd}) and  (\ref{thmcontensor}) were established in
Proposition \ref{propodd}.

 Part (\ref{thmcondual}) comes from a more general statement:
 we claim that if $X$ is an odd object of dimension $d$ in a $\Q$-linear pseudo-abelian symmetric tensor category, then there is a canonical isomorphism 
 \[(\Sym^i X)^{\vee} \cong \Sym^{d-i}X \otimes (\Sym^{d}X)^{\vee} \ . \]
Let us prove the claim. In \cite[Lemma 3.2]{OS} it is proven that for any even object
 $X$ of dimension $d$ there is a canonical isomorphism $(\wedge^i X)^{\vee} \cong \wedge^{d-i}X \otimes (\wedge^{d}X)^{\vee}.$ 
 One can change the sign of all symmetries so that one gets a new category which is 
 equivalent as $\Q$-linear tensor category to the previous one (but not as $\Q$-linear symmetric tensor category). This transformation sends even objects of dimension $d$ to odd objects of dimension $d$ so that 
 \cite[Lemma 3.2]{OS} implies our claim.
\end{proof}

\subsection{Weil cohomology of semiabelian varieties}
 Let $H^*$ be any mixed Weil cohomology with coefficients in $F$, in the sense of \cite{CD2} (we do not ask anymore this to be the $\ell$-adic cohomology). 
Recall that, by definition, this means that $F$ is a field of characteristic zero, the K\"unneth formula and excision hold, and that, moreover, the cohomology of the point, the affine line and the multiplicative group have the standard dimensions.

We also require that the cohomology of any scheme is concentrated in non-negative degrees. In \cite[Remark 2.7.15]{CD2} it is conjectured that this should be deduced  from the other axioms (see the comments in loc. cit. after Theorem 2.7.14). We also write \[H^*: \DMegk \longrightarrow \GrVec^{\pm}_{F}\] for the realization functor induced by the Weil cohomology \cite[Thm 3]{CD2}. 
 
\begin{lemma}\label{lemweilartin}
Let $M$ be an Artin motive. Then $H^*(M)$ is concentrated in degree $0$.
\end{lemma}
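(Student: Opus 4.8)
The plan is to sandwich the cohomology of an Artin motive between non-negative and non-positive degrees: the first bound is immediate from the axioms, the second will come from compatibility of the realization with duality.

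First I would reduce to the motive of a zero-dimensional scheme. By definition an Artin motive is a direct summand of $M(X)$ for some finite \'etale $k$-scheme $X$, and since $H^*$ is additive and any direct summand of a graded vector space concentrated in degree $0$ is again concentrated in degree $0$, it is enough to prove the assertion for $M=M(X)$ with $X$ finite \'etale over $k$ (one may even take $X=\Spec L$ with $L/k$ finite separable, by decomposing $X$ into its connected components).

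Now the axiom that the cohomology of a scheme is concentrated in non-negative degrees gives $H^i(X)=0$ for $i<0$, hence $H^i(M(X))=0$ for $i<0$. For the vanishing in positive degrees I would invoke duality. The scheme $X$ is smooth and projective over $k$ of dimension $0$, so its motive is dualizable with $M(X)^{\vee}\cong M(X)(0)[0]=M(X)$ (the usual self-duality of the motive of a finite \'etale scheme; if one prefers, one views $M(X)$ inside the rigid category $\DMgk$, to which the realization of \cite[Thm 3]{CD2} extends). Since $H^*$ is a symmetric monoidal functor it carries this isomorphism to $H^*(M(X))\cong H^*(M(X))^{\vee}$ in $\GrVec^{\pm}_F$, and dualizing a graded vector space exchanges its degree-$i$ and degree-$(-i)$ parts; thus $\dim_F H^i(M(X))=\dim_F H^{-i}(M(X))$ for all $i$. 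Together with the vanishing for $i<0$ this forces $H^i(M(X))=0$ for every $i\neq 0$.

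The statement is elementary, and the only point needing a little attention is the bookkeeping around dualizability: one must make sure the realization functor is monoidal and that $M(X)$ is dualizable (both true once one works in $\DMgk$). A possible alternative would be to base change along a finite extension $k'/k$ splitting $X$, so that $M(X)_{k'}\cong\one^{\oplus n}$ becomes a sum of unit motives; but that route requires knowing that $H^*$ is compatible with this base change, so the duality argument looks preferable.
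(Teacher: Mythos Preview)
Your proof is correct and follows essentially the same approach as the paper: both use the non-negativity axiom for the lower bound and Poincar\'e duality for $0$-dimensional smooth projective varieties for the upper bound. The paper invokes Poincar\'e duality directly at the level of the cohomology theory (citing \cite[Thm 1(4)]{CD2}), while you phrase the same fact as self-duality $M(X)^{\vee}\cong M(X)$ of the motive together with monoidality of the realization; these are equivalent formulations of the same argument.
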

\begin{proof}
By \cite[Thm 1(4)]{CD2}, the theory $H^*$ satisfies Poincar\'e duality. By assumption it is concentrated in non-negative degrees. Hence $H^*(M(\Spec L))$ is concentrated in degree $0$.
\end{proof}
 \begin{proposition}\label{propweil}
 Let $G$ be a semiabelian variety over $k$ and $\Mi{i}(G)$  be the  K\"unneth components constructed  in Theorem \ref{Thmconsequences}. Then
 \[{H}^*(\Mi{i}(G))={H}^i(G)\]
 for any mixed Weil cohomology $H^*$.
\end{proposition}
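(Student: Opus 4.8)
The plan is to reduce the statement, via the Main Theorem \ref{MainThm}, to the single assertion that $H^*(\Mof(G))$ is concentrated in cohomological degree $1$, and then to extract that concentration from the structure of $\Mof(G)$ recorded in Proposition \ref{propodd} together with the axioms of a mixed Weil cohomology.

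First I would set up the reduction. By the Main Theorem \ref{MainThm} we have $\Mi{i}(G)=\Sym^i\Mof(G)$ and the canonical morphism $M(G)\to\OSym(\Mof(G))=\bigoplus_{i=0}^{2g+r}\Sym^i\Mof(G)$ is an isomorphism; in particular $\Mof(G)=\Sym^1\Mof(G)$ is a direct summand of $M(G)$. Since $H^*\colon\DMegk\to\GrVec^{\pm}_{F}$ is a symmetric tensor functor, the idempotent cutting out $\Sym^i$ — being assembled from the symmetry constraints — is preserved by $H^*$ (covariantly or contravariantly), so $H^*(\Mi{i}(G))=\Sym^i(H^*(\Mof(G)))$ in $\GrVec^{\pm}_{F}$, and $H^*(G)=H^*(M(G))=\bigoplus_i H^*(\Mi{i}(G))$. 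If $H^*(\Mof(G))$ is concentrated in degree $1$, then it is an odd object concentrated in degree $1$, so by the Koszul sign rule $\Sym^i$ of it is the $i$-th exterior power of the underlying vector space placed in degree $i$; hence $H^*(\Mi{i}(G))$ is concentrated in degree $i$, and comparing homogeneous components in $H^*(G)=\bigoplus_i H^*(\Mi{i}(G))$ forces $H^*(\Mi{i}(G))=H^i(G)$ for all $i$, which is the claim.

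It then remains to show $V:=H^*(\Mof(G))$ is concentrated in degree $1$, and here I would use three inputs from Proposition \ref{propodd}: $\Mof(G)$ is odd of dimension $2g+r$; $\det\Mof(G)=\Sym^{2g+r}\Mof(G)=\Lambda(g+r)[2g+r]$ for a tensor-invertible Artin motive $\Lambda$; and (from the reduction) $\Mof(G)$ is a direct summand of $M(G)$. Applying $H^*$: oddness gives $\Sym^{2g+r+1}V=0$ and $\Sym^{2g+r}V=H^*(\det\Mof(G))$, which is one-dimensional and in degree $2g+r$ — indeed $H^*(\Lambda)$ is one-dimensional since $\Lambda$ is invertible, and sits in degree $0$ by Lemma \ref{lemweilartin}, while the twist–shift $(g+r)[2g+r]$ moves a degree-$0$ line to degree $2g+r$ (using that $H^*(\one(n)[s])$ is one-dimensional in degree $s$). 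Now a graded super-vector space $V$ with $\Sym^N V\neq 0=\Sym^{N+1}V$ is necessarily concentrated in odd degrees, has total dimension $N$, and its $\Sym^N V=\bigwedge^N V$ is one-dimensional, placed in the degree $\sum_j j\cdot\dim_F V_j$; with $N=2g+r$ this yields $\dim_F V=2g+r=\sum_j j\cdot\dim_F V_j$. Since $V$ is a direct summand of $H^*(M(G))=H^*(G)$, which by the non-negativity axiom lies in degrees $\geq 0$, the space $V$ is supported on the odd integers $1,3,5,\dots$; writing $m_a=\dim_F V_{2a+1}$ the two identities read $\sum_{a\geq 0}m_a=2g+r=\sum_{a\geq 0}(2a+1)m_a$, and subtracting gives $\sum_{a\geq 0}2a\,m_a=0$, hence $m_a=0$ for $a\geq 1$. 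Thus $V=H^1(\Mof(G))$, concentrated in degree $1$ (of dimension $2g+r$), as needed.

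The only genuinely delicate point is the elementary super-linear-algebra observation that "$\Mof(G)$ odd of dimension $2g+r$" together with "$\det\Mof(G)$ realizes to a line in degree $2g+r$" and "$H^*(\Mof(G))$ concentrated in degrees $\geq 0$" already pins down degree $1$; everything else is bookkeeping with the contravariant tensor functor $H^*$ and its effect on Tate twists and shifts, plus the check that $H^*$ commutes with $\Sym^i$ (which it does, $\Sym^i$ being the image of an idempotent built from the symmetry isomorphisms). This argument subsumes the torus and abelian special cases, which one could otherwise treat directly via $\Mof(T)=\Xi(T)\otimes\one(1)[1]$ (Proposition \ref{toruscase}) and $\Mof(A)=\hof{1}(A)$ together with the curve decomposition of Proposition \ref{Chow Jacobian}.
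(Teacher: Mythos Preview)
Your proof is correct and follows essentially the same route as the paper: reduce to showing $H^*(\Mof(G))$ is concentrated in degree $1$, then use oddness, the determinant computation of Proposition \ref{propodd}, Lemma \ref{lemweilartin}, and the non-negativity axiom to force this. You spell out the counting argument and the non-negativity justification (via $\Mof(G)$ being a summand of $M(G)$) more explicitly than the paper, which instead phrases the reduction through the primitive part of the Hopf algebra $H^*(G)$, but the substance is identical.
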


\begin{proof}
 We apply $H^*$ to the isomorphism of Hopf algebras
\[\vp_G:M(G)\to\OSym(\Mof(G))\ .\]
Hence $H^*(\Mof(G))$ is the primitive part of $H^*(G)$ and it suffices to show
that it is concentrated in degree $1$. We know that $\Mof(G)$ is odd of dimension $2g+r$ where $r$ is the torus rank of $G$ and $g$ the dimension of the abelian part of $A$. Hence $H^*(\Mof(G))$ is of dimension $2g+r$. Moreover,
\[ \det(H^*(\Mof(G))=H^*(\det(G))=H^*(\Lambda(g+r)[2g+r])\]
with $\Lambda$ as in Theorem \ref{Thmconsequences} an invertible Artin motive.
By Lemma \ref{lemweilartin} the cohomology of $\Lambda$ is concentrated in
degree $0$. Hence the cohomology of
$\det(G)$ is concentrated in degree $2g+r$. 
By assumption $H^*(\Mof(G))$ is odd and $H^*$ is concentrated in non-negative degrees.
This is only possible
if all of $H^*(\Mof(G))$ is concentrated in degree $1$.
\end{proof}

Consider now the category of $1$-motives over $k$ (introduced by Deligne  \cite{DeH3}) and its embedding in $\DMeetkQ$ \cite{BVK,Org}.

\begin{proposition}\label{prop1-mot}
Let $[F\to G]\in\DMeetkQ$  be a $1$-motive, with $F$ (in degree $0$) a $k$-group scheme such that $F_{\bar{k}}$ is a free abelian group of rank $d$  and $G$ (in degree $1$) an extension of an abelian variety of dimension $g$ by a torus of rank $r$. Let $H^*$ be a Weil cohomology theory such that $H^*(X)$ is concentrated in non-negative degrees. Then $H^*([F\to G])$ is concentrated in degree $0$ and of dimension $2g+d+r$.
\end{proposition}
\begin{proof}
This holds for $[F\to 0]$ by Lemma \ref{lemweilartin} and for $[0\to G]$ by Proposition \ref{propweil} (compare with the Definition \ref{defnalpha} and note the shift on the degree here). By considering the long exact cohomology sequence, it follows for $[F\to G]$. 
\end{proof}

\begin{remark}
To our knowledge this result is new. There is a discussion of realizations of $1$-motives in \cite[Section 15.4]{BVK}  which is going into a different direction.
\end{remark}

\subsection{Weight filtration}\label{sec_weights}
Recall that Bondarko defines in \cite[Def 1.1.1]{Bon} categories
$_{\leq a}\DMgm$ of motives of weight at most $a$ and $_{\geq a}\DMgm$ of motives of weight at least $a$ such that
\[ _{\leq a}\DMgm\cap _{\geq a}\DMgm =\ChowkQ[-a]\ .\]

Our structural knowledge also gives us a weight filtration in the sense of Bondarko on
$M(G)$. Let $G$ be semiabelian with torus part $T$ of rank $r$ and abelian part $A$ of dimension $g$. Recall that there is a natural exact triangle
\[ \Mof(T)\to\Mof(G)\to\Mof(A)\]
with $\Mof(A)$ a Chow motive and $\Mof(T)=\Xi(1)[1]$ (with $\Xi$ the Artin motive of Proposition \ref{toruscase}) a Chow motive shifted by $[-1]$. This means
that $\Mof(A)$ is pure of weight $0$ and $\Mof(T)$ is pure of weight $1$. Hence $\Mof(G)$ has weights between $0$ and $1$ and the above sequence is a weight decomposition. Using the filtration of Appendix \ref{appfilt} we extend this to all K\"unneth components.

\begin{proposition}\label{weight}
Fix an integer $i$.
For every choice of $-\infty \leq a\leq b\leq \infty$ in $\Z\cup\{-\infty,\infty\}$ there is a functor
\[_{a\leq w\leq b}\Mi{i}:\sAbQ\to _{\leq b}\DMgm\cap _{\geq a}\DMgm\]
together with an exact triangle of functors for every choice of $a\leq b<c$
\[ _{b+1\leq w\leq c}\Mi{i}\to _{a\leq w\leq c}\Mi{i}\to _{a\leq w\leq b}\Mi{i}\]
such that
\[ _{a\leq w\leq b}\Mi{i}=\Mi{i}\hspace{2ex}\text{for $a\leq i$ and $b\geq 0$.} \]
Moreover, for every semiabelian variety $G$ with torus part $T$ of rank $r$  and abelian part $A$ of dimension $g$, we have naturally
\[ _{a\leq w\leq a}\Mi{i}(G)=\Mi{a}(T)\tensor \Mi{i-a}(A)=\Sym^a\Mof(T)\tensor\Sym^{i-a}\Mof(A)\ .\]
The weights of $\Mi{i}(G)$ are concentrated in the range
\[ \Mi{i}(G)= _{\max(0,i-2g)\leq w\leq \min(i,r)}\Mi{i}(G)\ .\]
\end{proposition}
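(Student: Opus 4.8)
The plan is to derive the weight filtration of each $\Mi{i}(G)=\Sym^i\Mof(G)$ from the filtration of Appendix~\ref{appfilt}, and to read off the weights from its graded pieces. Fix $i$. Applying the construction of Appendix~\ref{sectfilttensor} to the basic exact sequence $1\to T\to G\to A\to 1$ (equivalently, to the subobject $\ul{T}\subset\ul{G}$ in $\STEkQ$) equips $\Mi{i}(G)$ with a finite, functorial (in $G\in\sAbQ$) filtration $\Fil^{\bullet}$ whose graded pieces are, by Proposition~\ref{qniisomo2},
\[ \gr^{a}\Mi{i}(G)\cong\Sym^{a}\Mof(T)\ox\Sym^{i-a}\Mof(A)=\Mi{a}(T)\ox\Mi{i-a}(A)\ .\]
I would then define ${}_{a\le w\le b}\Mi{i}$ to be the subquotient functor of $\Fil^{\bullet}$ carrying the graded pieces of indices $a,\dots,b$; the one-step case $b=a$ is exactly the asserted formula for ${}_{a\le w\le a}\Mi{i}(G)$.

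The next step is to identify each graded piece as a pure Chow motive of the expected weight. By Theorem~\ref{Ku}(3), $\Sym^{i-a}\Mof(A)=\hof{i-a}(A)$ is a Chow motive, hence pure of weight $0$, and it vanishes for $i-a>2g$ by Theorem~\ref{DM}. By Proposition~\ref{toruscase}, $\Mof(T)\cong\Xi(T)(1)[1]$, so by the Koszul sign rule (as in Remark~\ref{det torus}) $\Sym^{a}\Mof(T)\cong\bigl(\bigwedge^{a}\Xi(T)\bigr)(a)[a]$; since $\bigwedge^{a}\Xi(T)$ is again an Artin motive, this is a Chow motive shifted by $[-a]$, hence pure of weight $a$, and it vanishes for $a>r$ by Proposition~\ref{toruscase}. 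As the heart $\ChowkQ$ is a tensor subcategory of $\DMgm$, the product $\gr^{a}\Mi{i}(G)$ is pure of weight $a$, and it is nonzero only for $\max(0,i-2g)\le a\le\min(i,r)$.

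It then remains to assemble the formal consequences, in the style of the proof of Proposition~\ref{propodd}. The object ${}_{a\le w\le b}\Mi{i}(G)$ is a finite iterated extension of the pure motives $\gr^{c}\Mi{i}(G)$ with $a\le c\le b$; since the classes ${}_{\le b}\DMgm$ and ${}_{\ge a}\DMgm$ are closed under extensions (axioms of a weight structure, \cite{Bon}), it lies in ${}_{\le b}\DMgm\cap{}_{\ge a}\DMgm$. For $a\le b<c$ the inclusions among the three relevant layers of $\Fil^{\bullet}$ produce a short exact sequence, hence the distinguished triangle
\[ {}_{b+1\le w\le c}\Mi{i}\longrightarrow{}_{a\le w\le c}\Mi{i}\longrightarrow{}_{a\le w\le b}\Mi{i}\ ,\]
natural in $G$. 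Finally, since the only nonzero graded pieces lie in weights contained in $[\max(0,i-2g),\min(i,r)]\subseteq[0,i]$, the filtration already exhausts $\Mi{i}$ within that interval, which gives both the normalization ${}_{a\le w\le b}\Mi{i}=\Mi{i}$ once $a$ lies below and $b$ above this interval and the last assertion $\Mi{i}(G)={}_{\max(0,i-2g)\le w\le\min(i,r)}\Mi{i}(G)$.

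The point I expect to require care — just as in the proof of Proposition~\ref{propodd} — is the passage from the abelian-categorical filtration of Appendix~\ref{appfilt}, which lives in $\STEkQ$, to an honest tower in $\DMgm$ with the prescribed graded pieces: one must use the localization $q$ together with the exactness of $\ox$ \cite{tensor} to turn the short exact sequences $0\to\Fil^{a+1}\to\Fil^{a}\to\gr^{a}\to 0$ into distinguished triangles in $\DMeetkQ$ with cones $\Sym^{a}\Mof(T)\ox\Sym^{i-a}\Mof(A)$, which is precisely what Proposition~\ref{qniisomo2} delivers. Granting this, the substance of the argument is the weight-purity computation of the graded pieces above, and everything else is a routine manipulation of towers and of the extension-closed classes ${}_{\le b}\DMgm$ and ${}_{\ge a}\DMgm$.
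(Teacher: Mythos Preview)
Your proposal is correct and follows essentially the same route as the paper: define ${}_{a\le w\le b}\Mi{i}(G)$ as $q$ applied to the subquotient $\Fil_a^{\ul{T}}\Sym^i\ul{G}/\Fil_{b+1}^{\ul{T}}\Sym^i\ul{G}$ from Appendix~\ref{appfilt}, identify the graded pieces via Proposition~\ref{qniisomo2}, and then deduce everything else from the purity of those graded pieces together with extension-closure of the weight classes. Your write-up is in fact more explicit than the paper's on the one point it leaves terse, namely why $\Sym^a\Mof(T)\tensor\Sym^{i-a}\Mof(A)$ is pure of weight $a$: the paper simply asserts this and then says ``All other statements follow from this case by induction on $|b-a|$'', whereas you spell out the identification $\Sym^a\Mof(T)\cong\bigl(\bigwedge^a\Xi(T)\bigr)(a)[a]\in\ChowkQ[-a]$.
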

\begin{remark}
Suppose that $G=T \times A$, then
\[\Mi{i}(G)= \Sym^i(\Mi{1}(T)\oplus\Mi{1}(A))= \bigoplus_{j=0} \Mi{j}(T)\otimes \Mi{i-j}(A)\]
and the definitions
\[ _{a \leq w \leq b}\Mi{i}(G)= \bigoplus_{j=a}^{b}  \Mi{j}(T)\otimes \Mi{i-j}(A)\]
verify all the properties of the statement. In general $G$ will just be an extension of $A$ by $T$ and one needs to replace this description by a filtered one, using Proposition \ref{qniisomo2}.
\end{remark}

\begin{proof}[Proof of Proposition \ref{weight}.]
By Section \ref{reduce perfect}, we may assume that $k$ is perfect.
Recall that $\Mof(G)=q\underline{G}$ where $q:D^-(\STEkQ) \to \DMeetkQ$
is the localization functor and $\underline{G}\in\STEkQ$ is a sheaf.
We put
\[ _{a\leq w\leq b}\Mi{i}(G)=
q\left( \Fil_{a}^{\underline{T}}\Sym^{i}\underline{G}/\Fil_{b+1}^{\underline{T}}\Sym^{i}\underline{G}\right)\]
with $\Fil^{\underline{T}}$ as defined in Definition \ref{symninot}. 
The exact triangles are immediate from this construction. 
The computation of $ _{a\leq w\leq a}\Mi{i}(G)$ follows from
Proposition \ref{qniisomo2}. In particular, it is pure of weight $a$ and geometric. All other statements follow from this case by induction on $|b-a|$. 
\end{proof}

\begin{corollary}
Suppose we are in one of the following two cases:
\begin{enumerate}
\item $k$ is embedded into $\C$ and $H^*$ is the Betti realization with its
natural mixed Hodge structure;
\item $k$ is a field of finite type over its prime field, $\ell$ a prime different from the characteristic of $k$  and $H^*$ the $\ell$-adic cohomology;
\end{enumerate}
In both cases let $(W_nH^*)_{n\in\Z}$ be the weight filtration. Then
\[ H^*\left(_{-\infty\leq w\leq a}\Mi{i}(G)\right)=H^i\left( _{-\infty\leq w\leq a}\Mi{i}(G)\right)=
   W_{i+a}H^i(G)\ .\]
\end{corollary}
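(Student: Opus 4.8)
The plan is to realize the weight tower on $\Mi{i}(G)$ furnished by Proposition \ref{weight} and to check that it cuts out the weight filtration on $H^i(G)$. By Section \ref{reduce perfect} we may assume $k$ is perfect. I would start from the description, recalled in the proof of Proposition \ref{weight}, that ${}_{-\infty\leq w\leq a}\Mi{i}(G)=q\bigl(\Sym^i\ul{G}/\Fil_{a+1}^{\ul{T}}\Sym^i\ul{G}\bigr)$. Applying the exact functor $q$ to the short exact sequence of \'etale sheaves with transfers
\[ 0\to\Fil_{a+1}^{\ul{T}}\Sym^i\ul{G}\to\Sym^i\ul{G}\to\Sym^i\ul{G}/\Fil_{a+1}^{\ul{T}}\Sym^i\ul{G}\to 0 \]
gives an exact triangle $N\to\Mi{i}(G)\to {}_{-\infty\leq w\leq a}\Mi{i}(G)\to N[1]$ in $\DMegkQ$, where $N:={}_{a+1\leq w\leq\infty}\Mi{i}(G)=q\bigl(\Fil_{a+1}^{\ul{T}}\Sym^i\ul{G}\bigr)$; by Proposition \ref{weight} the motive $N$ sits in a finite tower of triangles with graded pieces the $\Sym^b\Mof(T)\otimes\Sym^{i-b}\Mof(A)$ for $b>a$.

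The next step is a weight computation. By Proposition \ref{realization M1} one has $H^*(\Mof(A))=H^1(A)$ and $H^*(\Mof(T))=H^1(T)$, concentrated in degree $1$; here I would feed in the classical purity of $H^1$ (Hodge theory in case (1), Deligne's weight bounds in case (2)) to record that $H^1(A)$ is pure of weight $1$ and $H^1(T)$ is pure of weight $2$. Since $H^*$ is a symmetric tensor functor, $H^*(\Sym^{i-b}\Mof(A))=\bigwedge^{i-b}H^1(A)$ is concentrated in degree $i-b$ and pure of weight $i-b$, while $H^*(\Sym^b\Mof(T))=\bigwedge^b H^1(T)$ is concentrated in degree $b$ and pure of weight $2b$; hence $H^*$ of each graded piece $\Sym^b\Mof(T)\otimes\Sym^{i-b}\Mof(A)$ is concentrated in cohomological degree $i$ and pure of weight $i+b$. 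Propagating this through the tower of triangles for $N$ and then through the triangle above, an immediate induction on the associated long exact sequences shows that $H^*(N)$ and $H^*\bigl({}_{-\infty\leq w\leq a}\Mi{i}(G)\bigr)$ are concentrated in degree $i$, and that $H^i(N)$ is an iterated extension of the $H^i\bigl(\Sym^b\Mof(T)\otimes\Sym^{i-b}\Mof(A)\bigr)$ with $b>a$, so that all its weights are $\geq i+a+1$. Using $H^i(\Mi{i}(G))=H^i(G)$ (Proposition \ref{propweil}) and the fact that $H^*$ is contravariant, the triangle then realizes to a short exact sequence
\[ 0\to H^i\bigl({}_{-\infty\leq w\leq a}\Mi{i}(G)\bigr)\to H^i(G)\to H^i(N)\to 0 \]
in degree $i$, so that $H^i\bigl({}_{-\infty\leq w\leq a}\Mi{i}(G)\bigr)$ is a subobject of $H^i(G)$ with quotient of weights $>i+a$.

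It remains to identify this subobject with $W_{i+a}H^i(G)$. Strictness of the weight filtration applied to the quotient map forces $W_{i+a}H^i(G)\subseteq H^i\bigl({}_{-\infty\leq w\leq a}\Mi{i}(G)\bigr)$. For equality I would compare dimensions: by Lemma \ref{H semiabelian}, $H^i(G)=\bigwedge^i H^1(G)$, and the exact sequence $0\to H^1(A)\to H^1(G)\to H^1(T)\to 0$ together with the purity recalled above gives $W_1 H^1(G)=H^1(A)$ and $\grw_2 H^1(G)=H^1(T)$, whence $\grw_{i+b}H^i(G)\cong\bigwedge^{i-b}H^1(A)\otimes\bigwedge^b H^1(T)$, of the same dimension as $H^i\bigl(\Sym^b\Mof(T)\otimes\Sym^{i-b}\Mof(A)\bigr)$. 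Therefore $\dim H^i(N)=\sum_{b>a}\dim\grw_{i+b}H^i(G)=\dim H^i(G)-\dim W_{i+a}H^i(G)$, so the two subobjects of $H^i(G)$ above have the same dimension and hence coincide; the identification $H^i\bigl({}_{-\infty\leq w\leq a}\Mi{i}(G)\bigr)=W_{i+a}H^i(G)$ is natural in $G\in\sAbQ$ since every map used is natural.

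The step I expect to be the real content — and the only one where the two cases of the statement enter — is the weight bookkeeping of the second paragraph: one must know that the realization sends the Bondarko-pure pieces $\Sym^b\Mof(T)\otimes\Sym^{i-b}\Mof(A)$ (which are Chow motives, suitably Tate-twisted and shifted) to Hodge structures, resp.\ $\ell$-adic Galois representations, that are genuinely \emph{pure}, of weight equal to the cohomological degree $i$ plus the Bondarko weight $b$. This in turn only uses the classical purity of $H^1$ of abelian varieties and of tori; everything else is a diagram chase with long exact sequences and a dimension count. One could alternatively appeal to a general compatibility of the realization with Bondarko's weight structure, but passing through the explicit pure pieces avoids having to invoke such a statement.
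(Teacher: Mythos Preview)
Your proof is correct and follows essentially the same approach as the paper: compute that each pure graded piece $\Sym^b\Mof(T)\otimes\Sym^{i-b}\Mof(A)$ realizes to something concentrated in cohomological degree $i$ and pure of weight $i+b$, then induct through the tower of triangles. The paper's proof is terser but carries out exactly this computation and then says ``the corollary follows by induction.''

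One minor simplification: your dimension count in the last paragraph is correct but unnecessary. The same induction you ran for $N$ shows that $H^i\bigl({}_{-\infty\leq w\leq a}\Mi{i}(G)\bigr)$ is a successive extension of the pure pieces with $b\leq a$, hence has all weights $\leq i+a$; so it is automatically contained in $W_{i+a}H^i(G)$. Combined with the inclusion you already obtained from strictness, this gives equality without ever computing dimensions or invoking $\bigwedge^i H^1(G)$.
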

\begin{proof}
Let again be $T$ the torus part of $G$ and $A$ the abelian part.
 Note that by Proposition \ref{propweil}
\[ H^*(\Mi{a}(T)\tensor\Mi{i-a}(A))=H^a(T)\tensor H^{i-a}(A)\]
is concentrated in degree $i$.
As $A$ is smooth projective, $H^{i-a}(A)$ is pure of weight $i-a$. 
On the other hand, $H^a(T)$ is pure of weight $2a$. Hence
$H^*( _{a\leq w\leq a}\Mi{i}(G))$ is concentrated in degree $i$ and pure of
weight $i+a$.

The corollary follows by induction. 
\end{proof}
\begin{remark}
In general one does not have to expect that the weight filtration of a cohomology and its graded components lift \emph{canonically} in $\DMegkQ$. This has been studied in \cite{WildChow} and a sufficient criterion,  called of \emph{avoiding weights}, has been given. This criterion does not apply here, but our situation allows anyway to define such a canonical lifting.
\end{remark}

\subsection{General Commutative Group Schemes}\label{sec_cgroup}
Let $k$ be a field and $G/k$ an arbitrary commutative group
scheme of finite type over $k$.
Our aim is to extend the previous results to this case.

Let us initially consider $k$ to be perfect. 
Let $G^0$ be the connected component of the neutral element and $\pi_0(G)$ the group of connected components of $G$. The latter is finite. We have
a natural short exact sequence
\[ 1\to G^0\to G\to \pi_0(G)\to 1\ .\]

We are going to express the motive of $G$ in terms of $G^0$ and $\pi_0(G)$.
Note that we always have
\[ M(G)=M(G^\red)\]
where $G^\red$ is the maximal reduced subscheme of $G$. 
As $k$ is perfect, this scheme is a group and it is, moreover, smooth.
Recall that by 
 the Barsotti-Chevalley structure theorem  (\cite{Bars} and \cite{Chev}), there is a short exact sequence
\[ 1\to G^u\to G^\red\to G^{sa}\to 1\]
with $G^u$ unipotent and $G^{sa}$ semiabelian. 

\begin{remark}Let $F$ be a finite group scheme over $k$. Then
the Artin motive $M(F)$ has the explicit description as representation of the absolute Galois group:
\[ M(F)\isom\Q[F(\bar{k})]\]
with operation of $\Gal(\bar{k}/k)$ given by the operation on $\bar{k}$-valued
points. The Hopf algebra structure is the one of the group ring. Note
that we have naturally $F(\bar{k})=F^\red(\bar{k})$.
\end{remark}

\begin{definition}[cf. Definition \ref{defnulG}]Let $G$ be a commutative group scheme over a perfect field $k$.
Define
$\ul{G}$ to be the \'etale sheaf on $\Sm/k$ given by
\[ \ul{G}(S)=\Mor_{\Sch/k}(S,G)\]
and $\ul{G}_\Q=\ul{G}\tensor_\Z\Q$.
\end{definition}

\begin{lemma}\label{group_reduced}Let $G$ be a commutative group scheme. Then $\ul{G}$ is
an \'etale sheaf with transfers and we have
\[ \ul{G}=\ul{G}^\red=\ul{G}^0=(\ul{G}^0)^\red\ .\]
\end{lemma}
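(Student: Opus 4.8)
The plan is to record first that $\ul{G}$ is a sheaf --- by the very argument of Lemma~\ref{lemwelldef}, namely \'etale descent for the presheaf $S\mapsto\Mor_{\Sch/k}(S,G)$ together with the exactness of tensoring with $\Q$ --- and then to establish the chain of equalities, the transfer structure coming for free from the smooth case once those are in hand.

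For the two equalities involving $G^\red$ I would argue as follows. Every $S\in\Sm/k$ is reduced, being smooth over a field, so any morphism $S\to G$ factors uniquely through the closed immersion $G^\red\hookrightarrow G$; hence $\Mor_{\Sch/k}(S,G)=\Mor_{\Sch/k}(S,G^\red)$ for every such $S$, i.e.\ $\ul{G}=\ul{G^\red}$, and the same observation applied to $G^0$ gives $\ul{G^0}=\ul{(G^0)^\red}$. Since $k$ is perfect, $G^\red$ is a smooth commutative group scheme, so Corollary~\ref{corspsz} endows $\ul{G^\red}=\ul{G}$ with a structure of \'etale sheaf with transfers; unwinding the Spie\ss--Szamuely construction --- a finite correspondence is a cycle on $S\times G$, which only sees $G^\red$ --- shows this to be the transfer structure of Proposition~\ref{propspsz}, so that $\ul{G}=\ul{G^\red}$ and $\ul{G^0}=\ul{(G^0)^\red}$ hold in $\STEkQ$ and not merely between \'etale sheaves of $\Q$-vector spaces.

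The substantive point is the equality $\ul{G^0}=\ul{G}$, and I expect it to rest entirely on the finiteness of $\pi_0(G)$. For any $S$, a morphism $S\to G$ factors through $G^0$ precisely when its composite with $G\to\pi_0(G)$ is the constant morphism $e$, because $G^0$ is the fibre of $G\to\pi_0(G)$ over $e$; hence $G^0(S)=\ker(G(S)\to\pi_0(G)(S))$, so $\ul{G^0}_\Z=\ker(\ul{G}_\Z\to\ul{\pi_0(G)}_\Z)$ as presheaves and, since tensoring with $\Q$ is exact, $\ul{G^0}=\ker(\ul{G}\to\ul{\pi_0(G)})$. Now $\pi_0(G)$ is a finite \'etale commutative group scheme over $k$; setting $N=\#\,\pi_0(G)(\bar{k})$, the endomorphism $[N]$ of $\pi_0(G)$ vanishes --- it is zero on $\bar{k}$-points by Lagrange, and an \'etale $k$-group scheme is determined by its geometric points --- so $\Mor_{\Sch/k}(S,\pi_0(G))$ is annihilated by $N$ for every $S$, and hence $\ul{\pi_0(G)}=\ul{\pi_0(G)}_\Z\ox\Q=0$. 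Therefore $\ul{G^0}=\ker(\ul{G}\to 0)=\ul{G}$, which together with the preceding paragraph gives the full chain $\ul{G}=\ul{G^\red}=\ul{G^0}=\ul{(G^0)^\red}$. Apart from noticing this rational vanishing of $\ul{\pi_0(G)}$ --- elementary, but perhaps not the first thing one would try --- the only item needing care is the bookkeeping that upgrades the equalities to equalities of sheaves with transfers; I foresee no genuine obstacle.
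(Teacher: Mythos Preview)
Your proof is correct and follows essentially the same route as the paper: the equality $\ul{G}=\ul{G^{\red}}$ via reducedness of smooth $S$, transfers from Corollary~\ref{corspsz} applied to the smooth group $G^{\red}$, and the vanishing of $\ul{\pi_0(G)}$ after $\otimes\,\Q$ to get $\ul{G^0}=\ul{G}$. The paper compresses your last step into the phrase ``by isogeny invariance (Proposition~\ref{exactMof}) the finite group of components does not contribute,'' which is exactly your observation that $[N]$ kills $\pi_0(G)$; your version simply makes this explicit and adds the (routine) check that the identifications respect transfers.
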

\begin{proof}Note that we consider $\ul{G}$ as a sheaf only on $\Sm$. For
$S$ smooth, we have
\[ \Mor_{\Sch/k}(S,G)=\Mor_{\Sch/k}(S,G^\red)\ .\]
Moreover, $G^\red$ is smooth, hence the sheaf has transfers by  Lemma \ref{lemwelldef}. By isogeny invariance (Proposition \ref{exactMof}) the finite group of
components does not contribute.
\end{proof}
\begin{definition}[cf. Definition \ref{defnalpha}]
Let $G$ be a commutative group scheme over $k$. Consider $\ul{G}_\Q \in D^-\STEkQ$ as an object concentrated in degree zero and define 
\[ \Mof(G)\in \DMeetkQ \]
as the image of $\ul{G}_\Q$ by the functor $q$ (see Section \ref{sect_motives}).
Similarly, consider
\[\gamma_{G^\red}:\Cor(\cdot,G^\red)\otimes_\Z\Q\to \ul{G^\red}_\Q\ ,\]
of Proposition \ref{propspsz}, as a map in $D^-\STEkQ$, and define
\[ \alpha_G:M(G)=M(G^\red)\to \Mof(G^\red)=\Mof(G)\]
as the image of $\gamma_{G^\red}$ by the functor $q$.
Moreover, let
\[\vp_G:M(G)\to \OSym(\Mof(G))\]
be the unique extension of $\alpha_G$ compatible with comultiplication.
\end{definition}
If $G$ is semiabelian, then this definition agrees with the old one.
\begin{lemma}\label{unip}Let $k$ be perfect, $G$ a smooth connected commutative groups scheme over $k$. Let $G^{sa}$ be its semi-abelian part.
Then
\[ M(G)\to M(G^{sa}),\hspace{2ex}\Mof(G)\to \Mof(G^{sa})\]
are isomorphisms. In particular, $M(G)$ and $\Mof(G)$ vanish if
$G$ is unipotent.
\end{lemma}
\begin{proof}
It suffices to consider the case where $k$ is algebraically closed. We
consider the sequence
 \[ 1\to G^u\to G\to G^{sa}\to 1\]
with $G^u$ unipotent and $G^{sa}$ semiabelian. As $\Mof$ is exact, we need
to show that $\Mof(G^u)$ vanishes.
The unipotent group $G$ has a filtration with associated graded components isomorphic to $\Ga$. 
As $\Mof$ is exact, it suffices to consider $G=\Ga$. By definition
\[ \ul{\Ga}=\sheaf{O}\]
is the structure sheaf. We take its image under 
$\STEkQ\xrightarrow{q}\DMeetkQ\xrightarrow{i}D^-(\STEkQ)$ and get
the Suslin complex 
\[ iq\sheaf{O}=\ul{C}_*(\sheaf{O})=\sheaf{O}\tensor_k \sheaf{O}(\Delta^*)\]
where $\Delta^*$ is the standard cosimplicial object with $\Delta^n$ the
algebraic $n$-simplex. It suffices to show that $\sheaf{O}(\Delta^*)$ is exact.
This is the case $q=0$ of \cite[Prop. 1.1]{BG}  Note that they assume $\chr(k)=0$, but the assumption is not used at this point.

We now turn to $M(G)$. We view $G\to G^{sa}$ as a $G^u$-bundle. Note that $G^u$ is $\A^1$-homotopy
equivalent to a point. 
The surjectivity of the map of \'etale sheaves $\ul{G}\to\ul{G}^{sa}$ means that
$G^{sa}\to G$ \'etale locally has a section. This implies that the
bundle is \'etale locally trivial. Hence the $M(G)\to M(G^{sa})$ is
an isomorphism.
\end{proof}

\begin{theorem}\label{propcommgroup}
Let $G$ be a commutative group scheme of finite type over a field $k$, which is not necessarily perfect. Let
$\pi:G\to\pi_0(G)$ be the projection to its group of components.
Then the natural map
\[ \psi_G:M(G)\to\OSym(\Mof(G))\tensor M(\pi_0(G))\]
given by the composition
\[ \psi_G:M(G)\xrightarrow{\Delta}M(G)\tensor M(G)\xrightarrow{\vp_G\tensor \pi}
\OSym(\Mof(G))\tensor M(\pi_0(G))\]
is an isomorphism of Hopf algebras and it is natural in $G\in\CGS$.
\end{theorem}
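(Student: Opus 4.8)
The plan is to reduce, in a few steps, to the semiabelian case, where the statement is the Main Theorem~\ref{MainThm} together with Proposition~\ref{Hopf}. First, by Section~\ref{reduce perfect} we may assume $k$ perfect; and since $M(G)=M(G^\red)$, while Lemma~\ref{group_reduced} gives $\ul G=\ul{G^\red}$ (hence $\Mof(G)=\Mof(G^\red)$) and $\pi_0(G)=\pi_0(G^\red)$, and $\alpha_G$, $\vp_G$ and $M(\pi)$ are literally the same for $G$ and $G^\red$, we may assume $G$ smooth. To see that $\OSym(\Mof(G))$, hence $\psi_G$, is defined, note $\Mof(G)=\Mof(G^0)$ (Lemma~\ref{group_reduced}); choosing the canonical exact sequence $1\to U\to G^0\to G_s\to 1$ with $U$ the unipotent radical of $G^0$ (split, as $k$ is perfect) and $G_s$ semiabelian, Lemma~\ref{unip} gives $\Mof(U)=0$, so by exactness of $\Mof$ one has $\Mof(G^0)\cong\Mof(G_s)$, which is odd of dimension $2g+r$ by Proposition~\ref{propodd}.

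Next I would check that $\psi_G$ is a morphism of Hopf algebras. The map $\vp_G$ is a morphism of coalgebras by construction, and in fact of Hopf algebras: the argument of Proposition~\ref{Hopf} carries over verbatim, since the naturality of $\alpha$ in $\CGS/k$ (Proposition~\ref{propspsz}), Corollary~\ref{formulaMof} ($\Mof(\mu)=+$, $\Mof(\epsilon)=\Mof(0)=0$) and Proposition~\ref{product} hold for arbitrary commutative group schemes. Since $M(G)$ is cocommutative, its comultiplication $\Delta$ is a morphism of Hopf algebras $M(G)\to M(G)\otimes M(G)$, and $\vp_G\otimes M(\pi)$ is one as well, so their composite $\psi_G$ is a morphism of Hopf algebras.

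The heart of the matter is that $\psi_G$ is an isomorphism. By Proposition~\ref{stalk} it suffices to treat $k=\bar k$, so $F:=\pi_0(G)$ is a constant finite abelian group; choose a set-theoretic section $c\mapsto g_c$ of $G(\bar k)\twoheadrightarrow F$ with $g_e=e$. This exhibits $G$, as a scheme, as the disjoint union of the translates $G_c:=g_c\cdot G^0$, so $M(G)=\bigoplus_{c\in F}M(G_c)$, and $M(\pi)$ restricts on the summand $M(G_c)$ to $[c]\circ\epsilon_c$, where $[c]\colon\one\to M(F)$ is the class of the rational point $c$ and $\epsilon_c\colon M(G_c)\to\one$ the structure map. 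Because $\Delta_G$ maps $G_c$ into $G_c\times G_c$ and $(\id\otimes\epsilon_c)\circ M(\Delta_{G_c})=\id$, one finds that $\psi_G$ restricts on $M(G_c)$ to $(\vp_G|_{M(G_c)})\otimes[c]$; as $\{[c]\}_{c\in F}$ is a basis of $M(F)=\Q[F]$, it follows that $\psi_G$ is an isomorphism if and only if each $\vp_G|_{M(G_c)}\colon M(G_c)\to\OSym(\Mof(G))$ is.

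For the identity component ($c=e$), since $G^0\hookrightarrow G$ is a homomorphism the restriction $\vp_G|_{M(G^0)}$ is a coalgebra morphism into $\OSym(\Mof(G^0))$ lifting $\alpha_{G^0}$, hence equals $\vp_{G^0}$; and $\vp_{G^0}$ corresponds, under $M(G^0)\cong M(G_s)$ (the torsor $G^0\to G_s$ under the split unipotent $U$ is an iterated affine-space bundle, hence an isomorphism on motives) and $\Mof(G^0)\cong\Mof(G_s)$, to $\vp_{G_s}$, which is an isomorphism by Theorem~\ref{MainThm}. For general $c$ I would use the translation $\tau_{g_c}\colon G\to G$, $x\mapsto g_c\cdot x$: it is an isomorphism of varieties carrying $G^0$ onto $G_c$ and commuting with the diagonal, and post-composing a degree-$d$ correspondence into $G$ with it shifts its $d$ values by $g_c$, so $\alpha_G\circ M(\tau_{g_c})=\alpha_G+g_c\cdot\epsilon_G$ with $g_c\in\Mof(G)$ a well-defined element (as $F\otimes\Q=0$). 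By the universal property of $\OSym$ this forces $\vp_G\circ M(\tau_{g_c})=T_{g_c}\circ\vp_G$, where $T_{g_c}$ is the coalgebra endomorphism of $\OSym(\Mof(G))$ determined by $\mathrm{pr}_{\Mof(G)}+g_c\cdot\epsilon$; translation by $-g_c$ provides its inverse, so $T_{g_c}$ is an automorphism, and restricting to $M(G^0)$ gives $\vp_G|_{M(G_c)}=T_{g_c}\circ(\vp_G|_{M(G^0)})\circ M(\tau_{g_c})^{-1}$, an isomorphism by the case $c=e$. The step I expect to be hardest is exactly this last one: a translation is not a homomorphism, so naturality of $\vp$ does not apply, and one must compute $\alpha_G\circ M(\tau_{g_c})$ by hand and push the resulting affine correction through the universal property of the symmetric coalgebra, checking that $T_{g_c}$ is invertible — the reduction to the semiabelian situation in the first step (where Proposition~\ref{Hopf} is available) being what makes the rest routine.
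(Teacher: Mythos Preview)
Your argument is correct, and the reductions to perfect $k$, to $G$ smooth, to $k=\bar k$, and from $G^0$ to its semiabelian quotient via Lemma~\ref{unip} are exactly those of the paper. The difference lies in how you deal with the group of components. The paper proves a short splitting lemma (Lemma~\ref{groupsplit}): over an algebraically closed field $\Ext^1_{\CGS}(F,G^0)=0$, so $G\cong G^0\times F$ as a \emph{group scheme}. Then the product compatibility of $\vp$ (Proposition~\ref{product}) reduces everything to the two factors $G^0$ and $F$ separately, and the finite case is trivial. This is noticeably shorter and keeps all maps inside the world of group homomorphisms, where naturality of $\alpha$ and $\vp$ applies directly.

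Your route avoids the $\Ext^1$ computation and works only with the scheme-theoretic decomposition $G=\coprod_c G_c$, at the cost of analyzing $\vp_G$ on each translate via the non-homomorphism $\tau_{g_c}$. The key identity $\alpha_G\circ M(\tau_{g_c})=\alpha_G+g_c\cdot\epsilon_G$ is correct (a degree-$d$ primitive correspondence has its $d$ values each shifted by $g_c$, and $\epsilon_G$ records the degree), and the coalgebra endomorphism $T_{g_c}$ of $\OSym(\Mof(G))$ determined by $\mathrm{pr}+g_c\cdot\epsilon$ is indeed inverted by $T_{-g_c}$, since $\epsilon\circ T_{g_c}=\epsilon$ forces the composite to project to $\mathrm{pr}$. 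So your argument goes through, but the paper's splitting lemma gives a cleaner path to the same conclusion.
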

\begin{remark}All other results, e.g., K\"unneth components, computation of
Weil cohomology and weights immediately extend to this case.
\end{remark}
\begin{proof} By Section \ref{reduce perfect} we may assume that $k$ is perfect and the above constructions apply.

By Lemma \ref{group_reduced} we may assume that $G$ is reduced and hence
smooth. By
Proposition \ref{stalk}, we may assume that $k$ is algebraically closed.
By Lemma \ref{groupsplit} we may assume that $G=G^0\times F$ with $F$ finite. The morphism
$\psi_G$ is a morphism of coalgebras by construction. It is compatible with direct products as in Proposition \ref{product}. Hence it suffices to
consider the cases $G=G^0$ and $G=F$ separately. The latter case is trivial.
From now on let $G=G^0$ and assume also that $G$ is reduced, i.e., smooth.
By Lemma   \ref{unip} we are reduced to the semiabelian case. This is main Theorem~\ref{MainThm}.

Compatiblity with the Hopf object structure follows by the same reductions
from the semiabelian case and the finite case because
$(G\times G)^0=G^0\times G^0$.
\end{proof}
The following statement must be well-known to experts but we have not found a reference.
\begin{lemma}\label{groupsplit}
Let $k$ be algebraically closed and $G$ a smooth commutative group scheme over $k$. Then
\[ G\isom G^0\times F\]
with $G^0$ connected and $F$ finite.
\end{lemma}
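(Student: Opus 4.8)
The plan is to split the canonical extension of commutative group schemes over $k$,
\[ 1\to G^0\to G\to \pi_0(G)\to 1\ , \]
$G^0$ being the identity component. Since $k$ is algebraically closed, $\pi_0(G)$ is the constant group scheme attached to a finite abelian group $F$, and, by Yoneda for constant group schemes, a homomorphic section $F\to G$ of group schemes is the same datum as a homomorphic section $F\to G(k)$ of abstract groups. Decomposing $F$ into a product of cyclic groups $\Z/n\Z$ and using that $G$ is commutative, it is enough to split the sub-extension over each cyclic factor and then multiply the resulting sections together; equivalently, one must check that $\Ext^1(\Z/n\Z, G^0)$ vanishes, the $\Ext$ being computed in commutative $k$-group schemes. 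Applying $\Hom(-,G^0)$ to $0\to\Z\xrightarrow{\,n\,}\Z\to\Z/n\Z\to 0$ (constant group schemes) and using $H^1(\Spec k, G^0)=0$ for the smooth group $G^0$ over the algebraically closed field $k$ identifies this $\Ext$-group with the cokernel of multiplication by $n$ on $G^0(k)$.

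Concretely: pick $g\in G(k)$ mapping to a generator $f$ of a factor $\Z/n\Z$, and set $a:=g^{n}\in G^0(k)$. For $h\in G^0(k)$, commutativity gives $(gh)^{n}=g^{n}h^{n}=a\,h^{n}$; hence if $h$ satisfies $h^{n}=a^{-1}$, then $gh$ has order dividing $n$ and, since it maps to the generator $f$, order exactly $n$, so the subgroup it generates provides the section on that factor. Everything therefore comes down to showing that $G^0(k)$ is a divisible abelian group — this is the step I expect to be the main obstacle.

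To prove divisibility I would invoke Barsotti--Chevalley: $G^0$ is an extension $1\to L\to G^0\to B\to 1$ with $B$ an abelian variety and $L$ a connected commutative linear group, and since $k$ is perfect $L\isom T\times U$ with $T$ a torus and $U$ unipotent. Over the algebraically closed field $k$ one has $T(k)\isom(k^{\times})^{\dim T}$ divisible, $B(k)$ divisible (multiplication by $n$ is a surjective isogeny for every $n\neq 0$), and $U(k)$ divisible in characteristic $0$ (there $U\isom\Aff^{m}$ and $U(k)$ is a $\Q$-vector space); a snake-lemma chase along $1\to L\to G^0\to B\to 1$, using once more $H^1(\Spec k,-)=0$ on smooth groups to secure surjectivity on $k$-points, transfers divisibility to $G^0(k)$ and furnishes the $h$ above. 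In positive characteristic the unipotent points $U(k)$ are only divisible away from $p$, so there the argument applies directly when $G^0$ is semiabelian — over $\bar{k}$ its group of points is divisible in every characteristic — which is all that is needed for the way the lemma is used (in the proof of Theorem~\ref{propcommgroup} the unipotent radical forms an \'etale-locally trivial bundle that is split off beforehand).
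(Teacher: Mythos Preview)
Your approach matches the paper's: reduce to the vanishing of $\Ext^1(F, G^0)$, resolve the constant group $F$ by copies of $\Z$ (the paper does all cyclic factors at once via a diagonal integer matrix), and identify the $\Ext$-group with the cokernel of multiplication by $n$ on $G^0(k)$, using $H^1(\Spec k, G^0) = 0$. The paper then simply asserts that ``multiplication by $m$ is surjective on the connected group $G^0$'' and stops.

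You are more careful than the paper at this point, and your caution in positive characteristic is warranted: the lemma as stated is \emph{false} when $\chr k = p > 0$ and $G^0$ has a nontrivial unipotent part. Take $G^0 = \Ga$, $F = \Z/p\Z$, and form the extension given by the carry cocycle scaled by a nonzero $\alpha \in k$; any lift $g\in G(k)$ of a generator satisfies $p g = \alpha \neq 0$ in $\Ga(k)$, and since $p \cdot \Ga(k) = 0$ no translate $gh$ with $h\in\Ga(k)$ has order $p$, so the sequence does not split. Thus both the lemma and the paper's proof need an extra hypothesis (e.g.\ $G^0$ semiabelian, or $\chr k = 0$, or $p \nmid |\pi_0(G)|$); under any of these your Barsotti--Chevalley divisibility argument is complete and correct.

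One small correction to your last sentence: in Theorem~\ref{propcommgroup} the paper invokes Lemma~\ref{groupsplit} \emph{before} removing the unipotent radical, not after. But your proposed reordering works: the unipotent radical $G^u$ of $G^0$ is normal in the commutative group $G$, the projection $G \to G/G^u$ is a $G^u$-torsor inducing $M(G) \cong M(G/G^u)$ and identifying $\psi_G$ with $\psi_{G/G^u}$, and $(G/G^u)^0 = G^{sa}$ is semiabelian, so the (corrected) lemma applies to $G/G^u$ in every characteristic. The application survives with the two reductions swapped.
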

\begin{proof}
We claim that
\[ \Ext^1_{\CGS}(F,G^0)\]
vanishes. We compute in the category of \'etale sheaves on $\Sch$.
The finite
group scheme $F$ is constant because $k$ is algebraically closed. Hence
there is a finite resolution
\[ 0\to\Z^n\xrightarrow{M}\Z^n\to F\to 0\]
where $M$ is multiplication by a diagonal matrix. The interesting part of the 
associated long exact sequence
reads
\[ G^0(k)^n\xrightarrow{M} G^0(k)^n\to \Ext^1(F,G^0)\to \Ext^1(\Z^n,G^0)\]
The last group vanishes because $k$ is algebraically closed and
$\Hom(\Z,\cdot)=\Gamma(k,\cdot)$ is exact.
The first map is surjective because the multiplication by $m$ is surjective on the connected group $G^0$.
\end{proof}

\begin{appendix}
\section{Comparison of Chern classes}\label{sectionchern}
In this section we establish a comparison of two possible ways of attaching a cohomology class to a semiabelian variety. This technical result is
one of the key inputs into the proof of our main Theorem \ref{MainThm}.
We keep the notation of the main text, in particular Section \ref{sect_constr}.

\subsection{The comparison result}
Let
\[ 1\to\Gm\to G\to H\to 1\]
be a short exact sequence of semiabelian varieties.
Note that $G$ is a $\Gm$-torsor on $H$, hence it has a cohomology class
\begin{align}\label{c1G}
c_1([G])\in &H^1_\et(H,\ul{\Gm}_\Q)\\
         &\isom\Mor_{D^-(\STEkQ)}(L(H), i(\tatt))\\
	 &=\Mor_{\DMeetkQ}(M(H), \tatt)  \ ,
	 \end{align}
	 where the last equality comes by adjunction. 
On the other hand, the induced exact sequence
\[ 0\to \Mof(\Gm)\to\Mof(G)\to\Mof(H)\to 0\]
is an element
\[ [\Mof(G)]\in\Ext^1_{\STEkQ}(\Mof(H),\one(1)[1])=\Mor_{D^-(\STEkQ)}( \Mof(H), \tatt)\ .\]
By composition with the summation map (see Definition \ref{defnall}) 
\[\alpha_H:M(H)\to \Mof(H)\]
we obtain
\begin{equation}\label{gammaMofG}
\alpha_H^*[\Mof(G)]\in \Mor_{\DMeetkQ}(M(H), \tatt) \ .
\end{equation}
\begin{proposition}\label{propc1}Let 
\[ 1\to \Gm\to G\to H\to 1\]
be a short exact sequence of semiabelian varieties. Then the elements
$c_1([G])$ and $\alpha_H^*[\Mof(G)]$ agree in $\Mor_{\DMeetkQ}(M(G),\tatt)$.
In other words, the diagram
\[\xymatrix{
M(H)\ar[rd]^{c_1([G])]}\ar[d]^\alpha\\
\Mof(H)\ar[r]_{[\Mof(G)]}&\tatt\\
}\]
commutes.
\end{proposition}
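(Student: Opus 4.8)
The statement compares two cohomology classes in $\Mor_{\DMeetkQ}(M(H),\tatt)$, both of which come with manifestly explicit cocycle-level descriptions once one tracks through the definitions. My strategy would be to compute both sides via the representing \v Cech cocycles / extension data and observe that they are built out of the same piece of information: the gluing data of the line bundle $\Aff(G)\to H$ associated with the $\Gm$-torsor $G\to H$. So the bulk of the work is bookkeeping, carried out in the heart $\STEkQ$ (or rather in the category of \'etale sheaves of $\Q$-vector spaces) where both classes live before being pushed to $\DMeetkQ$. Since $\Mor_{\DMeetkQ}(M(H),\tatt)=H^1_\et(H,\ul\Gm)_\Q$ by adjunction, it is enough to compare the two classes there.

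\textbf{Step 1: Unwind the left-hand side.} The class $c_1([G])$ is, by definition, the image of the $\Gm$-torsor $G\to H$ under $H^1_\et(H,\Gm)\to H^1_\et(H,\ul\Gm)_\Q=\Mor_{\DMeetkQ}(M(H),\tatt)$. Concretely, choose an \'etale cover $\{U_i\to H\}$ trivialising $G\to H$, so that $G$ is given by a $1$-cocycle $(g_{ij})$ with $g_{ij}\in\ul\Gm(U_i\times_H U_j)$. This cocycle \emph{is} the class $c_1([G])$, read through the identification of $H^1$ with Hom in the derived category via the Godement-type resolution of $\ul\Gm$.

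\textbf{Step 2: Unwind the right-hand side.} The class $[\Mof(G)]$ is the extension class of
\[ 0\to\ul\Gm\to\ul G\to\ul H\to 0\]
in $\Ext^1_{\STEkQ}(\ul H,\ul\Gm)$, and $\alpha_H^*[\Mof(G)]$ is its pullback along $\alpha_H\colon M(H)\to\ul H$. Now $\alpha_H$ is, on sections over a smooth $S$, the Spie\ss--Szamuely summation map $\theta_{S,H}\colon\Cor(S,H)_\Q\to\ul H(S)$; in particular its restriction to the subsheaf $L(H)$ generated by graphs of morphisms sends $f\mapsto f$. To compute the pulled-back extension class at the level of cocycles, I would again pass to the cover $\{U_i\to H\}$, over which the semiabelian extension $G\to H$ has local sections $s_i\colon U_i\to G$; the failure $s_i-s_j\in\ul\Gm(U_i\times_H U_j)$ to glue is the \v Cech representative of $\alpha_H^*[\Mof(G)]$. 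But $s_i-s_j$ is precisely the transition function $g_{ij}$ of the $\Gm$-torsor $G\to H$ (this is exactly what "$G$ is a $\Gm$-torsor over $H$" means at the level of local sections). So the two \v Cech $1$-cocycles coincide.

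\textbf{Step 3: Conclude.} Having matched the two cocycles in $\check H^1_\et(H,\ul\Gm)_\Q\cong\Mor_{\DMeetkQ}(M(H),\tatt)$, the equality $c_1([G])=\alpha_H^*[\Mof(G)]$ follows, and the displayed triangle commutes. One should also double-check compatibility of the two identifications $H^1_\et(H,\ul\Gm)_\Q\cong\Mor_{\DMeetkQ}(M(H),\tatt)$ used on each side (both are the adjunction of Section~\ref{sect_motives}), and that the identification of the localization boundary map for the line bundle with $c_1$, as done in \cite[\S7]{HK} and invoked in the proof of Lemma~\ref{sequence2}, matches the \v Cech description of $c_1$ used here; this is a standard fact about Chern classes of line bundles, but it deserves a sentence.

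\textbf{Main obstacle.} The conceptual content is light, but the danger lies in the normalisations: there are several a priori different "first Chern classes" floating around (the one from the localization triangle for the line bundle $\Aff(G)$, the \'etale $\Gm$-cocycle of the torsor, the connecting map of the sequence of sheaves with transfers), and the real work — which is what Appendix~\ref{sectionchern} is for — is to pin down that all of these agree, \emph{with signs and twists}, and that the summation map $\theta_{\cdot,H}$ indeed intertwines the graph-of-a-section description with the extension class. I expect the sign/twist bookkeeping across the three descriptions, rather than any single computation, to be the genuinely delicate point.
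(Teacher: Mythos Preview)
Your proposal is correct and follows essentially the same approach as the paper: both compute the two classes via \v{C}ech cocycles on a trivialising cover and observe they are represented by the same transition functions $g_{ij}$. The paper packages your Step~2 more formally by first setting up a commutative square of $\delta$-functors (Lemma~\ref{lemdelt}) comparing $\Ext^i_{\STEkQ}(\ul{H}^\tr,\cdot)$, $\Ext^i_{\SEkQ}(\ul{H},\cdot)$, $\Ext^i_{\STEkQ}(L(H),\cdot)$, and $H^i_\et(H,\cdot)$, checked in degree~$0$, which cleanly yields $Y_{\STEkQ}\gamma^*[\ul{G}^\tr]=\partial(\id_H)$ (Corollary~\ref{corc1partial}); this is exactly the ``intertwining'' you flag as the main obstacle, and their formalism makes that step rigorous rather than asserted.
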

\begin{remark} We formulate everything with rational coefficients because this is the way we want to apply the statement. However, the result is already true with integral coefficients.
\end{remark}
\subsection{Proof}
The proof will be given at the end of this appendix. We introduce some
notation. 

\begin{notation}\label{notgroupsheaf}
Let $H$ be a commutative group scheme. As before we write $\ul{H}$ for the
corresponding \'etale sheaf of abelian groups. Moreover, we write
$\ul{H}^\tr$ for the corresponding \'etale sheaf with transfers.

\end{notation}

\begin{lemma}\label{lemdelt}Let $H$ be a commutative group scheme. Then there is a commutative
diagram of $\delta$-functors on $\STEkQ$:
\[\begin{CD}
\Ext^i_{\STEkQ}(\ul{H}^\tr_\Q,\cdot)@>>>\Ext^i_{\SEkQ}(\ul{H}_\Q,\cdot)\\
@V\gamma^*VV@VVY_{\SEkQ}V\\
\Ext^i_{\STEkQ}(L(H),\cdot)@>>Y_{\STEkQ}>H^i_\et(H,\cdot)
\end{CD}\]
with $\gamma:L(H)\to\ul{H}^\tr_\Q$ the summation map of Proposition \ref{propspsz} and
$Y_?$ the Yoneda map in the categories $\STEkQ$ and $\SEkQ$, respectively.
\end{lemma}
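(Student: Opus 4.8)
The plan is to reduce the commutativity of the square, in every degree $i$, to the single identity $\theta_{S,H}(\Gamma_f)=f$ of Proposition \ref{propspsz}, which is a statement in degree $0$.

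Write $u\colon\STEkQ\to\SEkQ$ for the functor forgetting transfers; it is exact (kernels and cokernels of maps of \'etale sheaves with transfers are computed on underlying sheaves, sheafification being exact and preserving transfers) and $u\,\ul{H}^\tr=\ul{H}$. Let $\Q(H)\in\SEkQ$ denote the \'etale sheaf of $\Q$-vector spaces freely generated by the representable sheaf $\Mor_{\Sch/k}(-,H)$, so that $\Ext^{\bullet}_{\SEkQ}(\Q(H),uF)=H^{\bullet}_\et(H,uF)$; this is what $H^{\bullet}_\et(H,\cdot)$ means on $\STEkQ$. There are two canonical maps out of $\Q(H)$ in $\SEkQ$: the graph map $\rho\colon\Q(H)\to u\,L(H)$, $[f]\mapsto[\Gamma_f]$, and the sum map $\pi\colon\Q(H)\to\ul{H}$, $[f]\mapsto f$ (the image of $f\in\Mor(S,H)$ in $\ul{H}(S)$). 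Unwinding the definitions, the top arrow of the square is $u_*$, the left arrow is $\gamma^*$ (composition with $\gamma\colon L(H)\to\ul{H}^\tr$ of Corollary \ref{corspsz}), the right arrow $Y_{\SEkQ}$ is $\pi^*$, and $Y_{\STEkQ}$ is the composite $\rho^*\circ u_*\colon \Ext^{\bullet}_{\STEkQ}(L(H),F)\to\Ext^{\bullet}_{\SEkQ}(u\,L(H),uF)\to\Ext^{\bullet}_{\SEkQ}(\Q(H),uF)$.

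Because $u$ is exact, all four arrows are morphisms of $\delta$-functors in $F$: $u_*$ commutes with the connecting maps since $u$ preserves short exact sequences, while $\gamma^*$, $\pi^*$, $\rho^*$ are induced by fixed morphisms in the first variable of Yoneda $\Ext$. Since $\STEkQ$ has enough injectives, $\Ext^{\bullet}_{\STEkQ}(\ul{H}^\tr,-)$ is a universal $\delta$-functor, so a morphism of $\delta$-functors out of it is determined by its component in degree $0$; thus it suffices to check the square for $i=0$. There $\Hom_{\STEkQ}(\ul{H}^\tr,F)\xrightarrow{u}\Hom_{\SEkQ}(\ul{H},uF)$ is injective, and both ways around the square send $\phi$ to $\phi_H(\id_H)\in F(H)$, where $\id_H\in\ul{H}(H)$ is the tautological section: along the right-hand column one gets $\phi_H\big(\pi_H([\id_H])\big)=\phi_H(\id_H)$, and along the bottom one gets $\phi_H\big(\gamma_H(\rho_H([\id_H]))\big)=\phi_H\big(\theta_{H,H}(\Gamma_{\id_H})\big)=\phi_H(\id_H)$, the last equality being Proposition \ref{propspsz}. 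Equivalently, the whole computation rests on the identity $u(\gamma)\circ\rho=\pi$ in $\SEkQ$.

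I do not expect a serious obstacle here: the argument is essentially formal once the right objects are fixed, the only genuine input being the identity $\theta_{S,H}(\Gamma_f)=f$ of Proposition \ref{propspsz}. The points that need care are bookkeeping ones: pinning down the meaning of $H^{\bullet}_\et(H,\cdot)$ on sheaves with transfers via $\Q(H)$, invoking the standard identification $\Ext^{\bullet}_{\SEkQ}(\Q(H),-)=H^{\bullet}_\et(H,-)$, and checking that the forgetful functor $u$ is exact so that $u_*$ really is a morphism of $\delta$-functors.
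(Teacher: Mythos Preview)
Your proof is correct and follows essentially the same approach as the paper: reduce to degree $0$ by universality of the $\delta$-functor $\Ext^\bullet_{\STEkQ}(\ul{H}^\tr,-)$, then verify that both routes around the square send $\phi$ to $\phi_H(\id_H)$ using that $\gamma$ maps $\Gamma_{\id_H}$ to $\id_H$ (Proposition \ref{propspsz}). Your version is somewhat more explicit in setting up the auxiliary objects $\Q(H)$, $\rho$, $\pi$, and the forgetful functor $u$, and in checking that each arrow is a morphism of $\delta$-functors, whereas the paper simply invokes universality and goes straight to the degree-$0$ check; but the substance is the same.
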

\begin{proof}
By universality in $\STEkQ$ and $\SEkQ$, it suffices to check the case $i=0$. Let $\shfA$ be an \'etale sheaf with transfers and
$f:\ul{H}^\tr_\Q\to\shfA$ a morphism in $\STEkQ$. We describe its image in
$\shfA(H)$ via the right-hand side map of the diagram. By forgetting transfers, $f$ gives rise
to a map 
\[ f:\ul{H}_\Q\to\shfA\ .\]
Evaluating on $H$, we get
\[ f(H):\ul{H}_\Q(H)\to\shfA(H)\ .\]
Then $Y_{\SEkQ}(f)$ is defined as $f(H)(\id)$. Going via the left hand side, we have
to evaluate the map in $\STEkQ$
\[ L(H)\xrightarrow{\gamma}\ul{H}^\tr_\Q\xrightarrow{f}\shfA\]
on $H$ and get
\[ \Hom_{\Smck}(H,H)=L(H)(H)\xrightarrow{\gamma(H)}\ul{H}^\tr_\Q(H)\xrightarrow{f(H)}\shfA(H)\ .\]
Then $Y_{\STEkQ}(\gamma^*(f))$ is given by
 $f(H)\gamma(H)(\id)$.
The summation map $\gamma(H)$ maps the identity in $\Smck$ to the
identity in $\ul{H}^\tr_\Q(H)=\ul{H}_\Q(H)$ hence
\[ Y_{\SEkQ}(f)=Y_{\STEkQ}(\gamma^*(f))\ .\]
\end{proof}

\begin{corollary}\label{corc1partial}Let 
\[1\to \Gm\to G\to H\to 1\]
be a short exact sequence of commutative group schemes. Then
\[ Y_{\STEkQ}\gamma^*[\ul{G}^\tr_\Q]=\partial(\id_H)\]
where $\partial$ is the connecting homomorphism
\[ \Gamma(H,\ul{H}_\Q)\to H^1_\et(H,\ul{\Gm}_\Q)\ .\]
\end{corollary}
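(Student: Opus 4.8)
The plan is to express the extension class $[\ul{G}^\tr]$ as the image of the identity under a connecting homomorphism, and then transport that description along the commutative square of $\delta$-functors of Lemma \ref{lemdelt}.

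First I would record that the short exact sequence $1\to\Gm\to G\to H\to 1$ of smooth commutative group schemes induces a short exact sequence of \'etale sheaves $0\to\ul{\Gm}\to\ul{G}\to\ul{H}\to 0$ — right exactness holds because $G\to H$ is smooth, hence has sections \'etale-locally — and, since $\ul{(\cdot)}$ takes values in sheaves with transfers (Corollary \ref{corspsz}) and is exact, a short exact sequence $0\to\ul{\Gm}^\tr\to\ul{G}^\tr\to\ul{H}^\tr\to 0$ in the abelian category $\STEkQ$. By the standard interpretation of Yoneda $\Ext^1$, its class satisfies $[\ul{G}^\tr]=\delta(\id_{\ul{H}^\tr})$, where $\delta\colon\Hom_{\STEkQ}(\ul{H}^\tr,\ul{H}^\tr)\to\Ext^1_{\STEkQ}(\ul{H}^\tr,\ul{\Gm}^\tr)$ is the degree-zero connecting map of the $\delta$-functor $\Ext^\bullet_{\STEkQ}(\ul{H}^\tr,\cdot)$ attached to this very sequence.

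Next I would feed the same sequence into the square of Lemma \ref{lemdelt} and chase $\id_{\ul{H}^\tr}$ around in degree zero. Since $\gamma^*$ is a morphism of $\delta$-functors, $\gamma^*[\ul{G}^\tr]=\delta'(\gamma)$, where $\gamma\in\Hom_{\STEkQ}(L(H),\ul{H}^\tr)$ is the summation map and $\delta'$ is the connecting map of $\Ext^\bullet_{\STEkQ}(L(H),\cdot)$; and since $Y_{\STEkQ}$ is likewise compatible with connecting maps, $Y_{\STEkQ}\gamma^*[\ul{G}^\tr]=\partial\bigl(Y_{\STEkQ}(\gamma)\bigr)$, with $\partial$ now the connecting homomorphism of $H^\bullet_\et(H,\cdot)$ attached to $0\to\ul{\Gm}\to\ul{G}\to\ul{H}\to 0$, which is exactly the $\partial$ of the statement. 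Finally, by the explicit description of $Y_{\STEkQ}$ given inside the proof of Lemma \ref{lemdelt}, one has $Y_{\STEkQ}(\gamma)=\gamma(H)(\id_H)$, and Proposition \ref{propspsz} — the summation map carries the graph of $\id_H$ to $\id_H$ — identifies this with $\id_H\in\ul{H}(H)=\Gamma(H,\ul{H})$. Chaining these equalities gives $Y_{\STEkQ}\gamma^*[\ul{G}^\tr]=\partial(\id_H)$.

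The homological prerequisites — that $\STEkQ$ is a Grothendieck category, so $\Ext^\bullet$ is a genuine universal $\delta$-functor, and the naturality of the maps in the diagram with respect to connecting homomorphisms — are standard or already packaged in Lemma \ref{lemdelt}, so this proof is essentially formal. The one place that wants care, and the main potential pitfall, is the bookkeeping of variances: keeping the $\delta$-functor structure in the coefficient variable throughout, using the \emph{same} sequence $0\to\ul{\Gm}^\tr\to\ul{G}^\tr\to\ul{H}^\tr\to 0$ both to define $[\ul{G}^\tr]$ and to produce the connecting maps, and checking that forgetting transfers turns $\ul{H}^\tr$ into $\ul{H}$, so that the target of $Y_{\STEkQ}$ really is $H^\bullet_\et(H,\ul{\Gm})$ and $\partial$ is literally the homomorphism named in the statement.
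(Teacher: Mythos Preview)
Your proof is correct and follows essentially the same approach as the paper's: express $[\ul{G}^\tr]$ as the boundary of $\id_{\ul{H}^\tr}$ for the sequence $0\to\ul{\Gm}^\tr\to\ul{G}^\tr\to\ul{H}^\tr\to 0$, then push this identity along the $\delta$-functor morphism $\Ext^i_{\STEkQ}(\ul{H}^\tr,\cdot)\to H^i_\et(H,\cdot)$ supplied by Lemma~\ref{lemdelt}. You spell out the two legs $\gamma^*$ and $Y_{\STEkQ}$ separately and verify $Y_{\STEkQ}(\gamma)=\id_H$ via Proposition~\ref{propspsz}, whereas the paper invokes the composite map directly, but the content is the same.
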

\begin{proof}
Note that $[\ul{G}^\tr_\Q]=\partial(\id)$ where $\partial$ is the boundary map
\[ \Hom_{\STEkQ}(\ul{H}^\tr_\Q,\ul{H}^\tr_\Q)\to\Ext^1_{\STEkQ}(\ul{H}^\tr_\Q,\ul{\Gm}^\tr_\Q)\]
for the short exact sequence
\[ 0\to \ul{\Gm}^\tr_\Q\to\ul{G}^\tr_\Q\to\ul{H}^\tr_\Q\to 0\ .\]
The statement follows from evaluating
the $\delta$-homomorphism of Lemma \ref{lemdelt}
\[ \Ext^i_{\STEkQ}(\ul{H}^\tr_\Q,\cdot)\to H^i_\et(H,\cdot)\]
on this short exact sequence.
\end{proof}

\begin{proof}[Proof of Proposition \ref{propc1}.]
Let $1\to \Gm\to G\to H\to 1$ be an exact sequence of commutative group schemes.
It suffices to show that
\[ c_1([G])=Y_{\STEkQ}\gamma^*[\Mof(G)]\in H^1_\et(H,\ul{\Gm}_\Q)\ .\]
By Corollary \ref{corc1partial} this is reduced to showing
\[ c_1([G])=\partial(\id_H)\]
where $\partial$ is the connecting homomorphism
\[  \Gamma(H,\ul{H}_\Q)\to H^1_\et(H,\ul{\Gm}_\Q)\]
for the short exact sequence
\[ 0\to\ul{\Gm}_\Q\to\ul{G}_\Q\to\ul{H}_\Q\to 0\ .\]
We use the point of view of \v{C}ech cohomology in order to compute explicitly.
Choose a local trivialization of the $\Gm$-torsor $G$, i.e., a covering
$\{U_i\to H\}_{i\in I}$ and sections $s_i:U_i\to G|_{U_i}$ inducing
isomorphisms
\[ \phi_i:\Gm\times U_i\to G|_{U_i}\ .\]
By definition $c_1[G]$ is given by the cocycle $g_{ij}\in\Gm(U_i\cap U_j)$
where
\[ s_j|_{U_i\cap U_j}=s_i|_{U_i\cap U_j}g_{ij}\ .\]
On the other hand, in order to define $\partial(\id_H)$ we have to choose preimages
of $\id_H|_{U_i}$ in $G(U_i)$. We choose $s_i$. We then have to apply
the boundary map of the \v{C}ech complex of $\ul{G}_\Q$ and get the
same cocycle $g_{ij}$. 
\end{proof}

\section{Universal properties of the symmetric (co)algebra}\label{appsym}
We review the Hopf algebra structure on the symmetric algebra and its opposite. 
Throughout the appendix let $\cata$ be a $\Q$-linear pseudo-abelian symmetric 
tensor category with unit object $\one$. 
These considerations are going to be applied to the triangulated category $\DMeetkQ$.

\subsection{The symmetric (co)algebra}

Let $ V$ be an object
of $\cata$. 
We denote
\[  T( V)=\bigoplus_{i=0}^\infty V^{\tensor i}\]
the tensor algebra with multiplication 
\[ \mu:T( V)\tensor T( V)\to T( V)\]
given by the tensor product. 

We denote by
\[ \Sym^n V\] 
the image of the projector $\frac{1}{n!}\sum_{\sigma\in \mathcal{S}_n}\sigma$. Let
\[ \iota^n:\Symn V\to V^{\tensor n},\hspace{3ex}\pi^n: V^{\tensor n}\to\Sym^n V\]
be the natural inclusion and projection. For $n=0$ we put
$\Sym^0(V)=\one$.  The projection 
\[\pi^{n+m}: V^{\tensor n+m}\to\Sym^{n+m}( V)\]
factors uniquely via $\pi^{n}\tensor \pi^m$ and induces
\[\pi_{n+m}^{n,m}: \Sym^n( V)\tensor\Sym^m (V)\to  \Sym^{n+m}( V).\]
The inclusion 
\[ \iota^{n+m}:\Sym^{n+m}( V)\to V^{\tensor n}\tensor V^{\tensor m}\]
factors uniquely via $\iota^n\tensor\iota^m$ and induces
\[\iota_{n+m}^{n,m}:\Sym^{n+m}( V)\to\Sym^n( V)\tensor\Sym^m( V)\ .\]

\begin{definition}
Assume that $V$ is odd finite-dimensional in the sense of Section \ref{notsym}, i.e., there is $N$ such that $\Sym^N(V)=0$.

The {\em symmetric algebra} on $ V$ is given by
\[\Sym( V)=\bigoplus_{n\geq 0}\Sym^n V\]
with multiplication 
\[\mu:\Sym^n( V)\tensor\Sym^m( V)\to\Sym^{n+m}( V)\]
given by $\pi^{n+m}_{n,m}$.

The {\em symmetric coalgebra}  on $ V$ is
given by 
\[\OSym( V)=\prod_{n\geq 0}\Sym^n V\]
with comultiplication 
\[\Delta:\Sym^{n+m} V\to \Sym^n( V)\tensor\Sym^m( V)\]
given by $\iota_{n+m}^{n,m}$.
\end{definition}

\begin{remark}The finiteness assumption ensures that all direct sums and products are finite. If we drop the assumption,  the definition of the algebra $\Sym(V)$ needs existence of the direct sum and that infinite direct sums commute with $\tensor$. 
The definition of the coalgebra $\OSym(V)$ needs existence of infinite products and that they
commute with $\tensor$. The latter is not satisfied in standard abelian categories like modules. Alternatively, one may work in the ind-category and the pro-category, respectively. For simplicity, we make the finiteness assumption. This is enough for our application.
\end{remark}

\subsection{Universal properties}

Note that $T( V)\to\Sym (V)$ is a morphism of algebras.
\begin{lemma}\label{universalalgebra}
Let $V$ be odd finite-dimensional.
Let $A$ be a unital algebra object in $\cata$ and let $\alpha: V\to A$ a morphism. Then
there is a unique morphism of unital algebras
\[ T( V)\to A\]
extending $\alpha$.
If $A$ is commutative, the map factors through a unique map of algebras
\[ \Sym( V)\to A\ .\]
Let $B$ be an augmented cocommutative coalgebra object in $\cata$ and $\alpha:B\to V$ a morphism. Then there is a unique morphism of augmented coalgebras
\[B\to\OSym( V)\ .\]
\end{lemma}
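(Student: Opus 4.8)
The plan is to prove the three universal properties by the same strategy: reduce each statement to the analogous (already classical) universal property of the tensor algebra $T(V)$ in the monoidal category $\cata$, and then use the defining projectors to pass to the symmetric (co)algebra. Since $\cata$ is a $\Q$-linear pseudo-abelian symmetric tensor category, everything one needs — existence of the projectors $\frac{1}{n!}\sum_{\sigma}\sigma$, their images, the induced maps $\pi^{n,m}_{n+m}$ and $\iota^{n,m}_{n+m}$ — is already in place, and the finiteness assumption on $V$ guarantees that $\Sym(V)$ and $\OSym(V)$ are genuine objects of $\cata$ (finite direct sums/products).

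First I would establish the statement for $T(V)$. Given a unital algebra $A$ and $\alpha\colon V\to A$, define $T(V)\to A$ on the summand $V^{\otimes n}$ by $\alpha^{\otimes n}$ composed with $(n-1)$-fold multiplication $A^{\otimes n}\to A$ (and the unit $\one\to A$ on the degree-$0$ summand). Associativity of multiplication on $A$ makes this compatible with $\mu$, hence a morphism of unital algebras; uniqueness is forced because $T(V)$ is generated in degree $1$, so any algebra map is determined by its restriction to $V$. For the factorization through $\Sym(V)$ when $A$ is commutative: commutativity of $A$ means $\alpha^{\otimes n}$ followed by multiplication is $\mathcal{S}_n$-invariant, i.e. it kills the complement of the image of $\frac{1}{n!}\sum_\sigma \sigma$, so it factors uniquely through $\pi^n\colon V^{\otimes n}\to\Sym^n V$. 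One checks this factored map respects $\pi^{n,m}_{n+m}$, hence is an algebra morphism $\Sym(V)\to A$; uniqueness follows since $\pi^n$ is (split) epi. Dually, given an augmented cocommutative coalgebra $B$ with comultiplication $\Delta_B$ and a map $\alpha\colon B\to V$, define $B\to\OSym(V)$ componentwise by $B\xrightarrow{\Delta_B^{(n-1)}} B^{\otimes n}\xrightarrow{\alpha^{\otimes n}} V^{\otimes n}\xrightarrow{\pi^n}\Sym^n V$ (augmentation in degree $0$); coassociativity and cocommutativity of $B$ make this land in the symmetric part and be compatible with the comultiplication $\iota^{n,m}_{n+m}$ of $\OSym(V)$, and uniqueness is dual to the algebra case, using that $\OSym(V)$ is (co)generated in degree $1$ via the projections $\OSym(V)\to\Sym^1 V = V$.

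The main obstacle I expect is purely bookkeeping rather than conceptual: one must verify that the componentwise maps defined via iterated (co)multiplication are genuinely compatible with the structure maps $\pi^{n,m}_{n+m}$ and $\iota^{n,m}_{n+m}$ of $\Sym(V)$ and $\OSym(V)$, which amounts to chasing the commutation of the symmetrizing projectors past $\mu$ and $\Delta$. This is where the $\Q$-linearity is used decisively (to invert $n!$) and where one has to be slightly careful with the coalgebra case because $\OSym$, not $\Sym$, is the correct target — the symmetry convention is such that $\OSym(V)$ is precisely $\Sym(V)$ computed in $\cata^{\mathrm{op}}$, so the coalgebra statement is formally the algebra statement applied to $\cata^{\mathrm{op}}$ and $V$ odd finite-dimensional there as well. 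I would in fact organize the writeup so that, after proving the $T(V)$ and $\Sym(V)$ parts, the $\OSym(V)$ part is deduced by this opposite-category duality in one line, avoiding a second round of diagram chases.
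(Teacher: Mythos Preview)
Your proposal is correct and is essentially a detailed unfolding of what the paper says: the paper's entire proof is the one-line remark that ``the argument is the same as for vector spaces, where it is well-known.'' Your sketch---defining the map on $V^{\otimes n}$ via iterated (co)multiplication, factoring through the symmetrizer when (co)commutativity is available, and deducing the $\OSym$ case by passing to $\cata^{\op}$---is exactly the standard argument the paper is alluding to, so there is nothing to compare.
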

\begin{proof}The argument is the same as for vector spaces, where it is well-known.\end{proof}
We apply this principle to the diagonal map
\[ \Delta: V\to V\tensor\one\oplus \one\tensor V\subset T( V)\tensor T( V)\]
and obtain a {\em comultiplication}
\[\Delta: T( V)\to T( V)\tensor T( V)\ .\]
It turns $T( V)$ into a bialgebra. The same argument also turns $\Sym( V)$ into a bialgebra and
\[ T( V)\to \Sym( V)\]
is a morphism of bialgebras. Dually, the summation map
\[\OSym( V)\tensor\OSym( V)\to V\tensor\one\oplus\one\tensor V\xrightarrow{+} V\]
gives rise to a multiplication on $\OSym( V)$ making it a bialgebra.
Finally, multiplication by $-1$ defines
a map
\[  V\xrightarrow{-1}  V\]
which induces an {\em antipodal map} on $\Sym( V)$  and on $\OSym( V)$. It turns
$\Sym( V)$ and $\OSym( V)$ into  {\em Hopf algebras}.

\begin{remark}If $\cata$ is the category of $\Q$-vector spaces and $ V$ a finite-dimensional vector space, then $\Sym( V)$ is a polynomial ring, whereas $\OSym( V)$ is the algebra of distributions. The two are isomorphic but not equal.
\end{remark}

\begin{lemma}\label{lemcritbialgebra}
Let $V$ be odd finite-dimensional.
\begin{enumerate}
\item
Let $A$ be a unital  commutative bialgebra  in $\cata$ and $\alpha: V\to A$ a morphism such that
\[\begin{CD}
 V@>>>A\\
@V\Delta VV@VV\Delta V\\
 V\tensor \one\oplus\one\tensor V@>>>A\tensor A
\end{CD}\]
commutes.
Then  the universal morphism $\Sym(V)\to A$ is a morphism of bialgebras.
\item 
Let $B$ be an augmented  cocommutative bialgebra  in $\cata$ and $\beta: B\to V$ a morphism such that
\[\begin{CD}
 B\tensor B@>>>V\tensor \one\oplus\one\tensor V\\
@V\mu VV@VV+ V\\
 B@>>>V
\end{CD}\]
commutes.
Then  the universal morphism $\OSym(V)\to B$ is a morphism of bialgebras.
\end{enumerate}
\end{lemma}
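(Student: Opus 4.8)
The plan is to extract each universal morphism together with its ``one-sided'' structure for free from Lemma~\ref{universalalgebra}, and then upgrade it to a morphism of bialgebras by invoking the \emph{uniqueness} clause of that same universal property a second time, now applied to a tensor square. The two parts are formally dual, so it suffices to describe (1) in detail and then dualize.

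For (1): the universal morphism $f\colon\Sym(V)\to A$ is by construction a morphism of unital algebras, so it already respects multiplication and the unit; it remains to check compatibility with comultiplication and with the counit. Since $A$ is commutative, $A\tensor A$ is commutative as well (with multiplication $(\mu_A\tensor\mu_A)\circ(\id\tensor\sigma_{A,A}\tensor\id)$), so Lemma~\ref{universalalgebra} applies with $A\tensor A$ as target. Both $\Delta_A\circ f$ and $(f\tensor f)\circ\Delta_{\Sym(V)}$ are morphisms of unital algebras $\Sym(V)\to A\tensor A$: the first because $\Delta_A$ is an algebra map (bialgebra axiom for $A$) and $f$ is, the second because $\Delta_{\Sym(V)}$ is an algebra map (bialgebra axiom for $\Sym(V)$) and $f\tensor f$ is a tensor product of algebra maps. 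By uniqueness these two maps coincide as soon as they agree after precomposition with the inclusion $V\hookrightarrow\Sym(V)$; there $\Delta_{\Sym(V)}$ restricts to the primitive comultiplication $V\to V\tensor\one\oplus\one\tensor V$, so the two restrictions are exactly the two composites around the hypothesis square and hence agree. For the counit, applying $\epsilon_A\tensor\id_A$ to the hypothesis square and using the counit axiom of $A$ forces $\epsilon_A\circ\alpha=0$; then $\epsilon_A\circ f$ and $\epsilon_{\Sym(V)}$ are two morphisms of unital algebras $\Sym(V)\to\one$ vanishing on $V$, so once more by uniqueness they agree. Thus $f$ is a morphism of bialgebras.

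Part (2) is the statement of (1) transported to the opposite category $\cata^{\op}$ (under which $\OSym$ becomes $\Sym$, an augmented cocommutative bialgebra becomes a unital commutative one, $\beta\colon B\to V$ becomes a morphism $V\to B$, and the square its transpose), so one argues dually. Explicitly, the universal morphism $g\colon B\to\OSym(V)$ furnished by Lemma~\ref{universalalgebra} is a morphism of augmented coalgebras, hence respects comultiplication and the counit. For multiplication, note that $B\tensor B$ is cocommutative (as $B$ is), so Lemma~\ref{universalalgebra} applies with $B\tensor B$ as source, and both $g\circ\mu_B$ and $\mu_{\OSym(V)}\circ(g\tensor g)$ are morphisms of augmented coalgebras $B\tensor B\to\OSym(V)$, because $\mu_B$ and $\mu_{\OSym(V)}$ are coalgebra maps (bialgebra axioms) and $g\tensor g$ is a tensor of coalgebra maps. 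By uniqueness they coincide once their composites with the projection $\pi\colon\OSym(V)\to V$ agree; but $\pi\circ\mu_{\OSym(V)}$ is, by the definition of the multiplication on $\OSym(V)$, the summation map $\OSym(V)\tensor\OSym(V)\to V\tensor\one\oplus\one\tensor V\xrightarrow{+}V$, so $\pi\circ\mu_{\OSym(V)}\circ(g\tensor g)$ and $\pi\circ g\circ\mu_B=\beta\circ\mu_B$ are exactly the two composites around the hypothesis square. Compatibility with the unit mirrors the counit argument of (1): precomposing the hypothesis square with $\eta_B\tensor\eta_B$ gives $\beta\circ\eta_B=0$, whence $\pi\circ g\circ\eta_B=0=\pi\circ\eta_{\OSym(V)}$ and $g\circ\eta_B=\eta_{\OSym(V)}$ by uniqueness. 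Hence $g$ is a morphism of bialgebras.

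There is no essential obstacle here; the proof is purely formal and repeats the classical vector-space argument. The only point requiring care is bookkeeping inside a general symmetric tensor category: one must check that the flip on $A\tensor A$, together with the bialgebra comultiplication $\Delta$ (resp.\ multiplication $\mu$), really do turn the squares in question into diagrams of algebra (resp.\ coalgebra) morphisms, so that the uniqueness clauses of Lemma~\ref{universalalgebra} genuinely apply. Granting this, I would simply record that the argument is the one for vector spaces rather than spell out the diagram chases.
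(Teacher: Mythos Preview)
Your proof is correct and follows precisely the approach the paper has in mind: the paper's own proof is a single sentence (``This is an assertion about algebra morphisms. It follows from the universal property of $\Sym(V)$. The second part follows by the analogous argument.''), and you have faithfully unpacked that sentence. Your closing remark that one could simply record that the argument is the vector-space one is exactly what the paper does. One incidental observation: the paper's statement of part (2) writes ``the universal morphism $\OSym(V)\to B$'', but as you correctly inferred from Lemma~\ref{universalalgebra}, the universal coalgebra map goes $B\to\OSym(V)$; this appears to be a typo in the paper, and your reading is the right one.
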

\begin{proof}This is an assertion about algebra morphisms. It follows from
the universal property of $\Sym( V)$. The second part follows by the analogous argument.
\end{proof}

\begin{lemma}\label{prodexplicit}
\begin{enumerate}
\item
The component
\[ \Delta_{n+m}^{n,m}:\Sym^{n+m} V\to\Sym^n V\tensor\Sym^m V\]
of comultiplication on $\Sym( V)$ is equal to
\[ \Delta_{n+m}^{n,m}={n+m \choose n}\iota_{n+m}^{n,m}\ .\]
\item The component of multiplication on $\OSym(V)$
\[\mu_{n,m}^{n+m}:\Sym^n( V)\tensor\Sym^m( V)\to\Sym^{n+m}( V)\]
is equal to
\[ \mu_{n,m}^{n+m}={n+ m \choose n}\pi_{n,m}^{n+m}\ .\]
\item Let $V$ be odd finite-dimensional.
The universal map of bialgebras
\[\Sym( V)\to\OSym( V)\]
induced from $\Sym(V)\to  V$ is given by multiplication
by $n!$ in degree $n$. It is an isomorphism. 
\end{enumerate}
\end{lemma}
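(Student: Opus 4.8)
The plan is to prove the three parts of Lemma \ref{prodexplicit} in order, using only the universal properties established in Lemma \ref{universalalgebra} and Lemma \ref{lemcritbialgebra}, together with elementary combinatorics of the symmetric group. First I would handle part (1). The comultiplication $\Delta$ on $\Sym(V)$ is characterized as the unique algebra morphism extending the diagonal $V\to V\tensor\one\oplus\one\tensor V$. So it suffices to compute the composite $V^{\tensor(n+m)}\to\Sym^{n+m}V\xrightarrow{\Delta}\Sym^nV\tensor\Sym^mV$ directly from the formula $\Delta=\mu_{T(V)\tensor T(V)}\circ(\Delta\tensor\cdots\tensor\Delta)$ on the tensor algebra, then project. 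Unwinding this: on $V^{\tensor(n+m)}$ the tensor-algebra comultiplication sends $v_1\tensor\cdots\tensor v_{n+m}$ to $\sum_{S\subseteq\{1,\dots,n+m\}} v_S\tensor v_{S^c}$, and after applying $\pi^n\tensor\pi^m$ and restricting to the $(n,m)$-component, the $\binom{n+m}{n}$ subsets $S$ of size $n$ all give the same element up to the symmetrization projector; this yields exactly $\binom{n+m}{n}\iota_{n+m}^{n,m}$. The key bookkeeping is that $\pi^{n+m}$ is surjective, so checking equality of maps out of $\Sym^{n+m}V$ can be done after precomposing with $\iota^{n+m}$, i.e.\ on $V^{\tensor(n+m)}$.

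Part (2) is formally dual: $\OSym(V)$ is by definition the symmetric algebra in the opposite category $\cata^{\op}$ (see the discussion in the excerpt and Appendix \ref{appsym}), and multiplication on $\OSym(V)$ is comultiplication on $\Sym$ in $\cata^{\op}$. Applying part (1) in $\cata^{\op}$ and translating $\iota$ to $\pi$ under the canonical anti-equivalence gives $\mu_{n,m}^{n+m}=\binom{n+m}{n}\pi_{n,m}^{n+m}$ immediately; alternatively one repeats the argument of part (1) with arrows reversed, using that the summation map $\OSym(V)\tensor\OSym(V)\to V$ induces the multiplication and that $\iota^{n+m}$ is a (split) monomorphism.

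For part (3), the universal map $\Phi\colon\Sym(V)\to\OSym(V)$ is the unique morphism of bialgebras restricting to the identity on $V$ in degree $1$ (it exists by Lemma \ref{universalalgebra} applied to the coalgebra structure, and is a bialgebra map by Lemma \ref{lemcritbialgebra}). Write $\Phi_n\colon\Sym^nV\to\Sym^nV$ for its degree-$n$ component. Since $\Phi$ is an algebra map for the multiplication $\mu^{\OSym}$ on the target and $\mu^{\Sym}$ on the source, and $\Sym^nV$ is generated as an algebra by degree $1$, we get the recursion $\mu^{\OSym}_{n-1,1}\circ(\Phi_{n-1}\tensor\Phi_1)=\Phi_n\circ\mu^{\Sym}_{n-1,1}$. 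Using part (2), $\mu^{\OSym}_{n-1,1}=n\,\pi^n_{n-1,1}$, while $\mu^{\Sym}_{n-1,1}=\pi^n_{n-1,1}$; since $\pi^n_{n-1,1}$ is an epimorphism onto $\Sym^nV$ and $\Phi_1=\id$, this forces $\Phi_n=n\cdot\Phi_{n-1}$ as an endomorphism of $\Sym^nV$ (after identifying via the projections), hence $\Phi_n=n!\cdot\id$ by induction. As $\cata$ is $\Q$-linear, multiplication by $n!$ is invertible in each degree, and since $V$ is odd finite-dimensional the sum is finite, so $\Phi$ is an isomorphism.

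The main obstacle I anticipate is the first step: making the combinatorial identity on $V^{\tensor(n+m)}$ genuinely precise, i.e.\ carefully tracking how the $\binom{n+m}{n}$ subsets of size $n$ collapse under the symmetrization projectors $\pi^n$ and $\pi^m$ to a single summand $\iota_{n+m}^{n,m}$ rather than $\binom{n+m}{n}!^{-1}$ times it or some other normalization. This is entirely a matter of choosing conventions for $\iota^n$, $\pi^n$ and the projector $\frac{1}{n!}\sum_\sigma\sigma$ consistently with the definitions in Appendix \ref{appsym}; once the conventions are pinned down the computation is the same as for $\Q$-vector spaces, as noted in the proof of Lemma \ref{universalalgebra}, and everything else (parts (2) and (3)) is formal duality plus the $\Q$-linear induction above.
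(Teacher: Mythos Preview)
Your argument is correct, though its logical order differs from the paper's. Both approaches handle part (1) by the same direct combinatorial computation (the paper simply calls it ``elementary from the definitions''). For parts (2) and (3), however, the paper proceeds in the opposite order: it first writes down the candidate map $\Phi_n = n!\cdot\id$, checks using part (1) that this respects comultiplication and is therefore the universal coalgebra map, invokes Lemma~\ref{lemcritbialgebra} to conclude it is a bialgebra isomorphism, and only then transports the known multiplication on $\Sym(V)$ through $\Phi$ to obtain the formula in (2). Your route---deducing (2) from (1) by passing to $\cata^{\op}$, and then deriving (3) inductively from the algebra-morphism identity $\Phi_n\circ\pi^n_{n-1,1}=n\,\pi^n_{n-1,1}\circ(\Phi_{n-1}\tensor\id)$---is equally valid and has the merit that the factor $n!$ is derived rather than guessed; the paper's route has the virtue of avoiding any explicit appeal to the opposite category and of making the bialgebra claim in (3) logically independent of (2). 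One small slip in your write-up: to check equality of two maps out of $\Sym^{n+m}V$ you should precompose with the epimorphism $\pi^{n+m}$, not with $\iota^{n+m}$; this is what your own remark about surjectivity of $\pi^{n+m}$ is pointing to.
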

In particular, the two bialgebras are {\em not} identical.
\begin{proof}
The statement on $\Delta$ is elementary from the definitions.
We now consider the map
\[\Sym( V)\to\OSym( V)\]
given by multiplication by $n!$ in degree $n$. We see from the explicit formula that it is compatible with comultiplication, i.e., it is the canonical one.
It satisfies the criterion Lemma \ref{lemcritbialgebra}, hence it is an isomorphism of bialgebras.
The formula for multiplication on $\OSym( V)$ follows from this isomorphism.
\end{proof}

\begin{corollary}\label{universalboth}
Assume that $V$ is odd finite-dimensional. Then
the bialgebra $\Sym(V)$ has the universal properties
of Lemma \ref{universalalgebra} and Lemma \ref{lemcritbialgebra} with respect to comultiplication. The bialgebra $\OSym(V)$ has the universal properties
of Lemma \ref{universalalgebra} and Lemma \ref{lemcritbialgebra} with respect to multiplication. 
\end{corollary}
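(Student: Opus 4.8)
The plan is to deduce everything from the explicit isomorphism of Lemma \ref{prodexplicit}(3): under the finiteness assumption, $\Sym(V)$ and $\OSym(V)$ are the same bialgebra up to canonical isomorphism, so they automatically share all universal properties. Concretely, the canonical morphism of bialgebras $c: \Sym(V)\to\OSym(V)$, acting as multiplication by $n!$ on the summand $\Sym^n(V)$, is an isomorphism; being a bialgebra isomorphism, $c$ is simultaneously an isomorphism of algebras and of coalgebras. Moreover $c$ is the identity on the degree-one summand $\Sym^1(V)=V$ (since $1!=1$), so it commutes with the canonical inclusions of $V$ into $\Sym(V)$ and into $\OSym(V)$, and dually with the canonical projections of $\Sym(V)$ and $\OSym(V)$ onto their degree-one summand. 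Checking this last compatibility is the only (entirely trivial) verification needed.

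First I would show that $\OSym(V)$ has the universal properties of Lemma \ref{universalalgebra} and Lemma \ref{lemcritbialgebra} with respect to multiplication. Given a unital commutative algebra object $A$ (respectively a unital commutative bialgebra with a compatible map as in Lemma \ref{lemcritbialgebra}(1)) together with a morphism $\alpha: V\to A$, Lemma \ref{universalalgebra} (respectively Lemma \ref{lemcritbialgebra}(1)) produces a unique morphism of algebras (respectively bialgebras) $h:\Sym(V)\to A$ extending $\alpha$. Then $h\circ c^{-1}:\OSym(V)\to A$ is a morphism of algebras (respectively bialgebras), and it extends $\alpha$ because $c^{-1}$ is the identity in degree one. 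For uniqueness, any morphism $g:\OSym(V)\to A$ of the required type extending $\alpha$ yields $g\circ c:\Sym(V)\to A$ of the same type extending $\alpha$, hence $g\circ c=h$ by uniqueness for $\Sym(V)$, i.e. $g=h\circ c^{-1}$.

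The second assertion, that $\Sym(V)$ has the universal properties of Lemma \ref{universalalgebra} and Lemma \ref{lemcritbialgebra} with respect to comultiplication, is proved by the mirror-image argument, now transporting the coalgebra universal property of $\OSym(V)$ along $c^{-1}$: given an augmented cocommutative coalgebra $B$ (respectively bialgebra with a compatible map) and $\alpha: B\to V$, the universal property of $\OSym(V)$ gives a unique morphism of coalgebras (respectively bialgebras) $f:B\to\OSym(V)$ compatible with the projections to $V$, and $c^{-1}\circ f:B\to\Sym(V)$ is the desired morphism, with uniqueness inherited from that of $f$ since $c$ is an isomorphism. As the whole argument is a transport of structure along an explicit isomorphism, there is no genuine obstacle; all the substantive work was already carried out in establishing Lemma \ref{prodexplicit}(3).
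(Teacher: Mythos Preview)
Your proposal is correct and follows exactly the approach the paper intends: the corollary is stated without proof immediately after Lemma \ref{prodexplicit}, and the implicit argument is precisely the transport of structure along the bialgebra isomorphism $c:\Sym(V)\to\OSym(V)$ of Lemma \ref{prodexplicit}(3). You have simply spelled out in detail what the paper leaves to the reader.
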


\subsection{Direct sums}
\begin{lemma}\label{opsymsum}
Let $V=U\oplus W$ in $\cata$ with $U$ and $W$ odd finite-dimensional.
The natural map
\[ \OSym(U)\tensor \OSym(W)\to U\tensor \one\oplus\one\tensor W\isom U\oplus W\]
gives rise to an isomorphism of bialgebras
\[\OSym(U)\tensor\OSym(W)\to\OSym(U\oplus W)\]
with inverse given by
\[ \OSym(U\oplus W)\xrightarrow{ \Delta}\OSym(U\oplus W)\tensor\OSym(U\oplus W)\to \OSym(U)\tensor\OSym(W)\ .\]
The analogous statement for $\Sym(U\oplus W)$ holds as well.
\end{lemma}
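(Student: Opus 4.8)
The plan is to deduce everything from the universal properties of the symmetric coalgebra recorded in Lemma \ref{universalalgebra} and Lemma \ref{lemcritbialgebra}, together with the comparison $\Sym\isom\OSym$ of Lemma \ref{prodexplicit}. First I would check that $U\oplus W$ is again odd finite-dimensional, so that $\OSym(U\oplus W)$ is actually defined: if $U$ is odd of dimension $N$ and $W$ odd of dimension $M$, then using the behaviour of $\Sym^n$ under direct sums one has $\Sym^{N+M+1}(U\oplus W)=\bigoplus_{i+j=N+M+1}\Sym^i(U)\ox\Sym^j(W)$, and every summand vanishes because $i>N$ or $j>M$; on the other hand $\Sym^{N+M}(U\oplus W)=\det U\ox\det W$ is invertible, hence nonzero.

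Next, $\OSym(U)\ox\OSym(W)$ is an augmented cocommutative coalgebra (a tensor product of such), and there is a canonical morphism $\OSym(U)\ox\OSym(W)\to U\oplus W$ in $\cata$ obtained by projecting each tensor factor onto its degree-one part and adding. By the universal property of $\OSym(U\oplus W)$ (Lemma \ref{universalalgebra}) this extends to a unique morphism of augmented coalgebras
\[\Phi\colon\OSym(U)\ox\OSym(W)\lra\OSym(U\oplus W)\ .\]
In the other direction, the projections $p_U\colon U\oplus W\to U$ and $p_W\colon U\oplus W\to W$ induce, again by the universal property, coalgebra morphisms $\OSym(U\oplus W)\to\OSym(U)$ and $\OSym(U\oplus W)\to\OSym(W)$; composing their tensor product with the comultiplication of $\OSym(U\oplus W)$ gives the morphism $\Psi\colon\OSym(U\oplus W)\to\OSym(U)\ox\OSym(W)$ written in the statement.

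I would then show that $\Phi$ and $\Psi$ are mutually inverse using only the uniqueness clauses of the universal property. The composite $\Psi\circ\Phi$ is a coalgebra endomorphism of $\OSym(U)\ox\OSym(W)$; since a coalgebra morphism into $\OSym(U)\ox\OSym(W)\isom\OSym(U\oplus W)$ is determined by its projection onto the degree-one part $U\oplus W$, it suffices to compute that this projection is the identity of $U\oplus W$, which is immediate from the definitions of $\Phi$ and $\Psi$. Symmetrically $\Phi\circ\Psi=\id$ by the universal property of $\OSym(U\oplus W)$. For the bialgebra (indeed Hopf algebra) statement, $\Phi$ is already a morphism of coalgebras by construction; to see it also respects the multiplication I would either invoke Lemma \ref{lemcritbialgebra}(2), or transport the whole picture through the isomorphism $\Sym\isom\OSym$ of Lemma \ref{prodexplicit}, under which $\Phi$ becomes the evident algebra morphism $\Sym(U)\ox\Sym(W)\to\Sym(U\oplus W)$ given by the symmetric-product maps; compatibility with unit, counit and antipode is then formal. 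The analogous statement for $\Sym(U\oplus W)$ follows by the same argument with the roles of algebra and coalgebra exchanged, using the universal property of $\Sym$ as a commutative algebra, or directly from Lemma \ref{prodexplicit}(3).

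I do not expect a genuine obstacle here; the work is bookkeeping with two universal properties plus the degree-one computations that identify the composites. The point needing the most care is checking that the ``natural map'' appearing in the statement really does coincide with the comultiplication-based formula for $\Psi$, and that all four composites are pinned down via the uniqueness clause rather than by expanding the symmetric projectors by hand.
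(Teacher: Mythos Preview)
Your approach is correct in outline and is more self-contained than the paper's proof, which simply invokes \cite[Proposition~1.8]{deltens} for the underlying isomorphism and then observes that the bialgebra compatibility follows because the map is the one compatible with the inclusion into the tensor algebra (and that the $\Sym$ case differs from the $\OSym$ case only by rational scalars, via Lemma~\ref{prodexplicit}). Your route via the universal properties of Lemma~\ref{universalalgebra} is a genuine alternative and has the advantage of staying inside Appendix~\ref{appsym}.

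There is, however, one circular step you should repair. In checking $\Psi\circ\Phi=\id$ you write that a coalgebra morphism into $\OSym(U)\ox\OSym(W)\isom\OSym(U\oplus W)$ is determined by its projection to degree one; but that isomorphism is precisely $\Phi$, which you have not yet shown to be invertible. Two easy fixes: (i) use that the tensor product is the categorical product in augmented cocommutative coalgebras, so a coalgebra map into $\OSym(U)\ox\OSym(W)$ is determined by its two components into $\OSym(U)$ and $\OSym(W)$, each of which is then pinned down by the universal property of the individual factors; or (ii) verify only $\Phi\circ\Psi=\id$ via the universal property of $\OSym(U\oplus W)$, and then note that $\Phi$ and $\Psi$ are graded and that the source and target are degree-wise isomorphic (this is the elementary identity $\Sym^n(U\oplus W)\isom\bigoplus_{i+j=n}\Sym^i(U)\ox\Sym^j(W)$, which you already invoked for the oddness check), so a one-sided inverse between them is automatically two-sided. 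Relatedly, your oddness check at the start already uses this degree-wise identity; it is harmless, but you should either cite it (this is exactly what the paper defers to Deligne) or give the short direct argument with the symmetrization projector.
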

\begin{proof}The isomorphism is well-known for vector spaces. The case
of an additive category is a special case of \cite[Propostion 1.8]{deltens}. 
By construction, the isomorphism is the one compatible with the inclusion
into the tensor algebra, i.e., the one for the symmetric coalgebra.
The case of the symmetric algebra follows because the map is a rational multiple of the one for the coalgebra.\end{proof}

\begin{remark}The isomorphism 
\[\Sym^N(U\oplus W)\to\bigoplus_{n+m=N}\Sym^n(U)\tensor\Sym^m( V)\]
via the symmetric coalgebra
is {\em not} the same as the one defined on the symmetric algebra!
\end{remark}

\section{Filtrations on the graded symmetric (co)algebra}\label{appfilt}
The aim of this appendix is to establish a certain exact triangle for the symmetric coalgebra, see Theorem \ref{corcup}. The basic construction behind it
already appears in a paper by Deligne \cite{deltens}, more precisely in the Proof of Proposition 1.19 of loc. cit.
We wanted to understand the details of the argument and, in particular, keep precise control of the morphisms and the Hopf algebra structure. Hence we decided to give the argument in full detail.

Throughout the appendix, let $\cata$ be a $\Q$-linear abelian symmetric 
tensor category with unit object $\one$. We assume that $\tensor$ is exact.  
These considerations are going to be applied to the abelian category $\STEkQ$ of etale sheaves of $\Q$-vector spaces with transfers. This is possible by 
Remark \ref{tensorexact}.

\subsection{The graded symmetric (co)algebra}
\begin{definition}Let $V$ be an object of $\cata$. We denote by
\[\Sym^*(V)=\bigoplus_{n\geq 0}\Sym^n(V)\]
the {\em graded symmetric algebra} with $\Sym^n(V)$ in degree $n$.
It is a graded Hopf algebra with structure morphisms as in Appendix \ref{appsym}.

We denote
\[\OSym^*(V)=\bigoplus_{n\geq 0}\Sym^n(V)\ \]
the {\em graded symmetric coalgebra} with $\Sym^n(V)$ in degree $n$. It
is a graded Hopf algebra with structure morphisms as in Appendix \ref{appsym}.
\end{definition}
The explicit formulae are given in Lemma \ref{prodexplicit}.
\begin{remark}Note that we are considering {\em graded objects}.
In contrast to Appendix \ref{appsym} we do {\em not} assume that
$V$ is odd finite-dimensional. Neither do we need existence of infinite
direct sums. The notation $\bigoplus_{n\geq 0}$ is just convention. A graded
object really is given by a sequence of objects. 
Switching to graded objects and graded morphisms allows all considerations of Appendix \ref{appsym} {\em without} finiteness assumptions of any kind.
\end{remark}

\begin{lemma}\label{gradeduniversal}
The universal properties of Lemma \ref{universalalgebra} and Lemma \ref{lemcritbialgebra} are satisfied for $\Sym^*(V)$ and $\OSym^*(V)$ with respect to \emph{graded} morphisms. 

The identity on $V$ induces an isomorphism of graded Hopf algebras
\[ \Sym^*(V)\to \OSym^*(V)\ .\]
In degree $n$ it is given by multiplication by $n!$.
\end{lemma}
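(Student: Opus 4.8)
The plan is to observe that the finiteness hypothesis in Appendix \ref{appsym} was used only to ensure that the infinite direct sums and products defining $\Sym(V)$ and $\OSym(V)$ exist in $\cata$ and commute with $\tensor$; once one works with graded objects of $\cata$ (sequences of objects indexed by the non-negative integers, with \emph{graded} — i.e.\ homogeneous — morphisms), this concern evaporates, because every structure morphism and every test morphism is homogeneous and hence determined by its finitely many components in each fixed degree. So first I would record that a graded algebra object $A=\bigoplus_n A_n$ in $\cata$ together with a graded morphism $\alpha\colon V\to A$ (which necessarily lands in $A_1$) extends uniquely to a graded algebra morphism out of the tensor algebra: the component in degree $n$ is forced to be $V^{\tensor n}\xrightarrow{\alpha^{\tensor n}}A_1^{\tensor n}\to A_n$, the second map being the iterated multiplication of $A$. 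This is exactly the argument of Lemma \ref{universalalgebra}, read off degree by degree; if $A$ is commutative it factors through $\Sym^*(V)$. Dually, an augmented graded coalgebra $B$ with a graded morphism $B\to V$ extends uniquely to a graded coalgebra morphism $B\to\OSym^*(V)$. The bialgebra refinements of Lemma \ref{lemcritbialgebra} are obtained in the same mechanical way, since each compatibility asserted there is an identity between morphisms that is homogeneous of some bidegree $(n,m)$ and therefore transports verbatim.

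For the comparison isomorphism I would proceed as in Lemma \ref{prodexplicit}: the formulas
\[ \Delta_{n+m}^{n,m}=\binom{n+m}{n}\,\iota_{n+m}^{n,m}\ ,\qquad \mu_{n,m}^{n+m}=\binom{n+m}{n}\,\pi_{n,m}^{n+m} \]
for the comultiplication of $\Sym^*(V)$ and the multiplication of $\OSym^*(V)$ each involve only a single pair of degrees, so they make sense and hold in the graded setting with no finiteness assumption. From them one checks directly that the graded endomorphism of $\bigoplus_n\Sym^n(V)$ acting as multiplication by $n!$ on the piece of degree $n$ intertwines the comultiplication of $\Sym^*(V)$ with that of $\OSym^*(V)$, and likewise the multiplications; since it is the identity in degrees $0$ and $1$, it is the canonical map furnished by the universal property applied to $\id_V$, and it is a morphism of bialgebras by Lemma \ref{lemcritbialgebra}. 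As $\cata$ is $\Q$-linear, multiplication by $n!$ is invertible on each $\Sym^n(V)$, so this graded Hopf algebra morphism is an isomorphism; compatibility with the antipode is automatic since the antipode on either side is induced by $-1\colon V\to V$.

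There is no genuine obstacle here: the argument is entirely formal, and the only thing requiring care is to confirm, item by item, that each verification carried out in Appendix \ref{appsym} of a universal property or a bialgebra compatibility never used the finiteness assumption except to form the ambient (co)algebra — so that, replacing that ambient object by the corresponding graded object, the identical computations apply. I would therefore present the proof as a short dictionary entry pointing back to the relevant statements of Appendix \ref{appsym}, together with the explicit degree-$n$ scaling by $n!$ and the remark that each $n!$ is a unit.
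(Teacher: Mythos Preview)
Your proposal is correct and takes essentially the same approach as the paper, which simply says ``Same arguments as in Appendix \ref{appsym}. Note that the finiteness assumptions are not needed because all morphisms respect the grading.'' You have spelled out in detail exactly what that one-line proof means: each universal property and compatibility check from Appendix \ref{appsym} is homogeneous, hence can be verified degree by degree without ever forming an infinite sum or product, and the comparison map is the degree-$n$ scaling by $n!$ as in Lemma \ref{prodexplicit}.
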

\begin{proof}Same arguments as in Appendix \ref{appsym}. Note that the finiteness assumptions are not needed because all morphisms respect the grading.
\end{proof}

\subsection{The filtration on the graded tensor algebra}\label{sectfilttensor}

\begin{definition}Let $U\subset V$ be a subobject in $\cata$. We define
a descending filtration on $ V$ by
\[\Fil^U_{i} V=\begin{cases} V&i=0,\\
                               U&i=1,\\
			       0&i>1.
	          \end{cases}\]
For $n\geq 0$ let 
\[ \Fil^U_i V^{\tensor n}\subset  V^{\tensor n}\]
the product filtration on $ V^{\tensor n}$ and
\[ \Gr^U_i V^{\tensor n}=\Fil^U_i V^{\tensor n}/\Fil^U_{i+1} V^{\tensor n}\]
the associated graded objec object.
\end{definition}
This means that an elementary tensor
is in $i$th step of the filtration step, if at least $i$ factors are in $U$.
\begin{lemma}Let $U\subset V$ be a subobject in $\cata$. 
The filtration $\Fil^U_iT( V)$ turns the graded tensor algebra into a filtered graded bialgebra, i.e.,
\begin{gather*}
\mu:\Fil_i^U V^{\tensor n}\tensor \Fil_j^U V^{\tensor m}\to\Fil_{i+j}^U V^{\tensor n+m}\ ,\\
\Delta: \Fil_{I}^U V^{\tensor N}\to\sum_{i+i'=I,n+n'=N}\Fil^U_i V^{\tensor n}\tensor\Fil^U_{i'} V^{\tensor n'}\ .
\end{gather*}
In particular, $\Gr^U_\bullet T^*( V)$ is a bigraded bialgebra.
\end{lemma}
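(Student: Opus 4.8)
\emph{Plan.} The idea is to reduce both assertions to two standard facts about the product filtration on tensor powers, both resting on exactness of $\tensor$. First I would record the explicit description
\[ \Fil^U_i V^{\tensor n}=\sum_{|S|=i} W^S_1\tensor\cdots\tensor W^S_n\ \subseteq\ V^{\tensor n}, \]
where $S$ runs over the $i$-element subsets of $\{1,\dots,n\}$, with $W^S_j=U$ for $j\in S$ and $W^S_j=V$ otherwise; each summand is a genuine subobject of $V^{\tensor n}$ because $\tensor$ is exact, and terms with more than $i$ slots equal to $U$ do not contribute since $\Fil^U_{\geq 2}V=0$. From this one reads off the associativity and multiplicativity of the product filtration: under the canonical isomorphism $V^{\tensor n}\tensor V^{\tensor m}\isom V^{\tensor(n+m)}$ one has $\Fil^U_k V^{\tensor(n+m)}=\sum_{a+b=k}\Fil^U_a V^{\tensor n}\tensor\Fil^U_b V^{\tensor m}$, and in particular $\Fil^U_a V^{\tensor n}\tensor\Fil^U_b V^{\tensor m}\subseteq\Fil^U_{a+b}V^{\tensor(n+m)}$. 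Since the multiplication $\mu$ on $T(V)$ is precisely this canonical identification, the inclusion $\mu(\Fil^U_i V^{\tensor n}\tensor\Fil^U_j V^{\tensor m})\subseteq\Fil^U_{i+j}V^{\tensor(n+m)}$ follows at once; equivalently, $T(V)$ is a filtered graded algebra, and then so is $T(V)\tensor T(V)$ with the product filtration $\Fil^U_I(T(V)\tensor T(V))=\sum_{a+b=I}\Fil^U_a T(V)\tensor\Fil^U_b T(V)$.

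For the comultiplication I would use its construction in Appendix \ref{appsym}: $\Delta$ is the unique algebra endomorphism of $T(V)$ extending $\Delta|_V\colon V\to V\tensor\one\oplus\one\tensor V\subseteq T(V)\tensor T(V)$. The map $\Delta|_V$ is filtered of degree $0$ for the product filtration on $T(V)\tensor T(V)$: $\Fil^U_0V=V$ maps into $\Fil^U_0$, while $\Fil^U_1V=U$ maps into $U\tensor\one+\one\tensor U\subseteq\Fil^U_1(T(V)\tensor T(V))$. On a generating subobject $W^S_1\tensor\cdots\tensor W^S_N$ of $\Fil^U_I V^{\tensor N}$ the map $\Delta$ is, up to the canonical identification $V^{\tensor 1}\tensor\cdots\tensor V^{\tensor 1}\isom V^{\tensor N}$, the composite of $(\Delta|_V)^{\tensor N}$ with the $N$-fold multiplication of $T(V)\tensor T(V)$ (because $\Delta$ is multiplicative and $V^{\tensor N}$ is the degree-$N$ component of the $N$-fold product of the generator $V$ in $T(V)$); this is a product of $I$ factors landing in $\Fil^U_1$ and $N-I$ factors landing in $\Fil^U_0$, hence lands in $\Fil^U_I(T(V)\tensor T(V))$ by multiplicativity. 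As $\Delta$ also preserves the tensor grading, restricting to internal degree $N$ gives exactly
\[ \Delta\bigl(\Fil^U_I V^{\tensor N}\bigr)\subseteq\sum_{\substack{i+i'=I\\ n+n'=N}}\Fil^U_i V^{\tensor n}\tensor\Fil^U_{i'}V^{\tensor n'}, \]
which is the second displayed inclusion. The antipode is induced by $-1\colon V\to V$, which is visibly filtered, so the whole Hopf structure is filtered.

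Finally, to obtain that $\Gr^U_\bullet T^*(V)$ is a bigraded bialgebra I would pass to the associated graded. Since $\mu$ and $\Delta$ are filtered in the above sense and preserve the tensor grading, they induce maps $\Gr^U_i V^{\tensor n}\tensor\Gr^U_j V^{\tensor m}\to\Gr^U_{i+j}V^{\tensor(n+m)}$ and $\Gr^U_I V^{\tensor N}\to\bigoplus_{i+i'=I,\,n+n'=N}\Gr^U_i V^{\tensor n}\tensor\Gr^U_{i'}V^{\tensor n'}$ on associated gradeds; here one uses the canonical isomorphism $\Gr^U_\bullet(A\tensor B)\isom\Gr^U_\bullet A\tensor\Gr^U_\bullet B$ for the product filtration, which holds exactly because $\tensor$ is exact. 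The bialgebra identities for $(\Gr^U_\bullet T^*(V),\bar\mu,\bar\Delta)$ then follow from those for $T^*(V)$ by functoriality of $\Gr^U_\bullet$, regarded as a (strong symmetric monoidal) functor on filtered objects.

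The step I expect to be the only real work is the bookkeeping with the product filtration in the abstract abelian setting: verifying the associativity formula $\Fil^U_k V^{\tensor(n+m)}=\sum_{a+b=k}\Fil^U_a V^{\tensor n}\tensor\Fil^U_b V^{\tensor m}$ and the identity $\Gr^U_\bullet(A\tensor B)\isom\Gr^U_\bullet A\tensor\Gr^U_\bullet B$ as manipulations of subobjects, using nothing beyond exactness of $\tensor$. Once these are in place everything else is a formal consequence of the free-algebra universal property of $T(V)$ recorded in Lemmas \ref{universalalgebra} and \ref{gradeduniversal}.
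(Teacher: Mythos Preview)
Your proof is correct and follows essentially the same approach as the paper, only with far more detail: the paper dismisses the multiplication statement as obvious and reduces the comultiplication statement ``by the universal property to the basic case $N=1$,'' which is exactly your argument that $\Delta|_V$ is filtered and that multiplicativity of $\Delta$ then propagates the filtration compatibility to all of $V^{\tensor N}$. Your explicit treatment of the associated-graded statement (via $\Gr^U_\bullet(A\tensor B)\isom\Gr^U_\bullet A\tensor\Gr^U_\bullet B$) spells out what the paper leaves implicit in the phrase ``In particular.''
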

\begin{proof}
The statement on multiplication is obvious. The statement on comultiplication is reduced by the universal property to the basic case $N=1$.
\end{proof}

The aim of this section is to compute the associated graded bialgebra.
\begin{proposition}\label{propfilt3}
Let
\[ 0\to U\to  V\to  W\to 0\]
be a short exact sequence in $\cata$. Then there is a canonical isomorphism of
bigraded bialgebras
\[ \Gr^U_\bullet T^*( V)\isom \Gr^U_\bullet T^*(U\oplus  W)=T^\bullet(U)\tensor T^{*-\bullet}(W)\ .\]
\end{proposition}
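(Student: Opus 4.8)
The plan is to derive everything from a single principle: forming the associated graded of the product filtration commutes with $\tensor$. This is where the standing exactness assumption on $\tensor$ in $\cata$ is used, and it reduces the statement to the split model $U\oplus W$; the bialgebra structure is then pinned down by the universal property of the tensor algebra (Lemma \ref{gradeduniversal}).

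First I would record that for the two-step filtered object $(V,\Fil^U_\bullet)$ there is a canonical isomorphism of graded objects $\Gr^U_\bullet V\isom W\oplus U$, with $W=V/U$ in degree $0$ and $U$ in degree $1$; the same recipe applied to the split object $U\oplus W$, filtered by $U$, gives the identical graded object. The lemma to prove is then: for filtered objects $A,B$ of $\cata$ the natural map $\Gr^U_\bullet A\tensor\Gr^U_\bullet B\to\Gr^U_\bullet(A\tensor B)$ for the product filtration is an isomorphism. Granting it, an induction on $n$ — writing $V^{\tensor n}=V^{\tensor n-1}\tensor V$ and matching the product filtrations under the associativity constraint — yields natural isomorphisms $(\Gr^U_\bullet V)^{\tensor n}\isom\Gr^U_\bullet(V^{\tensor n})$, with the total grading on the left. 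Being natural in the filtered object, these combine with the canonical identification $\Gr^U_\bullet V\isom\Gr^U_\bullet(U\oplus W)$ and, summed over $n$, give a canonical isomorphism of bigraded objects $\Gr^U_\bullet T^*(V)\isom\Gr^U_\bullet T^*(U\oplus W)$.

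Next I would make the right-hand side explicit and check the bialgebra compatibilities. For the split object the product filtration on $(U\oplus W)^{\tensor n}$ is the filtration by the number of tensor factors lying in $U$: decomposing $(U\oplus W)^{\tensor n}$ into the $2^n$ summands indexed by words in the letters $U,W$, the step $\Fil^U_i$ is the sum of the summands with at least $i$ letters $U$, and $\Gr^U_i$ the sum of those with exactly $i$; bookkeeping these summands by the pair (number of $U$-factors, total length) and keeping the concatenation product is, by definition, the bigraded bialgebra $T^\bullet(U)\tensor T^{*-\bullet}(W)$ of the statement. For the structure maps, recall from the lemma preceding the proposition that $\Fil^U_\bullet T^*(V)$ and $\Fil^U_\bullet T^*(U\oplus W)$ are filtered bigraded bialgebras, so both associated gradeds are bigraded bialgebras; the isomorphism constructed above is, in each tensor degree, functorially induced from the identity of $V^{\tensor n}$ together with the canonical comparison map on associated gradeds, hence automatically multiplicative (the multiplication being concatenation of tensors). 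For comultiplication I would invoke Lemma \ref{gradeduniversal}: viewing both sides as the graded tensor algebra on the graded object $\Gr^U_\bullet V\isom\Gr^U_\bullet(U\oplus W)$, a morphism of bigraded coalgebras out of a tensor algebra is determined by its degree-$1$ component, and in degree $1$ both comultiplications are the associated graded of the diagonal $V\to V\tensor\one\oplus\one\tensor V$, which corresponds under our identification to the diagonal of $U\oplus W$. Hence the isomorphism respects comultiplication, and it is an isomorphism of bigraded bialgebras.

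The one non-formal input, and the place where I expect the actual work to lie, is the lemma that $\Gr^U_\bullet$ commutes with $\tensor$: it relies on exactness of $\tensor$ in $\cata$ and on the two-step shape of the filtration on $V$, and its proof is a diagram chase comparing the subobjects $\Fil^U_i(A\tensor B)=\sum_{p+q=i}\tn{im}(\Fil^U_pA\tensor\Fil^U_qB)$, using that $\tensor$ preserves monomorphisms and that in an abelian category $(M+N)/(M\cap N)\isom M/(M\cap N)\oplus N/(M\cap N)$. Everything afterwards is bookkeeping; the two mild points to watch are fixing the bigrading conventions so that the split model is literally $T^\bullet(U)\tensor T^{*-\bullet}(W)$ with its concatenation product, and routing the comultiplication check through the universal property rather than attempting it componentwise.
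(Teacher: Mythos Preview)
Your proposal is correct and follows essentially the same route as the paper: the paper packages the key step as an explicit induction on $n$ (Lemmas \ref{lemfilt1} and \ref{lemfilt2}, resting on the intersection formula $(A'\tensor B)\cap(A\tensor B')=A'\tensor B'$ of Lemma \ref{lemtensorintersect}), which is exactly your ``$\Gr$ commutes with $\tensor$'' lemma unwound, and then dispatches the bialgebra compatibility by naturality just as you do via the universal property.
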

The proof will be given at the end of the section.
\begin{remark} Whenever the sequence has a splitting (e.g. in the case when $\cata$ is a category of vector spaces), this formula is easy to check. 
The purpose of the following arguments is to verify that they work for more general $\cata$. 
\end{remark}

\begin{lemma}\label{lemtensorintersect} Let  $A'\subset A$
and $B'\subset B$ be subobjects in $\cata$. Then 
\[ (A'\tensor B)\cap (A\tensor B')=A'\tensor B'\]
where the intersection is taken in $A\tensor B$.
\end{lemma}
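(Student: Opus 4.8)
The statement is that for subobjects $A' \subset A$ and $B' \subset B$ in the $\Q$-linear abelian tensor category $\cata$ with exact $\tensor$, one has $(A' \tensor B) \cap (A \tensor B') = A' \tensor B'$ inside $A \tensor B$. The inclusion $A' \tensor B' \subset (A' \tensor B) \cap (A \tensor B')$ is clear, since $A' \tensor B'$ maps into both $A' \tensor B$ and $A \tensor B'$ compatibly with their inclusions into $A \tensor B$ (using that $\tensor$ is exact, so these maps are monomorphisms). So the content is the reverse inclusion.

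The plan is to reduce to a diagram chase using exactness of $\tensor$. First I would form the quotients $A'' = A/A'$ and $B'' = B/B'$, giving short exact sequences $0 \to A' \to A \to A'' \to 0$ and $0 \to B' \to B \to B'' \to 0$. Applying the exact functors $(-) \tensor B$, $(-) \tensor B''$, $A' \tensor (-)$, etc., I get a commutative diagram whose rows and columns are short exact sequences, relating $A' \tensor B'$, $A' \tensor B$, $A \tensor B'$, $A \tensor B$, and the various quotients $A'' \tensor B$, $A \tensor B''$, $A'' \tensor B''$. Concretely, consider the composite $A \tensor B \twoheadrightarrow A'' \tensor B$; its kernel is $A' \tensor B$ by right-exactness applied to the first sequence tensored with $B$. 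Similarly the kernel of $A \tensor B \twoheadrightarrow A \tensor B''$ is $A \tensor B'$. Hence $(A' \tensor B) \cap (A \tensor B')$ is the kernel of the map $A \tensor B \to (A'' \tensor B) \oplus (A \tensor B'')$, equivalently of $A \tensor B \to A'' \tensor B''$ after identifying the relevant piece — more precisely, I would show that $(A' \tensor B) \cap (A \tensor B')$ is the kernel of $A \tensor B \to A'' \tensor B$ intersected with the kernel of $A \tensor B \to A \tensor B''$, and then identify this intersection with the kernel of $A \tensor B \to A'' \tensor B \to A'' \tensor B''$ together with the kernel of $A \tensor B \to A'' \tensor B$, which collapses to $\ker(A \tensor B \to A'' \tensor B'') $ after noting the two conditions are equivalent to the single one. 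Then $\ker(A \tensor B \to A'' \tensor B'')$ is $A' \tensor B'$: this follows by writing $A \tensor B \to A'' \tensor B'' $ as the composite of the two surjections $A \tensor B \to A'' \tensor B \to A'' \tensor B''$ and computing kernels step by step — the kernel of the first surjection is $A' \tensor B$, and the preimage of the kernel $A'' \tensor B'$ of the second is exactly $A' \tensor B + A \tensor B'$, which is not quite what we want, so instead I would argue directly: $\ker(A\tensor B \to A''\tensor B'')$ sits in an exact sequence whose identification with $A'\tensor B'$ comes from tensoring the two short exact sequences together and using the snake lemma, or equivalently from the standard fact (provable purely diagrammatically from exactness of $\tensor$) that in the commutative square obtained by tensoring, $A' \tensor B'$ is the pullback of $A' \tensor B \to A \tensor B \leftarrow A \tensor B'$.

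In fact the cleanest route, which I would adopt, is this pullback description: exactness of $\tensor$ means $A' \tensor (-)$ and $(-) \tensor B'$ preserve monomorphisms, so tensoring the inclusion $B' \hookrightarrow B$ with $A' \hookrightarrow A$ in both orders shows that the square
\[
\begin{CD}
A' \tensor B' @>>> A' \tensor B \\
@VVV @VVV \\
A \tensor B' @>>> A \tensor B
\end{CD}
\]
has all four arrows monic. To see it is a pullback (equivalently that $A' \tensor B'$ equals the intersection), apply the exact functor $(-) \tensor B$ to $0 \to A' \to A \to A'' \to 0$ and $(-) \tensor B'$ to the same, obtaining a map of short exact sequences
\[
\begin{CD}
0 @>>> A' \tensor B' @>>> A \tensor B' @>>> A'' \tensor B' @>>> 0 \\
@. @VVV @VVV @VVV @. \\
0 @>>> A' \tensor B @>>> A \tensor B @>>> A'' \tensor B @>>> 0
\end{CD}
\]
in which the right vertical arrow $A'' \tensor B' \to A'' \tensor B$ is monic (again by exactness of $\tensor$). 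A square of abelian-category morphisms whose rows are short exact and whose right-hand vertical map is monic is automatically a pullback square on the left; that is a standard lemma I would invoke (or prove in two lines via the snake lemma: the induced sequence on kernels is $0 \to K \to 0$ where $K$ is the kernel of the monic map, so $K = 0$, forcing the left square to be a pullback). Therefore $A' \tensor B' = (A' \tensor B) \times_{A \tensor B} (A \tensor B') = (A' \tensor B) \cap (A \tensor B')$, as claimed.

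The only mild obstacle is bookkeeping: one must be slightly careful that "intersection" in the statement means the pullback of the two subobjects $A' \tensor B \hookrightarrow A \tensor B \hookleftarrow A \tensor B'$, and that this pullback is again a subobject of $A \tensor B$ (automatic, being the pullback of monos). Everything else is a formal consequence of exactness of $\tensor$ and elementary homological algebra in an abelian category; there is no need for an internal elements argument, though if one prefers, the Freyd--Mitchell embedding reduces the whole statement to the trivial case of modules over a ring where it is the obvious identity $(A' \otimes B) \cap (A \otimes B') = A' \otimes B'$ for flat-enough situations — but since $\tensor$ here is assumed exact on all of $\cata$, the direct diagrammatic argument is cleanest and I would present that.
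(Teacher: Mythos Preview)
Your final argument is correct and is essentially the paper's own proof: form the map of short exact sequences obtained by tensoring $0\to B'\to B\to B/B'\to 0$ (or, as you do, $0\to A'\to A\to A''\to 0$) with the two objects, observe that the rightmost vertical arrow is monic by exactness of $\tensor$, and conclude that the left square is a pullback, i.e.\ $A'\tensor B'=(A'\tensor B)\cap(A\tensor B')$. The initial discussion of kernels of $A\tensor B\to A''\tensor B''$ is a detour you yourself abandon; you could drop it entirely.
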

\begin{proof}Consider the monomorphism of short exact sequences
\[\xymatrix{
0\ar[r]& A\tensor B'\ar[r]& A\tensor B\ar[r]&A\tensor (B/B')\ar[r]& 0\\
0\ar[r]&A'\tensor B'\ar[r]\ar@{^{(}->}[u]&A'\tensor B\ar@{^{(}->}[u]\ar[r]& A'\tensor (B/B')\ar[r]\ar@{^{(}->}[u]& 0
}\]
The assertion follows from diagram chasing.
\end{proof}

\begin{lemma}\label{lemfilt1}
Let $i\geq 0$ and $n\geq 1$ be  integers, and let $U \subset V$ be a subobject in $\cata$. 
Then the sequence
\[ 0\to U\tensor\Fil^U_i V^{\tensor (n-1)}\to 
    \left(U\tensor\Fil^{U}_{i-1} V^{\tensor (n-1)}\right)\oplus\left( V\tensor\Fil^{U}_i V^{\tensor (n-1)}\right)
  \to \Fil^U_i V^{\tensor n}\to 0\]
  is exact.
\end{lemma}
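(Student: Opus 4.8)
Let $i\geq 0$ and $n\geq 1$ be integers, and let $U\subset V$ be a subobject in $\cata$. Then the sequence
$$0\to U\tensor\Fil^U_i V^{\tensor(n-1)}\to\left(U\tensor\Fil^U_{i-1}V^{\tensor(n-1)}\right)\oplus\left(V\tensor\Fil^U_iV^{\tensor(n-1)}\right)\to\Fil^U_iV^{\tensor n}\to 0$$
is exact.

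Let me think about what this says and how to prove it.

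First, orient: $\Fil^U_i V^{\tensor n}\subset V^{\tensor n}$ is the product filtration. Its step-$i$ part is "at least $i$ of the $n$ factors lie in $U$". We want to compute $\Fil^U_i V^{\tensor n}$ by splitting off the first factor: either the first factor is in $U$ (costing one toward the count $i$, leaving $\ge i-1$ among the remaining $n-1$) or it's in all of $V$ (then we still need $\ge i$ among the remaining $n-1$). The map to $\Fil^U_i V^{\tensor n}$ should be $(a\ot x, v\ot y)\mapsto \iota_U(a)\ot x - v\ot y$ where $\iota_U\colon U\hookrightarrow V$; the kernel is the "overlap" $U\tensor\Fil^U_iV^{\tensor(n-1)}$ embedded diagonally via $(\iota_U,\mathrm{id})$.

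So the plan. First I would set up notation: write $\iota\colon U\hookrightarrow V$ for the inclusion, abbreviate $F_i=\Fil^U_iV^{\tensor(n-1)}$ inside $V^{\tensor(n-1)}$, and recall that by definition of the product filtration
$$\Fil^U_iV^{\tensor n}=\sum_{p+q=i}\Fil^U_pV\ot\Fil^U_qV^{\tensor(n-1)}\subset V^{\tensor n}.$$
Since $\Fil^U_pV$ is $V$ for $p=0$, $U$ for $p=1$, and $0$ for $p\ge 2$, this sum has exactly two terms: $\Fil^U_iV^{\tensor n}=\left(U\ot F_{i-1}\right)+\left(V\ot F_i\right)$ as subobjects of $V^{\tensor n}$ (with the convention $F_{-1}=0$, and using that $\tensor$ is exact so all these are genuine subobjects). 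This immediately gives surjectivity of the right-hand map $(a\ot x,\,v\ot y)\mapsto a\ot x - v\ot y$ (reading $U\ot F_{i-1}$ inside $V^{\tensor n}$ via $\iota$). Next I would identify the kernel: it consists of pairs mapping to equal images, i.e. the pullback $(U\ot F_{i-1})\times_{V^{\tensor n}}(V\ot F_i)$, which is the intersection of $U\ot F_{i-1}$ and $V\ot F_i$ computed \emph{inside} $V\ot F_i\cap V\ot V^{\tensor(n-1)}$ — more carefully, it is $(\iota\ot\mathrm{id})(U\ot V^{\tensor(n-1)})\cap(V\ot(F_i\hookrightarrow V^{\tensor(n-1)}))$ intersected with $V\ot F_i$. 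Here I would invoke Lemma~\ref{lemtensorintersect} (with $A'=U\subset A=V$ and $B'=F_i\subset B=V^{\tensor(n-1)}$) to get $(U\ot V^{\tensor(n-1)})\cap(V\ot F_i)=U\ot F_i$, and then the further intersection with $U\ot F_{i-1}$ inside $U\ot V^{\tensor(n-1)}$ is $U\ot(F_i\cap F_{i-1})=U\ot F_i$ since $F_i\subset F_{i-1}$. This shows the kernel is exactly $U\ot F_i$, embedded via $x\mapsto(x,\,(\iota\ot\mathrm{id})x)$, which is the claimed left-hand map. Exactness in the middle follows formally from the description of the image of the left map as the kernel of the right.

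The main obstacle, and the reason the lemma is stated separately rather than waved through, is that none of this can use elements: $\cata$ is a general $\Q$-linear abelian tensor category, so "an elementary tensor with at least $i$ factors in $U$" has no literal meaning, and I must phrase everything in terms of subobjects, images, and pullbacks, leaning on exactness of $\tensor$ to guarantee that $U\ot W\to V\ot W$ is monic and hence that all the $\ot F_j$ are honest subobjects of $V^{\tensor n}$. The one genuinely nontrivial categorical input is the intersection computation, which is precisely Lemma~\ref{lemtensorintersect}; the rest is assembling a short exact sequence from a surjection and the identification of its kernel, which is routine diagram chasing valid in any abelian category. I would also note the degenerate case $i=0$ (where $F_{-1}=0$, the first summand drops out, and the sequence becomes $0\to U\ot V^{\tensor(n-1)}\to V\ot V^{\tensor(n-1)}\to V^{\tensor n}/(\text{nothing})$ — one should double-check it still reads correctly, which it does since $\Fil^U_0V^{\tensor n}=V^{\tensor n}$ and the map $V\ot V^{\tensor(n-1)}\to V^{\tensor n}$ is the identity with kernel $0$, forcing the first term to be $0$; but $U\ot F_0=U\ot V^{\tensor(n-1)}\ne 0$ in general, so actually at $i=0$ one needs $F_{-1}=0$ \emph{and} the left map to be the inclusion $U\ot F_0\hookrightarrow 0\oplus V\ot F_0$, whose image is $U\ot V^{\tensor(n-1)}=\ker(V^{\tensor n}\to V^{\tensor n})$? — here I would recheck the boundary convention and confirm it is consistent, treating $i=0$ as a special case if needed).
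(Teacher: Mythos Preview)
Your approach is essentially the same as the paper's: identify $\Fil^U_i V^{\tensor n}$ as the sum $(U\tensor F_{i-1})+(V\tensor F_i)$ inside $V^{\tensor n}$, and compute the intersection via Lemma~\ref{lemtensorintersect}. Two small remarks. First, the paper applies Lemma~\ref{lemtensorintersect} more directly, taking $A'=U\subset A=V$ and $B'=F_i\subset B=F_{i-1}$, which immediately gives $(U\tensor F_{i-1})\cap(V\tensor F_i)=U\tensor F_i$; your detour through $B=V^{\tensor(n-1)}$ followed by a second intersection is correct but unnecessary. Second, your worry about $i=0$ stems from the wrong convention: for a descending filtration one takes $F_{-1}=\Fil^U_{-1}V^{\tensor(n-1)}=V^{\tensor(n-1)}$, not $0$. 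With that convention the sequence at $i=0$ reads
\[
0\to U\tensor V^{\tensor(n-1)}\to (U\tensor V^{\tensor(n-1)})\oplus V^{\tensor n}\to V^{\tensor n}\to 0,
\]
which is exact for the usual reason, and no special case is needed.
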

\begin{proof}
Obviously,
\[ (U\tensor\Fil^{U}_{i-1} V^{\tensor n-1})+( V\tensor\Fil^{U}_{i} V^{\tensor n-1})=\Fil^U_i V^{\tensor n}\ .\]
It remains to check that
\[  (U\tensor\Fil^{U}_{i-1} V^{\tensor n-1})\cap( V\tensor\Fil^{U}_{i} V^{\tensor n-1})=U\tensor\Fil^U_i V^{\tensor n-1}\ .\]
This is true by Lemma \ref{lemtensorintersect}.
\end{proof}

\begin{lemma}\label{lemfilt2}Let $n\geq 1, i\geq 0$ and
\[ 0\to U\to  V\to  W\to 0\]
be a short exact sequence in $\cata$. Then there is a natural isomorphism
\[ \Gr^U_i V^{\tensor n}\to \Gr^U_i(U\oplus W)^{\tensor n}\ .\]
\end{lemma}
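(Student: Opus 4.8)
The statement to prove is that for a short exact sequence $0\to U\to V\to W\to 0$ in $\cata$, there is a natural isomorphism $\Gr^U_i V^{\tensor n}\to\Gr^U_i(U\oplus W)^{\tensor n}$. The plan is to proceed by induction on $n$, using Lemma \ref{lemfilt1} to reduce the computation of $\Gr^U_i V^{\tensor n}$ to that of $\Gr^U_{i}$ and $\Gr^U_{i-1}$ of $V^{\tensor(n-1)}$, and then to observe that the same recursion holds verbatim for the split sequence $U\oplus W$. Concretely, applying the associated graded functor to the exact sequence of Lemma \ref{lemfilt1} and using exactness of $\tensor$, one gets, for each $n\ge 1$ and each $i$, a short exact sequence
\[
0\to U\tensor\Gr^U_i V^{\tensor(n-1)}\to \left(U\tensor\Gr^U_{i-1}V^{\tensor(n-1)}\right)\oplus\left(V\tensor\Gr^U_i V^{\tensor(n-1)}\right)\to \Gr^U_i V^{\tensor n}\to 0 .
\]
One must be slightly careful here: the middle and left terms of Lemma \ref{lemfilt1} are themselves filtered (in the last $n-1$ factors), and one has to check that passing to the associated graded in the outer factor commutes with the $\Fil^U$ in the remaining factors; this is again an instance of Lemma \ref{lemtensorintersect} together with exactness of $\tensor$. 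Granting this, the term $V\tensor\Gr^U_i V^{\tensor(n-1)}$ maps onto the quotient, and its kernel is $U\tensor\Gr^U_i V^{\tensor(n-1)}$, so that $\Gr^U_i V^{\tensor n}$ is an extension that one can resolve: splitting off the $U$-part and the $W$-part of the leading factor $V$ gives
\[
\Gr^U_i V^{\tensor n}\;\isom\;\bigl(U\tensor\Gr^U_{i-1}V^{\tensor(n-1)}\bigr)\oplus\bigl(W\tensor\Gr^U_i V^{\tensor(n-1)}\bigr).
\]

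By the inductive hypothesis applied to $\Gr^U_{i-1}V^{\tensor(n-1)}$ and $\Gr^U_i V^{\tensor(n-1)}$, the right-hand side is naturally isomorphic to the corresponding expression with $V$ replaced by $U\oplus W$ throughout. Running the identical recursion for the split sequence — where it is elementary, since $(U\oplus W)^{\tensor n}=\bigoplus_{S}\bigl(\bigotimes_{j\in S}U\bigr)\tensor\bigl(\bigotimes_{j\notin S}W\bigr)$ and $\Fil^U_i$ picks out the summands with $|S|\ge i$, so that $\Gr^U_i(U\oplus W)^{\tensor n}=\bigoplus_{|S|=i}(\bigotimes_{j\in S}U)\tensor(\bigotimes_{j\notin S}W)$ — one sees that the two sides agree. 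The base case $n=1$ is immediate from the definition of $\Fil^U$: $\Gr^U_0 V=V/U=W=\Gr^U_0(U\oplus W)$ and $\Gr^U_1 V=U=\Gr^U_1(U\oplus W)$, and $\Gr^U_i V=0=\Gr^U_i(U\oplus W)$ for $i\ge 2$.

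Naturality in the short exact sequence is automatic because every step — Lemma \ref{lemfilt1}, the splitting of the leading factor according to $0\to U\to V\to W\to 0$, and the inductive identification — is functorial in the given sequence; no choices are made. The main obstacle I expect is not conceptual but bookkeeping: one has to be careful that the isomorphism produced is canonical and not just an abstract one, which means tracking that the two exact sequences (for $V$ and for $U\oplus W$) are compatible under the natural maps induced by $U\hookrightarrow V\twoheadrightarrow W$, and checking that the various instances of "associated graded commutes with $\tensor U$, $\tensor V$, $\tensor W$" really do hold in an arbitrary $\Q$-linear abelian tensor category with exact $\tensor$ — which is exactly what Lemma \ref{lemtensorintersect} is there to supply. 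This is also precisely the step that the paper isolates so that Proposition \ref{propfilt3} can then be obtained by taking the direct sum over $i$ and $n$ and checking compatibility with the (co)algebra structures, which is handled by the universal properties recorded in Lemma \ref{gradeduniversal}.
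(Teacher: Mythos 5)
Your plan is essentially the paper's proof: both arguments induct on $n$, compare the two exact sequences of Lemma~\ref{lemfilt1} at filtration levels $i$ and $i+1$, take cokernels via the snake lemma, observe that the induced map $U\tensor\Gr^U_i V^{\tensor(n-1)}\to U\tensor\Gr^U_{i-1}V^{\tensor(n-1)}$ vanishes so that the resulting extension splits canonically into $\bigl(U\tensor\Gr^U_{i-1}V^{\tensor(n-1)}\bigr)\oplus\bigl(W\tensor\Gr^U_i V^{\tensor(n-1)}\bigr)$, and then invoke the inductive hypothesis. One inaccurate sentence in your write-up: $V\tensor\Gr^U_i V^{\tensor(n-1)}$ alone does \emph{not} surject onto $\Gr^U_i V^{\tensor n}$ (already false for $n=2$, $i=1$ with vector spaces of different dimensions); what is true is that the kernel of the whole middle term's surjection lands in that second summand, so the quotient is the direct sum of $U\tensor\Gr^U_{i-1}V^{\tensor(n-1)}$ with the cokernel of $U\tensor\Gr^U_i V^{\tensor(n-1)}\hookrightarrow V\tensor\Gr^U_i V^{\tensor(n-1)}$ --- which is exactly your displayed formula, so the slip is in the prose only and does not affect the argument.
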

\begin{remark}
The object $\Gr^{U}_i(U\oplus W)$ is a direct sum of tensor products
of $i$ copies of $U$ and $n-i$ copies of $ W$, running through all
possible choices. E.g.
\[ \Gr^{U}_1(U\oplus W)^{\tensor 3}=(U\tensor W\tensor  W)\oplus ( W\tensor U\tensor W)\oplus( W\tensor W\tensor U)\ .\]
\end{remark}
\begin{proof}[Proof of Lemma \ref{lemfilt2}.]
We argue by induction on $n$. The case $n=1$ holds by definition. For $n>0$ we
consider the commutative diagram of short exact sequences

\[\xymatrix{
0\ar[r]&U\tensor\Fil^U_{i+1} V^{\tensor n}\ar[r]\ar@{_{(}->}[d]& 
    \left(U\tensor\Fil^U_i V^{\tensor n}\right)\oplus\left( V\tensor\Fil^U_{i+1} V^{\tensor n}\right)\ar[r]\ar@{_{(}->}[d]&
  \Fil^U_{i+1} V^{\tensor n+1}\ar[r]\ar@{_{(}->}[d]&0
\\
 0\ar[r]&U\tensor\Fil^U_i V^{\tensor n}\ar[r]&
    \left(U\tensor\Fil^U_{i-1} V^{\tensor n}\right)\oplus\left( V\tensor\Fil^U_i V^{\tensor n}\right)
  \ar[r]& \Fil^U_i V^{\tensor n+1}\ar[r]&0
}\]
By the snake lemma we get a short exact sequence of cokernels. By induction it reads
 \[0\to U\tensor\Gr^U_i(U\oplus W)^{\tensor n}\to 
 U\tensor\Gr^U_{i-1}(U\oplus W)^{\tensor n}\oplus  V\tensor \Gr^U_i(U\oplus W)^{\tensor n}
 \to \Gr^U_i V^{\tensor n+1}   \to 0\ .\]
Note that the map
\[ U\tensor\Gr^U_i(U\oplus W)^{\tensor n}\to 
 U\tensor\Gr^U_{i-1}(U\oplus W)^{\tensor n}\]
 vanishes, hence
 \[ \Gr^U_i V^{\tensor n+1} \isom U\tensor\Gr^U_{i-1}(U\oplus W)^{\tensor n}\oplus W\tensor\Gr^U_{i-1}(U\oplus W)^{\tensor n}
 =\Gr^U_i(U\oplus W)^{\tensor n+1}\ .\]
\end{proof}
\begin{proof}[Proof of Proposition \ref{propfilt3}.]
We apply Lemma \ref{lemfilt2} for all $i$ and $n$. The compatibility
with multiplication and comultiplication follows from the 
construction or more abstractly from naturality. 
\end{proof}

\subsection{The filtration on the graded symmetric (co)algebra}
\begin{definition}\label{symninot}
Let $U\subset  V$ be a subobject in $\cata$ and $n\geq 0$. We define
a descending filtration on $\Symn( V)$ by
\[
\Fil^U_{i}\Symn( V)=
\pi^{n}\left(\Fil^U_i V^{\tensor n}\right)
\]
and
\[ \Gr^U_i\Symn( V)=\Fil^U_i\Symn( V)/\Fil^U_{i+1}\Symn( V)\]
the associated graded object.
\end{definition}
\begin{remark}We have the simpler presentation
\[ \Fil^U_i\Symn( V)=\pi^n\left(U^{\tensor i}\tensor V^{\tensor n-i}\right)\ .\]
\end{remark}

\begin{lemma}Let $U\subset V$ be a subobject in $\cata$. 
The filtration 
\[ \Fil^U_i\OSym^*( V)=\bigoplus_{n\geq 0}\Fil^U_i\Symn(V)\]
 turns the graded symmetric coalgebra 
$\OSym^*(V)$ into a filtered graded bialgebra.
In particular, $\Gr^U_\bullet\OSym^*( V)$ is a bigraded bialgebra.

The analogous statement for the symmetric algebra is also true.
\end{lemma}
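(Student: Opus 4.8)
The plan is to reduce everything to the corresponding assertion for the graded tensor algebra, which was established in Section~\ref{sectfilttensor}. First I would record the observation that, by Definition~\ref{symninot}, the subobject $\Fil^U_i\Sym^n(V)$ is \emph{by construction} the image of $\Fil^U_i V^{\tensor n}$ under the projection $\pi^n:V^{\tensor n}\to\Sym^n(V)$; equivalently, the filtration $\Fil^U_\bullet\Sym^*(V)$ is the image of the filtration $\Fil^U_\bullet T^*(V)$ along the surjection $\pi:T^*(V)\to\Sym^*(V)$, which is a morphism of graded bialgebras (Appendix~\ref{appsym}, Lemma~\ref{gradeduniversal}). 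So the whole statement for $\Sym^*(V)$ should follow formally from the fact that $T^*(V)$ is a filtered graded bialgebra together with the image filtration being preserved by a surjective bialgebra map.

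Concretely, I would check the two compatibilities for $\Sym^*(V)$ directly. For multiplication: given sections of $\Fil^U_i\Sym^n(V)$ and $\Fil^U_j\Sym^m(V)$, lift them to $\Fil^U_i V^{\tensor n}$ and $\Fil^U_j V^{\tensor m}$; their tensor-algebra product lies in $\Fil^U_{i+j}V^{\tensor(n+m)}$ by the tensor case, and since $\pi$ is an algebra map, applying $\pi$ shows the $\Sym$-product lands in $\Fil^U_{i+j}\Sym^{n+m}(V)$. For comultiplication: lift a section of $\Fil^U_I\Sym^N(V)$ to $\tilde x\in\Fil^U_I V^{\tensor N}$; then $\Delta\tilde x$ lies in $\sum_{i+i'=I,\,n+n'=N}\Fil^U_i V^{\tensor n}\tensor\Fil^U_{i'}V^{\tensor n'}$ by the tensor case, and applying $\pi\tensor\pi$ and using that $\pi$ is a coalgebra map gives the claim; here one needs that, $\tensor$ being exact, $(\pi\tensor\pi)\bigl(\Fil^U_iV^{\tensor n}\tensor\Fil^U_{i'}V^{\tensor n'}\bigr)=\Fil^U_i\Sym^n(V)\tensor\Fil^U_{i'}\Sym^{n'}(V)$, which is the same sort of bookkeeping with images under $\tensor$ already used in Lemma~\ref{lemtensorintersect} and Lemma~\ref{lemfilt2}.

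To transfer this to the symmetric coalgebra I would invoke the isomorphism of graded bialgebras $\Sym^*(V)\to\OSym^*(V)$ of Lemma~\ref{gradeduniversal}, which in degree $n$ is multiplication by $n!$: since $\cata$ is $\Q$-linear this is multiplication by a nonzero scalar on each $\Sym^n(V)$, hence it carries the subobject $\Fil^U_i\Sym^n(V)$ onto itself, and that subobject is precisely $\Fil^U_i\OSym^n(V)$ by Definition~\ref{symninot}. Therefore the filtered graded bialgebra structure on $\Sym^*(V)$ transports verbatim to $\OSym^*(V)$, giving both statements of the lemma at once. Finally, for the clause ``$\Gr^U_\bullet\OSym^*(V)$ is a bigraded bialgebra'', I would note that exactness of $\tensor$ makes the associated-graded functor $\Gr^U_\bullet$ monoidal, so it sends the filtered graded bialgebra $\OSym^*(V)$ to a bialgebra, now bigraded by the original tensor degree together with the filtration degree; likewise for $\Sym^*(V)$.

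I do not expect a real obstacle: the construction is purely formal once the tensor-algebra case is in hand. The only mildly delicate point is the identity $(\pi\tensor\pi)(A'\tensor B')=\pi(A')\tensor\pi(B')$ for subobjects used in the comultiplication step, but this is immediate from exactness of $\tensor$ (write $\pi$ as a split epimorphism onto its image followed by a monomorphism, and apply $-\tensor B'$ then $\pi(A')\tensor-$), and is entirely in the spirit of the lemmas already proved in this appendix.
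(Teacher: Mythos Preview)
Your proof is correct and follows essentially the same approach as the paper: deduce the symmetric algebra case from the tensor algebra case via the surjective bialgebra map $\pi$, then transfer to the symmetric coalgebra by noting that its multiplication and comultiplication differ from those of $\Sym^*(V)$ only by nonzero rational scalars degreewise (you phrase this via the $n!$-isomorphism of Lemma~\ref{gradeduniversal}, the paper just says ``degreewise rational multiples''). Your write-up supplies more of the routine bookkeeping, but the strategy is identical.
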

\begin{proof}
The assertion for the symmetric algebra follows from the case of the tensor algebra. The assertion for the symmetric coalgebra follows because
multiplication and comultiplication on $\OSym^*( V)$ are degreewise rational multiples of
multiplication and comultiplication on $\Sym^*( V)$.
\end{proof}


\begin{proposition}\label{qniisomo2}
Let 
\[ 0\to U\to  V\to  W\to 0\]
be a short exact sequence in $\cata$. Then there is a natural isomorphism
of bigraded bialgebras
\[ \Gr^U_\bullet\OSym^*( V)\isom \OSym^\bullet(U)\tensor\OSym^{*-\bullet}( W)\ .\]
In particular for $0\leq i\leq n$, there are natural short exact sequences
\[ 0\to\Fil^U_{i+1}\Sym^n( V)\to\Fil^U_i\Symn( V)\xrightarrow{g^{n,i}}\Sym^i(U)\tensor\Sym^{n-i} W\to 0\ .\]
The map $g^{n,i}$ is obtained by factoring
\[ U^{\tensor i}\tensor V^{\tensor n-i}\to U^{\tensor i}\tensor W^{\tensor n-i}\xrightarrow{\mu_{\OSym}} \Symi(U)\tensor\Sym^{n-i}( W)
\]
uniquely through $\Fil^U_i\Symn( V)$.

The analogous statement for $\Sym^*(V)$ is also true.
\end{proposition}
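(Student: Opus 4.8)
The plan is to reduce everything to the already-established structural result about the graded tensor algebra, namely Proposition \ref{propfilt3}, by applying the symmetrization projectors $\pi^n$. First I would observe that the symmetrization projectors $\pi^n : V^{\tensor n}\to\Sym^n(V)$ are compatible with the filtration $\Fil^U_\bullet$: by definition $\Fil^U_i\Sym^n(V)=\pi^n(\Fil^U_i V^{\tensor n})$, so $\pi^n$ maps the filtered graded tensor algebra onto the filtered graded symmetric algebra, and this map is a morphism of filtered bigraded bialgebras (the bialgebra structure on $\Sym^*(V)$ is induced from $T^*(V)$, and on $\OSym^*(V)$ it differs only by rational multiples in each degree by Lemma \ref{prodexplicit}, hence the filtration statements transfer verbatim). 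Passing to associated graded objects, the surjection $\Gr^U_\bullet T^*(V)\twoheadrightarrow\Gr^U_\bullet\Sym^*(V)$ together with the computation $\Gr^U_\bullet T^*(V)\isom T^\bullet(U)\tensor T^{*-\bullet}(W)$ of Proposition \ref{propfilt3} identifies $\Gr^U_\bullet\Sym^*(V)$ as the image of $T^\bullet(U)\tensor T^{*-\bullet}(W)$ under the symmetrization projectors in each tensor factor, which is precisely $\Sym^\bullet(U)\tensor\Sym^{*-\bullet}(W)$; and the same works for the coalgebra version since it differs only by the isomorphism of Lemma \ref{gradeduniversal} (multiplication by $n!$ in degree $n$).

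More concretely, for the explicit short exact sequences I would argue as follows. By definition $\Fil^U_i\Sym^n(V)=\pi^n(U^{\tensor i}\tensor V^{\tensor n-i})$ using the simpler presentation in the remark after Definition \ref{symninot}. The composite
\[ U^{\tensor i}\tensor V^{\tensor n-i}\to U^{\tensor i}\tensor W^{\tensor n-i}\xrightarrow{\ \mu_{\OSym}\ }\Sym^i(U)\tensor\Sym^{n-i}(W) \]
visibly kills $U^{\tensor i}\tensor\Fil^U_1 V^{\tensor n-i}$ (any factor landing in $U\subset V$ maps to $0$ in $W$), hence kills $\Fil^U_{i+1}\Sym^n(V)$ after one checks that the kernel of $\pi^n$ restricted to $U^{\tensor i}\tensor V^{\tensor n-i}$ together with $U^{\tensor i}\tensor\Fil^U_1 V^{\tensor n-i}$ exhausts $\pi^{n,-1}(\Fil^U_{i+1}\Sym^n(V))$; so it factors uniquely through a map $g^{n,i}:\Fil^U_i\Sym^n(V)\to\Sym^i(U)\tensor\Sym^{n-i}(W)$. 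Surjectivity of $g^{n,i}$ is clear. For exactness on the left one identifies the kernel: an element of $\Fil^U_i\Sym^n(V)$ dies under $g^{n,i}$ iff its class in $\Gr^U_i\Sym^n(V)$ is zero, and by the associated-graded computation above $\Gr^U_i\Sym^n(V)\isom\Sym^i(U)\tensor\Sym^{n-i}(W)$ via $g^{n,i}$, giving $\ker g^{n,i}=\Fil^U_{i+1}\Sym^n(V)$.

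The compatibility with multiplication and comultiplication — i.e. the statement that the isomorphism $\Gr^U_\bullet\OSym^*(V)\isom\OSym^\bullet(U)\tensor\OSym^{*-\bullet}(W)$ is one of bigraded bialgebras — I would deduce purely formally from the corresponding statement for $T^*$ in Proposition \ref{propfilt3}, using that all the structure maps on $\Sym^*$ and $\OSym^*$ are (rational multiples of) the images under $\pi$ of the structure maps on $T^*$, and that $\pi$ is a surjection of filtered bialgebras; naturality in the short exact sequence is automatic from naturality of all constructions involved. The main obstacle, as in the tensor-algebra case, is the purely diagram-chasing verification that $\pi^n$ is \emph{strictly} compatible with the filtration in the sense needed — precisely, that $\pi^n$ induces an \emph{isomorphism} (not merely a surjection) on each $\Gr^U_i$ when $V$ fits into the given short exact sequence, which is where one actually uses exactness of $\tensor$ and the snake-lemma input behind Lemma \ref{lemfilt2}. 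Once that is in hand, everything else is formal, and the statement for $\Sym^*(V)$ follows from that for $\OSym^*(V)$ via the degreewise-$n!$ isomorphism of Lemma \ref{gradeduniversal}.
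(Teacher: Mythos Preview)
Your approach is the same as the paper's: reduce to Proposition \ref{propfilt3} via the symmetrization projector $\pi^n$. However, you misidentify where the difficulty lies. You flag as ``the main obstacle'' the verification that $\pi^n$ is strictly compatible with the filtration, and suggest this needs diagram chasing and the snake-lemma input behind Lemma \ref{lemfilt2}. In fact this step is automatic: the $\mathcal{S}_n$-action on $V^{\tensor n}$ preserves each $\Fil^U_i V^{\tensor n}$ (permutations merely permute the summands of the product filtration), so $\pi^n$ restricts to an idempotent on every filtration step, and an idempotent in a $\Q$-linear abelian category is exact, hence commutes with passing to associated gradeds. The paper disposes of this in one line: ``As a projector, the symmetrization map $\pi^n$ preserves short exact sequences.'' The exactness of $\tensor$ and Lemma \ref{lemfilt2} were already consumed in proving Proposition \ref{propfilt3}; nothing further is needed to descend to $\Sym^*$.

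Your ``more concretely'' paragraph also reverses the natural order. The paper first obtains the short exact sequence abstractly from the projector argument, so $g^{n,i}$ exists as the quotient map, and only then checks it agrees with the explicit composite on the generating subobject $U^{\tensor i}\tensor V^{\tensor n-i}$. You instead try to \emph{define} $g^{n,i}$ by factoring the composite through $\Fil^U_i\Sym^n(V)$, which requires showing it kills $\ker(\pi^n|_{U^{\tensor i}\tensor V^{\tensor n-i}})$; you wave at this with ``after one checks that\dots'' but the check is not obvious, since $U^{\tensor i}\tensor V^{\tensor n-i}$ is only $\mathcal{S}_i\times\mathcal{S}_{n-i}$-stable, not $\mathcal{S}_n$-stable. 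The paper's order sidesteps this entirely.
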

\begin{proof}
We first consider the case of the symmetric algebra.
As a projector, the symmetrization map $\pi^n$ preserves short exact sequences. Hence Proposition \ref{propfilt3} implies
\[ \Gr^U_\bullet\Sym^*( V)\isom\Gr^U_\bullet\Sym^*(U\oplus W)\isom\Sym^\bullet(U)\tensor\Sym^{*-\bullet}( W)\ .\]
Specializing to $\Gr^U_i\Sym^n( V)$ provides the required short exact sequence.
It remains to verify the explicit description of $g^{n,i}$.
Everything is determined on the level of the graded tensor algebra, where the map
to the associated graded object is induced from the projection $ V\to W$ in the
appropriate factors.
As
$\Fil^U_i\Sym^n( V)$ is the image of 
$U^{\tensor i}\tensor V^{n-i}\subset\Fil^U_i V^{\tensor n}$, it suffices
to describe the map on this object.

In the case of the symmetric coalgebra everything agrees up to rational factors.
Exactness of the sequence follows from the first case.
\end{proof}
\begin{remark}This agrees with the map denoted $g^{n,i}$ in second author's thesis \cite{ward} (see  loc. cit.  Notation 5.3.10). 
\end{remark}

\subsection{Cup-product}
Let $ W$ be an object of $\cata$. The coalgebra structure on $\OSym^*( W)$
allows to define cup-products.

\begin{definition}Let $ W$ be an object of $\cata$, $c: W\to K$ a morphism
in the derived category $D^b(\cata)$. Then we define
\[ \cdot\cup c:\OSym^*( W)\to\OSym^{*-1}( W)\tensor K\]
as the composition
\begin{align*}
 \OSym^*( W)&\xrightarrow{\Delta}\OSym^*( W)\tensor\OSym^*( W)\\
           &\to \OSym^*( W) \tensor  W\\
	   &\xrightarrow{\id\tensor c}\OSym^*( W) \tensor K\ .
\end{align*}
\end{definition}
We apply this to the morphism $[ V]: W\to U[1]$ in $D^n(\cata)$ represented by 
a short exact sequence
\[ 0\to U\to V\to W\to 0\]
in $\cata$. 

\begin{proposition}\label{propcup}Let
\[0\to U\to V\to W\to 0\]
be a short exact sequence in $\cata$. Then
\[ \cdot\cup[ V]:\OSym^*( W)\to\OSym^{*-1}( W)\tensor U[1]\]
is equal to the extension class
\[ [\OSym^*( V)/\Fil^U_2\OSym^*( V)]\]
under the identifications of Proposition \ref{qniisomo2} $\Gr^U_0\OSym^*( V)=\OSym^*( W)$ and
$\Gr^U_1( V)\isom\OSym^*( W)\tensor U$.
\end{proposition}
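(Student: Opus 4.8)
The plan is to reduce the statement to a compatibility between two natural constructions attached to a short exact sequence, and to verify it by unravelling definitions on the level of the graded tensor algebra, where everything is controlled by the product filtration $\Fil^U_\bullet$. First I would recall that the extension class $[\OSym^*(V)/\Fil^U_2\OSym^*(V)]$ is, by definition, the connecting morphism in $D^b(\cata)$ associated to the short exact sequence
\[ 0\to \Gr^U_1\OSym^*(V)\to \OSym^*(V)/\Fil^U_2\OSym^*(V)\to \Gr^U_0\OSym^*(V)\to 0\ , \]
which under the identifications of Proposition \ref{qniisomo2} reads
\[ 0\to \OSym^{*-1}(W)\tensor U\to \OSym^*(V)/\Fil^U_2\OSym^*(V)\to \OSym^*(W)\to 0\ . \]
So the assertion is that this connecting morphism $\OSym^*(W)\to \OSym^{*-1}(W)\tensor U[1]$ coincides with $\cdot\cup [V]$, i.e. with the composition $\Delta$, projection to $\OSym^*(W)\tensor W$, and $\id\tensor [V]$.

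The key reduction is the universal property. Both maps $\OSym^*(W)\to \OSym^{*-1}(W)\tensor U[1]$ are coderivations over the coalgebra $\OSym^*(W)$ (with values in the comodule $\OSym^{*-1}(W)\tensor U[1]$): for $\cdot\cup[V]$ this is built into the definition via $\Delta$, and for the connecting morphism it follows because $\OSym^*(V)/\Fil^U_2\OSym^*(V)$ is a filtered bialgebra, so the extension is an extension of coalgebras. By the graded universal property of $\OSym^*(W)$ (Lemma \ref{gradeduniversal}, Corollary \ref{universalboth}), a coderivation into such a comodule is determined by its composite with the projection to the cogenerating piece, namely by the induced morphism $W=\Sym^1(W)\to U[1]$ in degree $1$. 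Hence it suffices to check that in degree $1$ both maps agree with $[V]:W\to U[1]$.

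For $\cdot\cup[V]$ this is immediate from the definition, since $\Delta$ in degree $1$ is (up to the convention) $W\to W\tensor\one\oplus\one\tensor W$ and the subsequent projection lands $W$ identically onto the $W$-factor, so $\cdot\cup[V]$ in degree $1$ is $[V]$ itself. For the connecting morphism, in degree $1$ the quotient $\OSym^*(V)/\Fil^U_2\OSym^*(V)$ restricted to degree $1$ is just $V$, the filtration gives back the original short exact sequence $0\to U\to V\to W\to 0$, and its connecting class is by definition $[V]$. This uses the explicit description of $g^{n,i}$ in Proposition \ref{qniisomo2}, which identifies $\Gr^U_1$ in degree $1$ with $U$ via the inclusion $U\hookrightarrow V$. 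The main obstacle I anticipate is the careful bookkeeping needed to confirm that both constructions genuinely are coderivations in the precise sense required to invoke the universal property — in particular, matching the comodule structures on the target on the two sides and checking that the identification of Proposition \ref{qniisomo2} is compatible with comultiplication — rather than the degree-$1$ computation, which is essentially tautological. Everything else is formal, exactly as in the vector space case, once the filtered-bialgebra structure from Proposition \ref{qniisomo2} is in hand.
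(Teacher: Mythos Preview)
Your strategy is conceptually appealing but has a genuine gap at exactly the point you flag as the ``main obstacle''. The reduction to degree $1$ hinges on two claims: (i) the connecting morphism of the extension is a coderivation over $\OSym^*(W)$, and (ii) such coderivations are determined by their degree-$1$ component via the universal property. Neither is available from the paper, and (i) is in fact problematic. For the quotient $\OSym^*(V)/\Fil^U_2\OSym^*(V)$ to be a coalgebra one would need $\Delta(\Fil^U_2)\subset \Fil^U_2\tensor\OSym^*(V)+\OSym^*(V)\tensor\Fil^U_2$; but the filtered-bialgebra property only gives $\Delta(\Fil^U_2)\subset\sum_{j+k=2}\Fil^U_j\tensor\Fil^U_k$, and the cross term $\Fil^U_1\tensor\Fil^U_1$ does not lie in the required subobject. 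So the middle term of the extension is \emph{not} a coalgebra, the extension is not an extension of coalgebras, and there is no immediate reason for its connecting class to be a coderivation in any sense that Lemma \ref{gradeduniversal} or Corollary \ref{universalboth} (which concern coalgebra morphisms, not coderivations) would control.

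The paper's proof bypasses this entirely by working degree by degree and arguing directly with Yoneda extensions. For fixed $n$, the cup product $\cdot\cup[V]$ on $\Sym^n(W)$ is by definition the pull-back of the extension $0\to\Sym^{n-1}(W)\tensor U\to\Sym^{n-1}(W)\tensor V\to\Sym^{n-1}(W)\tensor W\to 0$ along the comultiplication component $\Delta^{n-1,1}_n:\Sym^n(W)\to\Sym^{n-1}(W)\tensor W$. The paper constructs an explicit morphism of short exact sequences from $0\to\Gr^U_1\Sym^n(V)\to\Sym^n(V)/\Fil^U_2\to\Sym^n(W)\to 0$ into this one, using the map $\bar\Delta^{n-1,1}_n:\Sym^n(V)\to\Sym^{n-1}(W)\tensor V$ coming from comultiplication on $\OSym^*(V)$ followed by projection. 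The only thing left to verify is that the induced map on kernels agrees with the identification $g^{n,1}$ of Proposition \ref{qniisomo2}; this is an explicit computation on the tensor algebra, using that $\Delta^{n-1,1}=\frac{1}{n}\sum_{i=1}^n\sigma_i$ (where $\sigma_i$ swaps the $i$-th factor to the last position) and that after projecting $V\to W$ in the first $n-1$ factors, only $\sigma_n=\id$ survives because the others introduce a factor $U\to W=0$. That computation is short, but it is where the content lies; your argument attempts to replace it by a structural principle that, as formulated, does not hold.
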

\begin{remark}This corresponds to the crucial computation \cite[Lemma 5.3.13]{ward} .
\end{remark}
\begin{proof}[Proof of Proposition \ref{propcup}.]
We view the morphisms in $D(\cata)$ as Yoneda extensions.
We need to identify the extension class $[\Sym^n( V)/\Fil^U_2\Sym^n( V)]$, i.e.,
\[ 0\to \Sym^{n-1}(V)\tensor U\to \Sym^n( V)/\Fil^U_2\Sym^n( V)\to \Sym^n W\to 0\]
with the pull-back of
\[ 0\to \Sym^{n-1} W\tensor U\to\Sym^{n-1} W\tensor V\to\Sym^{n-1} W\tensor W\to 0\]
via the component
\[\Delta^{n-1,1}:\Sym^n W\to\Sym^{n-1} W\tensor W\]
of the comultiplication.
The same component of the comultiplication but for $\OSym^*( V)$ gives rise to a map
\[ \bar{\Delta}^{n-1,1}_n:\Sym^n V\xrightarrow{\Delta^{n-1,1}}\Sym^{n-1} V\tensor V\to\Sym^{n-1} W\tensor V\ .\]
This gives rise to a morphism of short exact sequences
\[\begin{CD}
0@>>>\Gr^U_1\Sym^n V@>>> \Sym^n V/\Fil^U_2\Sym^n V@>>>\Sym^n W@>>> 0\\
@.@VVV@VV\bar{\Delta}^{n-1,1}_nV@VV\Delta^{n-1,1}_n V\\
0@>>>\Sym^{n-1} W\tensor U@>>>\Sym^{n-1} W\tensor V@>>>\Sym^{n-1} W\tensor W@>>>0
\end{CD}\]
It remains to check that the induced map on kernels equals 
$g^{n,1}$, i.e., it is induced by multiplication.
By the explicit description, it suffices to check that the diagram
\[\xymatrix{
 V^{\tensor n-1}\tensor U\ar[r]^{\mu^n}\ar[d]_{\mu^{n-1}\tensor\id}&\Sym^n V\ar[d]^{\bar{\Delta}^{n,1}_n}\\
\Sym^{n-1} V\tensor V\ar[r]&\Sym^{n-1} W\tensor V
}\]
commutes.
Recall that multiplication and comultiplication are the ones of the symmetric coalgebra, and hence
\[ \mu^n= n!\pi^n\ ,\hspace{3ex}\mu^{n-1}=(n-1)!\pi^{n-1}\]
and 
\[ \Delta^{n-1,1}=\iota^{n-1,1}=\frac{1}{n}\sum_{i=1}^n\sigma_i\]
where
\[\sigma_i: V^{\tensor n}\to V^{\tensor n-1}\tensor  V\]
is the permutation that swaps the $i$-th factor into the last place and leaves the order otherwise intact. 
 Hence we can equivalently check the commutativity of the following diagram
\[\begin{CD}
 V^{\tensor n-1}\tensor U@>{\pi^n}>>\Sym^n V\\
@V{\pi^{n-1}\tensor\id} VV@VV{\sum_{i=1}^n\sigma_i}V\\
\Sym^{n-1} V\tensor V@>>>\Sym^{n-1} W\tensor V
\end{CD}\]

It suffices to check the same identity on the level of tensor algebras. 
By abuse
of notation
\[ \sigma_i(g_1\tensor g_2\tensor\dots\tensor g_n)=(g_1\tensor\dots\tensor\hat{g_i} \tensor\dots\tensor g_n)\tensor g_i\]
where $\hat{g_i}$ means that the factor is omitted.
The composition
\[  V^{\tensor n-1}\tensor U\subset V^{\tensor n}\xrightarrow{\sigma_i} V^{\tensor n-1}\tensor V\to W^{\tensor n-1}\tensor  V\]
vanishes for $i\neq n$ because it involves a factor $U\to W$.
Hence only $\sigma_n=\id$ contributes to
\[ V^{\tensor n-1}\tensor U\subset V^{\tensor n}\xrightarrow{\Delta^{n-1,1}} V^{\tensor n-1}\tensor V\to W^{\tensor n-1}\tensor  V\ .\]
This finishes the proof.
\end{proof}

\begin{theorem}\label{corcup}Let $\cata$ be a $\Q$-linear abelian symmetric tensor category. Moreover, let $T$ be a $\Q$-linear, tensor, symmetric, triangulated category and 
$q: D^b(A) \to T$ be a $\Q$-linear, tensor, symmetric and triangulated functor. 
Let
\[0\to U\to V\to W\to 0\]
be a short exact sequence in $\cata$. Suppose that 
$\Sym^2(q(U))=0$. 

Then there is  a canonical triangle in $T$ 
\[ \OSym^n(q( V)) \to \OSym^n(q( W))\xrightarrow{\cdot\cup[ V]}\OSym^{n-1}(q( W))\tensor q(U)[1] \ .\]
\end{theorem}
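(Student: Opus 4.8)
The plan is to push the finite filtration $\Fil^U_\bullet\OSym^n(V)$ of Definition~\ref{symninot} through $q$ and to use the hypothesis $\Sym^2(q(U))=0$ to collapse it to a single distinguished triangle, after which the connecting map is identified via Proposition~\ref{propcup}. Throughout, $n$ is fixed.

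First I would note that $q$, being a $\Q$-linear tensor symmetric functor, commutes with the formation of symmetric powers, since these are images of the idempotents $\frac1{i!}\sum_\sigma\sigma$; hence $q(\Sym^i(X))=\Sym^i(q(X))$ for all $X\in\cata$. Combined with $\Sym^2(q(U))=0$ and the surjection $\Sym^2(q(U))\tensor\Sym^{i-2}(q(U))\twoheadrightarrow\Sym^i(q(U))$ provided by the multiplication of Lemma~\ref{prodexplicit}, this gives $\Sym^i(q(U))=0$ for every $i\geq 2$. By Proposition~\ref{qniisomo2} the graded pieces of the filtration are $\Gr^U_i\OSym^n(V)\isom\Sym^i(U)\tensor\Sym^{n-i}(W)$, which vanish in $\cata$ for $i>n$; so the filtration is finite, and after applying $q$ the pieces $\Sym^i(q(U))\tensor\OSym^{n-i}(q(W))$ vanish for $i\geq 2$. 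Running $q$ on the short exact sequences $0\to\Fil^U_{i+1}\OSym^n(V)\to\Fil^U_i\OSym^n(V)\to\Gr^U_i\OSym^n(V)\to 0$ and descending induction on $i$ then yield $q(\Fil^U_2\OSym^n(V))=0$.

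Next I would apply the triangulated functor $q$ to the exact triangle in $D^b(\cata)$ attached, via Proposition~\ref{qniisomo2} (its case $i=1$), to the short exact sequence $0\to\Gr^U_1\OSym^n(V)\to\OSym^n(V)/\Fil^U_2\OSym^n(V)\to\OSym^n(W)\to 0$, in which $\Gr^U_1\OSym^n(V)\isom\Sym^{n-1}(W)\tensor U$. Since $q(\Fil^U_2\OSym^n(V))=0$, the map $q(\OSym^n(V))\to q(\OSym^n(V)/\Fil^U_2\OSym^n(V))$ is an isomorphism, and $q(\Gr^U_1\OSym^n(V))=\OSym^{n-1}(q(W))\tensor q(U)$; rotating the resulting triangle produces the desired shape $\OSym^n(q(V))\to\OSym^n(q(W))\xrightarrow{\ \partial\ }\OSym^{n-1}(q(W))\tensor q(U)[1]$.

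It then remains to identify $\partial$ with $\cdot\cup[V]$, which is where the actual content lies. Proposition~\ref{propcup} already identifies the extension class of the short exact sequence above (in degree $n$) with $\cdot\cup[V]$ inside $D^b(\cata)$; and because $q$ is $\Q$-linear, tensor and symmetric, it carries the comultiplication and the remaining structure maps of $\OSym^*(V)$ to those of $\OSym^*(q(V))$ and the class $[V]\colon W\to U[1]$ to its image in $T$, so that $q$ of the cup product is the cup product of the images. I expect the main (purely bookkeeping) obstacle to be precisely this last identification: verifying that $q$ genuinely intertwines the coalgebra structures defining $\cdot\cup[V]$ in $\cata$ and in $T$, and that the connecting map obtained from the two-step sequence $0\to\Gr^U_1\OSym^n(V)\to\OSym^n(V)/\Fil^U_2\OSym^n(V)\to\OSym^n(W)\to 0$ agrees, under $q(\Fil^U_1\OSym^n(V))\isom q(\Gr^U_1\OSym^n(V))$, with the one coming from $0\to\Fil^U_1\OSym^n(V)\to\OSym^n(V)\to\OSym^n(W)\to 0$ — a short diagram chase enabled by $q(\Fil^U_2\OSym^n(V))=0$. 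Canonicity of the resulting triangle is automatic, since the filtration $\Fil^U_\bullet$ and all the identifications employed are canonical.
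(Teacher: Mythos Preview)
Your proposal is correct and follows essentially the same route as the paper: collapse the filtration $\Fil^U_\bullet\OSym^n(V)$ after applying $q$ using $\Sym^i(q(U))=0$ for $i\geq 2$, then invoke Proposition~\ref{propcup} to identify the connecting morphism of the resulting two-step extension with $\cdot\cup[V]$. Your final bookkeeping worry is unnecessary: the paper works directly with the sequence $0\to\Gr^U_1\OSym^n(V)\to\OSym^n(V)/\Fil^U_2\OSym^n(V)\to\OSym^n(W)\to 0$, whose connecting map is already identified as $\cdot\cup[V]$ in $D^b(\cata)$ by Proposition~\ref{propcup}, and then simply applies $q$ (which, being tensor, symmetric and triangulated, sends this cup product to the cup product in $T$) together with the isomorphism $q(\OSym^n(V))\isom q(\OSym^n(V)/\Fil^U_2\OSym^n(V))$.
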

\begin{proof} By Proposition \ref{propcup} the short exact sequence in $\cata$
\[ 0\to \OSym^{*-1}(W)\tensor U\to \OSym^*(V)/\Fil^U_2\OSym^*(V)\to\OSym^*(W)\to 0\]
gives rise to the exact triangle
\[ \OSym^*(V)/\Fil^U_2\OSym^*(V)\to\OSym^*(W)\xrightarrow{\cdot\cup[V]}\OSym^{*-1}(q( W))\tensor q(U)[1] \ .\]
We apply $q$. It remains to show that
\[q(\Fil^U_2\OSym^*(V)=0\ .\]
This follows by descending induction from the system of triangles of Proposition \ref{qniisomo2}
\[ q(\Fil^U_{i+1}\Symn(V))\to q(\Fil^U_i\Sym^n(V))\to \Sym^i(q(U))\tensor\Sym^{n-i}q(W)\] 
and the vanishing of  $\Sym^i(q(U))$ for $i\geq 2$.
\end{proof}

\end{appendix}
\newpage

\bibliographystyle{alpha}	
\bibliography{masterbib}
\end{document}